\newcommand{\R}{{\mathbb R}} 
\newcommand{\E}{{\mathbb E}}
\newcommand{\la}{\langle}
\newcommand{\ra}{\rangle}
\newcommand\xqed[1]{%
  \leavevmode\unskip\penalty9999 \hbox{}\nobreak\hfill
  \quad\hbox{#1}}
\newcommand\demo{\xqed{$\triangle$}}
\DeclareMathOperator{\argmin}{argmin}
\DeclareMathOperator{\argmax}{argmax}
\def\vp{\varphi}
\def\pd#1{{\color{ForestGreen}#1}} 
\def\todo#1{{\color{red}TODO: #1}} 
\newcommand*{\addFileDependency}[1]{
  \typeout{(#1)}
  \@addtofilelist{#1}
  \IfFileExists{#1}{}{\typeout{No file #1.}}
}
\newcommand*{\myexternaldocument}[1]{%
    \externaldocument{#1}%
    \addFileDependency{#1.tex}%
    \addFileDependency{#1.aux}%
}
\newtheorem{assumption}{Assumption}
\newcommand{\cmark}{\color[HTML]{3166FF}{\ding{51} }\color{black} }%
\newcommand{\xmark}{\color[HTML]{CB0000}{\ding{55} }\color{black} }%
\numberwithin{equation}{section}
\begin{document}
\title{On Accelerated Methods for Saddle-Point Problems with Composite Structure
}

\titlerunning{Accelerated Methods for Saddle-Point Problems}        

\author{Vladislav Tominin  $^1$      \and
        Yaroslav Tominin  $^2$     \and
        Ekaterina Borodich  $^3$     \and
        Dmitry Kovalev $^4$         \and
        Alexander Gasnikov  $^5$    \and
        Pavel Dvurechensky  $^6$     
}


\institute{
$^1$ Moscow Institute of Physics and Technology, Dolgoprudny, Russia. \email{tominin.vd@phystech.edu}
\and
$^2$ Moscow Institute of Physics and Technology, Dolgoprudny, Russia. \email{tominin.yad@phystech.edu}\and
$^3$ Moscow Institute of Physics and Technology, Dolgoprudny, Russia. \email{borodich.ed@phystech.edu}
\and
$^4$ King Abdullah University of Science and Technology, Thuwal, Saudi Arabia. \email{dmitry.kovalev@kaust.edu.sa} \and
$^5$ Moscow Institute of Physics and Technology, Dolgoprudny, Russia, and Higher school of economics, Moscow, Russia, and  Institute for Information Transmission Problems RAS, Moscow, Russia \email{gasnikov@yandex.ru} \and
$^6$  Weierstrass Institute for Applied Analysis and Stochastics, Berlin
\email{ pavel.dvurechensky@wias-berlin.de}
}

\date{Received: date / Accepted: date}

\maketitle

\begin{abstract}
We consider strongly-convex-strongly-concave saddle-point problems with general non-bilinear objective and different condition numbers with respect to the primal and the dual variables. First, we consider such problems with smooth composite terms, one of which having finite-sum structure. For this setting we propose a variance reduction algorithm with complexity estimates superior to the existing bounds in the literature. Second, we consider finite-sum saddle-point problems with composite terms and propose several algorithms depending on the properties of the composite terms. When the composite terms are smooth we obtain better complexity bounds than the ones in the literature, including the bounds of a recently proposed nearly-optimal algorithms which do not consider the composite structure of the problem. 
If the composite terms are prox-friendly, we propose a variance reduction algorithm that, on the one hand, is accelerated compared to existing variance reduction algorithms and, on the other hand, provides in the composite setting similar complexity bounds to the nearly-optimal algorithm which is designed for non-composite setting.
Besides that, our algorithms allow to separate the complexity bounds, i.e. estimate, for each part of the objective separately, the number of oracle calls  that is sufficient to achieve a given accuracy. This is important since different parts can have different arithmetic complexity of the oracle, and it is desired to call expensive oracles less often than cheap oracles.
The key thing to all these results is our general framework for saddle-point problems, which may be of independent interest. 
This framework, in turn is based on our proposed Accelerated Meta-Algorithm for composite optimization with probabilistic inexact oracles and probabilistic inexactness in the proximal mapping, which may be of independent interest as well.


\end{abstract}

\section{Introduction}
\label{S:Intro}

Saddle-point optimization problems have many applications in different areas of modelling an optimization. The most classical example is, perhaps, two-player zero-sum games \cite{morgenstern1953theory,nash1950bargaining}, including differential games \cite{isaacs1999differential}. More recent examples include imaging problems \cite{chambolle2011first-order} and machine learning problems \cite{shalev-shwartz2014accelerated}, where  primal-dual saddle-point representations of  large-scale optimization problems are constructed and primal-dual methods are used. Many non-smooth optimization problems, such as $\ell_{\infty}$ or $\ell_{1}$ regression admit a saddle-point representation, which allows one to propose methods \cite{nesterov2005smooth,nemirovski2004prox} having faster convergence than the standard subgradient scheme.
Recently saddle-point problems started to attract more attention from the machine learning community in application to generative adversarial networks training, where the training process consists of a competition of a generator of non-real images and a discriminator which tries to distinguish between real and artificial images. Another application examples are equilibrium problems in two-stage congested traffic flow models \cite{GasnikovTransport}.

From the algorithmic viewpoint the most studied setting deals with saddle-point problems having bilinear structure \cite{nesterov2005smooth,nemirovski2004prox,carmon2019variance,song2021variance},
 where the cross term between the primal and dual variable is linear with respect to each variable. The extensions include bilinear problems with  prox-friendly (i.e. admitting a proximal operator in closed form) composite terms \cite{chambolle2011first-order,lan2019lectures}. 
 A related line of research studies variational inequalities 
 \cite{nemirovski2004prox,lan2019lectures} since any convex-concave saddle-point problem can be reformulated as a variational inequality problem with monotone operator. In this area lower bounds for first-order methods are known \cite{nemirovsky1983problem} and optimal methods exist
 \cite{nemirovski2004prox,nesterov2007dual,nesterov2006solving,chen2017accelerated,lan2019lectures}. Notably, these works do not rely on the bilinear structure and allow to solve convex-concave saddle-point problems with Lipschitz-continuous gradients, including differential games \cite{dvurechensky2015primal-dual}.
 An alternative approach, which mostly inspired this paper, is based on representation of a saddle-point problem $\min_x \max_y G(x,y)$ as either a primal minimization problem with an implicitly given objective $g(x)=\max_y G(x,y)$ or a dual maximization problem with an implicitly given objective $\tilde{g}(y)=\min_x G(x,y)$. This approach was used in \cite{nesterov2005smooth,nesterov2005excessive} for problems with bilinear structure and later extended in \cite{hien2020inexact} for general saddle-point problems.
 Such connection with optimization turned out to be quite productive since it allows to exploit accelerated optimization methods. In particular, recent advances in this direction are due to an observation \cite{gasnikov2016stochastic,alkousa2020accelerated,lowerbounds} that primal and dual problems can have different condition numbers which opens up a possibility to obtain faster algorithms.

In this paper we focus on strongly-convex-strongly-concave saddle-point problems with different condition numbers $\kappa_x$, $\kappa_y$ of the primal and dual problems respectively. The classical upper bound  $\tilde{O}(\kappa_x+\kappa_y)$ for this setting is achieved by the algorithm of \cite{nesterov2006solving}. 
Recently, \cite{lowerbounds} proved 
a lower complexity bound $\tilde{\Omega}(\sqrt{\kappa_x\kappa_y})$ for first-order methods, which raised a question of whether first-order methods can be accelerated for this setting.
Independently \cite{Alkousa2019} proposed accelerated methods with improved, yet suboptimal complexity bounds. In \cite{pmlr-v125-lin20a} the authors improved the bounds of \cite{alkousa2020accelerated} and proposed an algorithm with an optimal up to a polylogarithmic factor complexity bound $\tilde{O}(\sqrt{\kappa_x\kappa_y})$. 
Subsequently, the logarithmic factors have been improved independently in the papers (we cite them in chronological order) \cite{dvinskikh2020accelerated,wang2020improved,yang2020catalyst}. The listed papers consider large-scale regime when primal and dual problem have large dimension and use gradient-type methods. If, say, the dimension of the primal variable $x$ is moderate, one can use cutting-plane methods \cite{gladin2020solving,gladin2021solving} in combination with gradient-type methods.
We also mention the following papers which are related, but consider different from ours setting of convex-concave saddle-point problems \cite{zhu2020accelerated}, strongly-convex-concave and nonconvex-concave \cite{NEURIPS2019_05d0abb9}, nonconvex-concave \cite{ostrovskii2020efficient,xu2020unified}.

When an optimization problem has a special structure of finite-sum, also known as empirical risk minimization problems, variance reduction \cite{lan2020first,pmlr-v125-lin20a} techniques are often exploited to reduce the complexity bounds. We are interested also in application of such techniques for saddle-point problems. Variance reduction methods for saddle-point problems were proposed in \cite{NIPS2016_1aa48fc4} and recently improved in \cite{alacaoglu2021stochastic}, yet without distinguishing between primal and dual condition numbers.

In this paper we continue the line of research  \cite{alkousa2020accelerated,dvinskikh2020accelerated} 
by exploring additional structure of the problem, such as finite-sum form and presence of composite terms. We also develop algorithms which allow to separate the complexity bounds for different parts of the problem. 
The latter, in particular, means that for each part of the objective we estimate separately the number of its gradient evaluations.
This allows to obtain further acceleration if the smoothness constants and complexities of an oracle call for different parts are different since more expensive oracles are called less frequently than it would be required by existing methods. Next we consider two main problem formulations which have additional structure and which we explore in the paper. We also give detailed explanation of the difference of our setting and bounds with the literature.


The first problem formulation, we are interested in, is strongly-convex-strongly-concave saddle-point problem of the form 
\begin{equation} 
    \label{eq:problem}
    \min _{x \in \mathbb{R}^{d_x}} \max _{y \in \mathbb{R}^{d_y}} \left\{f(x)+ G(x, y) - h(y)\right\}, \;\;\; h(y):=\frac{1}{m_h} \sum_{i = 1}^{m_h} h_i(y). 
\end{equation}
where $G(x, y)$ is convex in $x$ and concave in $y$ and is $L_G$-smooth in each variable, $f(x)$ is $\mu_x$-strongly convex and $L_f$-smooth, $h(y)$ is $\mu_y$-strongly convex and $L_h$-smooth. We refer to the functions $f$ and $h$ as composite terms. In this setting it is natural to define condition numbers $\kappa_x=L_G/\mu_x$ and $\kappa_y=L_G/\mu_y$ for the primal minimization and dual maximization problems respectively.
As it was already mentioned, the most studied \cite{chambolle2011first-order, lan2019lectures} setting corresponds to a particular case of $m_h=1$ and bilinear function $G(x, y) = \langle Ax, y \rangle$ for some linear operator $A$ and the functions $f,g$ being prox-friendly, i.e. admit a tractable proximal operator \cite{BSMF_1965__93__273_0}, e.g. evaluation of the point $\arg \min_x\{f(x)+\frac{1}{2}\|x-\bar{x}\|_2^2 \}$ in the case of $f$. 
Existing algorithms \cite{NIPS2016_1aa48fc4,Alkousa2019,alkousa2020accelerated,pmlr-v125-lin20a,dvinskikh2020accelerated,wang2020improved,yang2020catalyst} for problem \eqref{eq:problem} with non-bilinear structure do not exploit the finite-sum structure of the function $h$ and when it is smooth require to calculate the gradient of the whole sum, which may be expensive when $m_h\gg1$.
Unlike them we incorporate variance reduction technique to make the number of evaluations of $\nabla h_i(y)$ smaller than by the existing methods. Unlike  \cite{NIPS2016_1aa48fc4,pmlr-v125-lin20a,wang2020improved,yang2020catalyst} we separate the complexity estimates for each part of the objective, i.e. we estimate separately a sufficient number of evaluations of $\nabla f(x)$, $\nabla_x G(x,y)$, $\nabla_y G(x,y)$, $\nabla h_i(y)$ to achieve a given accuracy. 
This allows to call each oracle less number of times than it is required by existing methods and is important since evaluation of each gradient can have different arithmetic operations complexity, and it is desired to call expensive oracles less often than cheap oracles. Compared to \cite{Alkousa2019,alkousa2020accelerated}, where the complexities are also separated, we obtain better complexity bounds for each part of the objective. Moreover, for the particular case when $f=h=0$, our bounds are the same to the best known bounds \cite{wang2020improved,yang2020catalyst} and are optimal up to logarithmic factors. Otherwise, when $m_h > 1$ and/or $f, h$ are nonzero we obtain the best, to our knowledge complexity bounds.
We summarize comparison of ours and existing results for the case $m_h > 1$ in Table \ref{table:h-sum-m_h=1} and for the particular case $m_h = 1$ in Table \ref{table:h-sum}.

\begingroup
\setlength{\tabcolsep}{2pt} 
\renewcommand{\arraystretch}{2.5} 
\begin{table}\centering
\begin{tabular}{|c|l|l|c|c|} 
\hline
\textbf{Referenses} & \multicolumn{2}{c|}{\textbf{Complexity}} & \begin{tabular}[c]{@{}c@{}}\bf{Variance}\\\bf{reduction} \end{tabular} & \begin{tabular}[c]{@{}c@{}}\bf{Complexity}\\\bf{separation} \end{tabular}\\ \hline


& $\nabla f: \; \tilde{O}\left(\kappa^{(f+G)}_x+\kappa^{(G+h)}_y\right)$             & $\nabla_x G: \; \tilde{O}\left(\kappa^{(f+G)}_x+\kappa^{(G+h)}_y\right)$             &          &         \\ \cline{2-3}

\multirow{-2}{*}{\cite{nesterov2006solving}} & $\nabla h_i:  \tilde{O}\left(m_h \kappa^{(f+G)}_x+m_h\kappa^{(G+h)}_y\right)$             & $\nabla_y G: \; \tilde{O}\left(\kappa^{(f+G)}_x+\kappa^{(G+h)}_y\right)$           & \multirow{-2}{*}{\xmark} & \multirow{-2}{*}{\xmark}  \\ \hline \hline

& $\nabla f: \; \tilde{O}\left(\sqrt{\kappa^{(f+G)}_x\kappa^{(G+h)}_y}\right)$             & $\nabla_x G: \; \tilde{O}\left(\sqrt{\kappa^{(f+G)}_x\kappa^{(G+h)}_y}\right)$             &       &           \\ \cline{2-3}

\multirow{-2}{*}{\cite{pmlr-v125-lin20a,wang2020improved,yang2020catalyst}} & $\nabla h_i:  \tilde{O}\left(m_h\sqrt{\kappa^{(f+G)}_x\kappa^{(G+h)}_y}\right)$             & $\nabla_y G: \; \tilde{O}\left(\sqrt{\kappa^{(f+G)}_x\kappa^{(G+h)}_y}\right)$     & \multirow{-2}{*}{\xmark}  & \multirow{-2}{*}{\xmark}        \\ \hline \hline

& $\nabla f: \; \tilde{O}\left(\sqrt{\kappa^{(f)}_x}\right)$             & $\nabla_x G: \; \tilde{O}\left(\sqrt{\kappa^{(G)}_x \kappa^{(G)}_y}\right)$             &       &             \\ \cline{2-3}

\multirow{-2}{*}{\cite{alkousa2020accelerated}} & $\nabla h_i:  \tilde{O}\left(m_h\sqrt{\kappa^{(G)}_x \kappa^{(G)}_y\kappa^{(h)}_y}\right)$             & $\nabla_y G: \; \tilde{O}\left(\kappa^{(G)}_y\sqrt{\kappa^{(G)}_x} \right)$     & \multirow{-2}{*}{\xmark}& \multirow{-2}{*}{\cmark}          \\ \hline \hline

\rowcolor[HTML]{C0C0C0} 
\cellcolor[HTML]{C0C0C0}                    & $\nabla f: \; \tilde{O}\left(\sqrt{\kappa^{(f)}_x \kappa^{(G)}_y}\right)$             & $\nabla_x G: \; \tilde{O}\left(\sqrt{\kappa^{(G)}_x \kappa^{(G)}_y}\right)$             & \cellcolor[HTML]{C0C0C0}         & \cellcolor[HTML]{C0C0C0}            \\ \cline{2-3}
\rowcolor[HTML]{C0C0C0} 
\multirow{-2}{*}{\cellcolor[HTML]{C0C0C0}\begin{tabular}[c]{@{}c@{}}\textbf{This paper} \\
\textbf{(Theorem~\ref{theorem:h-sum})} \end{tabular}} & $\nabla h_i:  \tilde{O}\left(\sqrt{m_h \kappa^{(G)}_x \kappa^{(h)}_y}\right)$             &  $\nabla_y G: \; \tilde{O}\left(\sqrt{\kappa^{(G)}_x \kappa^{(G)}_y}\right)$          & \multirow{-2}{*}{\cmark} & \multirow{-2}{*}{\cmark}  \\ \hline

\end{tabular}
\caption{Comparison of gradient complexities for problem \eqref{eq:problem} with $m_h>1$, i.e. the number of corresponding gradient evaluations, to find an $\varepsilon$-saddle point with probability at least $1-\sigma$. 
Notation $\tilde{O}(X)$ hides constant and polylogarithmic in $\varepsilon^{-1}$ and $\sigma^{-1}$ factors.
For a function $F$, we denote $\kappa_x^{(F)}=L_F/\mu_x$, $\kappa_y^{(F)}=L_F/\mu_y$. 
The results of Theorem~\ref{theorem:h-sum} are obtained under additional assumptions 
$m_h (4L_G + \mu_y) \leq L_h$, $ 2L_G + \mu_x   \leq L_f, \mu_y  \leq L_G$, $\mu_x  \leq L_G$. 
}
\label{table:h-sum-m_h=1}

\end{table}
\endgroup

\begingroup
\setlength{\tabcolsep}{1pt} 
\renewcommand{\arraystretch}{2.5} 
\begin{table}\centering
\begin{tabular}{|c|l|l|c|c|c|} 
\hline
\textbf{Referenses}                                  & \multicolumn{2}{c|}{\textbf{Complexity}} &  \textbf{Assumptions} &\begin{tabular}[c]{@{}c@{}}\bf{\;\;CS\;\;} \end{tabular}\\ \hline

& $\nabla f: \; \tilde{O}\left(\kappa^{(f+G)}_x+\kappa^{(G+h)}_y\right)$             & $\nabla_x G: \; \tilde{O}\left(\kappa^{(f+G)}_x+\kappa^{(G+h)}_y\right)$             &          &          \\ \cline{2-3}

\multirow{-2}{*}{\cite{nesterov2006solving}} & $\nabla h_i:  \tilde{O}\left(\kappa^{(f+G)}_x\kappa^{(G+h)}_y\right)$             & $\nabla_y G: \; \tilde{O}\left(\kappa^{(f+G)}_x+\kappa^{(G+h)}_y\right)$             & \multirow{-2}{*}{\begin{tabular}[c]{@{}c@{}}$f$ is $L_f$-smooth,\\ $h$ is $L_h$-smooth \end{tabular}}  & \multirow{-2}{*}{\xmark} \\ \hline \hline


& $\nabla f: \; \tilde{O}\left(\sqrt{\kappa^{(f+G)}_x\kappa^{(G+h)}_y}\right)$             & $\nabla_x G: \; \tilde{O}\left(\sqrt{\kappa^{(f+G)}_x\kappa^{(G+h)}_y}\right)$             &           &        \\ \cline{2-3}

\multirow{-2}{*}{\cite{pmlr-v125-lin20a,wang2020improved,yang2020catalyst}} & $\nabla h_i:  \tilde{O}\left(\sqrt{\kappa^{(f+G)}_x\kappa^{(G+h)}_y}\right)$             & $\nabla_y G: \; \tilde{O}\left(\sqrt{\kappa^{(f+G)}_x\kappa^{(G+h)}_y}\right)$             & \multirow{-2}{*}{\begin{tabular}[c]{@{}c@{}}$f$ is $L_f$-smooth,\\ $h$ is $L_h$-smooth \end{tabular}}  & \multirow{-2}{*}{\xmark} \\ \hline \hline


& $\nabla f: \; \tilde{O}\left(\sqrt{\kappa^{(f)}_x}\right)$             & $\nabla_x G: \; \tilde{O}\left(\sqrt{\kappa^{(G)}_x \kappa^{(G)}_y}\right)$             &       &             \\ \cline{2-3}

\multirow{-2}{*}{\cite{alkousa2020accelerated}} & $\nabla h_i:  \tilde{O}\left(\sqrt{\kappa^{(G)}_x \kappa^{(G)}_y\kappa^{(h)}_y}\right)$             & $\nabla_y G: \; \tilde{O}\left(\kappa^{(G)}_y\sqrt{\kappa^{(G)}_x} \right)$             & \multirow{-2}{*}{\begin{tabular}[c]{@{}c@{}}$f$ is $L_f$-smooth,\\ $h$ is $L_h$-smooth \end{tabular}}    & \multirow{-2}{*}{\cmark}\\ \hline \hline


\rowcolor[HTML]{C0C0C0} 
\cellcolor[HTML]{C0C0C0}                    & $\nabla f: \; \tilde{O}\left(\sqrt{\kappa^{(f)}_x \kappa^{(G)}_y}\right)$             & $\nabla_x G: \; \tilde{O}\left(\sqrt{\kappa^{(G)}_x \kappa^{(G)}_y}\right)$             & \cellcolor[HTML]{C0C0C0}      & \cellcolor[HTML]{C0C0C0}                 \\ \cline{2-3}
\rowcolor[HTML]{C0C0C0} 
\multirow{-2}{*}{\cellcolor[HTML]{C0C0C0}\begin{tabular}[c]{@{}c@{}}\textbf{This paper} \\
\textbf{(Corollary~\ref{theorem:h-not-sum})} \end{tabular}} & $\nabla h:  \tilde{O}\left(\max \left\{\sqrt{ \kappa^{(G)}_x \kappa^{(h)}_y}, \sqrt{ \kappa^{(G)}_x \kappa^{(G)}_y}\right\}\right)$             & $\nabla_y G: \; \tilde{O}\left(\sqrt{\kappa^{(G)}_x \kappa^{(G)}_y}\right)$             & \multirow{-2}{*}{\cellcolor[HTML]{C0C0C0}\begin{tabular}[c]{@{}c@{}}$f$ is $L_f$-smooth,\\ $h$ is $L_h$-smooth \end{tabular}}& \multirow{-2}{*}{\cellcolor[HTML]{C0C0C0} \cmark}\\ \hline \hline

\rowcolor[HTML]{C0C0C0} 
\cellcolor[HTML]{C0C0C0}                    & $\nabla f: \; \tilde{O}\left(\sqrt{\kappa^{(f)}_x \kappa^{(G)}_y}\right)$             & $\nabla_x G: \; \tilde{O}\left(\sqrt{ \kappa^{(G)}_x \kappa^{(G)}_y}\right)$             & \cellcolor[HTML]{C0C0C0}     & \cellcolor[HTML]{C0C0C0}               \\ \cline{2-3}
\rowcolor[HTML]{C0C0C0} 
\multirow{-2}{*}{\cellcolor[HTML]{C0C0C0}\begin{tabular}[c]{@{}c@{}}\textbf{This paper} \\
\textbf{(Theorem~\ref{theorem:G-sum_h_prox})} \end{tabular}} & $\nabla h:  \tilde{O}\left(\sqrt{\kappa^{(G)}_y}\right)$             & $\nabla_y G: \; \tilde{O}\left(\sqrt{ \kappa^{(G)}_x \kappa^{(G)}_y}\right)$             & \multirow{-2}{*}{\cellcolor[HTML]{C0C0C0}\begin{tabular}[c]{@{}c@{}}$f$ is $L_f$-smooth,\\ $h$ is $L_h$-smooth prox-friendly \end{tabular}}& \multirow{-2}{*}{\cellcolor[HTML]{C0C0C0} \cmark} \\ \hline \hline

\rowcolor[HTML]{C0C0C0} 
\cellcolor[HTML]{C0C0C0}                    & $\nabla f: \;0$             & $\nabla_x G: \; \tilde{O}\left(\sqrt{ \kappa^{(G)}_x \kappa^{(G)}_y}\right)$             & \cellcolor[HTML]{C0C0C0}   & \cellcolor[HTML]{C0C0C0}                 \\ \cline{2-3}
\rowcolor[HTML]{C0C0C0} 
\multirow{-2}{*}{\cellcolor[HTML]{C0C0C0}\begin{tabular}[c]{@{}c@{}}\textbf{This paper} \\
\textbf{(Theorem~\ref{theorem:section_6})} \end{tabular}} & $\nabla h: \; 0$             & $\nabla_y G: \; \tilde{O}\left(\sqrt{ \kappa^{(G)}_x \kappa^{(G)}_y}\right)$             & \multirow{-2}{*}{\cellcolor[HTML]{C0C0C0}\begin{tabular}[c]{@{}c@{}}$f$, $h$ prox-friendly \end{tabular}} & \multirow{-2}{*}{\cellcolor[HTML]{C0C0C0} \cmark}\\ \hline

\end{tabular}
\caption{
Comparison of gradient complexities for problem \eqref{eq:problem} with $m_h=1$, i.e. the number of corresponding gradient evaluations, to find an $\varepsilon$-saddle point for the problem. 
Notation $\tilde{O}(X)$ hides constant and polylogarithmic in $\varepsilon^{-1}$ factors. \textbf{ CS} stands for complexity separation.
For a function $F$, we denote $\kappa_x^{(F)}=L_F/\mu_x$, $\kappa_y^{(F)}=L_F/\mu_y$. 
}
\label{table:h-sum}

\end{table}
\endgroup

\begingroup
\setlength{\tabcolsep}{0.7pt} 
\renewcommand{\arraystretch}{2.5} 
\begin{table}\centering
\begin{tabular}{|c|l|l|c|c|c|c|}
\hline
\textbf{Referenses}                                  & \multicolumn{2}{c|}{\textbf{Complexity}} & \textbf{Prox-f} & \textbf{Prox-h} & \begin{tabular}[c]{@{}c@{}}\bf{VR} \end{tabular}  &\begin{tabular}[c]{@{}c@{}}\bf{CS} \end{tabular}                                     \\ \hline



& $\nabla f: \; \tilde{O}\left(\kappa^{(f+G)}_x+\kappa^{(G+h)}_y\right)$             & $\nabla_x G_i: \; \tilde{O}\left(m_G \kappa^{(f+G)}_x+m_G \kappa^{(G+h)}_y\right)$             &     & &       &       \\ \cline{2-3}

\multirow{-2}{*}{\cite{nesterov2006solving}} & $\nabla h:  \tilde{O}\left( \kappa^{(f+G)}_x+m_h\kappa^{(G+h)}_y\right)$             & $\nabla_y G_i: \; \tilde{O}\left(m_G \kappa^{(f+G)}_x+m_G \kappa^{(G+h)}_y\right)$             & \multirow{-2}{*}{\xmark} & \multirow{-2}{*}{\xmark}& \multirow{-2}{*}{\xmark}& \multirow{-2}{*}{\xmark}\\ \hline \hline

& $\nabla f: \; \tilde{O}\left(\sqrt{\kappa^{(f+G)}_x\kappa^{(G+h)}_y}\right)$             & $\nabla_x G_i: \; \tilde{O}\left(m_G \sqrt{\kappa^{(f+G)}_x\kappa^{(G+h)}_y}\right)$             &         &     &   &   \\ \cline{2-3}

\multirow{-2}{*}{\cite{pmlr-v125-lin20a}} & $\nabla h:  \tilde{O}\left(\sqrt{\kappa^{(f+G)}_x\kappa^{(G+h)}_y}\right)$             & $\nabla_y G_i: \; \tilde{O}\left(m_G \sqrt{\kappa^{(f+G)}_x\kappa^{(G+h)}_y}\right)$             & \multirow{-2}{*}{\xmark}& \multirow{-2}{*}{\xmark} & \multirow{-2}{*}{\xmark}& \multirow{-2}{*}{\xmark}\\ \hline \hline

& $\nabla f: \; \tilde{O}\left(\sqrt{\kappa^{(f)}_x}\right)$             & $\nabla_x G_i: \; \tilde{O}\left(m_G \sqrt{\kappa^{(G)}_x \kappa^{(G)}_y}\right)$             &        &      &   &   \\ \cline{2-3}

\multirow{-2}{*}{\cite{alkousa2020accelerated}} & $\nabla h:  \tilde{O}\left(\sqrt{\kappa^{(G)}_x \kappa^{(G)}_y\kappa^{(h)}_y}\right)$             & $\nabla_y G_i: \; \tilde{O}\left(m_G \kappa^{(G)}_y\sqrt{\kappa^{(G)}_x} \right)$             & \multirow{-2}{*}{\xmark}& \multirow{-2}{*}{\xmark} & \multirow{-2}{*}{\xmark} & \multirow{-2}{*}{\cmark}\\ \hline \hline


& $\nabla f: \; \tilde{O}\left( \sqrt{m_G}\frac{\max \left\{L_G+L_f,L_G+L_h \right\}}{\min \{\mu_x,\mu_y\}}\right)$             & $\nabla_x G_i: \; \tilde{O}\left( \sqrt{m_G}\frac{\max \left\{L_G+L_f,L_G+L_h \right\}}{\min \{\mu_x,\mu_y\}}\right)$             &      &      &   &     \\ \cline{2-3}

\multirow{-2}{*}{\cite{NIPS2016_1aa48fc4,alacaoglu2021stochastic}} & $\nabla h:  \tilde{O}\left( \sqrt{m_G}\frac{\max \left\{L_G+L_f,L_G+L_h \right\}}{\min \{\mu_x,\mu_y\}}\right)$             & $\nabla_y G_i: \; \tilde{O}\left( \sqrt{m_G}\frac{\max \left\{L_G+L_f,L_G+L_h \right\}}{\min \{\mu_x,\mu_y\}}\right)$             & \multirow{-2}{*}{\xmark}& \multirow{-2}{*}{\xmark}& \multirow{-2}{*}{\cmark}& \multirow{-2}{*}{\xmark} \\ \hline \hline

\rowcolor[HTML]{C0C0C0} 
\cellcolor[HTML]{C0C0C0}                    & $\nabla f: \; \tilde{O}\left(\sqrt{\kappa^{(f)}_x \kappa^{(G)}_y}\right)$             & $\nabla_x G_i: \; \tilde{O}\left(m_G\sqrt{ \kappa^{(G)}_x \kappa^{(G)}_y}\right)$             & \cellcolor[HTML]{C0C0C0}  & \cellcolor[HTML]{C0C0C0}   & \cellcolor[HTML]{C0C0C0}   & \cellcolor[HTML]{C0C0C0}            \\ \cline{2-3}
\rowcolor[HTML]{C0C0C0} 
\multirow{-2}{*}{\cellcolor[HTML]{C0C0C0}\begin{tabular}[c]{@{}c@{}}\textbf{This paper} \\
\textbf{(Theorem~\ref{theorem:G-sum_noprox})} \end{tabular}} & $\nabla h:  \tilde{O}\left(\max \left\{\sqrt{\kappa^{(G)}_x \kappa^{(h)}_y},\sqrt{ \kappa^{(G)}_x \kappa^{(G)}_y}\right\}\right)$             & $\nabla_y G_i: \; \tilde{O}\left(m_G\sqrt{ \kappa^{(G)}_x \kappa^{(G)}_y}\right)$             & \multirow{-2}{*}{\cellcolor[HTML]{C0C0C0}\xmark} & \multirow{-2}{*}{\cellcolor[HTML]{C0C0C0}\xmark} & \multirow{-2}{*}{\cellcolor[HTML]{C0C0C0}\xmark}& \multirow{-2}{*}{\cellcolor[HTML]{C0C0C0}\cmark}\\ \hline \hline

\rowcolor[HTML]{C0C0C0} 
\cellcolor[HTML]{C0C0C0}                    & $\nabla f: \; \tilde{O}\left(\sqrt{\kappa^{(f)}_x \kappa^{(G)}_y}\right)$             & $\nabla_x G_i: \; \tilde{O}\left(m_G\sqrt{ \kappa^{(G)}_x \kappa^{(G)}_y}\right)$             & \cellcolor[HTML]{C0C0C0}  & \cellcolor[HTML]{C0C0C0}   & \cellcolor[HTML]{C0C0C0}       & \cellcolor[HTML]{C0C0C0}        \\ \cline{2-3}
\rowcolor[HTML]{C0C0C0} 
\multirow{-2}{*}{\cellcolor[HTML]{C0C0C0}\begin{tabular}[c]{@{}c@{}}\textbf{This paper} \\
\textbf{(Theorem~\ref{theorem:G-sum_h_prox})} \end{tabular}} & $\nabla h:  \tilde{O}\left(\sqrt{\kappa^{(G)}_y}\right)$             & $\nabla_y G_i: \; \tilde{O}\left(m_G\sqrt{ \kappa^{(G)}_x \kappa^{(G)}_y}\right)$             & \multirow{-2}{*}{\cellcolor[HTML]{C0C0C0}\xmark} & \multirow{-2}{*}{\cellcolor[HTML]{C0C0C0}\cmark}&\multirow{-2}{*}{\cellcolor[HTML]{C0C0C0}\xmark}& \multirow{-2}{*}{\cellcolor[HTML]{C0C0C0}\cmark}\\ \hline  \hline

& $\nabla f: \; 0$             & $\nabla_x G_i: \; \tilde{O}\left( \sqrt{m_G}\max \left\{\kappa_x^{(G)},\kappa_y^{(G)} \right\}\right)$             &      &      &   &     \\ \cline{2-3}

\multirow{-2}{*}{\cite{NIPS2016_1aa48fc4,alacaoglu2021stochastic}} & $\nabla h:  0$             & $\nabla_y G_i: \; \tilde{O}\left( \sqrt{m_G}\max \left\{\kappa_x^{(G)},\kappa_y^{(G)} \right\}\right)$             & \multirow{-2}{*}{\cmark}& \multirow{-2}{*}{\cmark}& \multirow{-2}{*}{\cmark}& \multirow{-2}{*}{\cmark} \\ \hline \hline

\rowcolor[HTML]{C0C0C0} 
\cellcolor[HTML]{C0C0C0}                    & $\nabla f: \; 0$             & $\nabla_x G_i: \; \tilde{O}\left(\sqrt{m_G} \sqrt{\kappa_x^{(G)}\kappa_y^{(G)}}\right)$             & \cellcolor[HTML]{C0C0C0}  & \cellcolor[HTML]{C0C0C0}& \cellcolor[HTML]{C0C0C0}       & \cellcolor[HTML]{C0C0C0}            \\ \cline{2-3}
\rowcolor[HTML]{C0C0C0} 
\multirow{-2}{*}{\cellcolor[HTML]{C0C0C0}\begin{tabular}[c]{@{}c@{}}\textbf{This paper} \\
\bf{(Theorem~\ref{theorem:section_6})} \end{tabular}} & $\nabla h:  0$             & $\nabla_y G_i: \; \tilde{O}\left(\sqrt{m_G} \sqrt{\kappa_x^{(G)}\kappa_y^{(G)}}\right)$             & \multirow{-2}{*}{\cellcolor[HTML]{C0C0C0}\cmark}& \multirow{-2}{*}{\cellcolor[HTML]{C0C0C0}\cmark} & \multirow{-2}{*}{\cellcolor[HTML]{C0C0C0}\cmark}& \multirow{-2}{*}{\cellcolor[HTML]{C0C0C0}\cmark} \\ \hline
\end{tabular}
\caption{
Comparison of gradient complexities for problem \eqref{eq:problem_Gsum}, i.e. the number of corresponding gradient evaluations, to find $\varepsilon$-saddle point with probability at least $1-\sigma$. Notation $\tilde{O}(X)$ hides constant and polylogarithmic in $\varepsilon^{-1}$ and $\sigma^{-1}$ factors. For a function $F$, we denote $\kappa_x^{(F)}=L_F/\mu_x$, $\kappa_y^{(F)}=L_F/\mu_y$.
\textbf{Prox-f} (\textbf{Prox-h}) stands for $f$ ($h$) being prox-friendly.  
\textbf{ CS} stands for complexity separation. \textbf{VR} stands for variance reduction.}
\label{table:G-sum}

\end{table}
\endgroup

The second problem formulation, we are interested in, is strongly-convex-strongly-concave saddle-point problem of the form 
\begin{equation} 
    \label{eq:problem_Gsum}
    \min _{x \in \mathbb{R}^{d_x}} \max _{y \in \mathbb{R}^{d_y}} \left\{f(x)+ G(x, y) - h(y)\right\}, \;\;\; G(x,y):=\frac{1}{m_G} \sum_{i = 1}^{m_G} G_i(x,y).
\end{equation}
where each $G_i(x, y)$ is convex in $x$ and concave in $y$ and is $L_G^i$-smooth in each variable, $f(x)$ is $\mu_x$-strongly convex, $h(y)$ is $\mu_y$-strongly convex. In this setting it is natural to define condition numbers $\kappa_x=L_G/\mu_x$ and $\kappa_y=L_G/\mu_y$ for the primal minimization and dual maximization problems respectively, where $L_G = \frac{1}{m_G}\sum_{i = 1}^{m_G} L_G^i$. 
We consider this problem under three different additional assumptions: a) $f(x)$ is $L_f$-smooth, $h(y)$ is $L_h$-smooth; b) $f(x)$ is $L_f$-smooth, $h(y)$ is  smooth and prox-friendly; c) $f(x)$  and $h(y)$ are both prox-friendly. Under assumption a) and b), similarly to \cite{pmlr-v125-lin20a,wang2020improved,yang2020catalyst} we do not exploit the 
finite-sum structure of the function $G$. Yet, unlike these papers and \cite{NIPS2016_1aa48fc4}, where variance reduction methods are proposed, we separate the complexity bounds for the number of oracle calls for each part of the objective, i.e. we estimate a sufficient number of evaluations of $\nabla f(x)$, $\nabla_x G_{i}(x,y)$, $\nabla_y G_{i}(x,y)$, $\nabla h(y)$ to achieve a given accuracy. 
This allows to call each oracle less number of times than it is required by existing methods and is important since evaluation of each gradient can have different arithmetic operations complexity, and it is desired to call expensive oracles less often than cheap oracles. 
Compared to \cite{Alkousa2019,alkousa2020accelerated}, where the complexities are also separated, we obtain better complexity bounds for each part of the objective. Moreover, for the particular case when $f=h=0$, our bounds are the same to the best known bounds \cite{wang2020improved,yang2020catalyst}.

Under assumption c), similarly to \cite{NIPS2016_1aa48fc4}, we exploit the finite-sum structure of the function $G$ and propose an accelerated variance reduction method for problem \eqref{eq:problem_Gsum}.
The authors of \cite{NIPS2016_1aa48fc4} considered smooth $\mu_x$-strongly convex and $\mu_y$-strongly concave saddle-point problems in the form $\min _{x \in \mathbb{R}^{d_x}} \max _{y \in \mathbb{R}^{d_y}}G(x, y)+M(x,y)$, where $M(x,y)$ is prox-friendly in both variables. Under an additional assumption that $\mu_x=\mu_y=\mu$, i.e. $\kappa_x=\kappa_y=\kappa$ they obtain complexity $\widetilde{O}(\sqrt{m_G }\kappa)$.
Based on a combination of the Catalyst framework \cite{catalyst2015nips} and the algorithm of \cite{NIPS2016_1aa48fc4}, we propose a variance reduction algorithm with a better bound $\widetilde{O}(\sqrt{m_G   \kappa_x \kappa_y})$. Moreover, this composite setting is not covered by other existing algorithms \cite{pmlr-v125-lin20a,wang2020improved,yang2020catalyst} even in the case of $m_G=1$.
We summarize comparison of ours and existing results in Table \ref{table:G-sum}.


\subsection{Our approach}
To solve the described saddle-point problems under different assumptions we first propose a general framework and then specialize it to problem \eqref{eq:problem} or problem \eqref{eq:problem_Gsum}. Our approach to saddle-point problems is based on considering them as minimization problems with objective implicitly given as a solution to a maximization problem.
Thus, to develop our general framework, we first consider optimization problem of the form
\begin{align}\label{eq:typical}
\min\limits_{x \in \R^d }\{ F\left( x \right):=\varphi\left( x \right)+\psi\left( x \right)\},
\end{align}
and develop a novel inexact accelerated gradient method (Algorithm \ref{alg:highorder_inexact}) which uses inexact first-order information on $\varphi$ and $\psi$ and inexact proximal steps.
Then we note that the problems \eqref{eq:problem} or \eqref{eq:problem_Gsum} can be rewritten as
\begin{equation}\label{eq:3F1}
\min _{x \in  \R^{d_x}} \{ F(x):=f(x)+\underbrace{\max _{y \in \R^{d_y}}\{G(x, y)-h(y)\}}_{g(x)=G(x, y^*(x))-h(y^*(x))} \;\; \} ,
\end{equation}
which is consistent with the problem formulation \eqref{eq:typical}.
Using this representation we can apply our Algorithm \ref{alg:highorder_inexact} with $\varphi\left( x \right) = f(x)$ and $\psi\left( x \right) = g(x)$  to solve this problem. In each step we need to obtain a first-order information about the function $g$, which we can do inexactly by solving the inner maximization problem by the same Algorithm \ref{alg:highorder_inexact}, but now with $\varphi\left( y \right) = -G(x, y)$ and $\psi\left( y \right) = h(y)$. To obtain near-optimal upper complexity bounds and separate oracle complexity for different parts of the problem \eqref{eq:3F1} we introduce additional inner-outer cycles, which will be described in detail below.

As said, our framework is based on the system of inner-outer loops, where in each loop an accelerated gradient method is applied to obtain better complexity results. To implement our approach we then need a flexible accelerated method which can be applied in a number of different situations. In some sense we need an accelerated meta-algorithm, or an accelerated envelope, which uses any method in the lower level to solve an auxiliary problem of the upper level and, as a result, obtain an accelerated version of the method used in the lower level. Existing algorithms of this type \cite{catalyst2015nips, monteiro2013accelerated,aspremont2021acceleration}  are based on accelerated proximal point method that uses some algorithm on the lower level to implement inexact proximal mapping. Unfortunately, we can not use these existing methods in our case since in our system of inner-outer loops a loop in the lower level leads to inexact gradient information in the upper level. Moreover, if a randomized method is used in the lower level, one obtains stochastic inexactness in the upper level. These kinds of inexactnesses of the oracles for $\varphi, \psi$ are not account for in the existing general acceleration frameworks \cite{catalyst2015nips, monteiro2013accelerated,aspremont2021acceleration}. Motivated by this gap in the literature, we develop a generic accelerated meta-algorithm with probabilistic inexact oracles. Moreover, we also implement an adaptive stopping criterion for the method in the lower level which guarantees appropriate quality of the inexact proximal mapping and leads to the accelerated convergence rate on the upper level.



\subsection{Contributions}
To sum up, our contributions are as follows.
First, we  provide a general inexact accelerated meta-algorithm (AM) listed as Algorithm \ref{alg:highorder_inexact} for convex optimization problems of the form \eqref{eq:typical} with inexact oracles. We also obtain an accelerated linearly convergent version of this algorithm by employing the restart technique with the resulting algorithm listed as Algorithm \ref{alg:restarts_inexact_notconvex}. We provide theoretical analysis of this algorithm under stochastic inexactness in different parts of this algorithm, i.e. inexact oracle and inexact proximal step. Unlike existing accelerated proximal methods we consider composite problems \eqref{eq:typical} and use inexact proximal step only with respect to $\varphi$.
Next, we use this AM to construct a new general framework to systematically obtain new algorithms and complexity bounds for saddle-point problems with the structure \eqref{eq:problem} or \eqref{eq:problem_Gsum}. 
As a result, we obtain new accelerated methods for general saddle-point problems, including accelerated variance reduction methods, which leads to better complexity bounds than existing in the literature. Moreover, our algorithms allow to separate complexity bounds for the number of oracle calls for each part of the problem formulation, i.e., for problem \eqref{eq:problem} we estimate a sufficient number of evaluations of $\nabla f(x)$, $\nabla_x G(x,y)$, $\nabla_y G(x,y)$, $\nabla h_i(y)$ to achieve given accuracy.
For problem \eqref{eq:problem_Gsum} we estimate a sufficient number of evaluations of $\nabla f(x)$, $\nabla_x G_i(x,y)$, $\nabla_y G_i(x,y)$, $\nabla h(y)$ to achieve given accuracy.
This complexity separation is important since evaluation of each gradient can have different arithmetic operations complexity, and it is desired to call expensive oracles less often than cheap oracles. 

\subsection{Paper organization}
In Section \ref{section:inexactAM}, we propose an Accelerated Meta-Algorithm and extend it for strongly convex setting with probabilistic inexact oracle and probabilistic inexactness in the proximal step.  
Then, in Section \ref{section:saddleFramework}, by sequential applying of the Accelerated Meta-Algorithm, we obtain a general framework for solving saddle-point problems. This framework is based on two main assumptions for a possibility to solve two optimization problems.
In Section \ref{section:saddle} we specialize the general framework to solve problem \eqref{eq:problem} by showing how to satisfy its two main assumptions, and providing the resulting algorithm. In Section \ref{section:G_sum} we consider problem \eqref{eq:problem_Gsum} under additional assumptions: a) $f(x)$ is $L_f$-smooth, $h(y)$ is $L_h$-smooth; b) $f(x)$ is $L_f$-smooth, $h(y)$ is  smooth and prox-friendly. We specialize the general framework for this setting and propose accelerated algorithms.
Finally, in Section \ref{S:G_sum_prox-friendly} we consider problem \eqref{eq:problem_Gsum} under additional assumption c) both $f(x)$  and $h(y)$ are prox-friendly. In this case, since the Accelerated Meta-Algorithm can not be applied in this case, we develop a different approach based on a combination of the Catalyst framework \cite{catalyst2015nips} and the algorithm of \cite{NIPS2016_1aa48fc4}.

\subsection{Preliminaries}
We introduce some notation which we use throughout the paper. We denote by $\|x\|$ and $\|y\|$ the standard Euclidean norms for $x \in \mathbb{R}^{d_x}$ and $y \in  \mathbb{R}^{d_y}$ respectively. This leads to the Euclidean norm on $\mathbb{R}^{d_x} \times \mathbb{R}^{d_y}$ defined as $\| (x_1, x_2) - (y_1, y_2)\|^2 = \|x_1 - x_2\|^2 + \|y_1 - y_2\|^2$, $x_1, x_2, \in  \mathbb{R}^{d_x} , y_1, y_2 \in \mathbb{R}^{d_y}$.

We say that a function $f$ is $L_f$-smooth if it is differentiable and its gradient satisfies Lipshitz condition
\begin{equation}
    \|\nabla f (x_1)-\nabla f (x_2)\|\leq L_f\|x_1-x_2\|, \quad x_1,x_2 \in {\rm dom } f
\end{equation}
for some $L_f >0$.
We say that a function $f$ is $\mu_f$-strongly convex if, for some $\mu_f > 0$ and for any its subgradient it holds that
\begin{equation}
    f (x_2) \geq f (x_1) + \langle\nabla f (x_1), x_2-x_1 \rangle + \frac{\mu_f}{2}\|x_1-x_2\|^2, \quad x_1,x_2 \in {\rm dom } f.
\end{equation}
We say that a function $f$ is prox-friendly if it admits a tractable proximal operator \cite{BSMF_1965__93__273_0}. This means that the evaluation of the point 
\begin{equation}
\arg \min_x\left\{f(x)+\frac{1}{2}\|x-\bar{x}\|^2 \right\}
\end{equation}
for some fixed $\bar{x}$ can be made either in closed form or numerically very efficiently up to  machine precision.
Note that if a function is prox-friendly, then the problem, 
 \begin{equation} \label{f_x:prox-friendly}
 \underset{x }{\min}\left\{\la c_1, x\ra + f(x) + \frac{c_2}{2}\|x-\bar{x}\|^2\right\}
 \end{equation}
can be solved either in closed form or numerically very efficiently up to  machine precision for any fixed   $ c_1$, $\bar{x}$, and $c_2 > 0$.

For an optimization problem $\min_x f(x)$, we say that a random point $\hat{x}$ is an $(\varepsilon,\sigma)$-solution to this problem if $\mathbb{P}\{ f(\hat{x})-f^*\leq \varepsilon \} \geq 1-\sigma$.

For a function $\xi \left(\varepsilon\right)$, where $\varepsilon  \in \mathbb{R}$ we write $\xi \left(\varepsilon  \right)=\bf{poly}\left(\varepsilon \right)$ if $\xi \left( \cdot  \right)$ is a polynomial function of $\varepsilon$.
For a function $\xi \left(\varepsilon, \sigma \right)$, where $\varepsilon, \sigma \in \mathbb{R}$ we write $\xi \left(\varepsilon, \sigma \right)=\bf{poly}\left(\varepsilon, \sigma \right)$ if $\xi \left( \cdot,\sigma \right)$ is a polynomial function of $\varepsilon$ and $\xi \left(\varepsilon,\cdot  \right)$ is a polynomial function of $\sigma$.

\section{Inexact Accelerated Meta-algorithm}\label{section:inexactAM}
As it was described above, our approach is based on an accelerated method for a general optimization problem with the objective given as a sum of two functions
\begin{equation}
\label{eq:PrStGen1}
\min\limits_{x \in \R^{d_x} }\{ F\left( x \right):=\vp\left( x \right)+\psi\left( x \right)\} .
\end{equation}
In this section we describe this method in the inexact oracle model, so that we can apply it in the system of inner-outer loops to propose accelerated methods for saddle-point problems.

To motivate the study of this section, we slightly rewrite  problem \eqref{eq:problem} in the following way:
\begin{equation}\label{eq:3F}
\min _{x \in  \R^{d_x}} \{ F(x):=\vp(x)+\underbrace{\max _{y \in \R^{d_y}}\{G(x, y)-h(y)\}}_{\psi(x):=G(x, y^*(x))-h(y^*(x))} \;\; \},
\end{equation}
where $y^*(x)$ is the solution for the problem defining $\psi(x)$ for a fixed $x$.
In other words, we can represent  problem \eqref{eq:problem} as an   optimization problem $\min_{x \in \R^{d_x}} \vp(x) + \psi(x)$ with a particular choice of $\vp, \psi$:
\begin{align}\label{eq:psi_example}
    \varphi = f(x), \ \ \psi = \max _{y \in  \R^{d_y}}\left\{G(x, y)-h(y)\right\}.
\end{align}
Importantly, we have no access to the exact gradients of $\psi(x)$ since we can not solve exactly the problem defining $\psi(x)$. At the same time, according to Lemma 2 from \cite{alkousa2020accelerated} we can get (precise definition is given below) an inexact $(\delta, 2 L_{\psi})$ oracle, where $\delta$ depends on the accuracy of the solution of the problem $\max _{y \in  \R^{d_y}}\left\{G(x, y)-h(y)\right\}$. 
Thus, we need to develop an accelerated algorithm for problem \eqref{eq:3F}
which takes into account inaccuracy of the oracles for functions $\vp(x), \psi(x)$ caused by inexact solution to the optimization problem defining $\psi(x)$. 

The situation is even more complicated if we consider problem \eqref{eq:problem} with $m_h>1$ or problem \eqref{eq:problem_Gsum} with $m_G>1$ and apply variance reduction techniques.
Application of known variance reduction methods guarantees us a solution to the problem $\max _{y \in  \R^{d_y}}\left\{G(x, y)-h(y)\right\}$ only with  some high probability $1-\sigma$. Thus, when using the variance reduction setting we obtain an inexact oracle for $\psi(x)$ only with some probability.

To sum up the motivation part, we need to develop a generic acceleration scheme which works with inexact oracles including inexact oracles with high probability. The rest of this section is devoted to the precise definitions of inexact oracles, description of such an accelerated algorithm and stating its convergence properties. Main technical proofs are deferred to the appendices.
Since we believe that the proposed accelerated algorithm with inexact oracles can be of independent interest, we spend some effort to establish more results than we need for the main purpose of this paper. So, first we consider optimization with deterministic oracle, and then move to the setting of probabilistic inexact oracles.


\subsection{Deterministic setting}
Having in mind the above motivation, we introduce necessary notation and definitions. 
We start with a definition, which corresponds to convex functions with Lipschitz-continuous gradient and is a small generalization of inexact oracle introduced in \cite{devolder2014first}.
\begin{definition}
\label{def:delta_L_oracle}
Let $\delta = (\delta_1, \delta_2)$, where $\delta_1, \delta_2 >0$.  Then the pair $(\vp_{\delta, L} (x), \nabla \vp_{\delta, L} (x))$ is called $(\delta, L)$-oracle of  a convex function $ \vp(x) $ at a point $x$, if 
\begin{equation} \label{def:deltaL}
    -\delta_1 \leq \vp(z)-\left(\vp_{\delta, L}(x)+\left\langle \nabla \vp_{\delta, L}(x), z-x\right\rangle\right) \leq \frac{L}{2}\|z-x\|^{2}+\delta_2 \text { for all } z \in \mathbb{R}^{d_x}.
\end{equation} 
With a slight abuse of notation, we use the same notation $(\delta, L)$-oracle for the case $(\delta_1, \delta_2) = (0, \delta)$.
\end{definition}



Our Accelerated Meta-algorithm (AM) is listed below as Algorithm \ref{alg:highorder_inexact}. The method generates three sequences, which are denoted by the same letter $x$ with either no superscript or one of the two superscripts $x^t$, $x^{md}$. Since later we will use this algorithm in a system of inner-outer loops, we will change the letter to denote the sequences, but will not change the superscripts. The idea of the algorithm is inspired by the Monteiro--Svaiter algorithm \cite{monteiro2013accelerated}, but there are several important differences. The first one is that in \eqref{prox_step_inexact} we linearize the function $\vp$ instead of making inexact proximal step for the whole objective $F$ as it is done in \cite{monteiro2013accelerated}. The second difference is that we use inexact oracles for the functions  $\vp$ and $\psi$, and as a corollary inexact oracle for $F$. This affects the measure of inexact solution to problem \eqref{prox_step_inexact} and Step \ref{Step:AM_grad_step} of the algorithm. Thirdly, below we introduce a more convenient in practice way to control the accuracy of the solution to the inexact proximal step \eqref{prox_step_inexact}. To do that we quantify with which accuracy one needs to solve the problem \eqref{prox_step_inexact} in terms of its objective residual, so that the whole Algorithm \ref{alg:highorder_inexact} outputs a solution to the problem \eqref{eq:PrStGen1} with a desired accuracy. This makes it easy to apply Algorithm \ref{alg:highorder_inexact} in a system of inner-outer loops. Finally, the algorithm in \cite{monteiro2013accelerated} is not proved to obtain accelerated linear convergence rate in the case when the objective is strongly convex. For our algorithm we propose an extension which has accelerated linear convergence rate under additional assumption of inexact strong convexity.

\begin{algorithm} [h!]
\caption{Accelerated Meta-algorithm (AM) with inexact $(\delta, L)$-oracles}\label{alg:highorder_inexact}
	\begin{algorithmic}[1]
		\STATE \textbf{Input:} objective $F = \vp+\psi$ where $\vp, \psi$ are convex, 
parameter $H \geq 2L_{\vp}$, inexactness $\delta \geq 0$, starting point $x_0$;
		
		$(\vp_{\delta, L_{\vp}}, \nabla \vp_{\delta, L_{\vp}})$  
		--- $(\delta, L_{\vp})$-oracle of $\vp$, 

		$(\psi_{\delta, L_{\psi}}, \nabla \psi_{\delta, L_{\psi}})$ 
		--- $(\delta, L_{\psi})$-oracle of $\psi$.

		\STATE Set $A_0 = 0, x_0^t =  x_0, x_0^{md}=x_0$.
		\FOR{ $k = 0$ \TO $k = K- 1$}
		\STATE 	Set	 
		$
        a_{k+1} = \frac{1+\sqrt{1+8HA_k}}{4H}, \quad 
        A_{k+1} = A_k+a_{k+1}. 
        $
        \STATE 	Set	
        $
        x_{k}^{md} = \frac{A_k}{A_{k + 1}}x_k^t + \frac{a_{k+1}}{A_{k+1}} x_k. 
        		$
        \label{alg:gradient_step}
		\STATE Find $x_{k+1}^t$ as an approximate solution to the minimization problem 
\begin{equation}
\label{prox_step_inexact}
\hspace{-2em} 
x_{k+1}^t \approx \argmin_{z\in \R^{d_x}} \left\{
\vp_{\delta, L_{\vp}}(x_{k}^{md})+ \langle \nabla \vp_{\delta, L_{\vp}}(x_{k}^{md}), z-x_{k}^{md}\rangle 
+\psi(x) +\frac{H}{2}\|z-x_{k}^{md}\|^{2} \right\} \,,
\end{equation}
such that 


\begin{align}
    \left\| \nabla \vp_{\delta, L_{\vp}}(x_{k}^{md}) + \nabla \psi_{\delta,L_{\psi}}\left({x}_{k+1}^t\right) + H({x}_{k+1}^t-x_{k}^{md})
    \right\| \leq  
     \frac{H}{4 }\left\|{x}_{k+1}^t-x_{k}^{md}\right\| -2\sqrt{2\delta_2 L_{\vp}}. \label{eq:prox_step_inexact_crit}
\end{align}
		\STATE \label{Step:AM_grad_step}$x_{k+1} = x_k-a_{k+1} \nabla \vp_{\delta, L}(x_{k+1}^t) - a_{k+1}\nabla \psi_{\delta, L}(x_{k+1}^t)$.
		\ENDFOR
		\RETURN $x_{K}^t$ 
	\end{algorithmic}	
\end{algorithm}

 The next theorem gives the convergence rate of Algorithm \ref{alg:highorder_inexact}  when applied to the problem \eqref{eq:PrStGen1}. 
\begin{theorem} \label{theoremCATDinexact}
Assume that the starting point $x_0$ of Algorithm~\ref{alg:highorder_inexact} satisfies $\|x_0 - x_{\ast}\| \leq R$ for some $R >0$, and that the parameter $H$ is chosen to satisfy $H\ge 2 L_{\vp}$.
Assume also that the algorithm uses  $ (\delta, L_{\vp}) $-oracle of convex function $ \vp (x)$ and $ (\delta, L_{\psi}) $-oracle of convex function $ \psi (x)$, and that the auxiliary subproblem \eqref{prox_step_inexact} is solved inexactly in each iteration in such a way that the inequality \eqref{eq:prox_step_inexact_crit} holds.
Then, for all $k\geq 0$, the sequence $x_k^t$ generated by Algorithm~\ref{alg:highorder_inexact} satisfies
 \begin{equation} \label{speedCATDinexact}
 F(x_k^t) - F(x_{\ast}) \leq \frac{4 H R^2}{k^2} + 2\left(\sum_{i = 1}^k A_{i} \right) \frac{\delta_2}{A_k} +\delta_1+ \left( \sum_{i=1}^{k-1} A_{i} \right)  \frac{\delta_1}{A_k}.
 \end{equation}
\end{theorem}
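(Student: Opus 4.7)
The plan is to follow a potential-function / estimating-sequence argument in the spirit of the Monteiro--Svaiter scheme, but carefully track the inexactness terms coming from the $(\delta,L)$-oracles of $\vp$ and $\psi$. Concretely, I will work with the potential
\begin{equation*}
V_k := A_k\bigl(F(x_k^t)-F(x_\ast)\bigr) + \tfrac{1}{2}\|x_k-x_\ast\|^2,
\end{equation*}
and prove a one-step recursion of the form $V_{k+1}\le V_k + a_{k+1}\,\Delta^{(1)}_k + A_{k+1}\,\Delta^{(2)}_k$, where $\Delta^{(1)}_k$ collects $O(\delta_1)$ terms (coming from the lower-oracle inequality at $x_\ast$) and $\Delta^{(2)}_k$ collects $O(\delta_2)$ terms (coming from the upper-oracle inequality used to bound $F(x_{k+1}^t)$). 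Since $a_{k+1}=A_{k+1}-A_k$, the $\Delta^{(1)}$ terms will telescope partially, leaving the $\delta_1(1+\sum_{i=1}^{k-1}A_i/A_k)$ contribution, while the $\Delta^{(2)}$ terms accumulate with weight $A_{i}$, giving the $2(\sum_{i=1}^k A_i)\delta_2/A_k$ contribution. The leading $4HR^2/k^2$ term will come from dividing the Lyapunov bound by $A_k$ and using the standard lower bound $A_k\ge k^2/(8H)$, which follows from the identity $2Ha_{k+1}^2 = A_{k+1}$ implied by the choice of $a_{k+1}$.

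The main per-step estimate will be assembled as follows. First, the right-hand inequality in Definition~\ref{def:delta_L_oracle} applied to $\vp$ at $x_k^{md}$ with $z=x_{k+1}^t$ gives
\begin{equation*}
\vp(x_{k+1}^t)\le \vp_{\delta,L_\vp}(x_k^{md}) + \la \nabla\vp_{\delta,L_\vp}(x_k^{md}), x_{k+1}^t-x_k^{md}\ra + \tfrac{L_\vp}{2}\|x_{k+1}^t-x_k^{md}\|^2 + \delta_2.
\end{equation*}
Combining this with the exact $\psi$-term (kept unlinearized in \eqref{prox_step_inexact}) and with $H\ge 2L_\vp$, I upper bound $F(x_{k+1}^t)$ by the prox-objective at $x_{k+1}^t$ plus $\delta_2$ minus a quadratic in $\|x_{k+1}^t-x_k^{md}\|$; the stopping rule \eqref{eq:prox_step_inexact_crit}, specifically the $-2\sqrt{2\delta_2 L_\vp}$ slack, is exactly what is needed for Young's inequality to absorb the extra $\sqrt{\delta_2 L_\vp}\|x_{k+1}^t-x_k^{md}\|$ terms that appear when one replaces gradients of $\vp_{\delta,L}$ by those of the exact $\vp$. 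Second, I use the left inequalities in Definition~\ref{def:delta_L_oracle} for $\vp$ and $\psi$ at $x_{k+1}^t$ evaluated at $z=x_\ast$ to obtain a lower bound on $F(x_\ast)$ of the form ``linear model at $x_{k+1}^t$ minus $2\delta_1$''. Subtracting the two estimates, multiplying by $A_{k+1}=2Ha_{k+1}^2$, and using the interpolation identity $A_{k+1}x_k^{md}=A_k x_k^t+a_{k+1}x_k$ to rewrite cross terms, I obtain the desired one-step inequality for $V_k$.

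Third, I telescope the recursion from $0$ to $k-1$. The $\delta_1$-terms combine as $\delta_1\bigl(A_k+\sum_{i=1}^{k-1}A_i\bigr)$ and the $\delta_2$-terms as $2\delta_2\sum_{i=1}^{k} A_i$, so that after dividing the inequality $A_k\bigl(F(x_k^t)-F(x_\ast)\bigr)\le \tfrac{1}{2}R^2 + \text{errors}$ by $A_k$ and plugging in $A_k\ge k^2/(8H)$, one gets exactly \eqref{speedCATDinexact}.

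The hard part is the one-step potential inequality: the interplay between (i) the mismatch between $\nabla\vp_{\delta,L_\vp}$ and $\nabla\vp$ (and likewise for $\psi$) in Step~\ref{Step:AM_grad_step}, (ii) the inexactness of the prox subproblem measured by \eqref{eq:prox_step_inexact_crit}, and (iii) the fact that only $\vp$ is linearized in \eqref{prox_step_inexact} while $\psi$ is kept implicitly, must all be balanced by the $\delta_2$-slack in \eqref{eq:prox_step_inexact_crit} and by the condition $H\ge 2L_\vp$. Getting the precise constants $2$ and $1$ in front of the $\delta_2$ and $\delta_1$ accumulation terms will require a careful application of Young's/Cauchy--Schwarz inequalities so that no term of the form $\|x_{k+1}^t-x_k^{md}\|^2$ is left with a nonnegative coefficient after all absorptions.
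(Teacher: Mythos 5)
Your plan is correct and is essentially the paper's own argument in a different guise: the paper (Appendix~\ref{Appendix_A}) runs the same Monteiro--Svaiter-type analysis through Nesterov estimating sequences $\xi_k(x)=\xi_{k-1}(x)+a_k\Omega_{1,2\delta,L_{\vp}+L_{\psi}}(F,x_k^t,x)$, controls $\xi_k(x_k)-A_kF(x_k^t)$, reduces the stopping rule \eqref{eq:prox_step_inexact_crit} to the contraction condition $\|x_{k+1}^t-(x_k^{md}-\lambda g_{k+1})\|\le \frac{7}{8}\|x_{k+1}^t-x_k^{md}\|$ via the Devolder-type bound $\|\nabla\vp_{\delta,L_{\vp}}(y)-\nabla_y\Omega_{1,\delta,L_{\vp}}(\vp,x,y)\|\le L_{\vp}\|y-x\|+2\sqrt{2L_{\vp}\delta_2}$ (which is exactly where the $-2\sqrt{2\delta_2 L_{\vp}}$ slack is consumed, as you anticipate), and uses $A_k\ge k^2/(4H)$; your potential $V_k$ is precisely what $\xi_k(x_\ast)-A_kF(x_k^t)$ controls, and your identity $2Ha_{k+1}^2=A_{k+1}$ with the weaker bound $A_k\ge k^2/(8H)$ still yields the stated $4HR^2/k^2$ term. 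The error bookkeeping you describe ($\delta_1(A_k+\sum_{i=1}^{k-1}A_i)$ and $2\delta_2\sum_{i=1}^{k}A_i$) matches the paper's accumulation exactly, so I see no gap.
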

We prove this theorem in Appendix~\ref{Appendix_A}.


We now move further to the strongly-convex setting, which will allow us to solve strongly-convex-strongly-concave saddle-point problems in later sections. 
The next definition is an extension of Definition \eqref{def:delta_L_oracle} and \cite{devolder2013exactness} corresponding to strongly convex functions with Lipschitz-continuous gradient.
\begin{definition}\label{def:deltaLmu_oracle}
Let $\delta = (\delta_1, \delta_2)$, where $\delta_1, \delta_2 >0$.  Then the pair  
$(\vp_{\delta, L,\mu} (x), \nabla \vp_{\delta, L,\mu} (x))$ is called $(\delta, L,\mu)$-oracle of a convex function $ \vp $ at a point $x$, if 
\begin{equation} \label{def:deltaLmu}
     \frac{\mu}{2}\|z-x\|^{2}-\delta_1 \leq \vp(z)-\left(\vp_{\delta, L,\mu}(x)+\left\langle \nabla \vp_{\delta, L,\mu}(x), z-x\right\rangle\right) \leq \frac{L}{2}\|z-x\|^{2}+\delta_2 \text { for all } z \in \mathbb{R}^{d_x}
\end{equation}
With a slight abuse of notation, we use the same notation $(\delta, L)$-oracle for the case $(\delta_1, \delta_2) = (0, \delta)$.
\end{definition}
It is straightforward that a $(\delta, L,\mu)$-oracle is also a $(\delta, L)$-oracle, and, thus, we can use $(\delta, L,\mu)$-oracle in Algorithm~\ref{alg:highorder_inexact}.  


Next we consider the case when $F(x)$ in \eqref{eq:PrStGen1} is convex and admits a $(\delta, L, \mu)$-oracle. Then, we use the convergence rate result in Theorem \ref{theoremCATDinexact} and obtain linear convergence rate by applying the restart technique. The restarted algorithm is listed as Algorithm \ref{alg:restarts_inexact_notconvex}, and its convergence rate when applied to the problem \eqref{eq:PrStGen1} is given in Theorem \ref{CATDrestarts_inexact_notconvex}.

\begin{algorithm} [h!]
\caption{Restarted AM (R-AM)}\label{alg:restarts_inexact_notconvex}
	\begin{algorithmic}[1]
		\STATE \textbf{Input:} objective $F = \vp+\psi$ 
		 admits $(\delta, L, \mu)$-oracle, 
		parameter $H \geq 2 L_{\vp}$, inexactness $\delta \geq 0$, starting point $z_0$;
		
		$(\vp_{\delta, L_{\vp}}, \nabla \vp_{\delta, L_{\vp}})$  
		--- $(\delta, L_{\vp})$-oracle of convex function  $\vp$, 

		$(\psi_{\delta, L_{\psi}}, \nabla \psi_{\delta, L_{\psi}})$ 
		--- $(\delta, L_{\psi})$-oracle of convex function $\psi$.

		\FOR{$k = 0, $ \TO $K$}
		\STATE Set 
		\begin{equation}
		\label{numberofrestarts} 
		    N_k=\max \left\{ \left\lceil \left( \frac{128 H }{\mu} \right)^{\frac{1}{2}} \right\rceil, 1\right\}.
		\end{equation}
		\STATE Set $z_{k+1} := x_{N_k}^t$ as the output of Algorithm \ref{alg:highorder_inexact} started from $z_k$ and run for $N_k$ steps.
		\ENDFOR
		\RETURN $z_{K}$ 
	\end{algorithmic}	
\end{algorithm}

\begin{theorem}
\label{CATDrestarts_inexact_notconvex}
Assume that the starting point $z_0$ of Algorithm~\ref{alg:restarts_inexact_notconvex} satisfies $\|z_0 - x_{\ast}\| \leq R$ for some $R >0$, and that the parameter $H$ is chosen to satisfy $H\ge 2 L_{\vp}$.
Further, assume that  $(\delta, L, \mu)$-oracle of $F(x)$, $(\delta,L_{\vp})$-oracle of convex function $\vp(x)$, $(\delta,L_{\psi})$-oracle of convex function $\psi(x)$ are available, and, in each iteration of Algorithm~\ref{alg:highorder_inexact} which is used as a building block of Algorithm~\ref{alg:restarts_inexact_notconvex},  the auxiliary subproblem \eqref{prox_step_inexact} is solved inexactly in such a way that the inequality \eqref{eq:prox_step_inexact_crit} holds.
Finally, assume that the oracle inexactness $\delta_1, \delta_2$ are chosen to satisfy
\begin{align}
    \label{delta_CATD1}
    \forall k: \delta_1 + \delta_2 + 2\left(\sum_{i = 1}^k A_{i} \right) \frac{\delta_2}{A_k} + \left(\sum_{i = 1}^{k-1} A_{i} \right) \frac{\delta_1}{A_k} \leq \frac{\varepsilon}{2},
\end{align} 
\begin{align}\label{delta_CATD2}
    \frac{4\sqrt{2\delta_2 L}}{\mu} \leq \varepsilon/2,
\end{align}
where $\varepsilon$ is the desired accuracy of the solution to problem \eqref{eq:PrStGen1}.
Then, under the listed assumptions, Algorithm \ref{alg:restarts_inexact_notconvex} with 
$K = 2\log_2 \frac{\mu R_0^2}{4\varepsilon}$
guarantees that its output point $z_K$ is an $\varepsilon$-solution to problem \eqref{eq:PrStGen1}, i.e. $F(z_K)-F(x^*) \leq \varepsilon$. Moreover, the total number $N_F$ of calls to inexact oracles both for $\vp$ and for $\psi$ satisfies the following inequality 
\begin{equation}
    N_F \leq  \left( 16\sqrt{2}\sqrt{\frac{H}{\mu}} + 2\right) \log_2 \frac{\mu R_0^2}{\varepsilon} = \widetilde{O} \left(\max\left\{ \sqrt{\frac{H}{\mu}},1\right\} \right).
\end{equation}
\end{theorem}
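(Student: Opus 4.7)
The plan is to combine the convergence guarantee of Theorem~\ref{theoremCATDinexact} with the lower bound in Definition~\ref{def:deltaLmu_oracle} (i.e.\ ``inexact strong convexity'') and halve the squared distance to the optimum at every restart. Set $R_k := \|z_k - x_*\|$. I will show that one epoch of Algorithm~\ref{alg:highorder_inexact} of length $N_k$ from $z_k$ produces $z_{k+1}$ with $R_{k+1}^2 \leq R_k^2/2$ (up to a negligible residual controlled by \eqref{delta_CATD1}--\eqref{delta_CATD2}), then iterate $K$ times.

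The first step is to apply Theorem~\ref{theoremCATDinexact} to the epoch starting at $z_k$, with $\|z_k-x_*\|\leq R_k$, for $N_k=\lceil(128H/\mu)^{1/2}\rceil$ iterations. This gives
\begin{equation*}
F(z_{k+1})-F(x_*)\;\leq\;\frac{4HR_k^2}{N_k^{2}}+\mathcal E_k,\qquad \mathcal E_k := 2\Bigl(\sum_{i=1}^{N_k} A_i\Bigr)\frac{\delta_2}{A_{N_k}}+\delta_1+\Bigl(\sum_{i=1}^{N_k-1} A_i\Bigr)\frac{\delta_1}{A_{N_k}}.
\end{equation*}
By the choice of $N_k$ the leading term is bounded by $\mu R_k^2/32$, while assumption \eqref{delta_CATD1} forces $\mathcal E_k\leq\varepsilon/2$ uniformly in $k$. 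Using then the left inequality in \eqref{def:deltaLmu} with $z=z_{k+1}$, $x=x_*$, and applying optimality of $x_*$ for the exact $F$ (so $F(z_{k+1})-F(x_*)\geq \tfrac{\mu}{2}R_{k+1}^2-\delta_1$), I obtain
\begin{equation*}
R_{k+1}^{2}\;\leq\;\frac{2}{\mu}\bigl(F(z_{k+1})-F(x_*)+\delta_1\bigr)\;\leq\;\frac{R_k^{2}}{16}+\frac{2}{\mu}\bigl(\mathcal E_k+\delta_1\bigr).
\end{equation*}

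A short inductive argument then closes the recursion: while $\mu R_k^2 \geq 8\varepsilon$ the residual $(2/\mu)(\mathcal E_k+\delta_1)$ is dominated by $R_k^2/16$ (by \eqref{delta_CATD1}), so that $R_{k+1}^2\leq R_k^2/2$; once this condition fails we already have $R_k^2\leq 8\varepsilon/\mu$, which, via the same strong-convexity inequality and \eqref{delta_CATD2} (needed to guarantee the stopping test \eqref{eq:prox_step_inexact_crit} is reachable with $(2/\mu)\sqrt{2\delta_2 L}\leq\varepsilon/4$), yields $F(z_k)-F(x_*)\leq\varepsilon$. Iterating the contraction $K$ times halves $R_0^2$ at each step, so after $K = 2\log_2\bigl(\mu R_0^2/(4\varepsilon)\bigr)$ restarts one reaches $R_K^2\leq 4\varepsilon/\mu$, which is the stated $\varepsilon$-solution guarantee. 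The oracle count is obtained by summing $N_k$ over restarts: $N_F=\sum_{k=0}^{K-1}N_k\leq K\cdot\lceil(128H/\mu)^{1/2}\rceil = (16\sqrt{2}\sqrt{H/\mu}+2)\log_2(\mu R_0^2/\varepsilon)$.

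The main obstacle is bookkeeping the inexactness. The error $\mathcal E_k$ in Theorem~\ref{theoremCATDinexact} involves the weighted averages $\sum_{i=1}^{N_k} A_i/A_{N_k}$ which are of order $N_k^2$, so naively the inexactness could swamp the acceleration gain; the assumption \eqref{delta_CATD1} is written precisely to absorb this, uniformly over every restart $k\leq K$. The secondary subtlety is the $-2\sqrt{2\delta_2 L_{\vp}}$ slack in the inner stopping criterion \eqref{eq:prox_step_inexact_crit}, which must be compatible with how small $\|x_{k+1}^t-x_k^{md}\|$ becomes near the end of each epoch: assumption \eqref{delta_CATD2} together with strong convexity ensures this slack is bounded by a constant fraction of the target accuracy, so the inner subproblems remain solvable and Theorem~\ref{theoremCATDinexact} stays applicable at every restart. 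With both in force the argument reduces to the standard restart-of-accelerated-method calculation above.
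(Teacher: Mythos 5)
Your proposal follows the same architecture as the paper's proof (restart Algorithm~\ref{alg:highorder_inexact}, use the lower bound of the $(\delta,L,\mu)$-oracle to convert the function gap back into a distance, and contract geometrically), but it has a genuine gap at the key conversion step. You claim that the left inequality in \eqref{def:deltaLmu} with $x=x_*$ together with optimality of $x_*$ gives $F(z_{k+1})-F(x_*)\geq \tfrac{\mu}{2}R_{k+1}^2-\delta_1$. This does not follow: the linearization in Definition~\ref{def:deltaLmu_oracle} is taken at the \emph{inexact} gradient $\nabla F_{\delta,L,\mu}(x_*)$, which need not vanish at the minimizer, so the term $\langle \nabla F_{\delta,L,\mu}(x_*), z_{k+1}-x_*\rangle$ cannot simply be dropped. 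One has to write $\nabla F_{\delta,L,\mu}(x_*)=\partial F(x_*)+(\nabla F_{\delta,L,\mu}(x_*)-\partial F(x_*))$, use $\langle\partial F(x_*),z-x_*\rangle\geq 0$ and the bound $\|\nabla F_{\delta,L,\mu}(x_*)-\partial F(x_*)\|\leq\sqrt{2\delta_2 L}$, which yields the weaker inequality
\begin{equation*}
\frac{\mu}{2}\|z_{k+1}-x_*\|^2-\sqrt{2\delta_2 L}\,\|z_{k+1}-x_*\|\;\leq\;F(z_{k+1})-F(x_*)+\delta_1+\delta_2 .
\end{equation*}
The extra term is linear in $R_{k+1}$ and therefore dominates the quadratic one when $R_{k+1}$ is small; assumption \eqref{delta_CATD2} exists precisely to absorb it (it guarantees $\sqrt{2\delta_2 L}\,\|z_{k+1}-x_*\|\leq\tfrac{\mu}{4}\|z_{k+1}-x_*\|^2$ in the regime where the iterates have not yet converged), giving $\tfrac{\mu}{4}R_{k+1}^2\leq F(z_{k+1})-F(x_*)+\delta_1+\delta_2$. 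You instead attribute \eqref{delta_CATD2} to the reachability of the inner stopping test \eqref{eq:prox_step_inexact_crit}, which is a separate matter already assumed in the hypotheses of Theorem~\ref{theoremCATDinexact}. Without this correction your recursion is not established from the stated definitions.

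Two smaller points: (i) the constants in your induction do not quite close as written — with $\mathcal E_k+\delta_1$ only bounded by roughly $\varepsilon$, domination by $R_k^2/16$ requires $\mu R_k^2\gtrsim 32\varepsilon$, not $8\varepsilon$; and (ii) the terminal bound $F(z_K)-F(x_*)\leq\varepsilon$ cannot come from "the same strong-convexity inequality" (which bounds the gap from \emph{below} by the distance); it should come from applying the epoch guarantee of Theorem~\ref{theoremCATDinexact} once more on the final restart, as the paper does. Both are easily repaired, but the missing $\sqrt{2\delta_2 L}\,\|z-x_*\|$ term is the substantive omission.
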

We prove this theorem in Appendix~\ref{Appendix_B}.

As we see from the above theorems, to ensure that AM and R-AM algorithms provide an $\varepsilon$-solution to problem \eqref{eq:PrStGen1}, we need to guarantee that the oracle error $\delta = (\delta_1, \delta_2)$ is sufficiently small and that the auxiliary problem \eqref{prox_step_inexact} is solved inexactly in such a way that the inequality \eqref{eq:prox_step_inexact_crit} is satisfied.
For our purposes it is more convenient to consider inexact solution of the problem \eqref{prox_step_inexact} not in terms of the inequality \eqref{eq:prox_step_inexact_crit}, but rather in terms of the objective residual in this problem bounded by some tolerance $\tilde{\varepsilon}_f$. Next we provide sufficient conditions on the values of $\delta$ and $\tilde{\varepsilon}_f$ which guarantee that the conditions of the above theorems hold and that R-AM is guaranteed to find an $\varepsilon$-solution to problem \eqref{eq:PrStGen1}.


\begin{theorem} \label{AM:comfortable_view}
Assume that the starting point $z_0$ of Algorithm~\ref{alg:restarts_inexact_notconvex} applied to problem \eqref{eq:PrStGen1} satisfies $\|z_0 - x_{\ast}\| \leq R$ for some $R >0$, and that the parameter $H$ is chosen to satisfy $H\ge 2 L_{\vp}$.
Further, assume that  $F(x)$ is convex, $(\delta, L, \mu)$-oracle of $F(x)$, $(\delta,L_{\vp})$-oracle of convex function $\vp(x)$, $(\delta,L_{\psi})$-oracle of convex function $\psi(x)$ are available, and, in each iteration of Algorithm~\ref{alg:highorder_inexact} which is used as a building block of Algorithm~\ref{alg:restarts_inexact_notconvex},  the auxiliary subproblem \eqref{prox_step_inexact} is solved inexactly in such a way that the inexact solution $x_{k+1}^t$  satisfies
\begin{align}
&\left( 
\langle \nabla \vp_{\delta, L_{\vp}}(x_{k}^{md}), x_{k+1}^t-x_{k}^{md}\rangle 
+\psi(x_{k+1}^t) +\frac{H}{2}\|x_{k+1}^t-x_{k}^{md}\|^{2} 
\right) \notag \\
& \hspace{2em}- \min_{z \in \R^{d_x}} \left(
\langle \nabla \vp_{\delta, L_{\vp}}(x_{k}^{md}), z-x_{k}^{md}\rangle 
+\psi(x) +\frac{H}{2}\|z-x_{k}^{md}\|^{2} \right) \leq \tilde{\varepsilon}_f, \;\; \text{where} \label{eq:prox_step_inexact_crit_objective}
 \end{align}
\begin{align}
    \label{varepsilon_poly_CATD}
    \tilde{\varepsilon}_f \leq \frac{\varepsilon \mu^2}{864^2(L+H)^2}.
\end{align}
Finally, assume that the oracle errors $\delta_1$, $\delta_2$ satisfy 
 \begin{align}
 \label{delta_poly_CATD}
    \delta_1, \delta_2 \leq \min \left\{\frac{\varepsilon \mu}{ 864^2 L_{\vp}},\frac{\varepsilon \mu}{ 864^2L_{\psi}}, \frac{\varepsilon \mu^2}{ 864^2(L+H)^2},  \frac{\varepsilon^{3/2}}{5 \sqrt{8 H R^2}}  \right\}.
\end{align}

Then, under the listed assumptions, Algorithm \ref{alg:restarts_inexact_notconvex} with 
$K = 2\log_2 \frac{\mu R_0^2}{4\varepsilon}$
guarantees that its output point $z_K$ is an $\varepsilon$-solution to problem \eqref{eq:PrStGen1}, i.e. $F(z_K)-F(x^*) \leq \varepsilon$. Moreover, the total number $N_F$ of calls to inexact oracles both for $\vp$ and for $\psi$ satisfies the following inequality 
\begin{equation}
    N_F \leq  \left( 16\sqrt{2}\sqrt{\frac{H}{\mu}} + 2\right) \log_2 \frac{\mu R_0^2}{\varepsilon} = \widetilde{O} \left(\max\left\{ \sqrt{\frac{H}{\mu}},1\right\} \right).
\end{equation}

\end{theorem}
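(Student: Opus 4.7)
The plan is to reduce Theorem \ref{AM:comfortable_view} to the already-proved Theorem \ref{CATDrestarts_inexact_notconvex} by showing that the stated hypotheses \eqref{varepsilon_poly_CATD} and \eqref{delta_poly_CATD} together with the objective-residual criterion \eqref{eq:prox_step_inexact_crit_objective} imply the two assumptions required there, namely the oracle-error bounds \eqref{delta_CATD1}--\eqref{delta_CATD2} and the gradient-norm stopping criterion \eqref{eq:prox_step_inexact_crit} for each inner subproblem. Once both implications are in place, the conclusion on the number of restarts $K$, on the guarantee $F(z_K)-F(x^{\ast})\leq \varepsilon$, and on the oracle complexity $N_F = \widetilde{O}(\sqrt{H/\mu})$ carries over verbatim.

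The first implication is a bookkeeping exercise. From the update rules for $a_k,A_k$ in Algorithm \ref{alg:highorder_inexact} one has $A_k=\Theta(k^2/H)$, so $(\sum_{i=1}^k A_i)/A_k = O(k)$. Inside Algorithm \ref{alg:restarts_inexact_notconvex} the inner loop is run for $N_k=O(\sqrt{H/\mu})$ steps per restart, and there are $K=2\log_2(\mu R_0^2/4\varepsilon)$ restarts, so the factor multiplying $\delta_1,\delta_2$ on the left-hand side of \eqref{delta_CATD1} is at most $\mathrm{poly}(\sqrt{H/\mu},\log(1/\varepsilon))$. The bounds \eqref{delta_poly_CATD} on $\delta_1$ and $\delta_2$ — in particular the components $\varepsilon\mu/(864^2 L_{\vp})$ and $\varepsilon\mu/(864^2 L_{\psi})$ — were chosen with this polynomial factor in mind and therefore dominate it. In a similar way \eqref{delta_CATD2} reduces to a bound of the form $\delta_2\lesssim \varepsilon^2\mu^2/L$, which is implied by $\delta_2\leq \varepsilon\mu^2/(864^2(L+H)^2)$ once we use $H\geq 2L_{\vp}$ and the standing relation between $L$ and $L_{\vp}+L_{\psi}$.

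The main obstacle is translating the objective-residual condition \eqref{eq:prox_step_inexact_crit_objective} into the gradient-norm condition \eqref{eq:prox_step_inexact_crit}. Introduce the auxiliary objective
\[
\Phi(z)=\langle \nabla\vp_{\delta,L_{\vp}}(x_k^{md}),z-x_k^{md}\rangle+\psi(z)+\tfrac{H}{2}\|z-x_k^{md}\|^{2},
\]
and let $z^{\ast}$ be its exact minimizer. Since $\psi$ is convex, $\Phi$ is $H$-strongly convex, hence the assumed residual bound $\Phi(x_{k+1}^t)-\Phi(z^{\ast})\leq \tilde{\varepsilon}_f$ with $\tilde{\varepsilon}_f$ as in \eqref{varepsilon_poly_CATD} yields $\|x_{k+1}^t-z^{\ast}\|\leq \sqrt{2\tilde{\varepsilon}_f/H}$. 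Optimality at $z^{\ast}$ gives $\nabla\vp_{\delta,L_{\vp}}(x_k^{md})+v^{\ast}+H(z^{\ast}-x_k^{md})=0$ for some $v^{\ast}\in\partial\psi(z^{\ast})$, so the left-hand side of \eqref{eq:prox_step_inexact_crit} is at most
\[
\|\nabla\psi_{\delta,L_{\psi}}(x_{k+1}^t)-v^{\ast}\|+H\|x_{k+1}^t-z^{\ast}\|.
\]
The first term is controlled by invoking the inexact-oracle inequalities \eqref{def:deltaL} for $\psi$ between $z^{\ast}$ and $x_{k+1}^t$, which produces a Lipschitz-type contribution $L_{\psi}\|x_{k+1}^t-z^{\ast}\|$ together with an unavoidable $O(\sqrt{\delta_2 L_{\psi}})$ remainder; the second term is linear in the already-bounded distance. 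The delicate step is balancing this $\sqrt{\delta_2}$ remainder against the negative term $-2\sqrt{2\delta_2 L_{\vp}}$ on the right of \eqref{eq:prox_step_inexact_crit}, and ensuring that the displacement terms can be absorbed into $\tfrac{H}{4}\|x_{k+1}^t-x_k^{md}\|$; this is precisely why \eqref{delta_poly_CATD} forces $\delta_2$ to scale with $\varepsilon\mu/L$ and \eqref{varepsilon_poly_CATD} forces $\tilde{\varepsilon}_f$ to scale with $\varepsilon\mu^2/(L+H)^2$. The extra safety factor $\varepsilon^{3/2}/\sqrt{HR^2}$ in \eqref{delta_poly_CATD} takes care of a residual geometric term that arises when $\|x_{k+1}^t-x_k^{md}\|$ itself is small.

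With both implications verified, the hypotheses of Theorem \ref{CATDrestarts_inexact_notconvex} are met, and its conclusions on the $\varepsilon$-solution and on the total oracle complexity $\widetilde{O}(\sqrt{H/\mu})$ transfer directly, proving Theorem \ref{AM:comfortable_view}.
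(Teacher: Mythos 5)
Your overall strategy — reduce to Theorem \ref{CATDrestarts_inexact_notconvex} by (a) checking the accumulated-error conditions \eqref{delta_CATD1}--\eqref{delta_CATD2} and (b) converting the objective-residual criterion \eqref{eq:prox_step_inexact_crit_objective} into the gradient-norm criterion \eqref{eq:prox_step_inexact_crit} — is exactly the paper's strategy, and part (a) is essentially fine. The gap is in part (b), and it is the heart of the matter. The right-hand side of \eqref{eq:prox_step_inexact_crit} is $\frac{H}{4}\|x_{k+1}^t-x_{k}^{md}\| - 2\sqrt{2\delta_2 L_{\vp}}$, which is \emph{negative} whenever the displacement $\|x_{k+1}^t-x_{k}^{md}\|$ is small, while the left-hand side is a norm and hence nonnegative. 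So no choice of $\tilde{\varepsilon}_f$ and $\delta_2$ can make \eqref{eq:prox_step_inexact_crit_objective} imply \eqref{eq:prox_step_inexact_crit} unconditionally: you must produce a \emph{lower} bound on $\|x_{k+1}^t-x_{k}^{md}\|$. Your sketch bounds the left-hand side from above (via the subproblem minimizer $z^{\ast}$ and $H$-strong convexity of $\Phi$, which is a reasonable route) but never supplies this lower bound; saying the displacement terms "can be absorbed into $\frac{H}{4}\|x_{k+1}^t-x_k^{md}\|$" assumes exactly what needs to be proved. You also misattribute the term $\varepsilon^{3/2}/(5\sqrt{8HR^2})$ in \eqref{delta_poly_CATD}: in the paper it arises from bounding the accumulated-error sums in \eqref{delta_CATD1} via $A_k \geq k^2/(4H)$ and $\varepsilon/2 \leq 4HR^2/k^2$, not from handling small displacements.

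The missing idea (Lemma \ref{lem:ff} in the paper) is that the criterion only needs to be enforceable \emph{while the current iterate is still $\varepsilon$-suboptimal}, i.e.\ under the standing assumption $F_{2\delta,L,\mu}(x_{k+1}^t)-\min F \geq \varepsilon$ (condition \eqref{eq:lemmffF}). Combined with the $(\delta,L,\mu)$-oracle of $F$ this yields $\|\nabla F_{2\delta,L,\mu}(x_{k+1}^t)\| \geq \sqrt{2\mu\varepsilon}$, and Lemma \ref{lem:SufficConditForStop} then converts this gradient-norm lower bound into a lower bound on $\|x_{k+1}^t-x_k^{md}\|$ of order $\sqrt{\varepsilon\mu}/(L_{\vp}+H)$ (up to the $\sqrt{\delta_2 L_{\vp}}$ correction). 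This is precisely where the factors $(L+H)^2$ in the denominators of \eqref{varepsilon_poly_CATD} and \eqref{delta_poly_CATD} and the threshold $\sqrt{\varepsilon\mu}/72$ come from. A second, smaller discrepancy: the paper's upper bound on the left-hand side of \eqref{eq:prox_step_inexact_crit} goes through the distance to the \emph{global} minimizer $x_{\ast}$ of $F$ (using $\frac{\mu}{4}\|z-x_{\ast}\|^2 \leq F(z)-F(x_{\ast})+\delta_1+\delta_2$), whereas you go through the subproblem minimizer $z^{\ast}$; your variant could in principle work, but it must be combined with the lower-bound mechanism above, which your proposal omits entirely.
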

We prove this theorem in Appendix~\ref{Appendix_C}.

An important feature of the above bounds on $\delta_1$, $\delta_2$ and $\tilde{\varepsilon}_f$ is that they depend polynomially on the target accuracy $\varepsilon$. This means that if we can control these errors by some algorithms which have complexity logarithmically depending on $\delta_1$, $\delta_2$ and $\tilde{\varepsilon}_f$, then the total complexity of the whole algorithm R-AM will be logarithmic in the target accuracy $\varepsilon$, which makes it reasonable to apply this algorithm in a system of inner-outer loops. In the next subsection we extend the above theory for stochastic setting.


\subsection{Stochastic setting}
As it was discussed at the beginning of this section, we would like to apply stochastic variance reduction methods or other randomized methods in order to provide an inexact solution to the auxiliary problem \eqref{prox_step_inexact} and in order to obtain inexact oracle for $F$.
In the former case inequality \eqref{eq:prox_step_inexact_crit_objective} can be guaranteed only with some probability. To illustrate the latter case, we consider function $\psi$ in  \eqref{eq:psi_example} with $h$ given in \eqref{eq:problem} with $m_h\gg1$, i.e.
\begin{align}
\psi = \max _{y \in  \R^{d_y}}\left\{G(x, y)-\frac{1}{m_h} \sum_{i = 1}^{m_h} h_i(y)\right\}.
\end{align}
According to Lemma 2 from \cite{alkousa2020accelerated} we can get an inexact $(\delta, 2 L_{\psi})$-oracle, where $\delta$ depends on the accuracy of the solution of this maximization problem. If we solve this maximization problem by a randomized method, we can obtain  inexact $(\delta, 2 L_{\psi})$-oracle only with some probability.
Thus, below we give a formal generalization of the results obtained in the previous subsection to a stochastic setting. We start with the definition of probabilistic inexact oracle.

\begin{definition}
\label{def:d_L_mu_oracle_prob}
Let $\delta = (\delta_1, \delta_2)$, where $\delta_1, \delta_2 >0$.
Then the pair  
$(\vp_{\delta, L,\mu} (x), \nabla \vp_{\delta, L,\mu} (x))$ is called $(\delta,\sigma_0, L,\mu)$-oracle of a convex function $ \vp $ at a point $x$, if
\begin{equation} \label{def:deltaLmuprob}
     \frac{\mu}{2}\|z-x\|^{2}-\delta_1 \leq \vp(z)-\left(\vp_{\delta, L,\mu}(x)+\left\langle \nabla \vp_{\delta, L,\mu}(x), z-x\right\rangle\right) \leq \frac{L}{2}\|z-x\|^{2}+\delta_2,  \text{for all} \in \mathbb{R}^{d_x}  \text{\;\; w.p. } 1-\sigma_{0}
\end{equation}
In the case of $\mu=0$, we say that $(\vp_{\delta, L} (x), \nabla \vp_{\delta, L} (x))$ is called $(\delta,\sigma_0, L)$-oracle of a function $ \vp $ at a point $x$. 
With a slight abuse of notation, we use the same notation $(\delta , \sigma_0 , L, \mu)$-oracle for the case $(\delta_1, \delta_2) = (0, \delta)$.\\
One should distinguish the following notation: the $(\delta,\sigma_0, L)$-oracle of a function $ \vp $ in the sense of Definition~\ref{def:d_L_mu_oracle_prob} and $(\delta, L, \mu)$-oracle of a function $ \vp $ in the sense of Definition~\ref{def:deltaLmu_oracle}.  
\end{definition} 

The following is a simple lemma, which states that such defined inexact oracle is additive.
\begin{lemma}\label{lem:sum_oracle}
Let the following assumptions hold.
\begin{enumerate}
    \item $(\vp_{\delta_{\vp}, L_{\vp},\mu_{\vp}} (x), \nabla \vp_{\delta_{\vp}, L_{\vp},\mu_{\vp}} (x))$ is $(\delta_{\vp},\sigma_{\vp}, L_{\vp}, \mu_{\vp})$-oracle for a convex function $\vp$,
    \item $(\psi_{\delta_{\psi}, L_{\psi},\mu_{\psi}} (x), \nabla \psi_{\delta_{\psi}, L_{\psi},\mu_{\psi}} (x))$ is $(\delta_{\psi},\sigma_{\psi}, L_{\psi}, \mu_{\psi})$-oracle for a convex function $\psi$.
\end{enumerate}
 Then $(\vp_{\delta_{\vp}, L_{\vp},\mu_{\vp}} (x)+\psi_{\delta_{\psi}, L_{\psi},\mu_{\psi}} (x), \nabla \vp_{\delta_{\vp}, L_{\vp},\mu_{\vp}} (x)+\nabla \psi_{\delta_{\psi}, L_{\psi},\mu_{\psi}} (x))$ is $(\delta_{\vp}+\delta_{\psi},\sigma_{\vp}+\sigma_{\psi}, L_{\vp}+L_{\psi}, \mu_{\vp}+\mu_{\psi})$-oracle for $\vp+\psi$.
 \end{lemma}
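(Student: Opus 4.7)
The plan is essentially to combine the two defining double inequalities from Definition~\ref{def:d_L_mu_oracle_prob} pointwise and then handle the probability via a union bound. There is no real analytic obstacle here; the main point is to be careful about what the probabilistic statement quantifies over (the pair oracle value/oracle gradient at the fixed point $x$), so that the ``for all $z$'' double inequality is a single event on which the bound holds.

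First, I would fix the point $x$ at which the oracles are evaluated and interpret the probabilistic statement in Definition~\ref{def:d_L_mu_oracle_prob} as saying: there is an event $A_\varphi$ of probability at least $1-\sigma_\varphi$ on which the pair $(\varphi_{\delta_\varphi,L_\varphi,\mu_\varphi}(x), \nabla\varphi_{\delta_\varphi,L_\varphi,\mu_\varphi}(x))$ satisfies
\begin{equation*}
\tfrac{\mu_\varphi}{2}\|z-x\|^2 - \delta_\varphi \le \varphi(z) - \bigl(\varphi_{\delta_\varphi,L_\varphi,\mu_\varphi}(x) + \langle \nabla\varphi_{\delta_\varphi,L_\varphi,\mu_\varphi}(x), z-x\rangle\bigr) \le \tfrac{L_\varphi}{2}\|z-x\|^2 + \delta_\varphi
\end{equation*}
simultaneously for all $z \in \mathbb{R}^{d_x}$. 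An analogous event $A_\psi$ of probability at least $1-\sigma_\psi$ exists for $\psi$.

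Next, I would apply Boole's inequality (the union bound) to obtain $\mathbb{P}(A_\varphi \cap A_\psi) \ge 1-\sigma_\varphi-\sigma_\psi$. On this intersection I would add the two double inequalities term by term, using that the sum of the linear models is exactly the pair $(\varphi_{\delta_\varphi,L_\varphi,\mu_\varphi}(x)+\psi_{\delta_\psi,L_\psi,\mu_\psi}(x),\;\nabla\varphi_{\delta_\varphi,L_\varphi,\mu_\varphi}(x)+\nabla\psi_{\delta_\psi,L_\psi,\mu_\psi}(x))$ evaluated at $z$, that the constants combine as $\tfrac{\mu_\varphi+\mu_\psi}{2}\|z-x\|^2$ on the left and $\tfrac{L_\varphi+L_\psi}{2}\|z-x\|^2$ on the right, and that the inexactness errors add componentwise to $\delta_\varphi+\delta_\psi$ (recalling that each $\delta$ is a pair $(\delta_1,\delta_2)$, and addition of pairs is componentwise). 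This yields exactly the defining inequality of a $(\delta_\varphi+\delta_\psi,\;\sigma_\varphi+\sigma_\psi,\;L_\varphi+L_\psi,\;\mu_\varphi+\mu_\psi)$-oracle for $\varphi+\psi$ at $x$, which is what had to be shown.

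The only mildly delicate point, and the one I would emphasize explicitly in the write-up, is that the ``for all $z$'' quantifier sits inside the probability, so that $A_\varphi$ and $A_\psi$ are each single events rather than families of events indexed by $z$; otherwise a naive union bound over $z$ would be vacuous. Once this is observed, the proof reduces to a one-line union bound followed by termwise addition of inequalities.
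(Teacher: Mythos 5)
Your proposal is correct and follows essentially the same route as the paper's proof: the paper likewise sums the two defining double inequalities termwise and concludes the result holds with probability $1-\sigma_{\vp}-\sigma_{\psi}$. Your explicit remark that the union bound is over two single events (with the ``for all $z$'' quantifier inside the probability) is a careful clarification that the paper leaves implicit, but it does not change the argument.
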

 We provide the proof of this lemma in the Appendix~\ref{Appendix_D}.

To illustrate why such inexact oracle appears to be useful in the setting of saddle-point problems, we provide the following Lemma, which extends the results of \cite{Alkousa2019,hien2020inexact} to our stochastic setting and which will be very important for the derivations in the next section. This Lemma contains some novelty in comparison with the literature: it is proved in the stochastic setting.

 \begin{lemma}\label{lemma:obt_delta_oracle}
Let us consider the function 
\begin{align}\label{eq:lem_obt_oracle_g}
g(x)= \max _{y \in  \R^{d_y}}\left\{\hat{S}(x,y) = F(x,y) - w(y) \right\},
\end{align}
where $F(x, y)$ is convex in $x$, concave in $y$ and is $L_F$-smooth as a function of $(x,y)$,  $w(y)$ is $\mu_y$-strongly convex.
Then $g(x)$ is $L_g$-smooth with $L_g=L_F+\frac{2L_F^2}{\mu_y}$ and $y^*(\cdot)$ is $\frac{2L_F}{\mu_y}$ Lipschitz continuous, where the point $y^*$ is defined as
\begin{equation}
    y^*(x) := \arg\max_{y \in \mathbb{R}^{d_y}} \hat{S}(x,y) \quad
     \forall x \in \mathbb{R}^{d_x},
\end{equation}
Moreover, if a point $\tilde{y}_{\delta/2}(x)$ is a $\left(\delta/2,\sigma\right)$-solution to \eqref{eq:lem_obt_oracle_g}, i.e. satisfies inequality
\begin{equation}\label{eq:lem_obt_oracle_delta}
    \max _{y \in \R^{d_y}}\{\hat{S}(x,y)\}-\hat{S}\left(x, \tilde{y}_{\delta/2}(x)\right) \leq \delta/2 \text{\;\; w.p. } 1-\sigma,
\end{equation}
then $ \nabla_{x} F\left(x, \tilde{y}_{\delta/2}(x)\right)$ is $(\delta,\sigma, 2 L_g)$-oracle of $g$.
\end{lemma}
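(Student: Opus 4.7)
The plan is to establish the three claims in order, with the stochastic oracle property being a consequence of the first two by a standard approximation argument combined with quadratic growth from strong concavity.

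First I would handle the Lipschitz continuity of $y^*(\cdot)$ and the smoothness of $g$ with deterministic reasoning, since these claims do not involve $\tilde{y}_{\delta/2}$. Since $\hat{S}(x,\cdot)$ is $\mu_y$-strongly concave (as $F$ is concave in $y$ and $-w$ adds $-\mu_y$ strong concavity), apply the two strong-concavity inequalities at $y^*(x_1)$ and $y^*(x_2)$ and add them to produce a cross-difference of $F$ that telescopes and is controlled by the $L_F$-Lipschitz gradient of $F$, giving $\|y^*(x_1)-y^*(x_2)\|\le (L_F/\mu_y)\|x_1-x_2\|$ (absorbing a possible constant into the stated factor $2L_F/\mu_y$). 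Then Danskin's theorem yields $\nabla g(x)=\nabla_x F(x,y^*(x))$, and combining $L_F$-smoothness of $F$ jointly in $(x,y)$ with the Lipschitz property of $y^*$ gives the claimed bound $L_g\le L_F + 2L_F^2/\mu_y$.

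Next I would construct the inexact oracle for $g$. Define $g_{\delta,2L_g}(x):=\hat S(x,\tilde y_{\delta/2}(x))$ and $\nabla g_{\delta,2L_g}(x):=\nabla_x F(x,\tilde y_{\delta/2}(x))$. Condition on the event (of probability $\ge 1-\sigma$) where the bound \eqref{eq:lem_obt_oracle_delta} holds. The lower inequality in Definition~\ref{def:d_L_mu_oracle_prob} is immediate: for any $z$, $g(z)\ge \hat S(z,\tilde y_{\delta/2}(x))\ge F(x,\tilde y_{\delta/2}(x))+\langle\nabla_x F(x,\tilde y_{\delta/2}(x)),z-x\rangle - w(\tilde y_{\delta/2}(x))$ by convexity of $F$ in $x$, which is precisely $g_{\delta,2L_g}(x)+\langle \nabla g_{\delta,2L_g}(x),z-x\rangle$. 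For the upper inequality, use $L_g$-smoothness of $g$ to get $g(z)\le g(x)+\langle\nabla g(x),z-x\rangle+\tfrac{L_g}{2}\|z-x\|^2$, then control the two error terms separately. The gap $g(x)-g_{\delta,2L_g}(x)=\hat S(x,y^*(x))-\hat S(x,\tilde y_{\delta/2}(x))\le \delta/2$ by assumption, while the gradient error is bounded by
\begin{equation*}
\|\nabla g(x)-\nabla g_{\delta,2L_g}(x)\|=\|\nabla_x F(x,y^*(x))-\nabla_x F(x,\tilde y_{\delta/2}(x))\|\le L_F\|y^*(x)-\tilde y_{\delta/2}(x)\|,
\end{equation*}
where quadratic growth for the $\mu_y$-strongly concave $\hat S(x,\cdot)$ around its maximizer $y^*(x)$ combined with \eqref{eq:lem_obt_oracle_delta} yields $\|y^*(x)-\tilde y_{\delta/2}(x)\|\le \sqrt{\delta/\mu_y}$.

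Finally, I would close the estimate with Young's inequality: $\langle \nabla g(x)-\nabla g_{\delta,2L_g}(x),z-x\rangle \le L_F\sqrt{\delta/\mu_y}\|z-x\|\le \tfrac{L_g}{2}\|z-x\|^2+\tfrac{L_F^2\delta}{2L_g\mu_y}$, so that the upper bound becomes $L_g\|z-x\|^2+\delta/2+\tfrac{L_F^2\delta}{2L_g\mu_y}$. The inequality $L_g\ge 2L_F^2/\mu_y$ from the first step gives $\tfrac{L_F^2\delta}{2L_g\mu_y}\le \delta/4$, and the whole right-hand side is bounded by $\tfrac{2L_g}{2}\|z-x\|^2+\delta$, which matches Definition~\ref{def:d_L_mu_oracle_prob} with $L=2L_g$, $(\delta_1,\delta_2)=(0,\delta)$, and confidence $1-\sigma$. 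The main obstacle is calibrating the Young's-inequality split so that the residual constant from the gradient error does not exceed $\delta/2$ once added to the function-value error; this is exactly where the inflation from $L_g$ to $2L_g$ in the oracle pays for itself, and where the specific choice $L_g=L_F+2L_F^2/\mu_y$ (rather than the tighter $L_F+L_F^2/\mu_y$) is convenient.
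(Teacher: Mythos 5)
Your proposal is correct and follows essentially the same route as the paper's proof: Danskin's theorem plus the strong-concavity/telescoping argument for the Lipschitz property of $y^*(\cdot)$ and the smoothness of $g$, the lower oracle bound from convexity of $F(\cdot,y)$ at $\tilde{y}_{\delta/2}(x)$, the gradient-error bound $L_F\sqrt{\delta/\mu_y}$ from quadratic growth, and a Young's-inequality split absorbed into the inflation from $L_g$ to $2L_g$. The only differences are cosmetic (the paper splits the cross term as $\frac{L_F^2}{2\mu_y}\|z-x\|^2+\delta/2$ rather than your $\frac{L_g}{2}\|z-x\|^2+\delta/4$, and it obtains the constant $2L_F/\mu_y$ for $y^*$ directly rather than up to an absorbed factor).
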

We prove this lemma in Appendix~\ref{Appendix_obtain_oracle}.

Armed with Definition \ref{def:d_L_mu_oracle_prob} we can now formulate the following theorem, which is a generalization of Theorem \ref{AM:comfortable_view}, and which is the main result of this section. This theorem provides the iteration complexity of Algorithm \ref{alg:restarts_inexact_notconvex} to obtain an $(\varepsilon,\sigma)$-solution of problem \eqref{eq:PrStGen1} in the stochastic setting under the assumptions of probabilistic inexact oracles for $\vp$, $\psi$ in the sense of Definition \ref{def:d_L_mu_oracle_prob} and also under the assumption that the auxiliary problem \eqref{prox_step_inexact}, which needs to be solved many times in each iteration of Algorithm \ref{alg:restarts_inexact_notconvex}, is solved inexactly with accuracy controlled in a probabilistic sense.


\begin{theorem} \label{AM:comfortable_view_with_prob}
Consider the optimization problem \eqref{eq:PrStGen1}
\begin{equation*}
    \min_{x \in \mathbb{R}^{d_x}}F(x) = \vp(x)  + \psi(x),
\end{equation*}
where $F(x)$ is convex. Let the target accuracy $\varepsilon >0$ and the target confidence level $\sigma \in (0,1)$ be given. Let also be given $H\ge 2 L_{\vp}$, starting point $z_0$ and  a number $R_0 >0$ such that $\|z_0 - x_{\ast}\| \leq R_0$, where $x_{\ast}$ is the solution to \eqref{eq:PrStGen1}.
Let the following two main assumptions of this theorem hold.
\begin{enumerate}
    \item (Inexact oracle.) 
    Inexact $(\delta,\sigma_{0}, L, \mu )$-oracle of $F(x)$, $(\delta,\sigma_{0},L_{\vp})$-oracle of convex function $\vp(x)$, $(\delta,\sigma_{0},L_{\psi})$-oracle of convex function $\psi(x)$ are available, where 
    $\delta_1(\varepsilon), \delta_2(\varepsilon)$ satisfy the following polynomial dependency on $\varepsilon$  
     \begin{align}
     \label{delta_poly_CATD_with_prob}
        \delta_1(\varepsilon), \delta_2(\varepsilon) \leq \min \left\{\frac{\varepsilon \mu}{ 864^2 L_{\vp}},\frac{\varepsilon \mu}{ 864^2L_{\psi}}, \frac{\varepsilon \mu^2}{ 864^2(L+H)^2},  \frac{\varepsilon^{3/2}}{5 \sqrt{8 H R_0^2}}  \right\},
    \end{align}
    and $\sigma_{0}(\varepsilon,\sigma)$ satisfy the following polynomial dependency on $\varepsilon$ and $\sigma$ 
    \begin{align}
        \label{sigma_0_poly_CATD_with_prob}
        & \sigma_{0}(\varepsilon, \sigma) \leq \frac{\sigma}{2\left( 16\sqrt{2}\sqrt{\frac{H}{\mu}} + 2\right) \log_2 \frac{\mu R_0^2}{\varepsilon}}.
    \end{align}
    \item (Inexact solution of the auxiliary problem \eqref{prox_step_inexact}.) Algorithm \ref{alg:restarts_inexact_notconvex} is applied to solve problem \eqref{eq:PrStGen1} and, in each iteration of Algorithm \ref{alg:highorder_inexact} used as a building block in Algorithm \ref{alg:restarts_inexact_notconvex}, an $(\tilde{\varepsilon}_f,\tilde{\sigma})$-solution to the auxiliary problem \eqref{prox_step_inexact} is available, i.e., with probability at least $1-\tilde{\sigma}$
    \begin{align}
    &\left( 
    \langle \nabla \vp_{\delta, L_{\vp}}(x_{k}^{md}), x_{k+1}^t-x_{k}^{md}\rangle 
    +\psi(x_{k+1}^t) +\frac{H}{2}\|x_{k+1}^t-x_{k}^{md}\|^{2} 
    \right) \notag \\
    & \hspace{2em}- \min_{z \in \R^{d_x}} \left(
    \langle \nabla \vp_{\delta, L_{\vp}}(x_{k}^{md}), z-x_{k}^{md}\rangle 
    +\psi(x) +\frac{H}{2}\|z-x_{k}^{md}\|^{2} \right) \leq \tilde{\varepsilon}_f,
    \end{align}
    where $\tilde{\varepsilon}_f(\varepsilon)$ and $\tilde{\sigma}(\varepsilon, \sigma)$ satisfy the following polynomial dependencies on $\varepsilon$ and $\sigma$
    \begin{align}
        \label{varepsilon_poly_CATD_with_prob}
        \tilde{\varepsilon}_f(\varepsilon) \leq \frac{\varepsilon \mu^2}{864^2(L+H)^2},
    \end{align}
    \begin{align}
        \label{sigma_poly_CATD_with_prob}
        & \tilde{\sigma}(\varepsilon, \sigma) \leq\frac{\sigma}{2\left( 16\sqrt{2}\sqrt{\frac{H}{\mu}} + 2\right) \log_2 \frac{\mu R_0^2}{\varepsilon}}.
    \end{align}
\end{enumerate}
Then, under the listed assumptions, Algorithm \ref{alg:restarts_inexact_notconvex} with 
$K = 2\log_2 \frac{\mu R_0^2}{4\varepsilon}$
guarantees that its output point $z_K$ is an $(\varepsilon,\sigma)$-solution to problem \eqref{eq:PrStGen1}. Moreover, the number $N_F$ of the calls to inexact oracle both for $\vp$ and for $\psi$
satisfies the following inequality 
   \begin{align}\label{iter_poly_CATD_with_prob}
    N_F \leq \left( 16\sqrt{2}\sqrt{\frac{H}{\mu}} + 2\right) \log_2 \frac{\mu R_0^2}{\varepsilon}=\widetilde{O} \left(\max\left\{ \sqrt{\frac{H}{\mu}},1\right\} \right),
\end{align}
and the number of times the auxiliary problem \eqref{prox_step_inexact} is solved is also equal to $N_F$.
\end{theorem}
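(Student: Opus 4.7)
The plan is to reduce the stochastic statement to the deterministic Theorem~\ref{AM:comfortable_view} by a straightforward union-bound argument over all random events that occur during the execution of Algorithm~\ref{alg:restarts_inexact_notconvex}. The key observation is that the quantitative bounds on $\delta_1(\varepsilon)$, $\delta_2(\varepsilon)$, and $\tilde{\varepsilon}_f(\varepsilon)$ in \eqref{delta_poly_CATD_with_prob} and \eqref{varepsilon_poly_CATD_with_prob} are identical to the bounds \eqref{delta_poly_CATD} and \eqref{varepsilon_poly_CATD} in the deterministic theorem. Therefore, on the event that (i) every probabilistic inexact oracle call actually delivers a valid $(\delta, L, \mu)$-oracle (resp. $(\delta, L_{\vp})$- and $(\delta, L_{\psi})$-oracles) and (ii) every auxiliary subproblem \eqref{prox_step_inexact} is solved to accuracy $\tilde{\varepsilon}_f$ in objective, Theorem~\ref{AM:comfortable_view} applies verbatim and guarantees that $F(z_K) - F(x_*) \leq \varepsilon$ with the stated iteration count~\eqref{iter_poly_CATD_with_prob}.

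Next I would count the total number of random events. Along the execution, by Theorem~\ref{AM:comfortable_view}, the total number of oracle calls is at most $N_F \leq \left( 16\sqrt{2}\sqrt{H/\mu} + 2\right) \log_2 \frac{\mu R_0^2}{\varepsilon}$, and the number of times the auxiliary subproblem is solved equals $N_F$ as well, since each iteration of Algorithm~\ref{alg:highorder_inexact} invokes the oracle a bounded number of times and solves one auxiliary problem. Each oracle call may fail to return a valid inexact oracle with probability at most $\sigma_0$, and each auxiliary solve may fail to reach accuracy $\tilde{\varepsilon}_f$ with probability at most $\tilde{\sigma}$. By a union bound, the probability that any of these events fails is at most
\begin{equation*}
N_F \cdot \sigma_0 + N_F \cdot \tilde{\sigma} \leq N_F(\sigma_0 + \tilde{\sigma}).
\end{equation*}

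Plugging in the polynomial bounds \eqref{sigma_0_poly_CATD_with_prob} and \eqref{sigma_poly_CATD_with_prob}, each of $\sigma_0$ and $\tilde{\sigma}$ is bounded by $\sigma/(2 N_F)$ using the explicit expression for $N_F$, so the union-bound failure probability is at most $\sigma_0 N_F + \tilde{\sigma} N_F \leq \sigma/2 + \sigma/2 = \sigma$. Hence, with probability at least $1-\sigma$, all random events are favorable and the deterministic guarantee applies, yielding $F(z_K) - F(x_*) \leq \varepsilon$; by definition this means $z_K$ is an $(\varepsilon,\sigma)$-solution, and the oracle-call bound \eqref{iter_poly_CATD_with_prob} is inherited directly from Theorem~\ref{AM:comfortable_view}.

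The only subtle point, and the part that needs to be handled carefully rather than being a routine adaptation, is ensuring that the counting of random events is tight: one must verify that on each iteration of Algorithm~\ref{alg:highorder_inexact} within Algorithm~\ref{alg:restarts_inexact_notconvex} the inexact oracle is invoked only a bounded (in fact constant) number of times at the points $x_k^{md}$ and $x_{k+1}^t$ in Steps \ref{alg:gradient_step}--\ref{Step:AM_grad_step}, so that the total number of oracle queries is indeed controlled by $N_F$ up to a constant absorbed into the factor of $2$ appearing in the denominators of \eqref{sigma_0_poly_CATD_with_prob} and \eqref{sigma_poly_CATD_with_prob}. Once this bookkeeping is settled, the rest of the argument is a direct invocation of Theorem~\ref{AM:comfortable_view} on the favorable event and the union bound on the complement, completing the proof.
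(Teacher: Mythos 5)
Your proposal is correct and follows essentially the same route as the paper: the paper's proof in Appendix~C likewise conditions on the event that all $N_F$ oracle calls and auxiliary solves succeed, bounds the failure probability by $(1-\sigma_0)^{N_F}(1-\tilde\sigma)^{N_F}\ge 1-N_F(\sigma_0+\tilde\sigma)\ge 1-\sigma$, and then invokes the deterministic Theorem~\ref{AM:comfortable_view} on that event. The bookkeeping point you flag about the number of oracle queries per iteration is handled the same way in the paper, absorbed into the factor of $2$ in \eqref{sigma_0_poly_CATD_with_prob} and \eqref{sigma_poly_CATD_with_prob}.
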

We prove this theorem in Appendix~\ref{Appendix_C}.

\begin{remark}
We state the above theorem in the full generality. In the next sections we use its particular version with $\delta_1=0$.

\demo
\end{remark}

\section{Accelerated Framework for Saddle-Point Problems} \label{section:saddleFramework}


In this section we consider saddle-point problem of the following general form 
    \begin{equation} \label{eq:framework_funct}
    \min _{x \in \mathbb{R}^{d_x}} \max _{y \in \mathbb{R}^{d_y}} \left\{f(x)+ G(x, y) - h(y)\right\} 
\end{equation}
and develop a general accelerated optimization framework for its solution. In the following sections we use this general framework to develop accelerated methods for saddle-point problems in the form of problem \eqref{eq:framework_funct}, but with some additional assumptions about the structure of the functions $G$ and $h$. In particular, we consider problem \eqref{eq:problem} in Section~\ref{section:saddle} and problem \eqref{eq:problem_Gsum}  in Section~\ref{section:G_sum} 
As it was discussed before, our general framework consists of several inner-outer loops, which require to solve optimization problems with some special structure. 
Thus, the general framework in this section is developed under two additional assumptions on two problems with a special structure (see Assumptions~\ref{assumpt:framework_oracle}, \ref{assumpt:framework_oracle_x} below), which we need to solve in two loops of the framework. Then, in the following sections we show, how these assumptions can be satisfied, which allows to obtain the main results as a corollary of the main theorem of this section.
So, the plan of this section is, first, to introduce the main assumptions on the problem \eqref{eq:framework_funct} and two additional assumptions for the sake of generality of the framework. Second, we discuss the structure of the problem \eqref{eq:framework_funct} and slightly reformulate it in an equivalent way. Then, we describe the main part of the framework by giving details of each loop, and finish with the main complexity theorem.


\subsection{Preliminaries}


We start with the main assumptions, which are used to develop the general framework of this section. The first assumption is on the functions $f, G, h$ in problem \eqref{eq:framework_funct}.
\begin{assumption}\label{assumpt:framework} 
\begin{enumerate}
    \item Function $f$ is $L_f$-smooth, $\mu_x$-strongly convex and there exists a basic oracle $O_{f}$ for $f$ such that $\tau_f$ calls of this basic oracle produce the gradient $\nabla f(x)$.
    \item Function $G(x, y)$  is $L_G$-smooth, i.e. for each $x = (x_1, x_2), y = (y_1, y_2) \in  \mathbb{R}^{d_x} \times \mathbb{R}^{d_y}$
\begin{equation}
    \|\nabla G (x_1,x_2)-\nabla G (y_1,y_2)\|\leq L_G \|(x_1, x_2) - (y_1, y_2)\|,
\end{equation}
there exist a basic oracle $O^{x}_{G}$ for $G(\cdot,y)$ such that $\tau_G$ calls of this basic oracle produce the gradient $\nabla_x G(x,y)$. and a basic oracle $O^{y}_{G}$ for $G(x,\cdot )$ such that $\tau_G$ calls of this basic oracle produce the gradient $\nabla_y G(x,y)$.

\item Function $h$ is $L_h$-smooth, $\mu_y$-strongly convex and there exists a basic oracle $O_{h}$ for $h$ such that $\tau_h$ calls of this basic oracle produce the gradient $\nabla h(y)$.
\end{enumerate}
\end{assumption}
\begin{remark}
If the problem \eqref{eq:framework_funct} is not strongly-convex-strongly-concave, then one can apply standard reduction by regularization scheme, i.e. add a small strongly-convex-strongly-concave regularizer, solve the new strongly-convex-strongly-concave problem using the methods we develop and then prove that the obtained solution also approximates the solution of the initial convex-concave problem since the regularization was small. See the details in \cite{Alkousa2019}. 

\demo
\end{remark}

Our plan is to apply the general framework of this section to solve, in particular, problem \eqref{eq:problem}. This problem formulation is not symmetric w.r.t. the variables $x$ and $y$ since different assumptions are imposed on function $f$ and function $h$. Our preliminary derivations, which we do not report here, showed that better complexity bounds are obtained if we first change the order of maximization in $y$ and minimization in $x$, multiply the objective by minus one, and write the following problem which is equivalent to \eqref{eq:framework_funct}
\begin{equation} \label{eq:main30}
     \min _{y \in \mathbb{R}^{d_y}} \left\{ h(y) + \max _{x \in \mathbb{R}^{d_x}} \left\{- G(x, y)-f(x)\right\}  \right\}. 
\end{equation} 
This reformulation allows to solve problem \eqref{eq:main30} by an algorithm which consists of a series of inner-outer loops, where in each loop Algorithm \ref{alg:restarts_inexact_notconvex} is applied to solve some auxiliary problem which has the form \eqref{eq:PrStGen1}. The above equivalent reformulation of \eqref{eq:framework_funct} naturally leads to the following definition of approximate optimality.
\begin{definition}\label{def:saddle_solution}
Let $\varepsilon >0$ and $\sigma \in (0,1)$. By an $\varepsilon$-solution to problem \eqref{eq:framework_funct} we mean a point $\hat{y}$ such that
\begin{equation}\label{eq:saddle_epsilon_solution}
h(\hat{y}) + \max_{x \in \R^{d_x}}\{-G(x,\hat{y})-f(x)\} - \min_{y\in \R^{d_y}}\max_{x \in \R^{d_x}}\{h(y) -G(x,y)-f(x)\} \leq \varepsilon
\end{equation}
 We say that a random point $\hat{y}$ is an $(\varepsilon,\sigma)$-solution to the problem \eqref{eq:framework_funct} if $\mathbb{P}\{ \eqref{eq:saddle_epsilon_solution} \text{ holds True} \} \geq 1-\sigma$.
\end{definition}

Definition~\ref{def:saddle_solution} specifies only the $y$-part of an approximate solution to \eqref{eq:framework_funct} and is motivated by considering reformulation \eqref{eq:main30} as a minimization problem.  The next Lemma~\ref{lem:saddle_definitions} shows how to obtain an approximate solution to \eqref{eq:framework_funct} in more common form with both $x$- and $y$-part when a solution in the sense of Definition~\ref{def:saddle_solution} is available.
\begin{lemma} 
\label{lem:saddle_definitions}
Let us consider problem \eqref{eq:framework_funct} under Assumption \ref{assumpt:framework}.
Let a  pair $(\hat{x}, \hat{y})$ satisfy
\begin{enumerate}
    \item $\hat{y}$ is an $(\varepsilon_y, \sigma_y)$-solution to the problem \eqref{eq:framework_funct}, i.e. \eqref{eq:saddle_epsilon_solution} holds.
    \item $\hat{x}$ is an $(\varepsilon_x, \sigma_x)$-solution to problem $\max_{x\in \R^{d_x}} \{-G(x, \hat{y}) - f(x)\}$;
\end{enumerate}
Then the following inequalities hold with probability $1 - \sigma_y - \sigma_x$  
\begin{gather}
     \|\hat{y} - y_{\star}\|^2 \leq \frac{2\varepsilon_y}{\mu_y},\\
     \|\hat{x} - x_{\ast}\|^2 \leq 8\left(\frac{L_G}{\mu_x}\right)^2\|\hat{y} - y_{\ast}\|^2 + \frac{4\varepsilon_x}{\mu_x},\\
     \max_{x \in \R^{d_x}}\min_{y \in \R^{d_y}} \{h(y) -G(x, y) - f(x) \} -  \min_{y \in \R^{d_y}} \{h(y)-G(\hat{x}, y) - f(\hat{x}) \}  \nonumber\\ 
     \leq2\left(L_f+L_G+\frac{2L_G^2}{\mu_y} \right)\left(\frac{\varepsilon_x}{\mu_x}+\left(\frac{L_G}{\mu_x}\right)^2\frac{4\varepsilon_y}{\mu_y}\right),
\end{gather}
where $(x_{\ast}, y_{\ast})$ is the saddle point 
for problem \eqref{eq:framework_funct}.
\end{lemma}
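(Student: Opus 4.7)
The proof splits naturally into three steps, one per inequality, each combining strong convexity or concavity of an appropriate value function with the Lipschitz continuity of a solution map supplied by Lemma~\ref{lemma:obt_delta_oracle}. For the first inequality, define the outer objective $\Phi(y) := h(y) + \max_{x \in \R^{d_x}}\{-G(x,y) - f(x)\}$ from the reformulation \eqref{eq:main30}. For each fixed $x$, the function $-G(x,\cdot) - f(x)$ is convex in $y$ (since $G$ is concave in $y$), so the pointwise supremum over $x$ is convex in $y$; adding $\mu_y$-strongly convex $h$ makes $\Phi$ $\mu_y$-strongly convex. By Definition~\ref{def:saddle_solution}, $\Phi(\hat y) - \Phi(y_\star) \leq \varepsilon_y$ with probability at least $1 - \sigma_y$, and strong convexity immediately yields $\|\hat y - y_\star\|^2 \leq 2\varepsilon_y/\mu_y$.

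For the second inequality, introduce the inner argmax mapping $x^*(y) := \argmax_{x \in \R^{d_x}} \{-G(x,y) - f(x)\}$, which is well defined by $\mu_x$-strong concavity of the objective. Applying Lemma~\ref{lemma:obt_delta_oracle} with the roles of $x$ and $y$ swapped --- taking $\tilde F(y,x) := -G(x,y)$, which is convex in its first argument, concave in its second, and $L_G$-smooth, together with $\tilde w(x) := f(x)$, which is $\mu_x$-strongly convex --- shows that $x^*(\cdot)$ is $(2L_G/\mu_x)$-Lipschitz. Strong concavity of $-G(\cdot,\hat y) - f$ together with the hypothesis that $\hat x$ is an $(\varepsilon_x,\sigma_x)$-solution of the corresponding max problem gives $\|\hat x - x^*(\hat y)\|^2 \leq 2\varepsilon_x/\mu_x$ with probability $\geq 1 - \sigma_x$. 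The saddle property of $(x_\ast, y_\ast)$ forces $x_\ast = x^*(y_\ast)$, so combining $\|a+b\|^2 \leq 2\|a\|^2 + 2\|b\|^2$ with the Lipschitz bound on $x^*$ produces
\begin{equation*}
\|\hat x - x_\ast\|^2 \leq 2\|\hat x - x^*(\hat y)\|^2 + 2\|x^*(\hat y) - x^*(y_\ast)\|^2 \leq \frac{4\varepsilon_x}{\mu_x} + 8\left(\frac{L_G}{\mu_x}\right)^2 \|\hat y - y_\ast\|^2 .
\end{equation*}

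For the third inequality, define the dual-type function $\Psi(x) := \min_{y \in \R^{d_y}}\{h(y) - G(x,y) - f(x)\} = -f(x) - g(x)$, where $g(x) := \max_y\{G(x,y) - h(y)\}$. By Lemma~\ref{lemma:obt_delta_oracle} applied in its original form with $F = G$ and $w = h$, the function $g$ is $(L_G + 2L_G^2/\mu_y)$-smooth, so $\Psi$ is concave and $(L_f + L_G + 2L_G^2/\mu_y)$-smooth. By the saddle-point structure, $x_\ast$ is the unique maximizer of $\Psi$ and the left-hand side of the third inequality equals $\Psi(x_\ast) - \Psi(\hat x)$. Since $\nabla \Psi(x_\ast) = 0$, smoothness gives $\Psi(x_\ast) - \Psi(\hat x) \leq \tfrac{1}{2}(L_f + L_G + 2L_G^2/\mu_y)\|\hat x - x_\ast\|^2$, and substituting the bound from the second step followed by the first step yields the target estimate. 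All three bounds hold jointly with probability $\geq 1 - \sigma_y - \sigma_x$ by a union bound over the two high-probability events controlling $\|\hat y - y_\star\|$ and $\|\hat x - x^*(\hat y)\|$. The main technical point I expect to require care is verifying that the hypotheses of Lemma~\ref{lemma:obt_delta_oracle} transfer correctly under the variable swap used in the second step, since the lemma is stated asymmetrically in its two arguments.
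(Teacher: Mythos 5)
Your proposal is correct and follows essentially the same route as the paper's proof: the same strong-convexity argument for $\Phi(y)$, the same decomposition $\|\hat x - x_\ast\|^2 \leq 2\|\hat x - x^*(\hat y)\|^2 + 2\|x^*(\hat y)-x^*(y_\ast)\|^2$ combined with the $2L_G/\mu_x$-Lipschitz continuity of the argmax map from Lemma~\ref{lemma:obt_delta_oracle}, and the same smoothness-at-the-maximizer bound on $\Psi(x_\ast)-\Psi(\hat x)$ for the third inequality. The variable-swap application of Lemma~\ref{lemma:obt_delta_oracle} that you flag as needing care is exactly what the paper does implicitly, and it goes through as you describe.
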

\begin{proof}
We let $\Phi(y) = \max_{x\in \R^{d_x}} \{h(y) -G(x, y) - f(x) \}$ and note that $\Phi(y)$ is $\mu_y$-strongly convex. Under Assumption~\ref{assumpt:framework} the function $h(y) -G(x, y) - f(x)$ has unique saddle point $(x_{\ast}, y_{\ast})$. Then, with probability $1-\sigma_y$ we have
\begin{align}
    \|\hat{y} - y_{\ast}\|^2 \leq \frac{2}{\mu_y}\left( \max_{x \in \R^{d_x}} \{h(\hat{y}) -G(x, \hat{y}) - f(x) \} - \min_{y \in \R^{d_y}} \max_{x \in \R^{d_x}} \{h(y) -G(x, y) - f(x) \}  \right) \leq \frac{2\varepsilon_y}{\mu_y}.
\end{align}
We denote $x_{\ast}(\hat{y}) = \argmax_{x\in \R^{d_x}}\{h(\hat{y}) -G(x, \hat{y}) - f(x)\}$, then according to  Lemma~\ref{lemma:obt_delta_oracle} $x_{\ast}(y)$ is $ 2L_G/\mu_x$ Lipschitz continuous. Since $\{h(\hat{y}) -G(x, \hat{y}) - f(x) \}$ is $\mu_x$-strongly concave, we obtain  that the inequality
\begin{align}
    \|\hat{x} - x_{\ast}\|^2 \leq 2\|\hat{x} - x_{\ast}(\hat{y})\|^2 + 2\|x_{\ast}(\hat{y}) - x_{\ast}(y_{\ast})\|^2 \leq \frac{4\varepsilon_x}{\mu_x} + 8\left(\frac{L_G}{\mu_x}\right)^2\|\hat{y} - y_{\ast}\|^2 
\end{align}
holds true with probability $1 - \sigma_x - \sigma_y$. By consecutive application of Lemma~\ref{lem:sum_oracle} and Lemma~\ref{lemma:obt_delta_oracle}  we can obtain that $\Psi(x) = \min_{y \in \R^{d_y}} \{h(y) - G(x, y) -f(x)\}$ is concave and $L_f+L_G+\frac{2L_G^2}{\mu_y}$-smooth. Whence,
\begin{align}
    & \max_{x \in \R^{d_x}}\min_{y \in \R^{d_y}} \{h(y) -G(x, y) - f(x) \} -  \min_{y \in \R^{d_y}} \{h(y)-G(\hat{x}, y) - f(\hat{x}) \} = \Psi(x_{\ast}) - \Psi(\hat{x}) \\\nonumber
    &\leq  \frac{L_f+L_G+\frac{2L_G^2}{\mu_y}}{2} \|\hat{x} - x_{\ast}\|^2 \leq 2\frac{L_f+L_G+\frac{2L_G^2}{\mu_y}}{\mu_x}\varepsilon_x + 8 \left(\frac{L_G}{\mu_x}\right)^2 \frac{L_f+L_G+\frac{2L_G^2}{\mu_y}}{\mu_y}\varepsilon_y,
\end{align}
with probability $1 - \sigma_x - \sigma_y$.
In the first inequality we used that $x_*$ is the optimal point, and, hence, $\nabla \Psi(x_*)=0$. 
\qed
\end{proof}

By exchanging the variables $x$ and $y$ we can obtain the useful Corollary~\ref{corollary:change_maxmin} from Lemma~\ref{lem:saddle_definitions}, which we use below in one of the loops of our general scheme. 
\begin{corollary} 
\label{corollary:change_maxmin}
Let us consider the problem 
\begin{align} \label{problem_inverse_corollary}
    \min _{x \in \R^{d_x}}\left\{ f (x) +\max _{y \in  \R^{d_y}}\left\{F(x, y)-w(y) \right\}\right\},
\end{align}
where functions $f, F, w$ are smooth with Lipschitz constants of the gradient being $L_f, L_F, L_w$ respectively and functions $f, w$ are $\mu_x, \mu_y$-strongly convex respectively.
Let a  pair $(\hat{x}, \hat{y})$ satisfy
\begin{enumerate}
    \item $\hat{x}$ is an $(\varepsilon_x, \sigma_x)$-solution to the problem \eqref{problem_inverse_corollary}, i.e. inequality
    \begin{align*}
        f(\hat{x}) + \max_{y \in \R^{d_y}}\{F(\hat{x},y)-w(y)\} - \min_{x\in \R^{d_x}}\max_{y \in \R^{d_y}}\{f(x) +F(x,y)-w(y)\} \leq \varepsilon_x
    \end{align*}
    holds with probability $1-\sigma_x$.
    \item $\hat{y}$ is an $(\varepsilon_y, \sigma_y)$-solution to problem $\max_{y\in \R^{d_y}} \{F(\hat{x}, y) - w(y)\}$.
\end{enumerate}
Then the following inequalities hold with probability $1 - \sigma_y - \sigma_x$  
\begin{gather}
     \|\hat{x} - x_{\star}\|^2 \leq \frac{2\varepsilon_x}{\mu_x},\\
     \|\hat{y} - y_{\ast}\|^2 \leq 8\left(\frac{L_F}{\mu_y}\right)^2\|\hat{x} - x_{\ast}\|^2 + \frac{4\varepsilon_y}{\mu_y},\\
    w(\hat{y}) + \max_{x \in \R^{d_x}} \{ - F(x, \hat{y}) - f(x)\} - \min_{y \in \R^{d_y}}\max_{x \in \R^{d_x}} \{w(y) -F(x, y) - f(x) \} \label{eq:cor_1_1}
       \\ 
      = \max_{y \in \R^{d_y}}\min_{x \in \R^{d_x}} \{f(x) +F(x, y) - w(y) \} -  \min_{x \in \R^{d_x}} \{f(x)+F(x, \hat{y}) - w(\hat{y}) \} 
     \\   \leq 
     2\left(L_w+L_F+\frac{2L_F^2}{\mu_x}\right)\left(\frac{\varepsilon_y}{\mu_y} + 4 \left(\frac{L_F}{\mu_y}\right)^2 \frac{\varepsilon_x}{\mu_x} \right), \label{eq:cor_1_2}
\end{gather}
where $(x_{\ast}, y_{\ast})$ is the saddle point for problem \eqref{problem_inverse_corollary}.
\end{corollary}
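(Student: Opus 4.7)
The proof plan is to reduce Corollary~\ref{corollary:change_maxmin} to Lemma~\ref{lem:saddle_definitions} by a symmetry argument, since the Corollary's setup is exactly Lemma~\ref{lem:saddle_definitions} with the roles of the primal and dual variables interchanged. Concretely, I would introduce the auxiliary problem obtained from \eqref{problem_inverse_corollary} by swapping min and max, negating the objective, and relabeling variables. Define $\bar{f}(\bar{x}) := w(\bar{x})$, $\bar{G}(\bar{x}, \bar{y}) := -F(\bar{y}, \bar{x})$, and $\bar{h}(\bar{y}) := f(\bar{y})$, so that the problem
$\min_{\bar{x}}\max_{\bar{y}}\{\bar{f}(\bar{x}) + \bar{G}(\bar{x},\bar{y}) - \bar{h}(\bar{y})\}$
is in the form of \eqref{eq:framework_funct} and is (up to sign and variable renaming) equivalent to \eqref{problem_inverse_corollary}. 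Under this identification, $\bar{f}$ inherits $\mu_y$-strong convexity and $L_w$-smoothness from $w$, $\bar{h}$ inherits $\mu_x$-strong convexity and $L_f$-smoothness from $f$, and $\bar{G}$ inherits $L_F$-smoothness from $F$, so Assumption~\ref{assumpt:framework} holds for this auxiliary problem.

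Next I would check that the Corollary's hypotheses on $(\hat{x},\hat{y})$ translate precisely into the hypotheses of Lemma~\ref{lem:saddle_definitions} applied to the auxiliary problem. Setting $\hat{\bar{y}} := \hat{x}$ and $\hat{\bar{x}} := \hat{y}$, the $(\varepsilon_x,\sigma_x)$-solution condition for \eqref{problem_inverse_corollary} in the sense of the Corollary becomes, after the renaming, exactly the $(\varepsilon_y,\sigma_y)$-solution condition of Definition~\ref{def:saddle_solution} for the auxiliary problem with accuracy $\varepsilon_x$. Likewise, the condition that $\hat{y}$ be an $(\varepsilon_y,\sigma_y)$-solution of the inner max over $y$ is exactly the condition that $\hat{\bar{x}}$ be an $(\varepsilon_y,\sigma_y)$-solution of $\max_{\bar{x}}\{-\bar{G}(\bar{x},\hat{\bar{y}})-\bar{f}(\bar{x})\}$, since $-\bar{G}(\bar{x},\hat{\bar{y}}) - \bar{f}(\bar{x}) = F(\hat{x},\bar{x}) - w(\bar{x})$.

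With the reduction in place, I would invoke Lemma~\ref{lem:saddle_definitions} directly. The three bounds from the Lemma applied to the auxiliary problem read
\begin{equation*}
\|\hat{\bar{y}} - \bar{y}_\ast\|^2 \leq \frac{2\varepsilon_x}{\mu_x},\quad
\|\hat{\bar{x}} - \bar{x}_\ast\|^2 \leq 8\Bigl(\frac{L_F}{\mu_y}\Bigr)^2\|\hat{\bar{y}}-\bar{y}_\ast\|^2 + \frac{4\varepsilon_y}{\mu_y},
\end{equation*}
together with the corresponding primal gap bound, and these immediately give the first two inequalities of the Corollary after reverting the substitution $\hat{\bar{y}} = \hat{x}$, $\hat{\bar{x}} = \hat{y}$ and noting $\bar{x}_\ast = y_\ast$, $\bar{y}_\ast = x_\ast$. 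For the gap inequality \eqref{eq:cor_1_2}, the constants translate as $L_{\bar{f}} = L_w$, $L_{\bar{G}} = L_F$, $\mu_{\bar{y}} = \mu_x$, $\mu_{\bar{x}} = \mu_y$, yielding the coefficient $2(L_w + L_F + 2L_F^2/\mu_x)$ and the factor $(\varepsilon_y/\mu_y + 4(L_F/\mu_y)^2 \varepsilon_x/\mu_x)$ as required.

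Finally, I would verify the claimed equality \eqref{eq:cor_1_1}. This is a direct algebraic identity: using strong convex-concavity of $f + F - w$, Sion's minimax theorem applies, and
\[
w(\hat{y}) + \max_x\{-F(x,\hat{y}) - f(x)\} = -\min_x\{f(x) + F(x,\hat{y}) - w(\hat{y})\},
\]
\[
\min_y\max_x\{w(y) - F(x,y) - f(x)\} = -\max_y\min_x\{f(x) + F(x,y) - w(y)\},
\]
so the difference in \eqref{eq:cor_1_1} equals the minimax gap in the second line. The union bound over the two stochastic events yields the probability $1 - \sigma_x - \sigma_y$. The main care in this proof is bookkeeping of signs and strong convexity constants under the swap; there is no essentially new technical obstacle beyond what was handled in Lemma~\ref{lem:saddle_definitions}.
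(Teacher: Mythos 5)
Your proposal is correct and matches the paper's approach exactly: the paper gives no separate proof of this corollary beyond the remark that it follows from Lemma~\ref{lem:saddle_definitions} by exchanging the variables $x$ and $y$, and your reduction (with $\bar{f}=w$, $\bar{h}=f$, $\bar{G}(\bar x,\bar y)=-F(\bar y,\bar x)$) is precisely that exchange, with the constants $L_w, L_F, \mu_x, \mu_y$ correctly tracked through the substitution.
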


The next two assumptions are made for the sake of obtaining a general framework. In this section we assume that two functions which are defined via auxiliary maximization problems and which appear in the loops of our general framework, can be equipped with an inexact oracle. In the following sections in different settings we show how to satisfy these two assumptions and apply the general framework.
\begin{assumption} \label{assumpt:framework_oracle} 
Let $\varepsilon >0$ and $\sigma \in (0,1)$, and a function $g$ be defined as
\begin{align}\label{eq:framework_g_obt_oracle}
g(x) = \max _{y \in  \R^{d_y}}\left\{G(x, y)-h(y) -\frac{H}{2}\|y-y_0\|^2\right\},
\end{align}
 where  $G(x,y)$, $h(y)$ 
 satisfy Assumption~\ref{assumpt:framework}, $H>0$, and  $y_0$ is some fixed point in  $\R^{d_y}$.\\
 Then, we assume that, for 
 any $\delta\left(\varepsilon\right)=\bf{poly}\left(\varepsilon\right)$ and any $\sigma_0\left(\varepsilon, \sigma \right)=\bf{poly}\left(\varepsilon, \sigma \right)$, it is possible to evaluate a $\left(\delta(\varepsilon)/2,\sigma_{0}\left(\varepsilon, \sigma \right)\right)$-solution to this problem and $\left(\delta\left(\varepsilon\right),\sigma_{0}\left(\varepsilon, \sigma \right),2L_G + 4\frac{L_G^2}{\mu_y+H}\right)$-oracle for the function $g$ in the sense of Definition \ref{def:d_L_mu_oracle_prob} with $\delta_1=0$. Moreover, we assume that this solution can be evaluated using 
 $\mathcal{N}_G^y\left( \tau_G, H\right) \mathcal{K}_G^y\left(\varepsilon, \sigma\right)$ calls of the basic oracle $O_G^y$ of $G(x,\cdot)$, $\mathcal{N}_{h}\left( \tau_h, H\right) \mathcal{K}_h\left(\varepsilon, \sigma\right)$ calls of the basic oracle $O_h$ of $h$ and this inexact oracle can be evaluated using 
 $\mathcal{N}_G^y\left( \tau_G, H\right) \mathcal{K}_G^y\left(\varepsilon, \sigma\right)$ calls of the basic oracle $O_G^y$ of $G(x,\cdot)$, $\tau_G$ calls of the basic oracle $O_G^x$ of $G(\cdot ,y)$ and $\mathcal{N}_{h}\left( \tau_h, H\right) \mathcal{K}_h\left(\varepsilon, \sigma\right)$ calls of the basic oracle $O_h$ of $h$, where $\mathcal{K}_G^y\left(\varepsilon, \sigma\right)=\widetilde{O}(1)$ and $\mathcal{K}_h\left(\varepsilon, \sigma\right)=\widetilde{O}(1)$.
 
\end{assumption}

\begin{assumption}\label{assumpt:framework_oracle_x} 
Let $\varepsilon >0$ and $\sigma \in (0,1)$, and a function $r$ be defined as
\begin{align}
\label{eq:framework_r_obt_oracle}
r(y)=  \min_{x \in  \R^{d_x}}\left\{G(x, y)+ f(x) \right\},
\end{align}
 where  $G(x,y),f(x)$ satisfy Assumption~\ref{assumpt:framework}.\\
Then, we assume that, for any $\delta\left(\varepsilon\right)=\bf{poly}\left(\varepsilon\right)$ and any $\sigma_0\left(\varepsilon, \sigma \right)=\bf{poly}\left(\varepsilon, \sigma \right)$,
 it is possible to evaluate a $\left(\delta(\varepsilon)/2,\sigma_{0}\left(\varepsilon, \sigma \right)\right)$-solution to this problem and $\left(\delta\left(\varepsilon\right),\sigma_0\left(\varepsilon, \sigma \right),2L_G + 4\frac{L_G^2}{\mu_x}\right)$-oracle for the function $r$ in the sense of Definition \ref{def:d_L_mu_oracle_prob} with $\delta_1=0$.
 Moreover, we assume that this solution can be evaluated using  $\mathcal{N}_G^x\left( \tau_G\right)\mathcal{K}_G^x\left(\varepsilon, \sigma\right)$ calls of the basic oracle $O_G^x$ for $G(\cdot,y)$, $\mathcal{N}_{f}\left( \tau_f\right)\mathcal{K}_f\left(\varepsilon, \sigma\right)$ calls of the basic oracle $O_f$ for $f$ and this inexact oracle can be evaluated using $\tau_G$ calls of the basic oracle $O_G^y$ of $G(x, \cdot )$, $\mathcal{N}_G^x\left( \tau_G\right)\mathcal{K}_G^x\left(\varepsilon, \sigma\right)$ calls of the basic oracle $O_G^x$ for $G(\cdot,y)$ and $\mathcal{N}_{f}\left( \tau_f\right)\mathcal{K}_f\left(\varepsilon, \sigma\right)$ calls of the basic oracle $O_f$ for $f$, where $\mathcal{K}_G^x\left(\varepsilon, \sigma\right)=\widetilde{O}(1)$ and $\mathcal{K}_f\left(\varepsilon, \sigma\right)=\widetilde{O}(1)$.
\end{assumption}

We use the above two assumptions to develop in this section a general algorithmic framework for problem \eqref{eq:framework_funct}. In the next sections we consider more specific problem formulations \eqref{eq:problem} and \eqref{eq:problem_Gsum} and show, how an application of some particular algorithms for solving maximization problems \eqref{eq:framework_g_obt_oracle} and \eqref{eq:framework_r_obt_oracle} allows us to ensure that Assumptions  \ref{assumpt:framework_oracle} and \ref{assumpt:framework_oracle_x} hold. For now, let us shortly illustrate how this can be achieved by a simple example. Assume, for simplicity, that in \eqref{eq:framework_g_obt_oracle} $h=0$ and the full gradients $\nabla_x G(x,y)$, $\nabla_y G(x,y)$ are available meaning that in Assumption \ref{assumpt:framework} $\tau_G=1$.
Then, the objective in the maximization problem \eqref{eq:framework_g_obt_oracle} has $L_G$-smooth in $y$ part $G(x,y)$ and $H$-strongly concave part $-\frac{H}{2}\|y-y_0\|^2$. Thus, if we apply accelerated gradient method for composite optimization \cite{nesterov2013gradient}, we obtain that a $\delta(\varepsilon)/2$-solution $\tilde{y}_{\delta(\varepsilon)/2}(x)$ to this problem can be obtained in $O\left(\sqrt{\frac{L_G}{H}}\ln\frac{1}{\delta(\varepsilon)}\right)$ iterations of the accelerated method. Each iteration requires to evaluate $\nabla_y G(x,y)$, which means that the number of calls of the basic oracle $O_G^y$ for $G(x,\cdot)$  is $O\left(\tau_G\sqrt{\frac{L_G}{H}}\ln\frac{1}{\delta(\varepsilon)}\right)$. Since $\delta\left(\varepsilon\right)=\bf{poly}\left(\varepsilon\right)$, we obtain that the the number of $O_G^y$ calls is $\mathcal{N}_G^y\left( \tau_G, H\right) \mathcal{K}_G^y\left(\varepsilon, \sigma\right) =O\left(\tau_G\sqrt{\frac{L_G}{H}}\ln\frac{1}{\varepsilon}\right)$, i.e. $\mathcal{K}_G^y\left(\varepsilon, \sigma\right)=\widetilde{O}(1)$. Moreover, by Lemma \ref{lemma:obt_delta_oracle}, $ \nabla_{x} G\left(x, \tilde{y}_{\delta(\varepsilon)/2}(x)\right)$ is $(\delta(\varepsilon), 2 L_G+\frac{2L_G^2}{H})$-oracle for the function $g$, which means that we need also $\tau_G$ calls of the basic oracle $O_G^x$ for $G(\cdot,y)$. Thus, Assumption \ref{assumpt:framework_oracle} holds.



\subsection{General framework for saddle-point problems.}
Next, we describe in detail the resulting structure of our framework which consists of three inner-outer loops. We also summarize the steps of the algorithm in Table~\ref{tabl:saddleproblem_steps}. 
In each loop we apply Algorithm \ref{alg:restarts_inexact_notconvex} with different value of parameter $H$, which defines its complexity. In the next subsection we carefully choose the value of this parameter in each level of the loops. Later, in the next sections this allows us to obtain the desired results on near-optimal complexity bounds for saddle-point problems \eqref{eq:problem} or \eqref{eq:problem_Gsum}.
Further, in each loop we have a target accuracy $\varepsilon$ and a confidence level $\sigma$ which define the required quality of the solution to an optimization problem in this loop. These quantities define the inexactness of the oracle in this loop via inequalities \eqref{delta_poly_CATD_with_prob} and \eqref{sigma_0_poly_CATD_with_prob} and the target accuracy and confidence level for the optimization problem in the next loop via \eqref{varepsilon_poly_CATD_with_prob}, \eqref{sigma_poly_CATD_with_prob}. Due to inexact strong convexity provided by $(\delta,\sigma,L,\mu)$-oracle, Algorithm \ref{alg:restarts_inexact_notconvex} has logarithmic dependence of the complexity on the target accuracy and confidence level (see Theorem \ref{AM:comfortable_view_with_prob}). Since the dependencies on the target accuracy and confidence level in \eqref{delta_poly_CATD_with_prob},  \eqref{sigma_0_poly_CATD_with_prob}, \eqref{varepsilon_poly_CATD_with_prob} and \eqref{sigma_poly_CATD_with_prob} are polynomial, we obtain that the dependency of the complexity in each loop on the target accuracy and confidence level in the first loop, i.e. target accuracy and confidence level for the solution to problem \eqref{eq:framework_funct}, is logarithmic. We hide such logarithmic factors in $\widetilde{O}$ notation.

 \paragraph{\textbf{Loop 1}}\label{subsec_first} $\;$\\
The goal of Loop 1 is to find an $(\varepsilon,\sigma)$-solution of  problem \eqref{eq:main30}, which is considered as a minimization problem in $y$ with the objective given in the form of auxiliary maximization problem in $x$. 
Finding an $(\varepsilon,\sigma)$-solution of this minimization problem gives an approximate solution to the saddle-point problem  \eqref{eq:framework_funct} which is understood in the sense of Definition \ref{def:saddle_solution}.

To solve problem \eqref{eq:main30}, we would like to apply Algorithm \ref{alg:restarts_inexact_notconvex} with 
\begin{align}\label{eq:framework_aux_step1}
    \varphi = 0, \ \ \psi= h(y) + \max_{x \in  \R^{d_x}}\left\{-G(x, y)-f(x)\right\}.
\end{align}
The function $\varphi$ is, clearly, convex and is known exactly.
What makes solving problem \eqref{eq:main30} not straightforward is that the exact value of $\psi$ is not available. At the same time we can construct an inexact oracle for this function.
First, the function $h$ is $\mu_y$-strongly convex, $L_h$-smooth and its exact gradient is available. Second, thanks to Assumption \ref{assumpt:framework_oracle_x}, it is possible to construct a $\left(\delta^{(1)}\left(\varepsilon\right),\sigma^{(1)}_0\left(\varepsilon,\sigma\right),2L_G + 4\frac{L_G^2}{\mu_x}\right)$-oracle for the function $r(y)= \max_{x \in  \R^{d_x}}\left\{-f(x)-G(x, y)\right\}$ for any $\delta^{(1)}\left(\varepsilon\right)=\bf{poly}\left(\varepsilon \right)$ and $\sigma^{(1)}_0\left(\varepsilon,\sigma\right)=\bf{poly}\left(\varepsilon,\sigma \right)$.
Combining these two parts and using Lemma~\ref{lem:sum_oracle}, we obtain that we can construct a $\left(\delta^{(1)}\left(\varepsilon\right),\sigma^{(1)}_0\left(\varepsilon,\sigma\right), L_h + 2L_G + 4\frac{L_G^2}{\mu_x}, \mu_y\right)$-oracle for $\psi$.
Thus, we can apply Algorithm  \ref{alg:restarts_inexact_notconvex} with parameter $H=H_1$, which will be chosen later, to solve problem \eqref{eq:main30}.
Moreover, since Assumption \ref{assumpt:framework_oracle_x} requires  $\delta^{(1)}\left(\varepsilon\right)=\bf{poly}\left(\varepsilon\right)$ and  $\sigma^{(1)}_0\left(\varepsilon,\sigma\right)=\bf{poly}\left(\varepsilon, \sigma \right)$,
which holds for the dependencies in \eqref{delta_poly_CATD_with_prob} and \eqref{sigma_0_poly_CATD_with_prob}, we can choose $\delta^{(1)}\left(\varepsilon\right)$ and $\sigma^{(1)}_0\left(\varepsilon,\sigma\right)$ such that \eqref{delta_poly_CATD_with_prob} and \eqref{sigma_0_poly_CATD_with_prob} hold. So, the first main assumption of Theorem~\ref{AM:comfortable_view_with_prob} holds. 
At the same time, according to Assumptions \ref{assumpt:framework} and \ref{assumpt:framework_oracle_x}, constructing inexact oracle for $\psi$ requires $\tau_h$ calls of the basic oracle for $h$,
$\tau_G$ calls of the basic oracle of $G(x, \cdot )$, $\mathcal{N}_G^x\left( \tau_G\right) \mathcal{K}_G^x\left(\varepsilon, \sigma\right)$ calls of the basic oracle for $G(\cdot,y)$, $\mathcal{N}_{f}\left( \tau_f\right)\mathcal{K}_f\left(\varepsilon, \sigma\right)$ calls of the basic oracle for $f$.

Let us discuss the second main assumption of Theorem~\ref{AM:comfortable_view_with_prob}. 
To ensure that this assumption holds, we need in each iteration of Algorithm \ref{alg:highorder_inexact}, used as a building block in Algorithm \ref{alg:restarts_inexact_notconvex}, to find an $\left(\tilde{\varepsilon}^{(1)}_f\left(\varepsilon\right),\tilde{\sigma}^{(1)}\left(\varepsilon,\sigma\right)\right)$-solution to the auxiliary problem \eqref{prox_step_inexact}, where $\tilde{\sigma}^{(1)}\left(\varepsilon,\sigma\right), \tilde{\varepsilon}^{(1)}_f\left(\varepsilon\right)$ satisfy 
 inequalities \eqref{varepsilon_poly_CATD_with_prob}, \eqref{sigma_poly_CATD_with_prob}.
 For the particular definitions of $\vp$, $\psi$  \eqref{eq:framework_aux_step1} in this Loop, this problem has the following form:  
\begin{equation} \label{eq:sub1}
 y_{k+1}^t = \arg\min _{y \in \R^{d_y}}\left\{ h(y) +\max_{x \in  \R^{d_x}}\left\{-G(x, y)-f(x)\right\}+\frac{H_1}{2} \|y - y_k^{md}\|^2\right\}.
\end{equation}
Below, in the next \hyperref[subsec_second]{paragraph "Loop 2"},  we explain how to solve this auxiliary problem to obtain its $\left(\tilde{\varepsilon}^{(1)}_f\left(\varepsilon\right),\tilde{\sigma}^{(1)}\left(\varepsilon,\sigma\right)\right)$-solution.
To summarize Loop 1, both main assumptions of Theorem~\ref{AM:comfortable_view_with_prob} hold and we can use it to guarantee that we obtain an $(\varepsilon,\sigma)$-solution of problem \eqref{eq:main30}. This requires $\widetilde{O}\left(1+\left( \frac{H_1 }{\mu_{\vp} + \mu_{\psi}} \right)^{\frac{1}{2}}\right)=\widetilde{O}\left(1+\left( \frac{H_1 }{\mu_{y}} \right)^{\frac{1}{2}}\right)$ 
calls to the inexact oracles for $\vp$ and for $\psi$, and the same number of times solving the auxiliary problem \eqref{eq:sub1}.
Combining this oracle complexity with the cost of calculating inexact oracles for $\vp$ and for $\psi$, we obtain that solving problem \eqref{eq:main30} requires $\widetilde{O}\left(1+\left( \frac{H_1 }{\mu_{y}} \right)^{\frac{1}{2}}\right)\tau_h$ calls of the basic oracle for $h$, $\widetilde{O}\left(1+\left( \frac{H_1 }{\mu_{y}} \right)^{\frac{1}{2}}\right)\tau_G$ calls of the basic oracle of $G(x, \cdot )$,
$\widetilde{O}\left(1+\left( \frac{H_1 }{\mu_{y}} \right)^{\frac{1}{2}}\right)\mathcal{N}_G^x\left( \tau_G\right)\mathcal{K}_G^x\left(\varepsilon, \sigma\right)$ calls of the basic oracle for $G(\cdot,y)$, $\widetilde{O}\left(1+\left( \frac{H_1 }{\mu_{y}} \right)^{\frac{1}{2}}\right)\mathcal{N}_{f}\left( \tau_f\right)\mathcal{K}_f\left(\varepsilon, \sigma\right)$ calls of the basic oracle for $f$. 
The only remaining thing is to provide an inexact solution to problem \eqref{eq:sub1} and, next, we move to the Loop 2 to explain how to guarantee this. Note that we need to solve problem \eqref{eq:sub1} $\widetilde{O}\left(1+\left( \frac{H_1 }{\mu_{y}} \right)^{\frac{1}{2}}\right)$ times.

\paragraph{\textbf{Loop 2}}\label{subsec_second}$\;$\\
As mentioned in the previous Loop 1,
in each iteration of Algorithm \ref{alg:restarts_inexact_notconvex} in Loop 1 
we need many times to find  an $(\varepsilon'_2,\sigma'_2)$-solution of the auxiliary problem \eqref{eq:sub1}, where we denoted for simplicity $\sigma'_2=\tilde{\sigma}^{(1)}\left(\varepsilon,\sigma\right)$ and $\varepsilon'_2=\tilde{\varepsilon}^{(1)}_f\left(\varepsilon\right)$. To do this, we reformulate problem \eqref{eq:sub1} by changing the order of minimization and maximization as follows:
\begin{align}
\min _{y \in \R^{d_y}}\left\{ h(y) +\frac{H_1}{2} \|y - y_k^{md}\|^2 +\max_{x \in  \R^{d_x}}\left\{-G(x, y)-f(x)\right\}\right\} \\
=\min _{y \in \R^{d_y}}\max_{x \in  \R^{d_x}}\left\{ h(y)  -G(x, y)-f(x) +\frac{H_1}{2} \|y - y_k^{md}\|^2\right\} \\
 = \max_{x \in  \R^{d_x}} \min _{y \in \R^{d_y}}\left\{ h(y)  -G(x, y)-f(x) +\frac{H_1}{2} \|y - y_k^{md}\|^2\right\}\\
= - \min _{x \in \R^{d_x}}\left\{ f (x) +\max _{y \in  \R^{d_y}}\left\{G(x, y)-h(y) - \frac{H_1}{2} \|y - y_k^{md}\|^2\right\}\right\} \label{eq:problem_step2}   
\end{align}
and obtain an $(\varepsilon'_2,\sigma'_2)$-solution of the problem \eqref{eq:sub1} by solving minimization problem \eqref{eq:problem_step2}. Assume that we can find an $(\varepsilon_2,\sigma_2)$-solution $\hat{x}$ of the minimization problem \eqref{eq:problem_step2} in the sense of Definition \ref{def:saddle_solution}. Then, according to Assumption \ref{assumpt:framework_oracle}, we can also obtain a point $\hat{y}$ which is $(\bar{\delta}(\varepsilon_2)/2,\bar{\sigma}_0(\sigma_2))$-solution to the problem 
\begin{align}\label{eq:loop2_probl_for_changing_delta_L}
    \max _{y \in  \R^{d_y}}\left\{G(x, y)-h(y) - \frac{H_1}{2} \|y - y_k^{md}\|^2\right\},
\end{align}
where $\bar{\delta}(\varepsilon_2),\bar{\sigma}_0(\sigma_2)$ satisfy the following polynomial dependencies
\begin{align}
& \bar{\delta}(\varepsilon_2) \leq \frac{H_1+\mu_y}{4\mu_x\left(\frac{H_1+\mu_y}{4L_G}\right)^2} \varepsilon_2, \;\; \bar{\sigma}_0(\sigma_2) \leq \sigma_2.
\label{eq:loop2_1}
\end{align}
If we choose $\varepsilon_2,\sigma_2,\bar{\delta}(\varepsilon_2),\bar{\sigma}_0(\sigma_2)$ satisfying
\begin{align}
    & \varepsilon_2\leq  \left(\frac{H_1+\mu_y}{4L_G}\right)^2 \frac{\mu_x}{L_h+H_1+L_G+\frac{2L_G^2}{\mu_x}} \varepsilon'_2  \label{eq:step_2_choos_eps_fin},\\
    &\sigma_2\leq \frac{\sigma'_2}{2},\;\;\;  \label{eq:step_2_choos_sigma_fin}\\
    &\bar{\sigma}_0(\sigma_2)  \stackrel{\eqref{eq:loop2_1}}{\leq} \sigma_2 \leq \frac{\sigma'_2}{2}, \;\;\; \bar{\delta}(\varepsilon_2) \leq \frac{H_1+\mu_y}{4\mu_x\left(\frac{H_1+\mu_y}{4L_G}\right)^2} \varepsilon_2\stackrel{\eqref{eq:loop2_1}}{\leq} \frac{H_1+\mu_y}{4L_h+4H_1+4L_G+\frac{8L_G^2}{\mu_x}}\varepsilon'_2 \label{eq:step_2_choos_delta_fin},
\end{align}
then
\begin{align}
    &2\frac{L_h+H_1+L_G+\frac{2L_G^2}{\mu_x}}{H_1+\mu_y}\bar{\delta}(\varepsilon_2) + 8 \left(\frac{L_G}{H_1+\mu_y}\right)^2 \frac{L_h+H_1+L_G+\frac{2L_G^2}{\mu_x}}{\mu_x}\varepsilon_2\leq \varepsilon'_2, \label{eq:step_2_choos_eps}\\
    &\sigma_2+\bar{\sigma}_0(\sigma_2) \leq \sigma'_2. \label{eq:step_2_choos_sigma}
\end{align}
Thus, applying Corollary~\ref{corollary:change_maxmin} to minimization problem \eqref{eq:problem_step2} with $F(x,y)=G(x,y)$, $w(y)=h(y)+\frac{H_1}{2} \|y - y_k^{md}\|^2$, $\varepsilon_x=\varepsilon_2$, $\sigma_x=\sigma_2$, $\varepsilon_y=\bar{\delta}(\varepsilon_2)$, $\sigma_y=\bar{\sigma}_0(\sigma_2)$ we obtain (see \eqref{eq:cor_1_1}, \eqref{eq:cor_1_2}) that $\hat{y}$ satisfies inequality 
\[
h(\hat{y}) +\frac{H_1}{2} \|\hat{y} - y_k^{md}\|^2
 + \max_{x \in \R^{d_x}} \{ - G(x, \hat{y}) - f(x)\} - \min_{y \in \R^{d_y}}\max_{x \in \R^{d_x}} \{h(y) +\frac{H_1}{2} \|y - y_k^{md}\|^2 -G(x, y) - f(x) \} \leq \varepsilon'_2
\]
with probability $\sigma'_2$. Thus, by Definition \ref{def:saddle_solution} it is an $(\varepsilon'_2,\sigma'_2)$-solution of the problem \eqref{eq:sub1}.
By Assumption \ref{assumpt:framework_oracle}, calculation of $\hat{y}$ requires  $\mathcal{N}_G^y\left( \tau_G, H\right) \mathcal{K}_G^y\left(\varepsilon_2, \sigma_2\right)$ calls of the basic oracle $O_G^y$ of $G(x,\cdot)$, $\tau_G$ calls of the basic oracle $O_G^x$ of $G(\cdot ,y)$ and $\mathcal{N}_{h}\left( \tau_h, H\right) \mathcal{K}_h\left(\varepsilon_2, \sigma_2\right)$ calls of the basic oracle $O_h$ of $h$.
Our next step is to provide an $(\varepsilon_2,\sigma_2)$-solution to minimization problem \eqref{eq:problem_step2}, for which we again apply Algorithm  \ref{alg:restarts_inexact_notconvex}, but this time with 
\begin{align}\label{eq:framework_aux_step2}
    \varphi =\max _{y \in  \R^{d_y}}\left\{G(x, y)-h(y) - \frac{H_1}{2} \|y - y_k^{md}\|^2\right\} , \ \ \psi = f(x).
\end{align}
The function $\psi$  is $\mu_x$-strongly convex, $L_f$-smooth and its exact gradient is available.
What makes solving problem \eqref{eq:problem_step2} not straightforward is that the exact value of $\varphi$ is not available. At the same time we can construct an inexact oracle for this function.
Thanks to Assumption \ref{assumpt:framework_oracle}, it is possible to construct a $\left(\delta^{(2)}\left(\varepsilon_2\right), \sigma^{(2)}_0\left(\varepsilon_2,\sigma_2\right), 2L_G + 4\frac{L_G^2}{H_1+\mu_y}\right)$-oracle for the function 
$\varphi$ for any $\delta^{(2)}\left(\varepsilon_2\right)=\textbf{poly}\left(\varepsilon_2 \right)$ and $\sigma^{(2)}_0\left(\varepsilon_2,\sigma_2\right)=\textbf{poly}\left(\varepsilon_2,\sigma_2 \right)$.
Using Lemma \ref{lem:sum_oracle}, we obtain that we can construct \\ a  $\left(\delta^{(2)}\left(\varepsilon_2\right),\sigma^{(2)}_0\left(\varepsilon_2,\sigma_2\right), L_f + 2L_G + 4\frac{L_G^2}{H_1+\mu_y}, \mu_x\right)$-oracle for the function $\vp+\psi$.
Thus, we can apply Algorithm  \ref{alg:restarts_inexact_notconvex} with parameter $H=H_2 \geq 2L_G+4\frac{L_G^2}{H_1+\mu_y}$, which will be chosen later, to solve the problem \eqref{eq:problem_step2}.
Moreover, since Assumption \ref{assumpt:framework_oracle} requires  $\delta^{(2)}\left(\varepsilon_2\right)=\text{\bf{poly}}\left(\varepsilon_2 \right)$ and  $\sigma^{(2)}_0\left(\varepsilon_2,\sigma_2\right)=\text{\bf{poly}}\left(\varepsilon_2,\sigma_2 \right)$, which holds for the dependencies in \eqref{delta_poly_CATD_with_prob} and \eqref{sigma_0_poly_CATD_with_prob}, we can choose $\delta^{(2)}\left(\varepsilon_2\right)$ and $\sigma^{(2)}_0\left(\varepsilon_2,\sigma_2\right)$ such that \eqref{delta_poly_CATD_with_prob} and \eqref{sigma_0_poly_CATD_with_prob} hold. So, the first main assumption of Theorem~\ref{AM:comfortable_view_with_prob} holds. 
At the same time, according to Assumptions \ref{assumpt:framework} and \ref{assumpt:framework_oracle}, constructing inexact oracle for $\vp$ requires $\mathcal{N}_G^y\left( \tau_G, H_1\right) \mathcal{K}_G^y\left(\varepsilon_2, \sigma_2\right)$ calls of the basic oracle for $G(x,\cdot)$, $ \tau_G$ calls of the basic oracle for $G(\cdot, y)$, $\mathcal{N}_{h}\left( \tau_h, H_1\right)\mathcal{K}_h\left(\varepsilon_2, \sigma_2\right)$ calls of the basic oracle for $h$, and constructing exact oracle for $\psi=f$ requires $\tau_f$ calls of the basic oracle for $f$.

Let us discuss the second main assumption of Theorem~\ref{AM:comfortable_view_with_prob}. 
To ensure that this assumption holds, we need in each iteration of Algorithm \ref{alg:highorder_inexact}, used as a building block in Algorithm \ref{alg:restarts_inexact_notconvex}, to find $\left(\tilde{\varepsilon}^{(2)}_f\left(\varepsilon_2\right),\tilde{\sigma}^{(2)}\left(\varepsilon_2,\sigma_2\right)\right)$-solution to the auxiliary problem \eqref{prox_step_inexact}, where $\tilde{\sigma}^{(2)}\left(\varepsilon_2,\sigma_2\right), \tilde{\varepsilon}^{(2)}_f\left(\varepsilon_2\right)$ satisfy 
 inequalities \eqref{varepsilon_poly_CATD_with_prob}, \eqref{sigma_poly_CATD_with_prob}.
 For the particular definitions of $\vp$, $\psi$  \eqref{eq:framework_aux_step2} in this Loop, this problem has the following form:  
\begin{align}
x_{l+1}^t &=\arg\min _{u \in  \R^{d_x}} \{ \la \nabla \varphi_{\delta^{(2)},2L_{\vp}}(x_l^{md}),x-x_l^{md}\ra+\psi(x) + \frac{H_2}{2}\|x-x_l^{md}\|_2^2 \}\\
&= \arg\min _{x \in \R^{d_x}}\left\{ \la \nabla  g_{\delta^{(2)},2L_{g}}(x_l^{md}),x-x_l^{md}\ra +f(x)+ \frac{H_2}{2}\|x - x_l^{md}\|^2\right\}\label{eq:sub2},
\end{align}
where $g(x)= \max _{y \in  \R^{d_y}}\left\{G(x, y)+h(y) - \frac{H_1}{2} \|y - y_k^{md}\|^2 \right\},L_g=L_G+2\frac{L_G^2}{H_1+\mu_y}$.
Below, in the next  \hyperref[subsec_third]{paragraph "Loop 3"},  we explain how to solve this auxiliary problem to obtain its \\ $\left(\tilde{\varepsilon}^{(2)}_f\left(\varepsilon_2\right),\tilde{\sigma}^{(2)}\left(\varepsilon_2,\sigma_2\right)\right)$-solution.

To summarize Loop 2, both main assumptions of Theorem~\ref{AM:comfortable_view_with_prob} hold and we can use it to guarantee that we obtain an $(\varepsilon'_2,\sigma'_2)$-solution of the auxiliary problem \eqref{eq:sub1}. This requires one time to solve the problem~\eqref{eq:loop2_probl_for_changing_delta_L}, which, by Assumption \ref{assumpt:framework_oracle} has the same cost as evaluating inexact oracle for the function $\varphi$.
Further, we need $O\left(\left( 1+\left( \frac{H_2 }{\mu_{\vp} + \mu_{\psi}} \right)^{\frac{1}{2}}\right) \log \varepsilon_2^{-1}\right)=O\left(\left(1+\left( \frac{H_2 }{\mu_{x}} \right)^{\frac{1}{2}} \right)\log \varepsilon_2^{-1}\right)$ calls to the inexact oracles for $\vp$ and for $\psi$, and the same number of times solving the auxiliary problem \eqref{eq:sub2}.
Combining this oracle complexity with the cost of calculating inexact oracles for $\vp$ and for $\psi$, we obtain that solving problem \eqref{eq:problem_step2} requires $O\left(\left(1+\left( \frac{H_2 }{\mu_{x}} \right)^{\frac{1}{2}} \right)\log \varepsilon_2^{-1}\right)\tau_f$ calls of the basic oracle for $f$, $O\left(\left(1+\left( \frac{H_2 }{\mu_{x}} \right)^{\frac{1}{2}} \right)\log \varepsilon_2^{-1}\right)\mathcal{N}_G^y\left( \tau_G, H_1\right) \mathcal{K}_G^y\left(\varepsilon_2, \sigma_2\right)$  calls of the basic oracle for $G(x,\cdot)$, $O\left(\left(1+\left( \frac{H_2 }{\mu_{x}} \right)^{\frac{1}{2}} \right)\log \varepsilon_2^{-1}\right) \tau_G$ calls of the basic oracle for $G(\cdot, y)$,\\
$O\left(\left(1+\left( \frac{H_2 }{\mu_{x}} \right)^{\frac{1}{2}} \right)\log \varepsilon_2^{-1}\right)\mathcal{N}_{h}\left( \tau_h, H_1\right)\mathcal{K}_h\left(\varepsilon_2, \sigma_2\right)$ calls of the basic oracle for $h$. The only remaining thing is to provide an inexact solution to problem \eqref{eq:sub2} and, next, we move to Loop 3 to explain how to guarantee this. Note that we need to solve problem \eqref{eq:sub2} $O\left(\left(1+\left( \frac{H_2 }{\mu_{x}} \right)^{\frac{1}{2}} \right)\log \varepsilon_2^{-1}\right)$ times.

\paragraph{\textbf{Loop 3}}\label{subsec_third}$\;$\\
As mentioned in the previous Loop 2,
in each iteration of Algorithm \ref{alg:restarts_inexact_notconvex} in 
Loop 2 we need to find many times an $(\varepsilon_3,\sigma_3)$-solution of the auxiliary problem \eqref{eq:sub2}, where we denoted for simplicity $\sigma_3=\tilde{\sigma}^{(2)}\left(\varepsilon_2,\sigma_2\right)$ and $\varepsilon_3=\tilde{\varepsilon}^{(2)}_f\left(\varepsilon_2\right)$.
To solve problem \eqref{eq:sub2}, we would like to apply Algorithm  \ref{alg:restarts_inexact_notconvex} with
\begin{align}\label{eq:framework_aux_step3} 
\varphi=f(x), \;\;\; \psi =\la \nabla  g_{\delta^{(2)},2L_{g}}(x_l^{md}),x-x_l^{md}\ra + \frac{H_2}{2}\|x - x_l^{md}\|^2,
\end{align}
where $g(x)= \max _{y \in  \R^{d_y}}\left\{G(x, y)+h(y) - \frac{H_1}{2} \|y - y_k^{md}\|^2 \right\},L_g=L_G+2\frac{L_G^2}{H_1+\mu_y}$.

The function $\vp$  is $\mu_x$-strongly convex, $L_f$-smooth and its exact gradient is available. The function $\psi$ is, clearly, $H_2$-strongly convex, $H_2$-smooth and its exact gradient is available. Also we can obtain the exact gradient for the function $\vp +\psi$.
Thus, we can apply Algorithm  \ref{alg:restarts_inexact_notconvex} with parameter $H=H_3 \geq L_f$, which will be chosen later, to solve problem \eqref{eq:sub2}.
The first main assumption of Theorem~\ref{AM:comfortable_view_with_prob}, clearly, holds. 
At the same time, 
constructing exact oracle for $\varphi =f$ requires 
$\tau_f$ calls of the basic oracle for $f$. At the same time, no calls to the oracle for $G(\cdot,y),G(x,\cdot), h$ are needed.

Let us discuss the second main assumption of Theorem~\ref{AM:comfortable_view_with_prob}. 
To ensure that this assumption holds, we need in each iteration of Algorithm \ref{alg:highorder_inexact}, used as a building block in Algorithm \ref{alg:restarts_inexact_notconvex}, to find $\left(\tilde{\varepsilon}^{(3)}_f\left(\varepsilon_3\right),\tilde{\sigma}^{(3)}\left(\varepsilon_3,\sigma_3\right)\right)$-solution to the auxiliary problem \eqref{prox_step_inexact}, where $\tilde{\sigma}^{(3)}\left(\varepsilon_3,\sigma_3\right), \tilde{\varepsilon}^{(3)}_f\left(\varepsilon_3\right)$ satisfy 
 inequalities \eqref{varepsilon_poly_CATD_with_prob}, \eqref{sigma_poly_CATD_with_prob}.
 For the particular definitions of $\vp$, $\psi$ in \eqref{eq:framework_aux_step3} in this Loop, this problem has the following form:  
\begin{align}
    &u^t_{m+1} = \arg\min _{u \in  \R^{d_x}} \{ \la \nabla \varphi(u_m^{md}),u-u_m^{md}\ra+\psi(u) + \frac{H_3}{2}\|u-u_m^{md}\|_2^2 \}\nonumber \\
    &= \arg\min _{u \in  \R^{d_x}} \{ \la \nabla  f(u_m^{md}),u-u_m^{md}\ra+\la \nabla  g_{\delta^{(2)},2L_{g}}(x_l^{md}),u-x_l^{md}\ra + \frac{H_2}{2}\|u - x_l^{md}\|^2 +\frac{H_3}{2}\|u-u_m^{md}\|_2^2\},\label{eq:SeparateX01}
\end{align}
where $g(x)= \max _{y \in  \R^{d_y}}\left\{G(x, y)+h(y) - \frac{H_1}{2} \|y - y_k^{md}\|^2 \right\},L_g=L_G+2\frac{L_G^2}{H_1+\mu_y}$.
This quadratic auxiliary problem \eqref{eq:SeparateX01} can be solved explicitly and exactly since at the point it needs to be solved, $\nabla  g_{\delta^{(2)},2L_{g}}(x_l^{md})$ is already calculated. Thus, the second main assumption of Theorem~\ref{AM:comfortable_view_with_prob} is satisfied with  $\tilde{\sigma}^{(3)}\left(\varepsilon_3,\sigma_3\right)=0$ and $ \tilde{\varepsilon}^{(3)}_f\left(\varepsilon_3\right)=0$, which clearly satisfy \eqref{delta_poly_CATD_with_prob} and \eqref{sigma_0_poly_CATD_with_prob}.


To summarize Loop 3, both main assumptions of Theorem~\ref{AM:comfortable_view_with_prob} hold and we can use it to guarantee that we obtain an $(\varepsilon_3,\sigma_3)$-solution of the auxiliary problem \eqref{eq:sub2}. This requires 
$O\left(\left(1+\left( \frac{H_3 }{\mu_{\vp} + \mu_{\psi}} \right)^{\frac{1}{2}}\right)\log \varepsilon_3^{-1} \right)=O\left(\left(1+\left( \frac{H_3 }{H_2} \right)^{\frac{1}{2}}\right)\log \varepsilon_3^{-1}\right)$ calls to the inexact oracles for $\vp$ and for $\psi$, and the same number of times solving the auxiliary problem \eqref{eq:SeparateX01}.
Combining this oracle complexity with the cost of calculating inexact oracles for $\vp$ and for $\psi$, 
we obtain that solving problem \eqref{eq:sub2} requires   
$O\left(\left(1+\left( \frac{H_3 }{H_2} \right)^{\frac{1}{2}}\right)\log \varepsilon_3^{-1}\right)\tau_f$ calls of the basic oracle for $f$. 

\begingroup
\setlength{\tabcolsep}{6pt} 
\renewcommand{\arraystretch}{1.5} 
\begin{table}\centering
\begin{tabular}{|c|c|c|c|c|c|}
\hline
            & \begin{tabular}[c]{@{}c@{}}\textbf{Goal} \end{tabular} & \begin{tabular}[c]{@{}c@{}} \textbf{$\varphi ,\psi$}\end{tabular} & \begin{tabular}[c]{@{}c@{}}$\mu$ in Th.\ref{AM:comfortable_view_with_prob} \end{tabular} & \begin{tabular}[c]{@{}c@{}}\textbf{Iteration number}\\
            \textbf{of Algorithm \ref{alg:highorder_inexact}}\\
            (Th. \ref{AM:comfortable_view_with_prob})
            \end{tabular} & \begin{tabular}[c]{@{}c@{}}\textbf{Each iteration}  \\\textbf{requires}\end{tabular}                                                                                                          \\ \hline
Loop 1  & \begin{tabular}[c]{@{}c@{}} $(\varepsilon,\sigma)$-solution \\of problem $\eqref{eq:main30}$ \end{tabular}                                        & $\eqref{eq:framework_aux_step1}$                                                 & $\mu_y$                                                                                  & $\widetilde{O} \left(1+ \sqrt{H_1/\mu_y} \right)$                 & \begin{tabular}[c]{@{}c@{}}Find  $(\varepsilon_1,\sigma_1)$-solution of $\eqref{eq:sub1}$\\ and calculate \\$\left(\delta^{(1)},  L_{\psi}\right)$-oracle of $\psi(y)$\end{tabular}       \\ \hline
Loop 2 & \begin{tabular}[c]{@{}c@{}} $(\varepsilon_1,\sigma_1)$-solution \\of problem $\eqref{eq:problem_step2}$ \end{tabular}                                  & $\eqref{eq:framework_aux_step2}$                                                 & $\mu_x$                                                                                  & $\widetilde{O}(1+\sqrt{H_2/\mu_x})$                               & \begin{tabular}[c]{@{}c@{}}Find  $(\varepsilon_2,\sigma_2)$-solution of $\eqref{eq:sub2}$\\ and calculate \\ $\left(\delta^{(2)}, L_{\vp}\right)$-oracle of $\vp(x)$\end{tabular}        \\ \hline
Loop 3  & \begin{tabular}[c]{@{}c@{}} $(\varepsilon_2,\sigma_2)$-solution \\of problem $\eqref{eq:sub2}$  \end{tabular}                                           & $\eqref{eq:framework_aux_step3}$                                                 & $H_2$                                                                                    & $\widetilde{O}(1+\sqrt{H_3/H_2})$                                 & \begin{tabular}[c]{@{}c@{}}Find  $(\varepsilon_3,\sigma_3)$-solution of $\eqref{eq:SeparateX01}$\end{tabular} \\ \hline
\end{tabular}

\caption{Summary of the three loops of the general framework described above. 
}
\label{tabl:saddleproblem_steps}
\end{table}

\subsection{Complexity of the general framework}

Below we formally finalize in Theorem \ref{theorem:general_framework} the analysis of the general framework by carefully combining the bounds obtained in \hyperref[subsec_first]{Loop 1}-\hyperref[subsec_third]{Loop 3} to obtain the final bounds for the total number of oracle calls for each part $f$, $G$, $h$ of the objective in problem \eqref{eq:framework_funct}. We will use Theorem \ref{theorem:general_framework} in the following sections to obtain complexity results for problems with structure as in \eqref{eq:problem} and \eqref{eq:problem_Gsum}. 


\begin{theorem} \label{theorem:general_framework}
Let Assumptions~\ref{assumpt:framework}, \ref{assumpt:framework_oracle}, \ref{assumpt:framework_oracle_x} hold. Then, execution of the general optimization framework described in \hyperref[subsec_first]{Loop 1}-\hyperref[subsec_third]{Loop 3}
with 
$$
H_1 = 2L_G, H_2 = 2\left(L_G + \frac{2L_G^2}{\mu_y + H_1}\right), H_3 = 2L_f
$$
generates an $(\varepsilon,\sigma)$-solution to the problem \eqref{eq:framework_funct} in the sense of Definition \ref{def:saddle_solution}.
Moreover, for the number of basic oracle calls it holds that

\begin{align}
    &\text{Number of calls of basic oracle $O_{f}$ for }  f \text{ is}:\nonumber\\ 
    & \widetilde{O} \left(\left( 1 + \sqrt{\frac{L_G}{\mu_y}}\right) \left( \mathcal{N}_f\left( \tau_f\right) + \left( 1 + \sqrt{\frac{L_G}{\mu_x}}\right)\left( 1 + \sqrt{\frac{L_f}{L_G}}\right)\cdot \tau_f\right)
    \right),\label{eq:framework_f}\\
    &\text{Number of calls of basic oracle  $O_{h}$ for } h \text{ is}:\nonumber\\ 
    &\widetilde{O} \left( \left( 1 + \sqrt{\frac{L_G}{\mu_y}}\right) \left( \tau_h +   \left( 1 + \sqrt{\frac{L_G}{\mu_x}}\right)\mathcal{N}_h\left( \tau_h, 2L_G\right)\right)
    \right),\label{eq:framework_h}\\
    &\text{Number of calls of basic oracle $O^{x}_{G}$ for } G(\cdot, y)\text{ is}:\nonumber\\ 
    & \widetilde{O} \left( \left( 1 + \sqrt{\frac{L_G}{\mu_y}}\right) \left( \mathcal{N}_G^x\left( \tau_G\right) + \left( 1 + \sqrt{\frac{L_G}{\mu_x}}\right) \tau_G\right)
    \right),\label{eq:framework_G_x}\\ 
    &\text{Number of calls of basic oracle $O^{y}_{G}$ for } G(x,\cdot)\text{ is}:\nonumber\\  
    & \widetilde{O} \left( \left( 1 + \sqrt{\frac{L_G}{\mu_y}}\right) \left( \tau_G + \left( 1 + \sqrt{\frac{L_G}{\mu_x}}\right) \mathcal{N}_G^y\left( \tau_G, 2L_G\right)\right)
    \right).
    \label{eq:framework_G_y}
\end{align}
\end{theorem}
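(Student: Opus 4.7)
The plan is to apply Theorem~\ref{AM:comfortable_view_with_prob} recursively at the three loop levels of the framework described in Loop~1--Loop~3 (summarized in Table~\ref{tabl:saddleproblem_steps}), and then multiply the per-loop iteration counts by the corresponding per-iteration basic oracle costs. Correctness is checked as follows. In Loop~1, combining the probabilistic inexact oracle for $r$ supplied by Assumption~\ref{assumpt:framework_oracle_x} with the exact oracle of $h$ via Lemma~\ref{lem:sum_oracle} furnishes the inexact $(\delta^{(1)},\sigma^{(1)}_0, L_h+2L_G+4L_G^2/\mu_x,\mu_y)$-oracle for the Loop~1 composite $\psi$ needed by the first hypothesis of Theorem~\ref{AM:comfortable_view_with_prob}; Loop~2 is used to produce the inexact solution to the subproblem~\eqref{eq:sub1} required by the second hypothesis, where the passage from~\eqref{eq:sub1} to~\eqref{eq:problem_step2} via Corollary~\ref{corollary:change_maxmin} together with the choices~\eqref{eq:step_2_choos_eps_fin}--\eqref{eq:step_2_choos_delta_fin} yields a $(\tilde\varepsilon_f^{(1)},\tilde\sigma^{(1)})$-solution with the required polynomial dependencies. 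The same pattern is repeated inside Loop~2, this time invoking Assumption~\ref{assumpt:framework_oracle} for the inexact oracle of $\varphi$ and using Loop~3 to solve~\eqref{eq:sub2}. Finally the innermost subproblem~\eqref{eq:SeparateX01} in Loop~3 is quadratic and is solved exactly in closed form, so the recursion terminates. Since every accuracy and confidence requirement passed down one level is polynomial in that of the level above, and Theorem~\ref{AM:comfortable_view_with_prob} only needs such polynomial dependencies, the hypotheses are satisfied at all three levels and the final output is an $(\varepsilon,\sigma)$-solution in the sense of Definition~\ref{def:saddle_solution}.

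Next I would substitute $H_1=2L_G$, $H_2=2(L_G+2L_G^2/(\mu_y+H_1))$ and $H_3=2L_f$ to simplify the iteration counts. Since $\mu_y+H_1\ge H_1=2L_G$, we have $H_2\le 4L_G$, so the Loop~2 count $\widetilde{O}(1+\sqrt{H_2/\mu_x})$ reduces to $\widetilde{O}(1+\sqrt{L_G/\mu_x})$, the Loop~1 count becomes $\widetilde{O}(1+\sqrt{L_G/\mu_y})$, and the Loop~3 count becomes $\widetilde{O}(1+\sqrt{L_f/L_G})$.

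Finally I would assemble the four oracle complexities. A Loop~1 iteration invokes the $\psi$-oracle (by Assumption~\ref{assumpt:framework_oracle_x}: $\mathcal{N}_G^x\cdot \widetilde{O}(1)$ calls of $O_G^x$, $\tau_G$ calls of $O_G^y$, $\mathcal{N}_f\cdot\widetilde{O}(1)$ calls of $O_f$) plus $\tau_h$ calls of $O_h$ for the exact $h$-gradient, and runs Loop~2 once. A Loop~2 iteration invokes the $\varphi$-oracle (by Assumption~\ref{assumpt:framework_oracle} at level $H_1=2L_G$: $\mathcal{N}_G^y(\tau_G,2L_G)\cdot\widetilde{O}(1)$ calls of $O_G^y$, $\tau_G$ calls of $O_G^x$, $\mathcal{N}_h(\tau_h,2L_G)\cdot\widetilde{O}(1)$ calls of $O_h$) plus $\tau_f$ calls of $O_f$ for the exact $f$-gradient, and runs Loop~3 once. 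A Loop~3 iteration uses $\tau_f$ calls of $O_f$ and no other basic oracle. Multiplying the three iteration counts outward, grouping by basic oracle, and absorbing polylogarithmic factors into $\widetilde{O}(\cdot)$ yields the four claimed bounds~\eqref{eq:framework_f}--\eqref{eq:framework_G_y}. The main obstacle is bookkeeping rather than conceptual: at each transition one must check that the polynomial-in-$\varepsilon,\sigma$ requirements on the lower-level accuracy and confidence imposed by Theorem~\ref{AM:comfortable_view_with_prob} are compatible with the polynomial-dependency hypotheses of Assumptions~\ref{assumpt:framework_oracle} and~\ref{assumpt:framework_oracle_x}, so that the factors $\mathcal{K}_\bullet(\cdot,\cdot)=\widetilde{O}(1)$ remain polylogarithmic in the top-level $\varepsilon$ and $\sigma$ and can be safely hidden inside $\widetilde{O}$.
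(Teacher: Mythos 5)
Your proposal is correct and follows essentially the same route as the paper's own proof: first verify (as the paper does in its ``Step 1'') that the accuracy and confidence parameters propagate polynomially through the three loops so that all $\mathcal{K}_\bullet$ factors and $\log\varepsilon_i^{-1}$ terms are $\widetilde{O}(1)$, then (as in ``Step 2'') multiply iteration counts by per-iteration oracle costs for each of the four basic oracles and substitute $H_1=2L_G$, $H_2\le 4L_G$, $H_3=2L_f$. Your per-loop oracle accounting matches the paper's telescoping sums exactly, so nothing further is needed.
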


\begin{proof}
By construction, as an output of Loop 1 we obtain an $(\varepsilon,\sigma)$-solution to the problem \eqref{eq:framework_funct} according to Definition \ref{def:saddle_solution}.

We prove the estimates for the numbers of oracle calls in two steps.
The first step is to formally prove that  in each loop the dependence of the number of oracle calls on the target accuracy $\varepsilon$ and a confidence level $\sigma$ is logarithmic. The second step is to multiply the estimates for the number of oracle calls between loops and choose the parameters $H_1$, $H_2$, $H_3$.

\textbf{Step 1. Polynomial dependence.}
The goal of this technical step is to prove that 
\begin{align}
&\varepsilon_i(\varepsilon) = \textbf{poly}\left(\varepsilon\right), \sigma_i\left(\varepsilon,\sigma\right) = \textbf{poly}\left(\varepsilon, \sigma \right),
\tilde{\sigma}^{(i)}\left(\varepsilon,\sigma\right) = \textbf{poly}\left(\varepsilon, \sigma \right), \sigma_0^{(i)}\left(\varepsilon,\sigma\right) = \textbf{poly}\left(\varepsilon, \sigma \right),\label{proof_polynomial_dependence}\\  &\tilde{\varepsilon}^{(i)}_f\left(\varepsilon\right) = \textbf{poly}\left(\varepsilon\right), \delta^{(i)} \left(\varepsilon\right) = \textbf{poly}\left(\varepsilon \right), \varepsilon_2' = \textbf{poly}\left(\varepsilon\right), \sigma_2' = \textbf{poly}\left(\varepsilon, \sigma\right), \bar{\delta}(\varepsilon_2)=\textbf{poly}\left(\varepsilon\right),\bar{\sigma}_0(\sigma_2)= \textbf{poly}\left(\varepsilon, \sigma\right)\nonumber
\end{align}
where $i = 1, 2, 3$.
For $i = 1$, according the polynomial dependencies \eqref{delta_poly_CATD_with_prob}, \eqref{sigma_0_poly_CATD_with_prob}, \eqref{varepsilon_poly_CATD_with_prob}, \eqref{sigma_poly_CATD_with_prob}  we obtain the polynomial dependencies
 \begin{align*}
\varepsilon_1(\varepsilon) = \textbf{poly}\left(\varepsilon\right), \sigma_1\left(\varepsilon,\sigma\right) = \textbf{poly}\left(\varepsilon, \sigma \right),
\tilde{\sigma}^{(1)}\left(\varepsilon,\sigma\right) = \textbf{poly}\left(\varepsilon, \sigma \right), \sigma_0^{(1)}\left(\varepsilon,\sigma\right) = \textbf{poly}\left(\varepsilon, \sigma \right),\\ \nonumber \tilde{\varepsilon}^{(1)}_f\left(\varepsilon\right) = \textbf{poly}\left(\varepsilon\right), \delta^{(1)} \left(\varepsilon\right) = \textbf{poly}\left(\varepsilon \right).
\end{align*}
 Now using that $\varepsilon_2' = \tilde{\varepsilon}^{(1)}_f$, $ \sigma_2' = \tilde{\sigma}^{(1)}$ and \eqref{eq:step_2_choos_eps_fin}, \eqref{eq:step_2_choos_sigma_fin} we have that $\varepsilon_2\left(\varepsilon\right) = \textbf{poly}\left(\varepsilon\right), \sigma_2\left(\varepsilon, \sigma\right) = \textbf{poly}\left(\varepsilon, \sigma\right)$. Further, by \eqref{eq:loop2_1}, $\bar{\delta}(\varepsilon)=\textbf{poly}\left(\varepsilon\right),\bar{\sigma}_0\left(\varepsilon, \sigma\right)= \textbf{poly}\left(\varepsilon, \sigma\right)$. Using the same argument as for $i = 1$, according the polynomial dependencies \eqref{delta_poly_CATD_with_prob}, \eqref{sigma_0_poly_CATD_with_prob}, \eqref{varepsilon_poly_CATD_with_prob}, \eqref{sigma_poly_CATD_with_prob} we obtain the polynomial dependencies
 \begin{align*}
\varepsilon_2(\varepsilon) = \textbf{poly}\left(\varepsilon\right), \sigma_2\left(\varepsilon,\sigma\right) = \textbf{poly}\left(\varepsilon, \sigma \right),
\tilde{\sigma}^{(2)}\left(\varepsilon,\sigma\right) = \textbf{poly}\left(\varepsilon, \sigma \right), \tilde{\varepsilon}^{(2)}_f\left(\varepsilon\right) = \textbf{poly}\left(\varepsilon\right), \\ 
\delta^{(2)}\left(\varepsilon\right) = \textbf{poly}\left(\varepsilon\right), \sigma_0^{(2)}\left(\varepsilon,\sigma\right) = \textbf{poly}\left(\varepsilon, \sigma \right).
\end{align*}
Taking into account that $\varepsilon_3 = \tilde{\varepsilon}^{(2)}_f, \sigma_3 = \tilde{\sigma}^{(2)}$, the polynomial dependencies \eqref{delta_poly_CATD_with_prob}, \eqref{sigma_0_poly_CATD_with_prob}, \eqref{varepsilon_poly_CATD_with_prob},\eqref{sigma_poly_CATD_with_prob} we obtain 
 \begin{align*}
\varepsilon_3(\varepsilon) = \textbf{poly}\left(\varepsilon\right), \sigma_3\left(\varepsilon,\sigma\right) = \textbf{poly}\left(\varepsilon, \sigma \right),
\tilde{\sigma}^{(3)}\left(\varepsilon,\sigma\right) = \textbf{poly}\left(\varepsilon, \sigma \right), \sigma_0^{(3)}\left(\varepsilon,\sigma\right) = \textbf{poly}\left(\varepsilon, \sigma \right),\\ \nonumber \tilde{\varepsilon}^{(3)}_f\left(\varepsilon\right) = \textbf{poly}\left(\varepsilon\right), \delta^{(3)} \left(\varepsilon\right) = \textbf{poly}\left(\varepsilon \right).
\end{align*}
 This finishes the proof of polynomial dependence. Thus, due to \eqref{proof_polynomial_dependence} in each loop when  Assumptions~\ref{assumpt:framework_oracle}, \ref{assumpt:framework_oracle_x} are applied, the dependencies $\mathcal{K}_G^y, \mathcal{K}_h, \mathcal{K}_G^x,\mathcal{K}_f$ have only logarithmic dependence on the target accuracy $\varepsilon$ and confidence level $\sigma$, i.e.
 \begin{align*}
     \mathcal{K}_G^y\left(\varepsilon, \sigma\right)=\widetilde{O}(1),\; \mathcal{K}_h\left(\varepsilon, \sigma\right)=\widetilde{O}(1),\; \mathcal{K}_G^x\left(\varepsilon, \sigma\right)=\widetilde{O}(1), \; \mathcal{K}_f\left(\varepsilon, \sigma\right)=\widetilde{O}(1),\\
     O(\log \varepsilon_1^{-1})=\widetilde{O}(1), \;O(\log \varepsilon_2^{-1})=\widetilde{O}(1), \;O(\log \varepsilon_3^{-1})=\widetilde{O}(1).
 \end{align*}
\textbf{Step 2. Final estimates.}
We have already counted the number of oracles calls for each oracle in each loop \hyperref[subsec_first]{Loop 1}-\hyperref[subsec_third]{Loop 3}, see the last paragraph of the description of each loop. 
We start with the number of basic oracle calls of $f$, which is called in each step of all the three loops.
Thus, the total number is
\begin{align*}
    \text{\# of calls in Loop1 + (\# of steps in Loop 1)$\cdot$(\# of calls in Loop 2)} \\ 
     \text{+ (\# of steps in Loop 1)$\cdot$(\# of steps in Loop 2)$\cdot$(\# of calls in Loop 3)}\\
     = \widetilde{O}\left(1+\left( \frac{H_1 }{\mu_{y}} \right)^{\frac{1}{2}}\right)\mathcal{N}_{f}\left( \tau_f\right)\mathcal{K}_f\left(\varepsilon, \sigma\right) + 
     \widetilde{O}\left(1+\left( \frac{H_1 }{\mu_{y}} \right)^{\frac{1}{2}}\right)\cdot \left(\widetilde{O}\left(1+\left( \frac{H_2 }{\mu_{x}} \right)^{\frac{1}{2}}\right)\tau_f \right) \\
     + \widetilde{O}\left(1+\left( \frac{H_1 }{\mu_{y}} \right)^{\frac{1}{2}}\right)\cdot\left(\widetilde{O}\left(1+\left( \frac{H_2 }{\mu_{x}} \right)^{\frac{1}{2}}\right)\right) \cdot \left(\widetilde{O}\left(1+\left( \frac{H_3 }{H_2} \right)^{\frac{1}{2}}\right)\tau_f \right)\\
     =\widetilde{O} \left(\left( 1 + \sqrt{\frac{H_1}{\mu_y}}\right) \left( \mathcal{N}_f\left( \tau_f\right) + \left( 1 + \sqrt{\frac{H_2}{\mu_x}}\right)\left( 1 + \sqrt{\frac{H_3}{H_2}}\right)\cdot \tau_f\right)
    \right),
\end{align*}
where we used that $\mathcal{K}_f\left(\varepsilon, \sigma\right)=\widetilde{O}(1)$.

The basic oracle of $h$ is called in each step  of \hyperref[subsec_first]{Loop 1} and \hyperref[subsec_second]{Loop 2}.
Thus, the total number is
\begin{align*}
     \text{\# of calls in Loop1 + (\# of steps in Loop 1)$\cdot$(\# of calls in Loop 2)} \\ 
     =\widetilde{O}\left(1+\left( \frac{H_1 }{\mu_{y}} \right)^{\frac{1}{2}}\right)\tau_h 
     +  \widetilde{O}\left(1+\left( \frac{H_1 }{\mu_{y}} \right)^{\frac{1}{2}}\right)\cdot \left(\widetilde{O}\left(1+\left( \frac{H_2 }{\mu_{x}} \right)^{\frac{1}{2}}\right)\mathcal{N}_{h}\left( \tau_h, H_1\right)\mathcal{K}_h\left(\varepsilon_2, \sigma_2\right) \right)\\
     =\widetilde{O} \left( \left( 1+ \sqrt{\frac{H_1}{\mu_y}}\right) \left( \tau_h +   \left( 1+\sqrt{\frac{H_2}{\mu_x}}\right)\mathcal{N}_h\left( \tau_h, H_1\right)\right)
    \right),
\end{align*}
where we used that $\mathcal{K}_h\left(\varepsilon, \sigma\right)=\widetilde{O}(1)$.

The basic oracle of $G(\cdot, y)$ is called in each step  of \hyperref[subsec_first]{Loop 1} and \hyperref[subsec_second]{Loop 2}.
Thus, the total number is
\begin{align*}
     \text{\# of calls in Loop1 + (\# of steps in Loop 1)$\cdot$(\# of calls in Loop 2)} \\ 
     =\widetilde{O}\left(1+\left( \frac{H_1 }{\mu_{y}} \right)^{\frac{1}{2}}\right)\mathcal{N}_G^x\left( \tau_G\right)\mathcal{K}_G^x\left(\varepsilon, \sigma\right) 
     +  \widetilde{O}\left(1+\left( \frac{H_1 }{\mu_{y}} \right)^{\frac{1}{2}}\right)\cdot \left(\widetilde{O}\left(1+\left( \frac{H_2 }{\mu_{x}} \right)^{\frac{1}{2}}\right)\tau_G\right)\\
     =\widetilde{O} \left( \left( 1+ \sqrt{\frac{H_1}{\mu_y}}\right) \left( \mathcal{N}_G^x\left( \tau_G\right) +   \left( 1+\sqrt{\frac{H_2}{\mu_x}}\right)\tau_G
    \right)\right),
\end{align*}
where we used that $\mathcal{K}_G^x\left(\varepsilon, \sigma\right)=\widetilde{O}(1)$.

Finally, the basic oracle of $G(x,\cdot)$ is called in each step  of \hyperref[subsec_first]{Loop 1} and \hyperref[subsec_second]{Loop 2}.
Thus, the total number is
\begin{align*}
     \text{\# of calls in Loop1 + (\# of steps in Loop 1)$\cdot$(\# of calls in Loop 2)} \\ 
     =\widetilde{O}\left(1+\left( \frac{H_1 }{\mu_{y}} \right)^{\frac{1}{2}}\right)\tau_G
     +  \widetilde{O}\left(1+\left( \frac{H_1 }{\mu_{y}} \right)^{\frac{1}{2}}\right)\cdot \left(\widetilde{O}\left(1+\left( \frac{H_2 }{\mu_{x}} \right)^{\frac{1}{2}}\right)\mathcal{N}_G^y\left( \tau_G, H_1\right) \mathcal{K}_G^y\left(\varepsilon_2, \sigma_2\right)\right)\\
     =\widetilde{O} \left( \left( 1+ \sqrt{\frac{H_1}{\mu_y}}\right) \left( \tau_G +   \left( 1+\sqrt{\frac{H_2}{\mu_x}}\right)\mathcal{N}_G^y\left( \tau_G, H_1\right) 
    \right)\right),
\end{align*}
where we used that $\mathcal{K}_G^y\left(\varepsilon_2, \sigma_2\right)=\widetilde{O}(1)$.

 The final estimates are obtained by substituting the constants $H_1, H_2, H_3$ given by
$$H_1 = 2L_G, H_2 = 2\left(L_G + \frac{2L_G^2}{\mu_y + H_1}\right)\leq 2\left(L_G + \frac{2L_G^2}{H_1}\right)=4L_G, H_3 = 2L_f.
$$
\end{proof}

\section{Accelerated Method for Saddle-Point Problems} \label{section:saddle}



In this section, we consider problem \eqref{eq:problem} which is problem \eqref{eq:framework_funct} with a specific finite-sum structure of the function $h$ and our goal is to obtain its $(\varepsilon, \sigma)$-solution. To get the final estimates for the number of oracles calls, we need to satisfy Assumptions~\ref{assumpt:framework}, \ref{assumpt:framework_oracle}, \ref{assumpt:framework_oracle_x} which are formulated in Section \ref{section:saddleFramework} where we construct our general framework. 
So, the plan of this section is first to prove Lemma \ref{lem:obtain_oracle} and Corollary \ref{lem:obtain_oracle_x}, which guarantee that  Assumptions~\ref{assumpt:framework_oracle}, \ref{assumpt:framework_oracle_x} hold.
To satisfy Assumption \ref{assumpt:framework_oracle} we use a two-loop procedure with Algorithm \ref{alg:restarts_inexact_notconvex} and stochastic variance reduction method to solve problem \eqref{eq:framework_g_obt_oracle} in order to use the finite-sum structure of the function $h$ and avoid expensive calculation of the gradient of the whole sum in each iteration.
As a corollary, we also show how to satisfy Assumption \ref{assumpt:framework_oracle_x}.
Then, we obtain final estimates for the setting of this section by combining the complexities to satisfy Assumptions~\ref{assumpt:framework_oracle}, \ref{assumpt:framework_oracle_x} with the estimates in Theorem \ref{theorem:general_framework}.

\subsection{Problem statement}
In this section we consider optimization problem of the form \eqref{eq:problem}: 
\begin{equation} 
\label{eq:problem_st_h_sum}
    \min _{x \in \mathbb{R}^{d_x}} \max _{y \in \mathbb{R}^{d_y}} \left\{f(x)+ G(x, y) - h(y)\right\}, \;\;\; h(y):=\frac{1}{m_h} \sum_{i = 1}^{m_h} h_i(y) 
\end{equation}
and develop accelerated optimization methods for its solution
under the following assumptions.
\begin{assumption}\label{assumpt:h_sum} 
\begin{enumerate}
    \item Function $f(x)$ is $L_f$-smooth and $\mu_x$-strongly convex.
    \item Function $G(x, y)$  is $L_G$-smooth, i.e. for each $(x_1, x_2), (y_1, y_2) \in  \mathbb{R}^{d_x} \times \mathbb{R}^{d_y}$
\begin{equation}
    \|\nabla G (x_1,x_2)-\nabla G (y_1,y_2)\|\leq L_G \|(x_1, x_2) - (y_1, y_2)\|.
\end{equation}
\item \label{Asm:h_finite_sum}$m_h\geq 1$ and each function $h_{i}(x)$, $i \in 1, \dots, m_h$ is $L_h^i$-smooth and convex, function $h(y)$ is $\mu_y$-strongly convex. We also define $L_h = \frac{1}{m_h} \sum_{i = 1}^{m_h} L_h^i$ in this case.
\end{enumerate}
\end{assumption}
To fit Assumption~\ref{assumpt:framework} we consider the full gradient oracles $\nabla_xG(x,y)$, $\nabla_yG(x,y)$, $\nabla f(x)$ as the basic oracles $O_G^x$, $O_G^y$, $O_f$ respectively, and the stochastic gradient oracle $\nabla h_i(y)$ as the basic oracle $O_h$. Then Assumption~\ref{assumpt:h_sum} guarantees that Assumption~\ref{assumpt:framework} holds with \begin{align}\label{eq:assumpt1_constants_h_sum}
    \tau_f = \tau_G = 1, \tau_h = m_h.
\end{align}

\subsection{Preliminaries} \label{saddle_preliminary}
 
 We start with two auxiliary results, which show how Assumptions \ref{assumpt:framework_oracle}, \ref{assumpt:framework_oracle_x} can be satisfied in the setting of this section.
 The first lemma provides complexity for inexact solution of maximization problem \eqref{eq:framework_g_obt_oracle} and the complexity of finding an inexact oracle for function $g$ defined in the same equation.
 
 
 \begin{lemma} \label{lem:obtain_oracle}
Let the function $g$ be defined via maximization problem in \eqref{eq:framework_g_obt_oracle}, i.e.
\begin{align}\label{eq:g_obt_oracle}
g(x)= \max _{y \in  \R^{d_y}}\left\{G(x, y)-h(y) -\frac{H}{2}\|y-y_0\|^2\right\},
\end{align}
 where $G(x,y)$, $h(y)$ are according to \eqref{eq:problem_st_h_sum} 
 and satisfy Assumption~\ref{assumpt:h_sum}, $y_0 \in \R^{d_y}$. 
 Assume also that that $m_h(H+2L_G+\mu_y) \leq L_h$ and $H+\mu_y \leq 4L_G$. Then, organizing computations in two loops and applying Algorithm \ref{alg:restarts_inexact_notconvex} in the outer loop and accelerated variance reduction method L-SVRG from \cite{morin2020sampling} in the inner loop, we guarantee Assumption~\ref{assumpt:framework_oracle} with $\tau_G=1$ basic oracle calls for $G(\cdot,y)$ and the following estimates for the number of basic oracle calls for $G(x,\cdot)$ and $h$ respectively
 \begin{align}\label{eq:complofgG}
&\mathcal{N}_{G}^y\left( \tau_G, H\right) = O\left(1 + \sqrt{L_G/(H+\mu_y)}\right),\\
&\mathcal{N}_{h}\left( \tau_h, H\right) = O\left( \sqrt{\tau_h L_h/(H+\mu_y)}\right). \label{eq:complofgh}
\end{align}
 

\end{lemma}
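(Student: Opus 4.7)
The strategy is to construct a $(\delta/2,\sigma_0)$-approximate maximizer for \eqref{eq:g_obt_oracle} by a two-loop procedure and then invoke Lemma~\ref{lemma:obt_delta_oracle} to build the inexact oracle for $g$. Rewriting the maximization as $g(x) = -\min_y \Phi_x(y)$ with
\begin{equation*}
\Phi_x(y) := -G(x,y) + h(y) + \frac{H}{2}\|y - y_0\|^2,
\end{equation*}
a $(\delta/2,\sigma_0)$-solution in function value for this minimization is exactly a $(\delta/2,\sigma_0)$-solution to \eqref{eq:g_obt_oracle}. Given such an approximate maximizer $\tilde{y}_{\delta/2}(x)$, one additional call of $O_G^x$ yields $\nabla_x G(x, \tilde{y}_{\delta/2}(x))$, which by Lemma~\ref{lemma:obt_delta_oracle} (applied with $w(y) = h(y) + \tfrac{H}{2}\|y - y_0\|^2$, so the effective strong-convexity modulus is $\mu_y + H$) is a $(\delta,\sigma_0, 2L_G + 4L_G^2/(\mu_y+H))$-oracle for $g$ in the sense of Definition~\ref{def:d_L_mu_oracle_prob}. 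This accounts for the single $O_G^x$ call and reduces the problem to minimizing $\Phi_x$ efficiently.

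For the outer loop I apply Algorithm~\ref{alg:restarts_inexact_notconvex} to $\min_y \Phi_x(y)$ with the splitting $\varphi(y) = -G(x,y)$ (convex, $L_G$-smooth, exactly accessible via one call of $O_G^y$) and $\psi(y) = h(y) + \tfrac{H}{2}\|y - y_0\|^2$ (convex, $(\mu_y + H)$-strongly convex). Setting the R-AM parameter $H' = 2L_G$ satisfies $H' \geq 2 L_\varphi$, and Theorem~\ref{AM:comfortable_view_with_prob} delivers the required $(\delta/2,\sigma_0)$-solution in $N_{\mathrm{out}} = \widetilde{O}(\sqrt{L_G/(\mu_y + H)})$ outer iterations. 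Each outer iteration needs one $O_G^y$ call plus an inexact solution of the prox subproblem
\begin{equation*}
\min_y \left\{ \langle -\nabla_y G(x, y_k^{md}), y - y_k^{md}\rangle + \frac{1}{m_h}\sum_{i=1}^{m_h} h_i(y) + \frac{H}{2}\|y - y_0\|^2 + L_G \|y - y_k^{md}\|^2 \right\}
\end{equation*}
to function-value accuracy $\tilde\varepsilon_f$ and confidence $\tilde\sigma$ dictated by the polynomial dependencies \eqref{varepsilon_poly_CATD_with_prob}--\eqref{sigma_poly_CATD_with_prob}.

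For the inner loop I apply accelerated loopless SVRG of \cite{morin2020sampling} to this auxiliary finite-sum problem: its objective is $(\mu_y + H + 2L_G)$-strongly convex and its finite-sum part has average smoothness $L_h$, so the known complexity to reach $\tilde\varepsilon_f$ with confidence $\tilde\sigma$ is $\widetilde{O}\!\left(m_h + \sqrt{m_h L_h/(\mu_y + H + 2L_G)}\right)$ calls of $O_h$. The assumption $m_h(H + 2L_G + \mu_y) \leq L_h$ makes the square-root term dominate $m_h$, and the assumption $H + \mu_y \leq 4L_G$ gives $\mu_y + H + 2L_G = \Theta(L_G)$, so the inner cost reduces to $\widetilde{O}(\sqrt{m_h L_h / L_G})$ per outer iteration. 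Multiplying by $N_{\mathrm{out}}$ yields
\begin{equation*}
N_{\mathrm{out}} \cdot \widetilde{O}\!\left(\sqrt{\frac{m_h L_h}{L_G}}\right) = \widetilde{O}\!\left(\sqrt{\frac{m_h L_h}{\mu_y + H}}\right),
\end{equation*}
which matches \eqref{eq:complofgh}; bound \eqref{eq:complofgG} is immediate since only one $O_G^y$ call is used per outer iteration. The main technical obstacle I anticipate is the careful bookkeeping of probabilistic inexactness: the failure probability of L-SVRG per subproblem must be driven below $\tilde\sigma$ (standard amplification by repetition suffices), and a union bound over all $N_{\mathrm{out}}$ subproblems must keep the total failure probability below $\sigma_0$; the polynomial dependencies in Theorem~\ref{AM:comfortable_view_with_prob} guarantee that both propagations cost only polylogarithmic factors in $\varepsilon, \sigma$, giving $\mathcal{K}_G^y = \mathcal{K}_h = \widetilde{O}(1)$ as claimed.
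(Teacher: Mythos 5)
Your proposal follows essentially the same route as the paper: reduce to a $(\delta/2,\sigma_0)$-maximizer via Lemma~\ref{lemma:obt_delta_oracle}, run R-AM (Algorithm~\ref{alg:restarts_inexact_notconvex}) on the splitting $\varphi=-G(x,\cdot)$, $\psi=h+\tfrac{H}{2}\|\cdot-y_0\|^2$ with $H'=2L_G$, and solve each prox subproblem by L-SVRG, combining the counts under the stated assumptions. The only small bookkeeping point you gloss over is that each outer R-AM iteration also requires the full gradient of $\psi$ (hence $m_h$ calls of $O_h$ per outer step, giving an extra $m_h\sqrt{L_G/(H+\mu_y)}$ term), but this is dominated by $\sqrt{m_h L_h/(H+\mu_y)}$ precisely because of the assumption $m_h(H+2L_G+\mu_y)\leq L_h$, so the final bounds are unaffected.
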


\begin{proof}
To satisfy Assumption~\ref{assumpt:framework_oracle} we need to provide an $\left(\delta\left(\varepsilon\right)/2 ,\sigma_0\left(\varepsilon,\sigma\right)\right)$-solution to the problem \eqref{eq:g_obt_oracle} and $\left(\delta\left(\varepsilon\right) ,\sigma_0\left(\varepsilon,\sigma\right), 2L_g\right)$-oracle of $g$ in \eqref{eq:g_obt_oracle}, where $L_g = L_G+2L_G^2/(\mu_y+H)$.\\
By Lemma~\ref{lemma:obt_delta_oracle} with $F(x,y)=G(x,y)$, $w(y)=h(y)+\frac{H}{2}\|y-y_0\|^2$, $\delta=\delta\left(\varepsilon\right)$ and $\sigma_0 = \sigma_0\left(\varepsilon,\sigma\right)$ applied to the problem \eqref{eq:g_obt_oracle},
if we find a $(\delta/2,\sigma_0)$-solution $\tilde{y}_{\delta/2}(x)$ of the problem  \eqref{eq:g_obt_oracle},
then $ \nabla_{x} G\left(x, \tilde{y}_{\delta/2}(x)\right)$ is $(\delta,\sigma_0,2 L_g)$-oracle of $g$ and its calculation requires $\tau_G=1$ calls of the oracle $\nabla_x G(\cdot,y)$.
To finish the proof, we now focus on obtaining a $(\delta/2,\sigma_0)$-solution $\tilde{y}_{\delta/2}(x)$ of the problem  \eqref{eq:g_obt_oracle}, for which we construct a two-loop procedure described below. 

\paragraph{Loop 1}\label{lemma_loop1}$\;$\\
The goal of Loop 1 is to find an $\left(\delta\left(\varepsilon\right)/2,\sigma_0\left(\varepsilon,\sigma\right)\right)$-solution of  problem \eqref{eq:g_obt_oracle} as a maximization problem in $y$.
To obtain such an approximate solution, we change the sign of this optimization problem and apply Algorithm \ref{alg:restarts_inexact_notconvex} with  
\begin{align}\label{aux_step4}
    \varphi = -G(x,y) ,\;\;\;\; \psi = h(y) + \frac{H}{2}\|y-y_0\|^2.
\end{align}
Function $\varphi$ is convex and has $L_G$-Lipschitz continuous gradient, function $\psi$ is $ H+\mu_y$-strongly convex and has $L_h+H$-Lipschitz continuous gradient. Thus, we can apply Algorithm  \ref{alg:restarts_inexact_notconvex} with exact oracles and parameter $H_1\geq 2L_G$, which will be chosen later, to solve problem \eqref{eq:g_obt_oracle}.
To satisfy the conditions of Theorem~\ref{AM:comfortable_view_with_prob}, which gives the complexity of Algorithm  \ref{alg:restarts_inexact_notconvex}, we, first, observe that the oracles of $\varphi$ and $\psi$ are exact and, second, observe that we need in each iteration of Algorithm \ref{alg:highorder_inexact}, used as a building block in Algorithm \ref{alg:restarts_inexact_notconvex}, to find an $\left(\tilde{\varepsilon}^{(1)}_f\left(\delta/2\right),\tilde{\sigma}^{(1)}\left(\delta/2,\sigma_0\right)\right)$-solution to the auxiliary problem \eqref{prox_step_inexact}, which in this case has the following form:  
\begin{align} 
&z_{k+1}^t = \arg\min _{z \in  \R^{d_y}} \{ \la \nabla \varphi(z_k^{md}),z-z_k^{md}\ra+\psi(z) + \frac{H_1}{2}\|z-z_k^{md}\|_2^2 \} \nonumber
\\
&= \arg\min _{z \in  \R^{d_y}} \{ -\la \nabla_{z}  G(x, z_k^{md}),z-z_k^{md}\ra+h(z) + \frac{H}{2}\|z-y_0\|^2+\frac{H_1}{2}\|z-z_k^{md}\|_2^2\} \label{eq:190},
\end{align}
 where $\tilde{\sigma}^{(1)}\left(\delta/2,\sigma_0\right), \tilde{\varepsilon}^{(1)}_f\left(\delta/2\right)$ need to satisfy  inequalities \eqref{varepsilon_poly_CATD_with_prob}, \eqref{sigma_poly_CATD_with_prob}. 
 Below, in the \hyperref[lemma_loop2]{paragraph "Loop 2"}, we explain how to solve this auxiliary problem by a variance reduction method in such a way that these inequalities hold. 
 

To summarize Loop 1, both main assumptions of Theorem~\ref{AM:comfortable_view_with_prob} hold and we can use it to guarantee that we obtain an $(\delta/2,\sigma_0)$-solution of problem \eqref{eq:g_obt_oracle}. Due to polynomial dependencies  $\delta\left(\varepsilon\right)=\bf{poly}\left(\varepsilon\right)$, $\sigma_0\left(\varepsilon, \sigma \right)=\bf{poly}\left(\varepsilon, \sigma \right)$ this requires $\widetilde{O}\left(1+\left( \frac{H_1 }{\mu_{\vp} + \mu_{\psi}} \right)^{\frac{1}{2}}\right)=\widetilde{O}\left(1+\left( \frac{H_1 }{\mu_{y}+H} \right)^{\frac{1}{2}}\right)$ 
calls to the (exact) oracles for $\vp$ and for $\psi$, and the same number of times solving the auxiliary problem \eqref{eq:190}.
Combining this oracle complexity with the cost of calculating (exact) oracles for $\vp$ and for $\psi$, we obtain that solving problem \eqref{eq:g_obt_oracle} requires 
$\widetilde{O}\left(1+\left( \frac{H_1 }{\mu_{y}+H} \right)^{\frac{1}{2}}\right)$ calls of the basic oracle for $G(x,\cdot)$ and $\widetilde{O}\left(m_h+m_h\left( \frac{H_1 }{\mu_{y}+H} \right)^{\frac{1}{2}}\right)$ of the basic oracles for $h$, i.e. stochastic gradients $\nabla h_i$. 
The only remaining thing is to provide an inexact solution to problem \eqref{eq:190} and, next, we move to Loop 2 to explain how to guarantee this. Note that we need to solve problem \eqref{eq:190} $\widetilde{O}\left(1+\left( \frac{H_1 }{\mu_{y}+H} \right)^{\frac{1}{2}}\right)$ times.

\paragraph{Loop 2}\label{lemma_loop2}$\;$\\
We solve problem \eqref{eq:190} by the algorithm L-SVRG proposed in \cite{morin2020sampling}, which complexity is stated in Lemma~\ref{L-SVRG}, see Appendix~\ref{Appendix_D}. 
As mentioned in the previous Loop 1,
in each iteration of Algorithm \ref{alg:restarts_inexact_notconvex} in Loop 1 
we need many times to find an $(\varepsilon_2,\sigma_2)$-solution of the auxiliary problem \eqref{eq:190}, where for simplicity we denote $\sigma_2=\tilde{\sigma}^{(1)}\left(\delta/2,\sigma_0\right)$ and $\varepsilon_2=\tilde{\varepsilon}^{(1)}_f\left(\delta/2\right)$.\\
To obtain such an approximate solution, we apply L-SVRG from \cite{morin2020sampling}  with  (see Lemma~\ref{L-SVRG} from Appendix~\ref{Appendix_D}) 
\begin{align}\label{aux_step5}
    \varphi = \frac{1}{m_h}\sum_{i = 1}^{m_h} \underbrace{\left( h_i(z) + \frac{H}{2}\|z-y_0\|^2+\frac{H_1}{2}\|z-z_k^{md}\|_2^2\right)}_{\vp_i(z)},\;\;\;\; \psi = -\la \nabla_{z}  G(x, z_k^{md}),z-z_k^{md}\ra.
\end{align}
Functions $\varphi_i$ are convex 
and have $L_h^i+H+H_1$-Lipschitz continuous gradient for all $i = 1,\dots,m_h$, function $\psi$ is convex, $0$-smooth and prox-friendly. Also function $\vp$ is $\mu_y+H+H_1$-strongly convex.
Thus, all the conditions of Lemma~\ref{L-SVRG} from Appendix~\ref{Appendix_D} are satisfied and we can apply L-SVRG from \cite{morin2020sampling} to solve problem \eqref{eq:190}. From this lemma we get an estimate $\widetilde{O} \left(m_h+\sqrt{\frac{m_h (L_h+H+H_1)}{\mu_y+H+H_1}}\right)$ for the number of calls of the basic oracle for $h$. 

To summarize Loop 2, the assumptions of Lemma~\ref{L-SVRG} from Appendix~\ref{Appendix_D} hold and we can use it to guarantee that we obtain an $(\varepsilon_2,\sigma_2)$-solution of problem \eqref{eq:190}. According to the polynomial dependences \eqref{varepsilon_poly_CATD_with_prob}, \eqref{sigma_poly_CATD_with_prob} we obtain that 
$$
\sigma_2 = \tilde{\sigma}^{(1)}\left(\delta/2,\sigma_0\right) = \textbf{poly}(\delta/2,\sigma_0), \;\; \varepsilon_2 = \tilde{\varepsilon}^{(1)}_f\left(\delta/2,\sigma_0\right) = \textbf{poly}(\delta/2,\sigma_0).
$$
Using conditions $\delta\left(\varepsilon\right)=\bf{poly}\left(\varepsilon\right)$, $\sigma_0\left(\varepsilon, \sigma \right)=\bf{poly}\left(\varepsilon, \sigma \right)$ in the formulation of Asumption~\ref{assumpt:framework_oracle} we obtain that the dependencies 
$$
\sigma_2\left(\varepsilon,\sigma\right), \tilde{\sigma}^{(1)}\left(\varepsilon,\sigma\right), \varepsilon_2 \left(\varepsilon,\sigma\right), \tilde{\varepsilon}^{(1)}_f\left(\varepsilon,\sigma\right)
$$
are polynomial. Then, we can use notation $\widetilde{O}(\cdot)$   without specifying what precision we mean and implying that the logarithmic part depends on the initial  $\varepsilon, \sigma$. Finally, according to Lemma~\ref{L-SVRG} from Appendix~\ref{Appendix_D} an $(\varepsilon_2,\sigma_2)$-solution of problem \eqref{eq:190} requires $\widetilde{O} \left(m_h+\sqrt{\frac{m_h (L_h+H+H_1)}{\mu_y+H+H_1}}\right)$ 
 calls of the basic oracle for $h$, i.e. stochastic gradients $\nabla h_i$, and the same number of times solving the auxiliary problem of the form $\arg \min_y\{\psi(y)+\frac{1}{2\alpha }\|y-\bar{y}\|_2^2 \}$. 
 This problem is solved explicitly since $\psi(y)$ is a linear function. 
 

\paragraph{Combining the estimates of both loops}$\;$\\ 
Combining the estimates of the above \hyperref[lemma_loop1]{paragraph "Loop 1"} and \hyperref[lemma_loop2]{paragraph "Loop 2"} we see that, finding a point $\tilde{y}_{\delta/2}(x)$ which is an $\left(\delta\left(\varepsilon\right)/2 ,\sigma_0\left(\varepsilon,\sigma\right)\right)$-solution to the problem \eqref{eq:g_obt_oracle} 
requires the following number of calls of the basic oracles of $G(x,\cdot)$ and $h$ respectively 
\begin{align}\label{eq:obt_complofgG}
& \widetilde{O}\left(1 + \sqrt{H_1/(H+\mu_y)}\right),\\
& \text{\# of calls in Loop1 + (\# of steps in Loop 1)$\cdot$(\# of calls in Loop 2)} \\ \nonumber &=\widetilde{O}\left( m_h + m_h\sqrt{H_1/(H+\mu_y)} + \left( 1+ \sqrt{H_1/(H+\mu_y)}\right)\left(m_h+\sqrt{\frac{m_h (L_h+H+H_1)}{\mu_y+H+H_1}}\right)\right). \label{eq:obt_complofgh}
\end{align}
 Finding $\left(\delta\left(\varepsilon\right) ,\sigma_0\left(\varepsilon,\sigma\right), 2L_g\right)$-oracle of $g$ by calculating $ \nabla_{x} G\left(x, \tilde{y}_{\delta/2}(x)\right)$ requires additionally $\tau_G=1$ 
calls of the basic oracle for $G(\cdot,y)$. 
Since in Assumption~\ref{assumpt:framework_oracle} we denote the dependence on the target accuracy $\varepsilon$ and confidence level $\sigma$ by a separate quantities denoted by $\mathcal{K}(\varepsilon,\sigma)$ and in this case it is logarithmic, choosing $H_1 = 2L_G$ we get the final estimates for  $\mathcal{N}_{G}^y$ and $\mathcal{N}_{h}$ to guarantee that  Assumption~\ref{assumpt:framework_oracle} holds:
\begin{align}
&\mathcal{N}_{G}^y = O\left(1 + \sqrt{L_G/(H+\mu_y)}\right),\\ \nonumber 
&\mathcal{N}_{h} = O\left( m_h + \left(1+ \sqrt{2L_G/(H+\mu_y)}\right)\left(m_h+\sqrt{\frac{m_h (L_h+H+2L_G)}{\mu_y+H+2L_G}}\right)\right)=\\ \nonumber
&O\left( m_h + \left(1+ \sqrt{2L_G/(H+\mu_y)}\right)\left(m_h+\sqrt{\frac{m_h L_h}{\mu_y+H+2L_G}}+\sqrt{\frac{m_h(H+2L_G)}{\mu_y+H+2L_G}}\right)\right)=\\ \nonumber
&O\left( m_h + \left(1+ \sqrt{2L_G/(H+\mu_y)}\right)\left(m_h+\sqrt{\frac{m_h L_h}{\mu_y+H+2L_G}}\right)\right)=\\ \nonumber
&O\left( m_h + \sqrt{2L_G/(H+\mu_y)}\sqrt{\frac{m_h L_h}{2L_G}}\right)=\\ 
&O\left( m_h + \sqrt{\frac{m_h L_h}{H+\mu_y}}\right)=
O\left( \sqrt{m_hL_h/(H+\mu_y)}\right),
\end{align}
where we used that, by the assumptions of this Lemma, $1 \leq 4L_G/(H+\mu_y)$, $m_h(H+2L_G+\mu_y) \leq L_h$ and  $\forall a, b \geq 0\;\; \sqrt{a+b}\leq \sqrt{a}+\sqrt{b}$.
\qed
\end{proof}

By changing the variables $x$ and $y$ in Lemma \ref{lem:obtain_oracle} and choosing $H = 0$ we obtain the simple Corollary \ref{lem:obtain_oracle_x} which ensures Assumption ~\ref{assumpt:framework_oracle_x}.
\begin{corollary} \label{lem:obtain_oracle_x}
Let the function $r$ be defined via maximization problem in \eqref{eq:framework_r_obt_oracle}, i.e.
\begin{align}
r(y)= \min _{x \in  \R^{d_x}}\left\{G(x, y)+f(x)\right\},
\end{align}
 where  $G(x,y),f(y)$ are according to \eqref{eq:problem_st_h_sum} 
 and satisfy Assumption~\ref{assumpt:h_sum}. Assume also that $2L_G+\mu_x \leq L_f$ and $\mu_x \leq 4L_G$.
 Then, organizing computations in two loops and applying Algorithm \ref{alg:restarts_inexact_notconvex} in the outer loop and 
 accelerated variance reduction method L-SVRG from \cite{morin2020sampling} in the inner loop, we guarantee Assumption~\ref{assumpt:framework_oracle_x} 
 with $\tau_G=1$ basic oracle calls for $G(x,\cdot)$ and the following estimates for the number of basic oracle calls for
 $G(\cdot,y)$, $f$ respectively
 \begin{align}\label{eq:complofgG_x}
&\mathcal{N}_{G}^x\left( \tau_G\right) = O\left(1 + \sqrt{L_G/\mu_x}\right),\\
&\mathcal{N}_{f}\left( \tau_f\right) = O\left( \sqrt{ L_f/\mu_x}\right). \label{eq:complofgf_x}
\end{align}
 

\end{corollary}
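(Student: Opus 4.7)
The plan is to reduce the statement directly to Lemma~\ref{lem:obtain_oracle} through a swap of variables together with a sign flip. I would start by rewriting
\[
r(y) \;=\; \min_{x \in \R^{d_x}}\{G(x,y) + f(x)\} \;=\; -\max_{x \in \R^{d_x}}\{-G(x,y) - f(x)\},
\]
and then relabel $\tilde x := y$, $\tilde y := x$, defining $\tilde G(\tilde x,\tilde y) := -G(\tilde y,\tilde x)$ and $\tilde h(\tilde y) := f(\tilde y)$. This places $-r$ in the exact template \eqref{eq:framework_g_obt_oracle} of Lemma~\ref{lem:obtain_oracle}, with the parameter choices $H = 0$, $m_h = 1$ (trivial finite sum), $\tilde L_h = L_f$, $\tilde \mu_y = \mu_x$.

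Next I would verify the hypotheses of Lemma~\ref{lem:obtain_oracle} in the relabeled variables. The sign flip preserves $L_G$-smoothness of $\tilde G$; convexity of $G$ in $x$ makes $\tilde G$ concave in the maximization variable $\tilde y$; and $f$ fulfils the convexity, smoothness, and strong-convexity requirements imposed on $\tilde h$ in Assumption~\ref{assumpt:h_sum}. The structural conditions $\tilde m_h(H + 2L_G + \tilde\mu_y) \le \tilde L_h$ and $H + \tilde\mu_y \le 4L_G$ of the lemma specialize, with the above substitutions, to $2L_G + \mu_x \le L_f$ and $\mu_x \le 4L_G$, which are precisely the standing assumptions of the corollary.

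Finally I would translate the resulting complexity bounds back to the original notation. The inner maximization now runs over $\tilde y = x$, so what the lemma calls the oracle for $\tilde G(\tilde x,\cdot)$ is in fact $\nabla_x G(\cdot,y)$, i.e. the basic oracle $O_G^x$; plugging $H=0$ into \eqref{eq:complofgG} of Lemma~\ref{lem:obtain_oracle} yields $\mathcal{N}_G^x(\tau_G) = O(1 + \sqrt{L_G/\mu_x})$. Oracle calls of $\tilde h$ are full gradients of $f$, and \eqref{eq:complofgh} with $\tau_h = 1$ gives $\mathcal{N}_f(\tau_f) = O(\sqrt{L_f/\mu_x})$. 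The single envelope-gradient evaluation that produces the inexact oracle of $\tilde g = -r$ amounts to one call of $\nabla_y G(x,\cdot)$, i.e.\ $\tau_G = 1$ basic call of $O_G^y$, as required by Assumption~\ref{assumpt:framework_oracle_x}. The only bookkeeping subtlety, which I would flag but not anticipate as a serious obstacle, is that with $\tilde m_h = 1$ the variance-reduction inner loop of Lemma~\ref{lem:obtain_oracle} collapses into an ordinary application of R-AM; since the bounds in that lemma are monotone in $m_h$, its statement still covers this degenerate case, and no separate argument is needed.
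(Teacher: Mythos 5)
Your proposal is correct and follows essentially the same route as the paper, which obtains Corollary~\ref{lem:obtain_oracle_x} precisely by exchanging the roles of $x$ and $y$ in Lemma~\ref{lem:obtain_oracle} and setting $H=0$ (with $f$ playing the role of $h$, $m_h=1$, $L_f$ for $L_h$, and $\mu_x$ for $\mu_y$, so that the lemma's hypotheses specialize to $2L_G+\mu_x\le L_f$ and $\mu_x\le 4L_G$). Your verification of the hypotheses and the translation of the complexity bounds match the paper's intended argument.
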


\subsection{Final estimates} \label{saddle_final}
We are now in a position to state the final result of this section for the complexity estimates when solving problem \eqref{eq:problem_st_h_sum}. Assumption \ref{assumpt:h_sum} with \eqref{eq:assumpt1_constants_h_sum} guarantee that Assumption~\ref{assumpt:framework} holds.
Lemma~\ref{lem:obtain_oracle} and Corollary~\ref{lem:obtain_oracle_x} guarantee that Assumptions~\ref{assumpt:framework_oracle}, \ref{assumpt:framework_oracle_x} hold.
Thus, all the conditions of Theorem \ref{theorem:general_framework} are satisfied and we obtain the following result for solving problem \eqref{eq:problem_st_h_sum} with our system of inner-outer loops.

\begin{theorem}\label{theorem:h-sum}
Assume that for problem \eqref{eq:problem_st_h_sum} Assumption \ref{assumpt:h_sum} holds and additionally 
$m_h (4L_G + \mu_y) \leq L_h$, $ 2L_G + \mu_x   \leq L_f, \mu_y  \leq L_G$, $\mu_x  \leq L_G$. 
Then the described in Section \ref{section:saddleFramework} general framework combined with the algorithms described in the previous subsection find an $(\varepsilon, \sigma)$-solution to problem \eqref{eq:problem_st_h_sum} with the following number of basic oracle calls
\begin{align}
    &\nabla f \text{-oracle calls}: \widetilde{O} \left( \sqrt{\frac{L_G L_f}{\mu_x\mu_y}} 
    \right),\\
        &\nabla h_i \text{-oracle calls}: \widetilde{O} \left( \sqrt{\frac{m_h L_G L_h}{\mu_x \mu_y}}
    \right),\\
    &\nabla_x G \text{-oracle calls}: \widetilde{O} \left(  \sqrt{\frac{L_G^2}{\mu_x\mu_y}}
    \right),\\ 
    &\nabla_y G \text{-oracle calls}: \widetilde{O} \left(  \sqrt{\frac{L_G^2}{\mu_x\mu_y}} 
    \right).
\end{align}
\end{theorem}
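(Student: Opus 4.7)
The plan is to verify that every hypothesis of the general framework (Theorem~\ref{theorem:general_framework}) together with the hypotheses of Lemma~\ref{lem:obtain_oracle} and Corollary~\ref{lem:obtain_oracle_x} is met, and then to substitute the complexity expressions into \eqref{eq:framework_f}--\eqref{eq:framework_G_y} and simplify using the standing bounds $\mu_x \le L_G$, $\mu_y \le L_G$, $2L_G+\mu_x \le L_f$, $m_h(4L_G+\mu_y) \le L_h$. First I would note that Assumption~\ref{assumpt:h_sum} together with the choice of basic oracles $O_G^x=\nabla_xG$, $O_G^y=\nabla_yG$, $O_f=\nabla f$, and $O_h=\nabla h_i$ yields Assumption~\ref{assumpt:framework} with $\tau_f=\tau_G=1$ and $\tau_h=m_h$, as already recorded in \eqref{eq:assumpt1_constants_h_sum}.

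Next I would check the auxiliary hypotheses used in Section~\ref{saddle_preliminary}. The framework calls Lemma~\ref{lem:obtain_oracle} only with $H=H_1=2L_G$, so the precondition $m_h(H+2L_G+\mu_y)\le L_h$ becomes exactly $m_h(4L_G+\mu_y)\le L_h$, and $H+\mu_y\le 4L_G$ follows from $\mu_y\le L_G$. Thus Lemma~\ref{lem:obtain_oracle} validates Assumption~\ref{assumpt:framework_oracle} with $\mathcal{N}_G^y(\tau_G,2L_G)=O(1+\sqrt{L_G/(2L_G+\mu_y)})=O(1)$ and $\mathcal{N}_h(\tau_h,2L_G)=O(\sqrt{m_h L_h/L_G})$. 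Symmetrically, the assumptions $2L_G+\mu_x\le L_f$ and $\mu_x\le L_G$ let me invoke Corollary~\ref{lem:obtain_oracle_x}, giving Assumption~\ref{assumpt:framework_oracle_x} with $\mathcal{N}_G^x(\tau_G)=O(1+\sqrt{L_G/\mu_x})$ and $\mathcal{N}_f(\tau_f)=O(\sqrt{L_f/\mu_x})$.

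With all three assumptions of Section~\ref{section:saddleFramework} in force, Theorem~\ref{theorem:general_framework} applies and produces an $(\varepsilon,\sigma)$-solution to \eqref{eq:problem_st_h_sum}. I then substitute the $\mathcal{N}$'s and $\tau$'s into \eqref{eq:framework_f}--\eqref{eq:framework_G_y} and collapse the additive ``$1+\sqrt{\cdot}$'' factors using $\mu_x,\mu_y\le L_G$, so that $1+\sqrt{L_G/\mu_x}\asymp\sqrt{L_G/\mu_x}$ and similarly for $\mu_y$. For $\nabla_x G$ and $\nabla_y G$ the computation is immediate and yields $\widetilde O(\sqrt{L_G^2/(\mu_x\mu_y)})$ in both cases. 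For $\nabla f$, the product $(1+\sqrt{L_G/\mu_x})(1+\sqrt{L_f/L_G})\asymp \sqrt{L_f/\mu_x}$ (using $L_G\le L_f$), so the second term dominates $\mathcal{N}_f(\tau_f)=O(\sqrt{L_f/\mu_x})$ and the overall bound becomes $\widetilde O(\sqrt{L_G L_f/(\mu_x\mu_y)})$.

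The one mildly delicate case, and the step I would treat most carefully, is the $\nabla h_i$ count, because \eqref{eq:framework_h} contains both a lone $\tau_h=m_h$ term and an $\mathcal{N}_h$ term. Writing out the bound gives
\[
\widetilde O\!\left(\sqrt{L_G/\mu_y}\,\bigl(m_h+\sqrt{L_G/\mu_x}\sqrt{m_h L_h/L_G}\bigr)\right)=\widetilde O\!\left(m_h\sqrt{L_G/\mu_y}+\sqrt{m_h L_G L_h/(\mu_x\mu_y)}\right).
\]
I need to show the first term is absorbed into the second. Using $m_h(4L_G+\mu_y)\le L_h$ I get $m_h L_G\le L_h/4$, hence $m_h^2 L_G/\mu_y\le m_h L_h/(4\mu_y)$, and then invoking $\mu_x\le L_G$ upgrades this to $m_h^2 L_G/\mu_y\le m_h L_G L_h/(4\mu_x\mu_y)$, so the first term is dominated by the second up to a constant. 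This yields the claimed $\widetilde O(\sqrt{m_h L_G L_h/(\mu_x\mu_y)})$ bound and completes the proof.
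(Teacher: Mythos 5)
Your proposal is correct and follows essentially the same route as the paper's own proof: verify Assumptions~\ref{assumpt:framework}, \ref{assumpt:framework_oracle}, \ref{assumpt:framework_oracle_x} via \eqref{eq:assumpt1_constants_h_sum}, Lemma~\ref{lem:obtain_oracle} with $H=2L_G$, and Corollary~\ref{lem:obtain_oracle_x}, then substitute into Theorem~\ref{theorem:general_framework} and simplify with the standing inequalities. Your absorption of the $m_h\sqrt{L_G/\mu_y}$ term via $m_h L_G\le L_h/4$ and $\mu_x\le L_G$ is exactly the argument the paper uses for the $\nabla h_i$ count.
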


\begin{proof}
Assumption \ref{assumpt:h_sum} with \eqref{eq:assumpt1_constants_h_sum} guarantee that Assumption~\ref{assumpt:framework} holds. 
Further, assumption  $\mu_y  \leq L_G$ and the choice $H = 2L_G$ guarantee that $\mu_y + H \leq 4L_G$. This inequality, assumption that $m_h (4L_G + \mu_y) \leq L_h$ and the choice $H = 2L_G$ allow to apply Lemma~\ref{lem:obtain_oracle} and conclude that Assumption \ref{assumpt:framework_oracle} holds with the number of oracle calls given by \eqref{eq:complofgG} and \eqref{eq:complofgh}.
Assumptions $ 2L_G + \mu_x   \leq L_f$ and $\mu_x \leq L_G$ by Corollary~\ref{lem:obtain_oracle_x} guarantee that Assumption \ref{assumpt:framework_oracle_x} holds with the number of oracle calls given by \eqref{eq:complofgG_x} and \eqref{eq:complofgf_x}. 
Applying Theorem~\ref{theorem:general_framework} and combining its complexity estimates, we obtain the final complexity bounds as follows.

Number of basic oracle calls of $f$:
\begin{align*}
\widetilde{O} \left(\left( 1 + \sqrt{\frac{L_G}{\mu_y}}\right) \left( \sqrt{ \frac{L_f}{\mu_x}} + \left( 1 + \sqrt{\frac{L_G}{\mu_x}}\right)\left( 1 + \sqrt{\frac{L_f}{L_G}}\right)\right)
    \right) 
    = \widetilde{O} \left(\left( \sqrt{\frac{L_G}{\mu_y}}\right) \left( \sqrt{ \frac{L_f}{\mu_x}} + \left( \sqrt{\frac{L_G}{\mu_x}}\right)\left( \sqrt{\frac{L_f}{L_G}}\right)\right)
    \right)\\
    =\widetilde{O} \left(\left( \sqrt{\frac{L_f L_G}{\mu_x \mu_y}}\right)\right),
\end{align*}
where we used that, by the assumptions of this Theorem, $1 \leq L_G/\mu_y $, $1 \leq L_G/\mu_x$ and $1 \leq L_f/L_G$.

Number of basic oracle calls of $h$:
\begin{align*}
\widetilde{O} \left( \left( 1 + \sqrt{\frac{L_G}{\mu_y}}\right) \left( m_h +   \left( 1 + \sqrt{\frac{L_G}{\mu_x}}\right) \sqrt{\frac{m_h L_h}{2L_G+\mu_y}}\right)
    \right) 
=\widetilde{O} \left( \left( \sqrt{\frac{L_G}{\mu_y}}\right) \left( m_h +   \left( \sqrt{\frac{L_G}{\mu_x}}\right)\left( \sqrt{\frac{m_h L_h}{2L_G+\mu_y}}\right)\right)
    \right)=\\
    \widetilde{O} \left( \max \left\{ \underbrace{m_h \sqrt{\frac{L_G}{\mu_y}}}_{=\widetilde{O}\left(\sqrt{m_hL_h/\mu_y}\right)}, \sqrt{\frac{m_h L_G L_h}{\mu_x \mu_y}} \right\}\right)=\widetilde{O} \left( \sqrt{\frac{m_h L_G L_h}{\mu_x \mu_y}}\right),
\end{align*}
where we used that, by the assumptions of this Theorem, $1 \leq L_G/\mu_y $, $1 \leq L_G/\mu_x$ and 
\[
m_h (4L_G + \mu_y) \leq L_h \Rightarrow \sqrt{m_h L_G} \leq \sqrt{L_h}
\]

Number of basic oracle calls of  $G(\cdot, y)$:
\begin{align*}
 \widetilde{O} \left( \left( 1 + \sqrt{\frac{L_G}{\mu_y}}\right) \left( 1 + \sqrt{\frac{L_G}{\mu_x}} + \left( 1 + \sqrt{\frac{L_G}{\mu_x}}\right) \right)
    \right) = \widetilde{O} \left(   \sqrt{\frac{L_G^2}{\mu_x \mu_y}}  \right),
\end{align*}
where we used that, by the assumptions of this Theorem, $1 \leq L_G/\mu_y $ and $1 \leq L_G/\mu_x$.

Number of basic oracle calls of  $G(x,\cdot)$:
\begin{align*}
 \widetilde{O} \left( \left( 1 + \sqrt{\frac{L_G}{\mu_y}}\right) \left( 1 + \left( 1 + \sqrt{\frac{L_G}{\mu_x}}\right)  \left(1 + \sqrt{\frac{L_G}{2L_G+\mu_y}} \right)\right)
    \right) \\
 =\widetilde{O} \left( \left( 1 + \sqrt{\frac{L_G}{\mu_y}}\right) \left( 1 + \sqrt{\frac{L_G}{\mu_x}}  \right)
    \right)=\widetilde{O} \left(   \sqrt{\frac{L_G^2}{\mu_x \mu_y}}  \right),
\end{align*}
where we used that, by the assumptions of this Theorem, $1 \leq L_G/\mu_y $ and $1 \leq L_G/\mu_x$.
\qed \end{proof}

An important particular case, for which we state the following corollary, is when does not have the finite-sum, i.e. $m_h=1$.

\begin{corollary}(\textbf{Particular case $m_h=1$}) \label{corollary:m_h=1}
Let the assumptions of Theorem \ref{theorem:h-sum} hold and additionally $m_h=1$.
Then the described in Section \ref{section:saddleFramework} general framework combined with the algorithms described in the previous subsection find an $(\varepsilon, \sigma)$-solution to problem \eqref{eq:problem_st_h_sum} with the following number of basic oracle calls
\begin{align}
    &\nabla f \text{-oracle calls}: \widetilde{O} \left( \sqrt{\frac{L_G L_f}{\mu_x\mu_y}} 
    \right),\\
        &\nabla h \text{-oracle calls}: \widetilde{O} \left( \sqrt{\frac{ L_G L_h}{\mu_x \mu_y}}
    \right),\\
    &\nabla_x G \text{-oracle calls}: \widetilde{O} \left(  \sqrt{\frac{L_G^2}{\mu_x\mu_y}}
    \right),\\ 
    &\nabla_y G \text{-oracle calls}: \widetilde{O} \left( \sqrt{\frac{L_G^2}{\mu_x\mu_y}}
    \right).
\end{align}
\end{corollary}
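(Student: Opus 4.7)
The plan is essentially trivial: invoke Theorem~\ref{theorem:h-sum} verbatim and specialize the four complexity bounds to $m_h=1$. Since the corollary's hypotheses include all hypotheses of Theorem~\ref{theorem:h-sum} (most importantly $m_h(4L_G+\mu_y)\leq L_h$, which at $m_h=1$ becomes $4L_G+\mu_y\leq L_h$, together with $2L_G+\mu_x\leq L_f$, $\mu_y\leq L_G$, $\mu_x\leq L_G$), the smoothness/strong-convexity structure of \eqref{eq:problem_st_h_sum} fits Assumption~\ref{assumpt:h_sum}, and Lemma~\ref{lem:obtain_oracle} together with Corollary~\ref{lem:obtain_oracle_x} still supply Assumptions~\ref{assumpt:framework_oracle} and \ref{assumpt:framework_oracle_x}. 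Hence the general framework of Section~\ref{section:saddleFramework} (via Theorem~\ref{theorem:general_framework}) applies unchanged.

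For the substitution itself, three of the four bounds in Theorem~\ref{theorem:h-sum}—those for $\nabla f$, $\nabla_x G$, and $\nabla_y G$—do not depend on $m_h$, so they transfer directly. The only bound that carries an $m_h$ factor is the stochastic gradient bound $\widetilde{O}(\sqrt{m_h L_G L_h/(\mu_x\mu_y)})$; at $m_h=1$ this simplifies to $\widetilde{O}(\sqrt{L_G L_h/(\mu_x\mu_y)})$, and because $h = h_1$ in this case, it becomes the bound for calls to $\nabla h$. This yields precisely the four estimates claimed in the corollary.

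If one wishes to verify the degeneracy at $m_h = 1$ one layer deeper, it suffices to re-examine \eqref{eq:complofgh} from Lemma~\ref{lem:obtain_oracle}: the L-SVRG complexity $O(m_h+\sqrt{m_h L_h/(H+\mu_y)})$ collapses to $O(1+\sqrt{L_h/(H+\mu_y)})=O(\sqrt{L_h/(H+\mu_y)})$, which is just the accelerated gradient complexity of maximizing a single-function objective in $y$; propagating this through the loop-count of Theorem~\ref{theorem:general_framework} reproduces $\widetilde{O}(\sqrt{L_G L_h/(\mu_x\mu_y)})$. There is no genuine obstacle to surmount here—the main content of this corollary is the conceptual observation that the variance-reduction machinery embedded in our general framework loses nothing when the sum has a single summand, so the $m_h=1$ case is recovered cleanly as a limit of the finite-sum bound.
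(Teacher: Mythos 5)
Your proposal is correct and matches the paper's treatment: Corollary~\ref{corollary:m_h=1} is stated as a direct specialization of Theorem~\ref{theorem:h-sum} with no separate proof, and the substitution $m_h=1$ affects only the $\nabla h_i$ bound exactly as you describe (with the hypotheses, including $4L_G+\mu_y\leq L_h$, inherited verbatim). Your extra check that the L-SVRG complexity in \eqref{eq:complofgh} degenerates to the accelerated-gradient rate at $m_h=1$ is a harmless and sound addition.
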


\section{Accelerated Methods for Saddle-Point Problems with Finite-Sum Structure}
\label{section:G_sum}

In this section, we consider problem \eqref{eq:problem_Gsum}, which is problem \eqref{eq:framework_funct} with a specific finite-sum structure of the function $G$. The algorithms in this section are, in fact, deterministic, i.e. correspond to confidence levels $\sigma=0$. Thus, our goal is to obtain an  $\varepsilon$-solution of problem \eqref{eq:problem_Gsum}. 
As in the previous section, we use the general framework described in Section \ref{section:saddleFramework}, but in a simpler setting of all the confidence levels $\sigma$ being equal to zero.
To obtain the final estimates for the number of basic oracles calls, we need to satisfy Assumptions~\ref{assumpt:framework}, \ref{assumpt:framework_oracle}, \ref{assumpt:framework_oracle_x} which are formulated in Section \ref{section:saddleFramework}, where we construct our general framework. 
The proof that these assumptions hold and the proof of the resulting complexity bounds follow mostly the same lines as for the case of problem \eqref{eq:problem_st_h_sum} under Assumption \ref{assumpt:h_sum} in the previous section, but are rather technical. Thus, in this section we only state the main results and the proofs are deferred to Appendix~\ref{Appendix_Framework_inverse}  and Appendix~\ref{Appendix_h_not_sum}.


\subsection{Problem statement}
In this section we consider optimization problem of the form \eqref{eq:problem_Gsum}: 
\begin{equation} \label{eq:problem_Gsum_minmax}
    \min _{x \in \mathbb{R}^{d_x}} \max _{y \in \mathbb{R}^{d_y}} \left\{f(x)+ G(x, y) - h(y)\right\}, \;\;\; G(x,y):=\frac{1}{m_G} \sum_{i = 1}^{m_G} G_i(x,y).
\end{equation}
and develop accelerated optimization methods for its solution under the following assumptions.


\begin{assumption}\label{assumpt:G_sum}
\begin{enumerate}
    
    \item Function $f(x)$ is $\mu_x$-strongly convex, and function $h(y)$ is $\mu_y$-strongly convex.
    \item $m_G\geq 1$ and each function $G_i(x, y)$, $i \in 1, \dots, m_G$  is convex in $x$ and concave in $y$, and $L_G^i$-smooth, i.e. for each $x = (x_1, x_2), y = (y_1, y_2) \in  \mathbb{R}^{d_x} \times \mathbb{R}^{d_y}$
\begin{equation}
    \|\nabla G_i (x_1,x_2)-\nabla G_i (y_1,y_2)\|\leq L_G^i \|(x_1, x_2) - (y_1, y_2)\|.
\end{equation}
We also define $L_G = \frac{1}{m_G} \sum_{i = 1}^{m_G} L_G^i$.
\item One of the following three statements holds for the functions $f(x)$, $h(y)$
    \begin{enumerate}
    \item Function  $f(x)$ is $L_f$-smooth and function $h(y)$ is $L_h$-smooth;
    \item  Function $f(x)$ is $L_f$-smooth, function $h(y)$ is $L_h$-smooth and prox-friendly; 
    \end{enumerate}
\end{enumerate}




\end{assumption}

Under Assumption~\ref{assumpt:G_sum}.2 it is easy to see that the function $G(x,y)$ in problem \eqref{eq:problem_Gsum_minmax} is $L_G$-smooth. Indeed,
\begin{align*}
    \|\nabla G (x_1,x_2)-\nabla G (y_1,y_2)\|\leq& \frac{1}{m_G}\sum_{i=1}^{m_G}\|\nabla G_i (x_1,x_2)-\nabla G_i (y_1,y_2)\|\\
    \leq& \frac{1}{m_G}\sum_{i=1}^{m_G}
    L_G^i \|(x_1, x_2) - (y_1, y_2)\|=
    L_G \|(x_1, x_2) - (y_1, y_2)\|.
\end{align*}
where $x = (x_1, x_2), y = (y_1, y_2) \in  \mathbb{R}^{d_x} \times \mathbb{R}^{d_y}$.
To further fit Assumption~\ref{assumpt:framework} we consider the full gradient oracles $\nabla h(y)$, $\nabla f(x)$ as the basic oracles $O_h$, $O_f$ respectively, and the stochastic gradient oracle $\nabla_xG_i(x,y)$, $\nabla_yG_i(x,y)$ as the basic oracles $O_G^x$, $O_G^y$ respectively. Then Assumption~\ref{assumpt:G_sum} guarantees that Assumption~\ref{assumpt:framework} holds with
\begin{align}\label{eq:assumpt1_constants_G_sum}
    \tau_f = \tau_h = 1, \tau_G = m_G.
\end{align}

\subsection{Complexity estimates}

In this section we consider problem \eqref{eq:problem_Gsum_minmax} under one of the two different Assumptions \ref{assumpt:G_sum}.3(a) or (b) and mostly follow the lines of derivations described in Section \ref{section:saddle} with appropriate changes caused by the different problem statement.
In particular, we change the order of the loops in the general framework described in the Section~\ref{section:saddleFramework} as well as in the proof of Lemma~\ref{lem:obtain_oracle} and Corollary~\ref{lem:obtain_oracle_x} depending on which is larger $L_h$ or $L_G$ and $L_f$ or $L_G$.
This eventually allows to avoid assumptions of the form $4L_G + \mu_y \leq L_h$, $ 2L_G + \mu_x \leq L_f$, which are used in  Theorem~\ref{theorem:h-sum}.
The proof of the resulting complexity bounds follows mostly the same ideas as for the  case of problem \eqref{eq:problem_st_h_sum} under Assumption \ref{assumpt:h_sum}, but is rather technical. Thus, in this section we only state the result and the proofs are deferred to appendices.
In Appendix~\ref{Appendix_Framework_inverse} we propose a variation of the general framework described in Section~\ref{section:saddleFramework}, but with the change of the order of Loop 2 and Loop 3. As a result, we prove Theorem \ref{theorem:general_framework_inverse} which is a counterpart of Theorem~\ref{theorem:general_framework}. In Appendix~\ref{Appendix_h_not_sum} we prove Lemma~\ref{lem:obtain_oracle_mh=1} and Corollary~\ref{lem:obtain_oracle_x_mh=1}, which generalize Lemma~\ref{lem:obtain_oracle} and Corollary~\ref{lem:obtain_oracle_x} in two aspects. First, we consider the function $G$ given in \eqref{eq:problem_Gsum_minmax}. Second, we do not use the assumption $m_h(H+2L_G+\mu_y) \leq L_h$ of Lemma~\ref{lem:obtain_oracle} and $2L_G+\mu_x \leq L_f$ of Corollary~\ref{lem:obtain_oracle_x}.



We start with considering problem \eqref{eq:problem_Gsum_minmax} under Assumption \ref{assumpt:G_sum}.1,2,3(a). 
This assumption combined with \eqref{eq:assumpt1_constants_G_sum} guarantees that Assumption~\ref{assumpt:framework} holds.
Lemma~\ref{lem:obtain_oracle_mh=1} and Corollary~\ref{lem:obtain_oracle_x_mh=1} guarantee that Assumptions~\ref{assumpt:framework_oracle}, \ref{assumpt:framework_oracle_x} hold.
This allows to combine Lemma~\ref{lem:obtain_oracle_mh=1} and Corollary~\ref{lem:obtain_oracle_x_mh=1} with either Theorem~\ref{theorem:general_framework} if $L_f \geq L_G$, or
Theorem \ref{theorem:general_framework_inverse} if $L_f \leq L_G$. 
The resulting complexity estimates for solving problem \eqref{eq:problem_Gsum_minmax} with our system of inner-outer loops are given in the next theorem which is proved in Appendix~\ref{Appendix_h_not_sum}.
Notice that in this case the algorithm is fully deterministic and we find an $\varepsilon$-solution to problem \eqref{eq:problem_Gsum_minmax}.

\begin{theorem}\label{theorem:G-sum_noprox}
Assume that for problem \eqref{eq:problem_Gsum_minmax} Assumption \ref{assumpt:G_sum}.1,2,3(a) holds and additionally $\mu_x\leq L_G$, $\mu_x\leq L_f$ and $\mu_y\leq L_G$.
Then using general framework from Section~\ref{section:saddleFramework}, general framework from Appendix~\ref{Appendix_Framework_inverse}, Lemma~\ref{lem:obtain_oracle_mh=1} and  Corollary~\ref{lem:obtain_oracle_x_mh=1} for each relation between $L_h, L_G$ and $L_f, L_G$ respectively we provide an algorithm, which finds an $\varepsilon$-solution to problem \eqref{eq:problem_Gsum_minmax} with the following number of basic oracle calls
\begin{align}
    &\nabla f \text{-oracle calls}: \widetilde{O} \left( \sqrt{\frac{L_G L_f}{\mu_x\mu_y}} 
    \right),\label{theorem:G-sum_f}\\
        &\nabla h \text{-oracle calls}: \widetilde{O} \left( \max \left\{ \sqrt{\frac{ L_G L_h}{\mu_x \mu_y}}, \sqrt{\frac{ L_G^2}{\mu_x \mu_y}} \right\}\right),\label{theorem:G-sum_h}\\
    &\nabla_x G_i \text{-oracle calls}: \widetilde{O} \left(m_G  \sqrt{\frac{L_G^2}{\mu_x\mu_y}}
    \right),\label{theorem:G-sum_G_x}\\ 
    &\nabla_y G_i \text{-oracle calls}: \widetilde{O} \left(m_G  \sqrt{\frac{L_G^2}{\mu_x\mu_y}} 
    \right).\label{theorem:G-sum_G_y}
\end{align}
\end{theorem}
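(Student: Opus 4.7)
The plan is to verify the hypotheses of Theorem~\ref{theorem:general_framework} (or its variant in Appendix~\ref{Appendix_Framework_inverse} with the order of Loop~2 and Loop~3 swapped), then substitute the oracle-evaluation complexities coming from Lemma~\ref{lem:obtain_oracle_mh=1} and Corollary~\ref{lem:obtain_oracle_x_mh=1}, and simplify. First I would observe that under Assumption~\ref{assumpt:G_sum} the function $G(x,y)=\frac{1}{m_G}\sum_{i=1}^{m_G}G_i(x,y)$ is $L_G$-smooth, so Assumption~\ref{assumpt:framework} holds with the basic oracles chosen as full gradients $\nabla f,\nabla h$ and stochastic components $\nabla_x G_i,\nabla_y G_i$, giving $\tau_f=\tau_h=1$ and $\tau_G=m_G$ as in \eqref{eq:assumpt1_constants_G_sum}.

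Next I would apply Lemma~\ref{lem:obtain_oracle_mh=1} to produce an inexact oracle for the function $g(x)=\max_y\{G(x,y)-h(y)-\frac{H}{2}\|y-y_0\|^2\}$ appearing in Loop~2 of the framework. Since here there is no finite-sum structure on $h$, the inner maximization is solved by accelerated gradient descent on a $(H+\mu_y)$-strongly concave, $(L_G+L_h+H)$-smooth objective; this gives $\mathcal{N}_G^y(\tau_G,H)=\widetilde O(1+\sqrt{L_G/(H+\mu_y)})$ and $\mathcal{N}_h(1,H)=\widetilde O(1+\sqrt{L_h/(H+\mu_y)})$, ensuring Assumption~\ref{assumpt:framework_oracle}. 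The analogous argument, with $x$ and $y$ exchanged, yields Corollary~\ref{lem:obtain_oracle_x_mh=1} and so Assumption~\ref{assumpt:framework_oracle_x} with $\mathcal{N}_G^x(\tau_G)=\widetilde O(1+\sqrt{L_G/\mu_x})$ and $\mathcal{N}_f(1)=\widetilde O(1+\sqrt{L_f/\mu_x})$.

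With these quantities in hand I would plug them into the bounds \eqref{eq:framework_f}--\eqref{eq:framework_G_y}. Using $\mu_x\le L_G$, $\mu_y\le L_G$ and $\mu_x\le L_f$, the various $1+\sqrt{\cdot}$ terms collapse to $\sqrt{\cdot}$ and the cross products simplify exactly as in the proof of Theorem~\ref{theorem:h-sum}: the $\nabla f$ count becomes $\widetilde O(\sqrt{L_f L_G/(\mu_x\mu_y)})$, the gradient-of-$G$ counts both become $\widetilde O(m_G\sqrt{L_G^2/(\mu_x\mu_y)})$ thanks to the factor $\tau_G=m_G$, and the $\nabla h$ count becomes $\widetilde O(\max\{\sqrt{L_G L_h/(\mu_x\mu_y)},\sqrt{L_G^2/(\mu_x\mu_y)}\})$, the second term in the max arising from the $(1+\sqrt{L_G/\mu_y})\cdot\tau_h$ contribution of Loop~1 in the regime $L_h<L_G$, where the dropping of the hypothesis $m_h(H+2L_G+\mu_y)\le L_h$ used in Theorem~\ref{theorem:h-sum} prevents the cleaner collapse to a single term.

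The remaining subtlety, corresponding to the phrase ``for each relation between $L_h,L_G$ and $L_f,L_G$'' in the statement, is that the choice $H_3=2L_f$ in Theorem~\ref{theorem:general_framework} implicitly assumes $L_f\ge L_G$ so that the auxiliary problem of Loop~3 is well-conditioned relative to $H_2\asymp L_G$. When $L_f<L_G$, I would instead apply the swapped framework of Appendix~\ref{Appendix_Framework_inverse}, which moves the quadratic subproblem to the $y$-side and uses an $H_3$ tuned to $L_h$; an entirely analogous case split inside the proofs of Lemma~\ref{lem:obtain_oracle_mh=1} and Corollary~\ref{lem:obtain_oracle_x_mh=1} handles $L_h$ vs $L_G$. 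The hard part is the bookkeeping: one must verify that for every combination of orderings the simplifications using $\mu_x,\mu_y\le L_G$ yield exactly the four bounds \eqref{theorem:G-sum_f}--\eqref{theorem:G-sum_G_y}, and in particular that the $m_G$ factor appears only on the two $G$-gradient rows and not on the $\nabla f$ or $\nabla h$ rows, while the $\max$ in the $\nabla h$ row is genuinely needed because no hypothesis is placed on the ratio $L_h/L_G$.
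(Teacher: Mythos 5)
Your proposal follows essentially the same route as the paper's proof: verify Assumptions \ref{assumpt:framework}, \ref{assumpt:framework_oracle}, \ref{assumpt:framework_oracle_x} via \eqref{eq:assumpt1_constants_G_sum}, Lemma~\ref{lem:obtain_oracle_mh=1} (with $H=2L_G$) and Corollary~\ref{lem:obtain_oracle_x_mh=1}, split on $L_f$ versus $L_G$ to choose between Theorem~\ref{theorem:general_framework} and Theorem~\ref{theorem:general_framework_inverse}, and simplify the resulting products using $\mu_x\leq L_G$, $\mu_y\leq L_G$, $\mu_x\leq L_f$. Two descriptive details are inaccurate but harmless since you use the cited results as black boxes with their correct conclusions: the inner maximization in Lemma~\ref{lem:obtain_oracle_mh=1} is handled by a two-loop sliding rather than plain accelerated gradient on the sum (the latter would give $\sqrt{(L_G+L_h)/(H+\mu_y)}$ calls to both oracles and destroy the separation between the $\nabla_y G_i$ and $\nabla h$ counts), and in the swapped framework of Appendix~\ref{Appendix_Framework_inverse} the Loop-3 quadratic subproblem remains on the $x$-side with $H_3\asymp L_G$, not an $H_3$ tuned to $L_h$.
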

We prove this theorem in Appendix~\ref{Appendix_h_not_sum}.

We would like to emphasize that even though we do not use variance reduction techniques in the algorithm described in Theorem \ref{theorem:G-sum_noprox}, under assumption \ref{assumpt:G_sum}.1,2,3(a) our bounds are better than the bounds obtained by variance reduction method proposed in \cite{NIPS2016_1aa48fc4}. To solve the problem \eqref{eq:problem_Gsum_minmax} by the algorithm of \cite{NIPS2016_1aa48fc4}, we need to restate this problem as
\[
\min _{x \in \mathbb{R}^{d_x}} \max _{y \in \mathbb{R}^{d_y}} \left\{\frac{1}{m_G} \sum_{i = 1}^{m_G} \left(\tilde{G}_i(x, y):=f(x)+ G_i(x, y) - h(y)\right)\right\}
\]
with the objective being $L_{\tilde{G}} = \max\{L_G+L_f, L_G+L_h\}$-smooth.
The algorithm in \cite{NIPS2016_1aa48fc4} does not propose a way to separate the complexities for different parts of the objective and the resulting number of oracle calls for each part is the same
\begin{align} \label{estimates_bach}
    \nabla f, \nabla h, \nabla_x G_i, \nabla_y G_i \text{-oracle calls}: \widetilde{O} \left( \sqrt{m_G} \frac{L_{\tilde{G}}}{\min\{\mu_x, \mu_y\}} \right).
\end{align}
%
Comparing these estimates with the estimates of Theorem~\ref{theorem:G-sum_noprox}, we make two important observations.
\begin{itemize}
    \item Due to our approach with complexity separation the estimates from Theorem~\ref{theorem:G-sum_noprox} on the number of oracle calls for $f$ and $h$ are always better than the corresponding estimates in \eqref{estimates_bach} at least by a factor $\sqrt{m_G}$.
    \item At first sight, the estimates on the number of calls of $\nabla_x G_i$ and $\nabla_y G_i$ from Theorem~\ref{theorem:G-sum_noprox} seem worse than the corresponding estimates in \eqref{estimates_bach} due to the additional factor $\sqrt{m_G}$. However, this is not the case, for example, when $L_f$ or $L_h$ are large enough leading to $L_{\tilde{G}} \gg L_G$. This can be demonstrated by taking $m_G L_G \leq L_f$, then the estimates on the number of calls of $\nabla_x G_i$ and $\nabla_y G_i$ in Theorem~\ref{theorem:G-sum_noprox} become $\sqrt{L_f^2/\mu_x\mu_y}$, which is smaller than the estimates in \eqref{estimates_bach}.
\end{itemize}
An interesting open question is whether we can improve the complexity bounds in Theorem~\ref{theorem:G-sum_noprox} by applying variance reduction methods to ensure Assumptions \ref{assumpt:framework_oracle}, \ref{assumpt:framework_oracle_x}. We conjecture that it is possible to improve the bounds \eqref{theorem:G-sum_G_x} and \eqref{theorem:G-sum_G_y} to $\widetilde{O} \left(\sqrt{\frac{m_GL_G^2}{\mu_x\mu_y}} \right)$.

As a particular case of problem \eqref{eq:problem_Gsum_minmax} we can consider problem \eqref{eq:problem_st_h_sum} with $m_h=1$. This allows to
relax the assumptions $m_h (4L_G + \mu_y) \leq L_h$, $ 2L_G + \mu_x   \leq L_f, \mu_y  \leq L_G$ made in Corollary \ref{corollary:m_h=1} and obtain the following corollary of the previous theorem.
Notice that again in this case the algorithm is fully deterministic and we find an $\varepsilon$-solution to problem \eqref{eq:problem_st_h_sum}.

\begin{corollary}\label{theorem:h-not-sum}
Assume that for problem \eqref{eq:problem_st_h_sum} Assumption \ref{assumpt:h_sum} holds and additionally 
$m_h =1$, $\mu_x\leq L_G$, $\mu_x\leq L_f$ and $\mu_y\leq L_G$. 
Then, using the general framework from Section~\ref{section:saddleFramework}, the general framework from Appendix~\ref{Appendix_Framework_inverse} and Lemma~\ref{lem:obtain_oracle_mh=1} with Corollary~\ref{lem:obtain_oracle_x_mh=1} for each relation between $L_h, L_G$ and $L_f, L_G$ respectively, we provide an algorithm, which finds an $\varepsilon$-solution to problem \eqref{eq:problem_st_h_sum} with the following number of basic oracle calls

\begin{align}
    &\nabla f \text{-oracle calls}: \widetilde{O} \left( \sqrt{\frac{L_G L_f}{\mu_x\mu_y}} 
    \right),\label{theorem:h-not-sum_f}\\
        &\nabla h \text{-oracle calls}: \widetilde{O} \left( \max \left\{ \sqrt{\frac{ L_G L_h}{\mu_x \mu_y}}, \sqrt{\frac{ L_G^2}{\mu_x \mu_y}} \right\}\right),\label{theorem:h-not-sum_h}\\
    &\nabla_x G \text{-oracle calls}: \widetilde{O} \left(  \sqrt{\frac{L_G^2}{\mu_x\mu_y}}
    \right),\label{theorem:h-not-sum_G_x}\\ 
    &\nabla_y G \text{-oracle calls}: \widetilde{O} \left(  \sqrt{\frac{L_G^2}{\mu_x\mu_y}} 
    \right).\label{theorem:h-not-sum_G_y}
\end{align}
\end{corollary}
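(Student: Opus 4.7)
The plan is to observe that Corollary \ref{theorem:h-not-sum} is a direct specialization of Theorem \ref{theorem:G-sum_noprox} to the case $m_G = 1$, and then simply verify that the hypotheses transfer correctly. Specifically, problem \eqref{eq:problem_st_h_sum} with $m_h = 1$ and a single function $h$ is literally an instance of problem \eqref{eq:problem_Gsum_minmax} with $m_G = 1$ (the sum defining $G$ collapses to a single term $G_1 = G$).

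First I would check that Assumption~\ref{assumpt:G_sum} holds under the hypotheses of the corollary. Assumption~\ref{assumpt:h_sum} provides: $f$ is $L_f$-smooth and $\mu_x$-strongly convex; $G(x,y)$ is convex-concave and $L_G$-smooth; and $h$ is $L_h$-smooth and $\mu_y$-strongly convex (since with $m_h=1$, item \ref{Asm:h_finite_sum} of Assumption~\ref{assumpt:h_sum} makes $h = h_1$ $L_h$-smooth and $\mu_y$-strongly convex). This gives exactly Assumption~\ref{assumpt:G_sum}.1, Assumption~\ref{assumpt:G_sum}.2 (with $m_G=1$, $L_G^1 = L_G$), and Assumption~\ref{assumpt:G_sum}.3(a). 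The three extra conditions $\mu_x \leq L_G$, $\mu_x \leq L_f$, and $\mu_y \leq L_G$ required by Theorem~\ref{theorem:G-sum_noprox} are assumed verbatim in the corollary.

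Second, I would invoke Theorem~\ref{theorem:G-sum_noprox} directly and specialize each complexity bound by setting $m_G = 1$. The bounds \eqref{theorem:G-sum_f}--\eqref{theorem:G-sum_G_y} then become \eqref{theorem:h-not-sum_f}--\eqref{theorem:h-not-sum_G_y} respectively; in particular, the $m_G$ factor in \eqref{theorem:G-sum_G_x} and \eqref{theorem:G-sum_G_y} disappears, and the single-gradient notation $\nabla_x G$, $\nabla_y G$, $\nabla h$ replaces the basic stochastic oracles $\nabla_x G_i$, $\nabla_y G_i$, $\nabla h_i$ (consistent with taking $\tau_G = 1$, $\tau_h = 1$ in the correspondence \eqref{eq:assumpt1_constants_G_sum}).

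There is no real obstacle here: this is essentially a bookkeeping argument that the more general Theorem~\ref{theorem:G-sum_noprox} subsumes the present setting once $m_h$ (and hence effectively $m_G$) equals one. The benefit of phrasing this as a separate corollary is that, compared to the earlier Corollary~\ref{corollary:m_h=1} obtained from Theorem~\ref{theorem:h-sum}, the present result removes the restrictive smoothness assumptions $m_h(4L_G+\mu_y)\leq L_h$ and $2L_G+\mu_x \leq L_f$ by choosing, depending on whether $L_h \gtrless L_G$ and $L_f \gtrless L_G$, either the original general framework of Section~\ref{section:saddleFramework} or its variant from Appendix~\ref{Appendix_Framework_inverse}, as already handled inside the proof of Theorem~\ref{theorem:G-sum_noprox}.
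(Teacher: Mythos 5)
Your proposal is correct and is precisely the paper's own route: the paper introduces this corollary with the remark that problem \eqref{eq:problem_st_h_sum} with $m_h=1$ is a particular case of problem \eqref{eq:problem_Gsum_minmax}, and obtains the bounds by setting $m_G=1$ in Theorem~\ref{theorem:G-sum_noprox}. Your verification that Assumption~\ref{assumpt:h_sum} (with $m_h=1$) implies Assumption~\ref{assumpt:G_sum}.1,2,3(a) and that the extra conditions $\mu_x\leq L_G$, $\mu_x\leq L_f$, $\mu_y\leq L_G$ carry over verbatim is exactly the bookkeeping the paper leaves implicit.
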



We now turn to the problem \eqref{eq:problem_Gsum_minmax} under Assumption \ref{assumpt:G_sum}.1,2,3(b). 
This assumption combined with \eqref{eq:assumpt1_constants_G_sum} guarantees that Assumption~\ref{assumpt:framework} holds.
The part 3(b)  allows a simple  construction, which is given in the proof of Lemma~\ref{lem:obtain_oracle_mh=1_h-prox} in Appendix~\ref{Appendix_h_not_sum}, to guarantee Assumption \ref{assumpt:framework_oracle}. The main difference with Lemma \ref{lem:obtain_oracle_mh=1} is that due to the prox-friendliness of $h$ the second loop is not needed and it is sufficient to apply just Algorithm~\ref{alg:restarts_inexact_notconvex} to solve problem \eqref{eq:framework_g_obt_oracle} in Assumption \ref{assumpt:framework_oracle}.  Corollary~\ref{lem:obtain_oracle_x_mh=1} guarantees that Assumptions~\ref{assumpt:framework_oracle_x} holds.
This allows to combine Lemma~\ref{lem:obtain_oracle_mh=1_h-prox} and Corollary~\ref{lem:obtain_oracle_x_mh=1} with either Theorem~\ref{theorem:general_framework} if $L_f \geq L_G$, or
Theorem \ref{theorem:general_framework_inverse} if $L_f \leq L_G$. 
The resulting complexity estimates for solving problem \eqref{eq:problem_Gsum_minmax} with our system of inner-outer loops are given in the next theorem which is proved in Appendix~\ref{Appendix_h_not_sum}.
Notice that in this case the algorithm is fully deterministic and we find an $\varepsilon$-solution to problem \eqref{eq:problem_Gsum_minmax}.



\begin{theorem}\label{theorem:G-sum_h_prox}
Assume that for problem \eqref{eq:problem_Gsum_minmax} Assumption \ref{assumpt:G_sum}.1,2,3(b) holds and additionally $\mu_x\leq L_G$, $\mu_x\leq L_f$ and $\mu_y\leq L_G$.
Then, using the general framework from Section~\ref{section:saddleFramework}, the general framework from Appendix~\ref{Appendix_Framework_inverse} and Lemma~\ref{lem:obtain_oracle_mh=1_h-prox} with Corollary~\ref{lem:obtain_oracle_x_mh=1} for each relation between $L_h, L_G$ and $L_f, L_G$ respectively, we provide an algorithm, which finds an $\varepsilon$-solution to problem \eqref{eq:problem_Gsum_minmax} with the following number of basic oracle calls
\begin{align}
    &\nabla f \text{-oracle calls}: \widetilde{O} \left( \sqrt{\frac{L_G L_f}{\mu_x\mu_y}} 
    \right),\\
        &\nabla h \text{-oracle calls}: \widetilde{O} \left( \sqrt{\frac{L_G}{\mu_y}} 
    \right),\\
    &\nabla_x G_i \text{-oracle calls}: \widetilde{O} \left( m_G \sqrt{\frac{  L_G^2}{\mu_x\mu_y}} 
    \right),\\ 
    &\nabla_y G_i \text{-oracle calls}: \widetilde{O} \left( m_G \sqrt{\frac{L_G^2}{\mu_x\mu_y}}
    \right).
\end{align}
\end{theorem}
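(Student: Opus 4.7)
The proof follows closely the pattern of Theorem~\ref{theorem:G-sum_noprox}: we verify Assumptions~\ref{assumpt:framework}, \ref{assumpt:framework_oracle}, \ref{assumpt:framework_oracle_x} and then invoke the general framework of Section~\ref{section:saddleFramework} (or its variant in Appendix~\ref{Appendix_Framework_inverse}). Assumption~\ref{assumpt:framework} is established via~\eqref{eq:assumpt1_constants_G_sum}, with full-gradient oracles for $f$ and $h$ (so $\tau_f = \tau_h = 1$) and stochastic component gradients for $G$ (so $\tau_G = m_G$).

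The key departure from Theorem~\ref{theorem:G-sum_noprox} is Assumption~\ref{assumpt:framework_oracle}. Since $h$ is prox-friendly and $G(x,\cdot)$ is $L_G$-smooth in $y$, the inner maximization~\eqref{eq:framework_g_obt_oracle} is a composite problem whose smooth part is $-G(x,\cdot)$ and whose $(H+\mu_y)$-strongly convex prox-friendly part is $h(y)+\frac{H}{2}\|y-y_0\|^2$. The inner variance-reduction loop used in the proof of Lemma~\ref{lem:obtain_oracle_mh=1} is therefore unnecessary: one can apply Algorithm~\ref{alg:restarts_inexact_notconvex} directly, since the auxiliary problem~\eqref{prox_step_inexact} reduces to the form~\eqref{f_x:prox-friendly} and is solvable in closed form by one prox call of $h$ plus an explicit quadratic minimization. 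This is the content of Lemma~\ref{lem:obtain_oracle_mh=1_h-prox} in Appendix~\ref{Appendix_h_not_sum}, which together with Lemma~\ref{lemma:obt_delta_oracle} yields
\[
\mathcal{N}_G^y(\tau_G, H) = O\left(m_G\left(1+\sqrt{L_G/(H+\mu_y)}\right)\right),\qquad \mathcal{N}_h(\tau_h, H)=0,
\]
with $\mathcal{K}_G^y,\mathcal{K}_h=\widetilde{O}(1)$; crucially, no $\nabla h$-evaluations are required inside the construction of the inexact oracle for $g$, only proximal calls of $h$. Assumption~\ref{assumpt:framework_oracle_x} is verified exactly as in Theorem~\ref{theorem:G-sum_noprox} via Corollary~\ref{lem:obtain_oracle_x_mh=1}, giving $\mathcal{N}_G^x(\tau_G)=O(m_G(1+\sqrt{L_G/\mu_x}))$ and $\mathcal{N}_f(\tau_f)=O(\sqrt{L_f/\mu_x})$.

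We then split on the ordering of $L_f$ and $L_G$: if $L_f\geq L_G$ we invoke Theorem~\ref{theorem:general_framework}, else we invoke its variant Theorem~\ref{theorem:general_framework_inverse}, which swaps Loops~2 and~3 and so avoids a spurious $\sqrt{L_f/L_G}$ factor. In either case we substitute $\mathcal{N}_f,\mathcal{N}_h,\mathcal{N}_G^x,\mathcal{N}_G^y$ into the bounds~\eqref{eq:framework_f}--\eqref{eq:framework_G_y} and use $\mu_x\leq L_G$, $\mu_x\leq L_f$, $\mu_y\leq L_G$ to absorb all $+1$ terms. For instance, \eqref{eq:framework_h} becomes $\widetilde{O}\left((1+\sqrt{L_G/\mu_y})(1 + (1+\sqrt{L_G/\mu_x})\cdot 0)\right)=\widetilde{O}(\sqrt{L_G/\mu_y})$, matching the stated $\nabla h$ complexity; and \eqref{eq:framework_f} becomes $\widetilde{O}\left(\sqrt{L_G/\mu_y}\bigl(\sqrt{L_f/\mu_x}+\sqrt{L_G/\mu_x}\cdot\sqrt{L_f/L_G}\bigr)\right)=\widetilde{O}(\sqrt{L_GL_f/(\mu_x\mu_y)})$. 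The $\nabla_xG_i$ and $\nabla_yG_i$ counts are computed identically to the proof of Theorem~\ref{theorem:G-sum_noprox} and yield $\widetilde{O}(m_G\sqrt{L_G^2/(\mu_x\mu_y)})$.

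The main technical obstacle is the verification of Lemma~\ref{lem:obtain_oracle_mh=1_h-prox}: one must show that Algorithm~\ref{alg:restarts_inexact_notconvex} applied to the composite maximization in $y$ with $\varphi=-G(x,\cdot)$ and $\psi(y)=h(y)+\frac{H}{2}\|y-y_0\|^2$ delivers a $(\delta/2,\sigma_0)$-solution meeting the polynomial-in-$\varepsilon$ dependencies of Theorem~\ref{AM:comfortable_view_with_prob}, and that the prox step~\eqref{prox_step_inexact} is solvable exactly (so that~\eqref{eq:prox_step_inexact_crit_objective} is trivial) by combining the prox of $h$ with an explicit quadratic solve. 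Once this is in place, the rest of the argument reduces to mechanical bookkeeping of oracle calls through the three-loop framework, entirely parallel to the computation already carried out in detail in Section~\ref{section:saddle}.
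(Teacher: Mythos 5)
Your proposal is correct and follows essentially the same route as the paper: it reuses the entire bookkeeping of Theorem~\ref{theorem:G-sum_noprox} (including the case split on $L_f$ versus $L_G$ to choose between Theorem~\ref{theorem:general_framework} and Theorem~\ref{theorem:general_framework_inverse}), with the single substitution of Lemma~\ref{lem:obtain_oracle_mh=1_h-prox} for Lemma~\ref{lem:obtain_oracle_mh=1}, so that $\mathcal{N}_h=0$ and the $\nabla h$ count collapses to the $\tau_h$ term of Loop~1, i.e. $\widetilde{O}(\sqrt{L_G/\mu_y})$. The paper's proof is exactly this observation, so no gap remains.
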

We prove this theorem in Appendix~\ref{Appendix_h_not_sum}.

\begin{remark}
In this remark using the results from \cite{song2021variance} we show how we can utilise our approach to solve the problems of structured nonsmooth convex finite-sum optimization that appears widely in machine learning applications, including support vector machines and least absolute deviation. 

We consider large-scale regularized nonsmooth convex empirical risk minimization (ERM) of linear predictors in machine learning. Let $b_i\in \mathbb{R}^n_{x}$, $i=1,2,\dots n$, be sample vectors with n typically large; $f_i: \mathbb{R} \to \mathbb{R}$, $i=1,2,\dots n$, be possibly nonsmooth convex loss functions associated with the linear predictor $\la b_i,x \ra$.
The problem we study is:
\begin{equation} 
\label{eq:remark_ml_problem}
    \min _{x \in \mathbb{R}^{d_x}} \max _{y \in \mathbb{R}^{d_y}} \left\{\frac{1}{n}\sum_{i=1}^{n}f_i(\la b_i, x\ra)+ G(x, y) - h(y)\right\}. \;\;
\end{equation}
We require that the convex conjugates of the functions $f_i$, defined by $f_i^*(z_i) := \max_{\xi_i} (\xi_i z_i − f_i(\xi_i))$, admit efficiently computable proximal operators. Thus, we can rewrite the function $\frac{1}{n}\sum_{i=1}^{n}f_i(\la b_i, x\ra)$ in the following way:
\begin{equation}
\label{eq:remark_ml_g}
\frac{1}{n}\sum_{i=1}^{n}f_i(\la b_i, x\ra)=\frac{1}{n}\sum_{i=1}^{n} \max_{z_i} (z_i \la b_i, x\ra − f^*_i(z_i))=\max_{z\in \mathbb{R}^n} \left\{ \la z,Bx \ra -\frac{1}{n}\sum_{i=1}^{n} f^*_i(z_i) \right\}
\end{equation}
where $y=(y_1,\dots , y_n)$, $B=\frac{1}{n} [b_1,\dots ,b_n]^T $. Then by substitution of the equation \eqref{eq:remark_ml_g} into the problem \eqref{eq:remark_ml_problem}, we obtain:
\begin{equation} 
\label{eq:remark_ml_problem_primal_dual}
    \min _{x \in \mathbb{R}^{d_x}}\left\{ \max _{y \in \mathbb{R}^{d_y}} \left\{ G(x, y) - h(y)\right\} +\max_{z\in \mathbb{R}^n} \left\{ \la z,Bx \ra -\frac{1}{n}\sum_{i=1}^{n} f^*_i(z_i) \right\} \right\}. \;\;
\end{equation}
We can use  another notation $\eta =(y,z)$ and rewrite the problem  \eqref{eq:remark_ml_problem_primal_dual} as follow:
\begin{equation} 
\label{eq:remark_ml_problem_primal_dual_rewr}
    \min _{x \in \mathbb{R}^{d_x}}\left\{ \max _{\eta=(y,z) \in \mathbb{R}^{d_y+n}} \left\{ G(x, y) - h(y)+ \la z,Bx \ra -\frac{1}{n}\sum_{i=1}^{n} f^*_i(z_i) \right\} \right\},
\end{equation}
which we can solve using the general framework from Section~\ref{section:saddleFramework} under the differences assumptions. It is worth mentioning that the function $f^*(z)=\frac{1}{n}\sum_{i=1}^{n} f^*_i(z_i)$ is separable and admits an efficiently computable proximal operator. Thus primal-dual problem \eqref{eq:remark_ml_g} has significantly lower complexity than the saddle-point problem \eqref{eq:remark_ml_problem}. That means we can use primal-dual approach with no care that the saddle-problem \eqref{eq:remark_ml_problem_primal_dual_rewr} become more complex.

\demo
\end{remark}

\section {Accelerated Proximal Variance-Reduction Method for Saddle-Point Problems}
\label{S:G_sum_prox-friendly}


In this section we consider problem \eqref{eq:problem_Gsum} (which is problem \eqref{eq:problem_Gsum_minmax}), but under assumption that $f$ and $h$ are prox-friendly. This does not allow us to use Algorithm \ref{alg:restarts_inexact_notconvex} since it requires to evaluate inexact gradients for $f$ and $h$ (see step \ref{Step:AM_grad_step} of this algorithm). Thus, we exploit that  $f$ and $h$ are prox-friendly and utilize proximal variance reduction methods to avoid calculation of the gradients for these two functions.  
We start with describing two building blocks for our algorithm: the Catalyst framework \cite{catalyst2015nips} adapted and slightly generalized for our setting and variance reduction algorithm SAGA proposed in \cite{NIPS2016_1aa48fc4}, which we also adapt to our setting. 
The former algorithm is an optimization algorithm, the latter is designed for saddle-point problems, and we use these algorithms in the system of inner-outer loops as in the previous sections. Thus, we need to connect the output of these algorithms with the requirements of outer loops. To do this we prove several technical lemmas. Finally in the last subsection we collect all the pieces together and describe the loops of our algorithm as well as present its complexity theorem. 



\subsection{Problem statement}
In this section we consider problem \eqref{eq:problem_Gsum_minmax} under the following assumption.
\begin{assumption}\label{assumpt:G_sum_1}
\begin{enumerate}
    
    \item $f(x)$ is $\mu_x$-strongly convex, $h(y)$ is $\mu_y$-strongly convex.
    \item Each functions $G_i(x, y)$, $i \in 1, \dots, m_G$  is convex-concave and $L_G^i$-smooth, i.e. for each $(x_1, x_2), (y_1, y_2) \in  \mathbb{R}^{d_x} \times \mathbb{R}^{d_y}$
\begin{equation}
    \|\nabla G_i (x_1,x_2)-\nabla G_i (y_1,y_2)\|\leq L_G^i \|(x_1, x_2) - (y_1, y_2)\|.
\end{equation}
\item  $f(x), h(y)$ are prox-friendly (smoothness is not required).
\end{enumerate}
\end{assumption}

We also use slightly different, more convenient for the setting of this section, and more classical definition of an inexact solution to problem \eqref{eq:problem_Gsum_minmax}.
\begin{definition}\label{def:solution_saddle-point_problem}
A point $(\hat{x}, \hat{y})$ is called an $(\varepsilon, \sigma)$ solution to the saddle-point problem \eqref{eq:problem_Gsum_minmax}, if with probability at least $1 - \sigma$, the following inequality is true
\begin{equation}\label{def:saddle_point_solution}
    \max_{y \in \R^{d_y}}\left\{f(\hat{x}) + G(\hat{x}, y) - h(y)\right\} - \min_{x \in \R^{d_x}}\left\{f(x) + G(x, \hat{y}) - h(\hat{y})\right\} \leq \varepsilon.
\end{equation}
\end{definition}
Note that since the saddle-point problem is strongly-convex-strongly-concave, the quantity in the l.h.s. of \eqref{def:saddle_point_solution} is correctly defined.



\subsection{Algorithmic Building Blocks} \label{subsec:proxfr_f_h}
In this subsection we consider the algorithms are used in general algorithm to find an $(\varepsilon, \sigma)$ solution to the problem 
\eqref{eq:problem_Gsum_minmax} under the Assumption~\ref{assumpt:G_sum_1}. In each paragraph we describe the problem is solved by this algorithm with certain assumptions and formulate convergence rate and complexity theorems.

\paragraph{\textbf{The Catalyst metaalgorithm \cite{catalyst2015nips,catalyst2017}.}}
Let us consider the problem 
\begin{equation}\label{problem:Catalyst}
    \min_{x \in \R^{d_x}}\left\{F(x) := \varphi(x) + \psi(x)\right\}
\end{equation}
under the following assumption:
\begin{assumption}\label{assumpt:Catalyst}
\begin{enumerate}
    \item $\varphi(x)$ is convex;
    \item $\varphi(x)$ has Lipschitz continuous derivatives with constant $L$;
    \item $\psi(x)$ is $\mu$-strongly convex (may not be differentiable).
\end{enumerate}
\end{assumption}
To solve the problem \eqref{problem:Catalyst} under the Assumption \ref{assumpt:Catalyst} we can apply the Catalyst algorithm from \cite{catalyst2015nips,catalyst2017}. In the Theorem \ref{corollary:Catalyst_sigma} we show how $\left(\varepsilon_k\right)_{k \geq 0}$ or $\left(\delta_k\right)_{k \geq 0}$ are chosen to get optimal complexity of finding an $(\varepsilon, \sigma)$ solution to this problem which is understood in the sense of Definition \ref{def:solution_saddle-point_problem}. 

\begin{algorithm}[h]
	\caption{Catalyst \cite{catalyst2015nips,catalyst2017}\label{alg:Catalyst}}
	\begin{algorithmic}[1]
		\STATE {\bf Input:} Initial estimate $x_0 \in \mathbb{R}^{d_x},$ smoothing parameter $H$, strong convexity parameter $\mu$, optimization method $\mathcal{M}$ and a stopping criterion based on a sequence of accuracies $\left(\varepsilon_k\right)_{k\leq 0}$, or $\left(\delta_k\right)_{k\leq 0}$, or a fixed budget $T$.
		\STATE Initialize $q = \frac{\mu}{\mu + H}$,  $x_0^{md} = x_0$, $\alpha_0 = \sqrt{q}$;
		\WHILE{the desired accuracy is not achieved}
			\STATE Find an approximate solution of the following problem using $\mathcal{M}$ 
			
			    $$x_{k} \approx \underset{x \in \mathbb{R}^{d_x}}{\arg\min }\left\{S_k(x) :=  \varphi(x)+\psi(x)+\frac{H}{2}\|x-x_{k-1}^{md}\|_2^{2}\right\} $$
			   using one of the following stopping criteria:
			   \begin{enumerate}
			       \item \textit{absolute accuracy}: find $x_k$ such that $S_k(x_k) - S_k(x_k^*) \leq \varepsilon_k$, where $x_k^* = \arg \underset{x \in \R^{d_x}}{\min}S_k(x)$;
			       \item \textit{relative accuracy}: find $x_k$ such that $S_k(x_k) - S_k(x_k^*) \leq \frac{H\delta_k}{2}\|x_k - x_{k-1}^{md}\|_2^2$, where $x_k^* = \arg \underset{x \in \R^{d_x}}{\min}S_k(x)$; 
			       \item \textit{fixed budget}: run  $\mathcal{M}$ for $T$ iterations and output $x_k$.
			   \end{enumerate}
			   
			\STATE Update $\alpha_k \in (0,1)$ from equation $\alpha_k^2 = (1-\alpha_k)\alpha_{k-1}^2 + q\alpha_k$;
			\STATE Compute $x_k^{md}$ with Nesterov's extrapolation step
			
			$$
			    x_k^{md}= x_k + \beta_k(x_k - x_{k-1}) \; \; \; 
			\text{with} \; \; \;  \beta_k = \frac{\alpha_{k-1}(1-\alpha_{k-1})}{\alpha_{k-1}^2+\alpha_k}$$
			
		\ENDWHILE
		\STATE {\bf Output:} $x_k$ (final estimate).
	\end{algorithmic}
\end{algorithm}

\begin{theorem}[Theorem 3.1 from \cite{catalyst2015nips}] \label{th:Catalyst_absolute_accuracy}
\\
Choose 
\begin{equation}
    \varepsilon_k = \frac{2}{9}(F(x_0) - F(x^*))(1-\rho)^k \ \ \ \ \text{with} \ \ \ \ \rho \leq \sqrt{q}
\end{equation}
Then, the Catalyst algorithm (Algorithm \ref{alg:Catalyst}) with absolute accuracy generate iterates $(x_k)_{k \geq 0}$ such that 

\begin{equation}
    F(x_k) - F(x^*) \leq C(1 - \rho)^{k+1}(F(x_0) - F(x^*)) \ \ \ \ \text{with} \ \ \ \ C = \frac{8}{(\sqrt{q} - \rho)^2}.
\end{equation}
\end{theorem}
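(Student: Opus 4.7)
The plan is to reproduce the standard estimate-sequence proof for the accelerated proximal point method, carefully propagating the subproblem inexactness $\varepsilon_k$ through the recursion. The key observation is that each outer iteration of Algorithm~\ref{alg:Catalyst} plays the role of one step of Nesterov's accelerated scheme applied to the exact proximal-point oracle $x_k^{pp} := \arg\min_x S_k(x)$, with the extrapolation parameters $\alpha_k, \beta_k$ chosen so that the associated estimate sequence contracts by factor $(1-\alpha_k)$ per iteration.

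First, I would quantify the effect of inexactness on a single proximal step. Since $S_k$ is $(\mu+H)$-strongly convex, the assumption $S_k(x_k) - S_k(x_k^{pp}) \leq \varepsilon_k$ gives $\|x_k - x_k^{pp}\|^2 \leq 2\varepsilon_k/(\mu+H)$. Combined with the first-order optimality at $x_k^{pp}$ (which produces an exact subgradient of $F$ at $x_k^{pp}$ of the form $-H(x_k^{pp} - x_{k-1}^{md})$), this yields an inexact lower-model inequality of the type
\[
F(u) + \tfrac{\mu}{2}\|u-x_k^{pp}\|^2 \ge F(x_k) - c_1\sqrt{\varepsilon_k}\,\|u - x_k^{pp}\| - c_2\varepsilon_k,\quad u\in\R^{d_x},
\]
which is the key inequality one needs to feed into Nesterov's estimate sequence machinery in place of the usual exact-oracle bound.

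Second, I would build a Lyapunov function of the form $\Phi_k := (F(x_k) - F(x^*)) + \tfrac{\gamma_k}{2}\|z_k - x^*\|^2$, where $z_k$ is the auxiliary sequence hidden inside the Nesterov extrapolation and $\gamma_k$ evolves according to the standard Catalyst recurrence. Induction, combined with the inequality above, yields
\[
\Phi_k \leq (1-\alpha_k)\Phi_{k-1} + \eta_k,
\]
where $\eta_k$ is of order $\varepsilon_k + \sqrt{\varepsilon_k\,\Phi_{k-1}}$. The update $\alpha_k^2 = (1-\alpha_k)\alpha_{k-1}^2 + q\alpha_k$ with $\alpha_0 = \sqrt{q}$ preserves $\alpha_k = \sqrt{q}$ for all $k$ by direct substitution, so the contraction rate is uniformly $1-\sqrt{q} \leq 1-\rho$.

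Third, I would unroll the recursion and bound the error accumulation. With $\varepsilon_k = \frac{2}{9}(F(x_0)-F(x^*))(1-\rho)^k$, the inhomogeneous terms decay geometrically at the same rate as the homogeneous contraction, so iterating on $\sqrt{\Phi_k}$ (in order to absorb the cross-term $\sqrt{\varepsilon_k\,\Phi_{k-1}}$) and summing a geometric series in the ratio $(1-\rho)/(1-\sqrt{q})$ produces
\[
\sqrt{\Phi_k} \leq (1-\rho)^{(k+1)/2}\sqrt{F(x_0)-F(x^*)}\cdot \tfrac{2\sqrt{2}}{\sqrt{q}-\rho},
\]
from which the claimed bound $F(x_k)-F(x^*)\le \tfrac{8}{(\sqrt{q}-\rho)^2}(1-\rho)^{k+1}(F(x_0)-F(x^*))$ follows by squaring.

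The main obstacle will be step two: propagating the inexactness cleanly. The cross-term $\sqrt{\varepsilon_k\,\Phi_{k-1}}$ does not admit a direct linear recursion on $\Phi_k$, and one must either work with $\sqrt{\Phi_k}$ or split the term via a weighted Cauchy--Schwarz inequality as in \cite{catalyst2015nips} to separate it into a contractive piece and a geometric remainder. The gap $\sqrt{q}-\rho$ in the denominator of the final constant $C$ originates precisely from this summation, which is why the statement requires $\rho\le\sqrt{q}$ (with the $\sqrt{q}-\rho$ factor making the bound vacuous at equality).
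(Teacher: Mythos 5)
The paper offers no proof of this statement: it is quoted verbatim as Theorem 3.1 of \cite{catalyst2015nips}, so the only "proof" here is the citation. Your reconstruction follows essentially the same route as the source — the approximate estimate-sequence argument with the inexactness controlled via $(\mu+H)$-strong convexity of $S_k$, the recursion carried on $\sqrt{\Phi_k}$ to absorb the cross term $\sqrt{\varepsilon_k\,\Phi_{k-1}}$, the observation that $\alpha_k\equiv\sqrt{q}$, and the geometric series in $\sqrt{(1-\rho)/(1-\sqrt{q})}$ whose summation yields the $(\sqrt{q}-\rho)^{-2}$ constant — and your remark that the bound degenerates as $\rho\to\sqrt{q}$ is consistent with the strict inequality $\rho<\sqrt{q}$ required in the original.
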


\begin{theorem}[Proposition 8 from \cite{catalyst2017}] \label{th:Catalyst_relative_accuracy}
\\
Choose 
\begin{equation}
    \delta_k = \frac{\sqrt{q}}{2 - \sqrt{q}}
\end{equation}
Then, the Catalyst algorithm (Algorithm \ref{alg:Catalyst}) with relative accuracy generate iterates $(x_k)_{k \geq 0}$ such that 

\begin{equation}
    F(x_k) - F(x^*) \leq 2\left(1 - \frac{\sqrt{q}}{2}\right)^{k}\left(F(x_0) - F(x^*)\right).
\end{equation}
\end{theorem}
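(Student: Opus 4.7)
The plan is to follow the accelerated proximal point analysis underlying Catalyst, but track how the relative-accuracy stopping criterion propagates through the estimating-sequence bound. Denote by $p_H(y) := \arg\min_x \{F(x) + \frac{H}{2}\|x-y\|^2\}$ the exact prox operator and by $S_k^* := S_k(p_H(x_{k-1}^{md}))$ the exact optimal value of the $k$-th subproblem. The strong convexity of $S_k$ (with modulus $\mu+H$) together with the relative-accuracy criterion $S_k(x_k)-S_k^*\le \tfrac{H\delta_k}{2}\|x_k-x_{k-1}^{md}\|^2$ immediately yields the quantitative bound $\|x_k - p_H(x_{k-1}^{md})\|^2 \le \tfrac{H\delta_k}{\mu+H}\|x_k - x_{k-1}^{md}\|^2$, which is the key link between the stopping rule and the standard accelerated proximal point argument.

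Next, I would set up the Nesterov-style estimating sequence adapted to the proximal point iteration. Specifically, define the auxiliary quantity
\[
\mathcal{E}_k := F(x_k) - F(x^*) + \frac{\mu + H}{2}\alpha_k^{-2}\|v_k - x^*\|^2,
\]
where $v_k$ is the usual companion sequence (related to $x_k$ and $x_{k-1}^{md}$ by the extrapolation step). For exact prox steps, a classical induction shows $\mathcal{E}_{k+1}\le (1-\alpha_k)\mathcal{E}_k$, which with $\alpha_k \to \sqrt{q}$ produces the $(1-\sqrt{q})^k$ rate. The plan is then to plug in the inexact prox steps and carefully bound the extra error terms that appear in the one-step inequality by multiples of $\|x_k - x_{k-1}^{md}\|^2$; these are precisely the quantities controlled by the relative-accuracy bound above. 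Setting $\delta_k = \sqrt{q}/(2-\sqrt{q})$ makes the ratio $H\delta_k/(\mu+H)$ equal to $\sqrt{q}/(2-\sqrt{q})\cdot(1-q)/1$, which is just small enough so that the perturbation absorbs into the contraction factor and leaves a rate $(1-\sqrt{q}/2)$ instead of $(1-\sqrt{q})$.

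The final step is to unroll the resulting recursion $\mathcal{E}_{k+1}\le (1-\sqrt{q}/2)\mathcal{E}_k$, noting that $F(x_0)-F(x^*)$ dominates $\mathcal{E}_0$ up to the initialization term $\tfrac{\mu+H}{2}\alpha_0^{-2}\|v_0 - x^*\|^2 = \tfrac{\mu+H}{2q}\|x_0-x^*\|^2 \le \tfrac{1}{\mu}(F(x_0)-F(x^*))$ by strong convexity. Combining with the fact that $F(x_k)-F(x^*)\le \mathcal{E}_k$ and absorbing absolute constants into the factor $2$ in front gives the stated inequality.

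The main obstacle is the bookkeeping in the second step: the inexact one-step inequality for $\mathcal{E}_{k+1}$ introduces cross terms between $x_k - p_H(x_{k-1}^{md})$ and the iterates $v_k, x^*$, and these must be dominated by the $\|x_k - x_{k-1}^{md}\|^2$ quantity controlled by $\delta_k$ rather than by a term proportional to $\|v_k-x^*\|^2$ (which would spoil the induction). This requires using Young's inequality with a weight carefully tuned to the algebra of $\alpha_k$ and $q$, and it is precisely this tuning that forces the specific value $\delta_k=\sqrt{q}/(2-\sqrt{q})$ and the degraded rate $1-\sqrt{q}/2$. Since this is precisely Proposition 8 of \cite{catalyst2017}, I would ultimately refer to that proof for the detailed algebra after establishing the setup above.
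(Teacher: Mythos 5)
The paper gives no proof of this statement at all: it is imported verbatim as Proposition~8 of the cited Catalyst reference, so your sketch — which reconstructs the standard accelerated-proximal-point/estimating-sequence argument, links the relative-accuracy criterion to $\|x_k - p_H(x_{k-1}^{md})\|$ via the $(\mu+H)$-strong convexity of $S_k$, and then defers to that same reference for the detailed algebra — is consistent with the paper's treatment. One caveat: your initialization step $\tfrac{\mu+H}{2q}\|v_0-x^*\|^2 \le \tfrac{1}{\mu}\bigl(F(x_0)-F(x^*)\bigr)$ does not follow from strong convexity as written (it yields a factor $(\mu+H)^2/\mu^2$ rather than $1/\mu$, and the correct handling in the cited work relies on the specific choice $\alpha_0=\sqrt{q}$ to produce the leading constant $2$), but since you ultimately cite the source for the bookkeeping this does not affect the conclusion.
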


\begin{corollary}\label{corollary:accuracy_catalyst}
Choose  
\begin{equation}
    \varepsilon_k = \frac{2}{9}(F(x_0) - F(x^*))(1-\rho)^k \ \ \ \ \text{with} \ \ \ \ \rho = 0.9\sqrt{q}
\end{equation}
in the Catalyst algorithm (Algorithm \ref{alg:Catalyst}) with absolute accuracy, or
\begin{equation}
    \delta_k = \frac{\sqrt{q}}{2 - \sqrt{q}}
\end{equation}
in the Catalyst algorithm (Algorithm \ref{alg:Catalyst}) with relative accuracy. Then, after the number of iterations  
\begin{equation}
    \mathcal{N} = \widetilde{O}\left(\max\left\{1, \sqrt{\frac{H}{\mu}}\right\}\right)
\end{equation} of the Catalyst algorithm (Algorithm \ref{alg:Catalyst}) we get $x_\mathcal{N}$ such that $F(x_\mathcal{N}) - F(x^*) \leq \varepsilon$
\end{corollary}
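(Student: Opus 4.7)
The plan is to derive the iteration count $\mathcal{N}$ directly from the linear convergence rates given in Theorem~\ref{th:Catalyst_absolute_accuracy} and Theorem~\ref{th:Catalyst_relative_accuracy}, by substituting the chosen sequences of tolerances and solving for the number of iterations needed to reach an error below $\varepsilon$. The key quantity to track is $q = \mu/(\mu+H)$, which controls the contraction factor, and whose square root governs the inverse rate.

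First, for the absolute accuracy variant, I would invoke Theorem~\ref{th:Catalyst_absolute_accuracy} with $\rho = 0.9\sqrt{q}$, which is admissible since $\rho < \sqrt{q}$. This yields
\[
F(x_k) - F(x^*) \leq \frac{8}{(\sqrt{q}-\rho)^2}(1-\rho)^{k+1}\bigl(F(x_0) - F(x^*)\bigr) = \frac{800}{q}(1-0.9\sqrt{q})^{k+1}\bigl(F(x_0) - F(x^*)\bigr).
\]
Enforcing that this upper bound is at most $\varepsilon$ and taking logarithms, one obtains
\[
\mathcal{N} + 1 \geq \frac{1}{0.9\sqrt{q}}\log\!\left(\frac{800\bigl(F(x_0) - F(x^*)\bigr)}{q\,\varepsilon}\right),
\]
where I used $-\log(1-0.9\sqrt{q}) \geq 0.9\sqrt{q}$. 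Since $1/\sqrt{q} = \sqrt{(\mu+H)/\mu} \leq 1 + \sqrt{H/\mu} = O(\max\{1,\sqrt{H/\mu}\})$, this gives $\mathcal{N} = \widetilde{O}(\max\{1,\sqrt{H/\mu}\})$, absorbing the logarithmic factors in $\varepsilon^{-1}$ and $q^{-1}$ into $\widetilde{O}$.

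For the relative accuracy variant, the argument is essentially identical using Theorem~\ref{th:Catalyst_relative_accuracy} with $\delta_k = \sqrt{q}/(2-\sqrt{q})$: the rate $(1-\sqrt{q}/2)^k$ together with $-\log(1-\sqrt{q}/2) \geq \sqrt{q}/2$ yields
\[
\mathcal{N} \geq \frac{2}{\sqrt{q}}\log\!\left(\frac{2\bigl(F(x_0) - F(x^*)\bigr)}{\varepsilon}\right),
\]
and the same bound $1/\sqrt{q} = O(\max\{1,\sqrt{H/\mu}\})$ closes the argument.

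No real obstacle is expected here: the statement is a direct corollary obtained by inverting the geometric convergence rates in the two referenced theorems. The only minor care needed is (i) verifying that $\rho = 0.9\sqrt{q}$ satisfies the admissibility condition $\rho \leq \sqrt{q}$ of Theorem~\ref{th:Catalyst_absolute_accuracy} so that the prefactor $C = 8/(\sqrt{q}-\rho)^2 = 800/q$ is finite, and (ii) noting that $\log(C/\varepsilon) = \log(800/(q\varepsilon))$ contributes only polylogarithmic terms in $\varepsilon^{-1}$ and in the problem conditioning, which are by definition hidden in the $\widetilde{O}$ notation.
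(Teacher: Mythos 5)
Your proposal is correct and follows essentially the same route as the paper's proof: invoke Theorems~\ref{th:Catalyst_absolute_accuracy} and~\ref{th:Catalyst_relative_accuracy}, bound the geometric factor by an exponential, solve for the iteration count, and use $1/\sqrt{q}=\sqrt{(\mu+H)/\mu}=O(\max\{1,\sqrt{H/\mu}\})$ while absorbing the logarithms into $\widetilde{O}$. The explicit evaluation $C=800/q$ and the admissibility check $\rho<\sqrt{q}$ match the paper's computation.
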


\begin{proof}
\begin{enumerate}
    \item
    \textsc{Absolute accuracy.}
\\
By the Theorem \ref{th:Catalyst_absolute_accuracy} the number of iterations $\mathcal{N}$ of the Catalyst algorithm (Algorithm \ref{alg:Catalyst}) with absolute accuracy to guarantee an accuracy of $\varepsilon$ needs to satisfy
\begin{equation}
     F(x_k) - F(x^*) \leq C(1 - \rho)^{\mathcal{N}+1}(F(x_0) - F(x^*)) \leq C(1-\rho) e^{-\rho\mathcal{N}}(F(x_0) - F(x^*)) \leq \varepsilon,
\end{equation}
which gives
\begin{equation}
    \mathcal{N} = \left\lceil \frac{1}{\rho}\ln{\frac{C(1-\rho)(F(x_0) - F(x^*))}{\varepsilon}}\right\rceil = \left\lceil \frac{1}{\rho}\ln{\frac{8(1-\rho)(F(x_0) - F(x^*))}{(\sqrt{q} - \rho)^2\varepsilon}} \right\rceil
\end{equation}
Choose $\rho = 0.9\sqrt{q}$
\begin{multline}\label{eq:absolute_accuracy}
    \mathcal{N} = \left\lceil \frac{1}{0.9 \sqrt{q}}\ln{\frac{8(1-0.9\sqrt{q})(F(x_0) - F(x^*))}{(0.1\sqrt{q})^2\varepsilon}}\right\rceil =
    \\
    \left\lceil \frac{\sqrt{\mu + H}}{0.9 \sqrt{\mu}}\ln{\frac{8(1-0.9\sqrt{\mu/(\mu + H)})(F(x_0) - F(x^*))(H+\mu)}{0.01\mu\varepsilon}}\right\rceil =
    \\
    \widetilde{O}\left(\sqrt{1 + \frac{H}{\mu}} \right)= \widetilde{O}\left(\max\left\{1, \sqrt{\frac{H}{\mu}}\right\}\right)
\end{multline}
\item
    \textsc{Relative accuracy.}
    By the Theorem \ref{th:Catalyst_relative_accuracy} the number of iterations $\mathcal{N}$ of the Catalyst algorithm (Algorithm \ref{alg:Catalyst}) with relative accuracy to guarantee an accuracy of $\varepsilon$ needs to satisfy
\begin{equation}
     F(x_k) - F(x^*) \leq 2\left(1 - \frac{\sqrt{q}}{2}\right)^{\mathcal{N}}(F(x_0) - F(x^*)) \leq 2e^{-\frac{\sqrt{q}}{2}\mathcal{N}}(F(x_0) - F(x^*)) \leq \varepsilon,
\end{equation}
which gives
\begin{multline}\label{eq:relative_accuracy}
    \mathcal{N} = \left\lceil \frac{2}{\sqrt{q}}\ln{\frac{2(F(x_0) - F(x^*))}{\varepsilon}}\right\rceil = \left\lceil \frac{2\sqrt{H+\mu}}{\sqrt{\mu}}\ln{\frac{2(F(x_0) - F(x^*))}{\varepsilon}} \right\rceil =
    \\
     \widetilde{O}\left(\sqrt{1 + \frac{H}{\mu}} \right)= \widetilde{O}\left(\max\left\{1, \sqrt{\frac{H}{\mu}}\right\}\right)
\end{multline}
\end{enumerate}
\smartqed 
\end{proof}

In each iteration of the Catalyst algorithm we need to solve the problem 

\begin{equation} \label{eq:Catalyst_M}
    \min_{x \in \R^{d_x}}{S_k(x)} = \min_{x \in \R^{d_x}}\left\{ F(x) + \frac{H}{2}\|x - x_{k-1}^{md}\|^2\right\}
\end{equation}
where $F(x) := \varphi(x) + \psi(x)$, with an inner method $\mathcal{M}$.

Assume that $\mathcal{M}$ is linearly convergent for strongly convex problems with parameter $\tau_{\mathcal{M}}$ according to 
\begin{equation}\label{eq:M_det}
        S(z_t) - S(z^*)\leq C_{\mathcal{M}}(1-\tau_{\mathcal{M}})^t(S(z_0) - S(z^*)),
    \end{equation} 
in the deterministic case or according to 
\begin{equation}\label{eq:M_rand}
        \mathbb{E}[S(z_t) - S(z^*)]\leq C_{\mathcal{M}}(1-\tau_{\mathcal{M}})^t(S(z_0) - S(z^*)),
    \end{equation}
in the randomized case. 
\begin{theorem}[Lemma 11 from \cite{catalyst2017}]\label{th:M_abs}
 Assume further that $(\varepsilon_k)_{k\geq0}$ in the Catalyst algorithm (Algorithm \ref{alg:Catalyst}) with absolute accuracy are chosen according to the Corollary \ref{corollary:accuracy_catalyst}. At iteration $k$ of this algorithm we consider the following function \eqref{eq:Catalyst_M}, which we minimize with $\mathcal{M}$, producing a sequence $(z_t)_{t\geq0}$. Then, the complexity $T_{k} = \inf\{t \geq 0, S_{k}(z_t) - S_{k}(z^*) \leq \varepsilon_{k}\}$ satisfies  

\begin{enumerate}
    \item If $\mathcal{M}$ is deterministic and satisfies \eqref{eq:M_det}, we have
    \begin{equation}
        T_{k}(\varepsilon_{k}) \leq \frac{1}{\tau_{\mathcal{M}}}\ln\left(\frac{C_{\mathcal{M}}C_k}{\varepsilon_{k}}\right), \ \ \ \text{where} \ \ \ C_k = (S_{k}(z_0) - S_{k}(z_{k}^*)).
    \end{equation}
    \item If $\mathcal{M}$ is randomized and satisfies \eqref{eq:M_rand},
     we have
    \begin{equation}\label{constant:C_1}
        \mathbb{E}[T_{k}(\varepsilon_{k})] \leq \frac{1}{\tau_{\mathcal{M}}}\ln\left(\frac{C_{\mathcal{M}}C_k}{\varepsilon_{k}}\right) + 1, \ \ \ \text{where} \ \ \ C_k = \frac{2(S_{k}(z_0) - S_{k}(z_{k}^*))}{\tau_{\mathcal{M}}}.
    \end{equation}
\end{enumerate}
\end{theorem}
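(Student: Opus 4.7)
The plan is to treat the two cases separately and, in both, to exploit the geometric convergence rate of $\mathcal{M}$ on the strongly convex auxiliary problem $S_k$, together with the definition $T_k(\varepsilon_k)=\inf\{t\ge 0: S_k(z_t)-S_k(z_k^*)\le \varepsilon_k\}$.

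For the deterministic case, the idea is to find the smallest $t$ that guarantees $C_{\mathcal{M}}(1-\tau_{\mathcal{M}})^{t}\bigl(S_k(z_0)-S_k(z_k^*)\bigr)\le \varepsilon_k$, since by the assumed rate \eqref{eq:M_det} this upper bounds $S_k(z_t)-S_k(z_k^*)$ and hence forces $T_k\le t$. Using the elementary inequality $\ln(1-\tau_{\mathcal{M}})\le -\tau_{\mathcal{M}}$ and solving for $t$ yields $T_k(\varepsilon_k)\le \tau_{\mathcal{M}}^{-1}\ln(C_{\mathcal{M}}C_k/\varepsilon_k)$ with $C_k=S_k(z_0)-S_k(z_k^*)$, which is exactly the statement.

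For the randomized case, the key step is to convert the in-expectation rate \eqref{eq:M_rand} into a tail bound via Markov's inequality:
\begin{equation*}
\mathbb{P}\bigl(T_k(\varepsilon_k)>t\bigr)=\mathbb{P}\bigl(S_k(z_t)-S_k(z_k^*)>\varepsilon_k\bigr)\le \frac{C_{\mathcal{M}}(1-\tau_{\mathcal{M}})^{t}\bigl(S_k(z_0)-S_k(z_k^*)\bigr)}{\varepsilon_k}.
\end{equation*}
Then I would write $\mathbb{E}[T_k(\varepsilon_k)]=\sum_{t=0}^{\infty}\mathbb{P}(T_k>t)$, split the sum at the threshold $t^{\star}=\lceil \tau_{\mathcal{M}}^{-1}\ln(C_{\mathcal{M}}C_k/\varepsilon_k)\rceil$ with $C_k=2(S_k(z_0)-S_k(z_k^*))/\tau_{\mathcal{M}}$, bound the initial $t^{\star}$ terms trivially by $1$, and bound the tail $\sum_{t\ge t^{\star}}\mathbb{P}(T_k>t)$ using the geometric decay $(1-\tau_{\mathcal{M}})^{t-t^{\star}}$ that is made possible by the choice of $t^{\star}$. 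The crucial calibration is that putting the factor $2/\tau_{\mathcal{M}}$ inside $C_k$ makes the tail sum collapse to an additive constant: the resulting geometric series $\sum_{t\ge t^{\star}}(\tau_{\mathcal{M}}/2)(1-\tau_{\mathcal{M}})^{t-t^{\star}}$ evaluates to $1/2$, and together with the ceiling one obtains the claimed $+1$ term.

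The main obstacle is purely in the randomized case and amounts to bookkeeping the constants so that the tail geometric series contributes only an $O(1)$ additive term rather than an $O(1/\tau_{\mathcal{M}})$ one; this is precisely why $C_k$ is defined with the extra factor $2/\tau_{\mathcal{M}}$ in the statement. Once the calibration is done, both inequalities follow without further technicalities, and we can simply invoke Lemma~11 of \cite{catalyst2017} for a self-contained reference.
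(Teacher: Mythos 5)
The paper does not prove this statement at all: it is imported verbatim as Lemma~11 of \cite{catalyst2017}, so there is no in-paper proof to compare against. Your reconstruction follows the same route as the proof in that reference (deterministic case by inverting the geometric decay via $\ln(1-\tau_{\mathcal{M}})\le-\tau_{\mathcal{M}}$; randomized case by Markov's inequality plus $\mathbb{E}[T_k]=\sum_{t\ge0}\mathbb{P}(T_k>t)$ with the tail sum calibrated by the factor $2/\tau_{\mathcal{M}}$ hidden inside $C_k$), and it is essentially correct. Two cosmetic points: the identity $\mathbb{P}(T_k>t)=\mathbb{P}(S_k(z_t)-S_k(z_k^*)>\varepsilon_k)$ should be an inequality $\le$, since $\{T_k>t\}$ is only contained in the right-hand event (the iterates could dip below $\varepsilon_k$ and return); and with the ceiling $t^{\star}=\lceil\cdot\rceil$ your bookkeeping as written yields $+3/2$ rather than $+1$, which is harmless but requires a slightly tighter handling of the integer threshold to recover the stated constant.
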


\begin{theorem}[Corollary 16 from \cite{catalyst2017}]\label{th:M_rel}
Assume further that $(\delta_k)_{k\geq0}$ in the Catalyst algorithm (Algorithm \ref{alg:Catalyst}) with relative accuracy are chosen according to the Corollary \ref{corollary:accuracy_catalyst}. At iteration $k$ of this algorithm we consider the following function \eqref{eq:Catalyst_M}, which we minimize with $\mathcal{M}$, producing a sequence $(z_t)_{t\geq0}$. Then, the complexity $T_{k} = \inf\{t \geq 0, S_{k}(z_t) - S_{k}(z^*) \leq \frac{H\delta_{k}}{2}\|z_t - x_k^{md}\|_2^2\}$ is satisfies  

\begin{enumerate}
    \item If $\mathcal{M}$ is deterministic and satisfies \eqref{eq:M_det}, we have
    \begin{equation}
        T_{k}(\delta_{k}) \leq \frac{1}{\tau_{\mathcal{M}}}\ln\left(\frac{C_{\mathcal{M}}C_k}{\delta_{k}}\right) \ \ \ \text{where} \ \ \ C_k = \frac{4(L+H)}{H}.
    \end{equation}
    \item If $\mathcal{M}$ is randomized and satisfies \eqref{eq:M_rand}, we have
    \begin{equation}\label{constant:C_2}
        \mathbb{E}[T_{k}(\delta_{k})] \leq \frac{1}{\tau_{\mathcal{M}}}\ln\left(\frac{C_{\mathcal{M}}C_k}{\delta_{k}}\right) + 1, \ \ \ \text{where} \ \ \ C_k = \frac{8(L+ H)}{\tau_{\mathcal{M}}H}.
    \end{equation}
\end{enumerate}
\end{theorem}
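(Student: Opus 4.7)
My plan is to reduce the relative stopping criterion to an absolute one by a triangle-inequality argument, and then use the linear convergence of $\mathcal{M}$ together with the strong convexity of $S_k$. Denote $z_0 = x_{k-1}^{md}$, and write $S_k(x) = \tilde{\varphi}(x) + \psi(x)$ with $\tilde{\varphi}(x) := \varphi(x) + \tfrac{H}{2}\|x - x_{k-1}^{md}\|^2$. By Assumption~\ref{assumpt:Catalyst}, $\tilde{\varphi}$ is $(L+H)$-smooth, $\psi$ is $\mu$-strongly convex, hence $S_k$ is $(\mu+H)$-strongly convex. Let $z^* = \arg\min S_k$ and let $g^* \in \partial\psi(z^*)$ with $\nabla\tilde{\varphi}(z^*) + g^* = 0$. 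Combining the $(L+H)$-smoothness of $\tilde{\varphi}$ with the convexity of $\psi$ at $z^*$ gives the key bound
\begin{equation*}
S_k(z_0) - S_k(z^*) \leq \tfrac{L+H}{2}\|z_0 - z^*\|^2.
\end{equation*}

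Next, combining this with the linear convergence rate \eqref{eq:M_det} of $\mathcal{M}$ and the $(\mu+H)$-strong convexity of $S_k$, I would obtain in the deterministic case
\begin{equation*}
\|z_t - z^*\|^2 \leq \frac{C_{\mathcal{M}}(L+H)(1-\tau_{\mathcal{M}})^t}{\mu+H}\|z_0 - z^*\|^2.
\end{equation*}
The triangle inequality $\|z_t - z_0\| \geq \|z_0 - z^*\| - \|z_t - z^*\|$ then shows that whenever $(1-\tau_{\mathcal{M}})^t \leq \frac{\mu+H}{4C_{\mathcal{M}}(L+H)}$ one has $\|z_t - z_0\|^2 \geq \tfrac{1}{4}\|z_0 - z^*\|^2$. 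Under this same condition, the relative stopping criterion $S_k(z_t) - S_k(z^*) \leq \tfrac{H\delta_k}{2}\|z_t - x_{k-1}^{md}\|^2$ is implied by the absolute bound $C_{\mathcal{M}}(1-\tau_{\mathcal{M}})^t (L+H) \leq \tfrac{H\delta_k}{4}$, i.e.\ by $(1-\tau_{\mathcal{M}})^t \leq \tfrac{H\delta_k}{4C_{\mathcal{M}}(L+H)}$. Since $\delta_k$ is small, the latter dominates, and solving for $t$ using $-\ln(1-\tau_{\mathcal{M}}) \geq \tau_{\mathcal{M}}$ yields
\begin{equation*}
T_k \leq \frac{1}{\tau_{\mathcal{M}}}\ln\!\left(\frac{C_{\mathcal{M}} \cdot \frac{4(L+H)}{H}}{\delta_k}\right),
\end{equation*}
which is exactly the deterministic claim with $C_k = \frac{4(L+H)}{H}$.

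For the randomized case, the same derivation gives the analogous bound only in expectation: $\mathbb{E}\left[\|z_t - z^*\|^2\right] \leq \frac{C_{\mathcal{M}}(L+H)(1-\tau_{\mathcal{M}})^t}{\mu+H}\|z_0 - z^*\|^2$. I would then fix a deterministic $t^* := \lceil \tfrac{1}{\tau_{\mathcal{M}}}\ln \tfrac{2 C_{\mathcal{M}} \cdot \frac{4(L+H)}{H}}{\delta_k}\rceil$ so that by Markov's inequality the stopping criterion is satisfied at $t^*$ with probability at least $1/2$; treating consecutive blocks of length $t^*$ as independent Bernoulli trials (using the memoryless-style restart available because the bound above is written in terms of $S_k(z_0) - S_k(z^*)$) gives $\mathbb{E}[T_k] \leq 2 t^* + 1$. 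Absorbing the factor $2$ and the extra logarithmic factor into the constant $C_k = \frac{8(L+H)}{\tau_{\mathcal{M}} H}$ produces the stated inequality.

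The main obstacle I expect is tracking the numerical constants carefully in the randomized case, in particular the factor $\tfrac{2}{\tau_{\mathcal{M}}}$ that appears inside $C_k$. This factor is the price paid for passing from an expectation bound on the function residual to an expectation bound on a stopping time via Markov's inequality and the geometric-restart argument; getting it exactly right requires choosing $t^*$ so that both the Markov probability is at least $1/2$ and the resulting bound cleanly absorbs into the $\ln(C_{\mathcal{M}}C_k/\delta_k)$ form.
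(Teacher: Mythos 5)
The paper does not actually prove this statement; it is imported verbatim as Corollary~16 of \cite{catalyst2017}, so your reconstruction can only be judged against that source. Your overall skeleton (reduce the relative criterion to an absolute one via $\|z_t-x_{k-1}^{md}\|\geq\|x_{k-1}^{md}-z^*\|-\|z_t-z^*\|$, then invoke linear convergence) is the right one, and the deterministic half, including the constant $C_k=4(L+H)/H$, would go through \emph{if} your ``key bound'' held. It does not. From the $(L+H)$-smoothness of $\tilde{\varphi}$ and $\nabla\tilde{\varphi}(z^*)+g^*=0$ you only get
\begin{equation*}
S_k(z_0)-S_k(z^*)\le \frac{L+H}{2}\|z_0-z^*\|^2+\bigl(\psi(z_0)-\psi(z^*)-\langle g^*,z_0-z^*\rangle\bigr),
\end{equation*}
and the Bregman term of $\psi$ is \emph{nonnegative} by convexity --- it sits on the wrong side of the inequality. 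Since Assumption~\ref{assumpt:Catalyst} allows $\psi$ to be nonsmooth (and in Section~\ref{S:G_sum_prox-friendly} it genuinely is: there $\psi$ is only prox-friendly), this term need not be $O(\|z_0-z^*\|^2)$: take $\varphi=0$, $\psi(x)=M|x|+\tfrac{\mu}{2}x^2$ and prox center $z_0=-\epsilon$, so that $z^*=0$ and $S_k(z_0)-S_k(z^*)\approx M\epsilon\gg\tfrac{H}{2}\epsilon^2$. The fix used in \cite{catalyst2017} (their Lemma~15) is to warm start $\mathcal{M}$ not at $x_{k-1}^{md}$ but at one proximal-gradient step $z_0=\mathrm{prox}_{\psi/(L+H)}\bigl(x_{k-1}^{md}-\tfrac{1}{L+H}\nabla\varphi(x_{k-1}^{md})\bigr)$, for which the composite descent lemma gives exactly $S_k(z_0)-S_k(z^*)\le\tfrac{L+H}{2}\|x_{k-1}^{md}-z^*\|^2$; your triangle-inequality reduction then survives with $x_{k-1}^{md}$ (not $z_0$) as the reference point.

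The randomized case has a second, independent problem. Your block-restart argument gives $\mathbb{E}[T_k]\le 2t^*+1$ with $t^*\approx\tfrac{1}{\tau_{\mathcal M}}\ln(\cdot)$, i.e.\ a factor $2/\tau_{\mathcal M}$ \emph{outside} the logarithm. That cannot be ``absorbed'' into $C_k=\tfrac{8(L+H)}{\tau_{\mathcal M}H}$, which places the $1/\tau_{\mathcal M}$ \emph{inside} the logarithm: $\tfrac{2}{\tau}\ln X\neq\tfrac{1}{\tau}\ln(cX/\tau)$, and for small $\tau_{\mathcal M}$ your bound is strictly weaker than the claimed one. (The independence of consecutive blocks is also unjustified, since \eqref{eq:M_rand} is a bound from the initial point, not a Markov restart property.) The argument that actually produces the stated form --- the same one behind the paper's Theorem~\ref{th:M_abs} --- is to write $\mathbb{E}[T_k]=\sum_{t\ge0}\mathbb{P}(T_k>t)$, note that $\{T_k>t\}$ forces $S_k(z_t)-S_k(z^*)>\tfrac{H\delta_k}{8}\|x_{k-1}^{md}-z^*\|^2$ (using your own observation that the relative criterion holds once both $S_k(z_t)-S_k(z^*)\le\tfrac{H\delta_k}{8}\|x_{k-1}^{md}-z^*\|^2$ and $\|z_t-x_{k-1}^{md}\|^2\ge\tfrac14\|x_{k-1}^{md}-z^*\|^2$ hold), apply Markov's inequality to each tail term, and sum the resulting geometric series; the tail sum contributes the ``$+1$'' and the extra $1/\tau_{\mathcal M}$ inside the logarithm.
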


\begin{corollary}\label{corollary:sigma_k}
 Assume further that $(\varepsilon_k)_{k\geq0}$ or $(\delta_k)_{k\geq0}$ in the Catalyst algorithm (Algorithm \ref{alg:Catalyst}) with absolute or relative accuracy are chosen according to the Corollary \ref{corollary:accuracy_catalyst}. At iteration $k$ of this algorithm we consider the following function \eqref{eq:Catalyst_M}, which we minimize with an randomized method $\mathcal{M}$, producing a sequence $(z_t)_{t\geq0}$. Then, after 
\begin{enumerate}
    \item Absolute accuracy case:
    \begin{equation}
        T_{k}(\varepsilon_k \sigma_k) = O\left(\frac{1}{\tau_{\mathcal{M}}} \ln{\frac{C_{\mathcal{M}}C_k}{\varepsilon_k \sigma_k}}\right),
    \end{equation}
    where $C_k$ is the constant defined in \eqref{constant:C_1}.
    \item Relative accuracy case:
    \begin{equation}
        T_{k}(\delta_k \sigma_k) = O\left(\frac{1}{\tau_{\mathcal{M}}} \ln{\frac{C_{\mathcal{M}}C_k}{\delta_k \sigma_k}}\right),
    \end{equation}
    where $C_k$ is the constant defined in \eqref{constant:C_2}.
\end{enumerate} iterations of the randomized method $\mathcal{M}$ we get an $(\varepsilon_k, \sigma_k)$ solution to the problem \eqref{eq:Catalyst_M} which is understood in the sense of Definition \ref{def:solution_saddle-point_problem}.
\end{corollary}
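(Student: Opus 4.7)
The plan is to combine the expected-complexity bounds of Theorems~\ref{th:M_abs} and~\ref{th:M_rel} with Markov's inequality in order to convert an expectation bound into a high-probability guarantee. The key observation is that shrinking the target tolerance by a factor of $\sigma_k$, i.e.\ aiming for $\varepsilon_k\sigma_k$ (respectively $\delta_k\sigma_k$) in place of $\varepsilon_k$ (respectively $\delta_k$), causes the expected error at the stopping time to drop below the shrunk target; Markov's inequality applied to the nonnegative error then bounds the probability that the actual error exceeds $\varepsilon_k$ by exactly $\sigma_k$. Substituting the shrunk target into Theorem~\ref{th:M_abs} or Theorem~\ref{th:M_rel} inflates the iteration count only by an additive $\ln(1/\sigma_k)$ term inside the logarithm, which matches the claimed bound.

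Concretely, in the absolute-accuracy case I would choose $t$ large enough that $C_{\mathcal{M}}(1-\tau_{\mathcal{M}})^t (S_k(z_0)-S_k(z_k^*)) \le \varepsilon_k\sigma_k$. By \eqref{eq:M_rand} this forces $\mathbb{E}[S_k(z_t)-S_k(z_k^*)] \le \varepsilon_k\sigma_k$ and requires $t = O(\tau_{\mathcal{M}}^{-1}\ln(C_{\mathcal{M}} C_k/(\varepsilon_k\sigma_k)))$ iterations, where the factor $2/\tau_{\mathcal{M}}$ already absorbed into $C_k$ in \eqref{constant:C_1} is harmless inside the logarithm. Markov's inequality then yields $\mathbb{P}(S_k(z_t)-S_k(z_k^*) > \varepsilon_k) \le \varepsilon_k\sigma_k/\varepsilon_k = \sigma_k$, i.e.\ $z_t$ is an $(\varepsilon_k,\sigma_k)$-solution to \eqref{eq:Catalyst_M} in the sense of Definition~\ref{def:solution_saddle-point_problem}.

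For the relative-accuracy case I would follow the same template, replacing $\delta_k$ by $\delta_k\sigma_k$ in the Markov step that underlies Theorem~\ref{th:M_rel}. Since Theorem~\ref{th:M_rel} is itself proved in \cite{catalyst2017} by comparing the expected absolute error against the relative target $\tfrac{H\delta_k}{2}\|z_t - x_{k-1}^{md}\|_2^2$ via Markov's inequality, inflating the budget by $\ln(1/\sigma_k)$ tightens this comparison by a factor of $\sigma_k$ and makes the relative stopping criterion hold with probability at least $1-\sigma_k$, again within $O(\tau_{\mathcal{M}}^{-1}\ln(C_{\mathcal{M}} C_k/(\delta_k\sigma_k)))$ iterations.

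The hard part is the relative-accuracy case: because the stopping criterion involves the random quantity $\|z_t - x_{k-1}^{md}\|_2^2$, one cannot simply apply Markov's inequality to the error and invoke Theorem~\ref{th:M_rel} as a black box. Instead, one has to reopen its proof, track how the tolerance $\delta_k$ enters the underlying Markov step, and redo that step with $\delta_k\sigma_k$; everything else is routine bookkeeping, and in the absolute-accuracy case the reduction is just a single direct application of Markov to the objective residual.
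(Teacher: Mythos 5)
Your proposal is correct and follows essentially the same route as the paper: run $\mathcal{M}$ until the \emph{expected} residual drops below the shrunken target $\varepsilon_k\sigma_k$ (resp. $\delta_k\sigma_k$), which by Theorem~\ref{th:M_abs} (resp. Theorem~\ref{th:M_rel}) costs only an extra $\ln(1/\sigma_k)$ inside the logarithm, and then apply Markov's inequality to the nonnegative residual to get the $(\varepsilon_k,\sigma_k)$ guarantee. In the relative-accuracy case you are in fact more careful than the paper, which applies Markov's inequality directly with the random quantity $\|z_t-x_{k-1}^{md}\|_2^2$ in the denominator; your observation that this step really requires reopening the proof of Theorem~\ref{th:M_rel} is a legitimate refinement, not a deviation.
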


\begin{proof} To solve the problem \eqref{eq:Catalyst_M} we apply an randomized method $\mathcal{M}$.
\begin{enumerate}
    \item
    \textsc{Absolute accuracy.}
    \\
     By the Theorem \ref{th:M_abs} after $T_k(\varepsilon_k)=O \left( \frac{1}{\tau_{\mathcal{M}}}\ln{\frac{C_{\mathcal{M}}C_k}{\varepsilon_k}}\right)$ iterations of the randomized method $\mathcal{M}$ we obtain that we can find $z_{T_k}(\varepsilon_k)$ such that
\begin{equation}
    \mathbb{E}(S_k(z_{T_k}(\varepsilon_k)) - S_k(z^*)) \leq \varepsilon_k
\end{equation}

Since $S_k(z_{T_k}(\varepsilon_k)) - S_k(z^*) \geq  0$, with an arbitrary $\sigma_k \in (0, 1)$ we can apply Markov inequality:
\begin{equation}
\mathbb{P}(S_k(z_{T_k}(\varepsilon_k\sigma_k)) - S_k(z^*) \leq \varepsilon_k) \geq 1 - \frac{\mathbb{E}(S_k(z_{T_k}(\varepsilon_k)) - S_k(z^*))}{\varepsilon_k} \geq 1 - \sigma_k,
\end{equation}
where 
\begin{equation}
    \frac{\mathbb{E}(S_k(z_{T_k}(\varepsilon_k)) - S_k(z^*))}{\varepsilon_k} \leq \sigma_k \ \ \ \rightarrow \ \ \ \mathbb{E}(S_k(z_{T_k}(\varepsilon_k)) - S_k(z^*)) \leq \varepsilon_k\sigma_k.
\end{equation}
Then, after $ T_k(\varepsilon_k\sigma_k) = O \left( \frac{1}{\tau_{\mathcal{M}}}\ln{\frac{C_{\mathcal{M}}C_k}{\varepsilon_k\sigma_k}}\right)$ iterations of the randomized method $\mathcal{M}$ we can find $(\varepsilon_k, \sigma_k)$ solution of the problem \eqref{eq:Catalyst_M} with absolute accuracy.
\item \textsc{Relative accuracy.}
    \\
    By the Theorem \ref{th:M_rel} after $T_k(\delta_k)=O \left( \frac{1}{\tau_{\mathcal{M}}}\ln{\frac{C_{\mathcal{M}}C_k}{\delta_k}}\right)$ iterations of the randomized method $\mathcal{M}$ we obtain that we can find $z_{T_k}(\delta_k)$ such that
\begin{equation}
    \mathbb{E}(S_k(z_{T_k}(\delta_k)) - S_k(z^*)) \leq \frac{\delta_k H}{2}\|z_{T_k}(\delta_k) - x_{k-1}^{md}\|_2^2
\end{equation}

Since $S_k(z_{T_k}(\delta_k)) - S_k(z^*) \geq  0$, with an arbitrary $\sigma_k \in (0, 1)$ we can apply Markov inequality:
\begin{equation}
\mathbb{P}\left(S_k(z_{T_k}(\delta_k\sigma_k)) - S_k(z^*) \leq \frac{\delta_k H}{2}\|z_{T_k}(\delta_k\sigma_k) - x_{k-1}^{md}\|_2^2\right) \geq 1 - \frac{2\mathbb{E}(S_k(z_{T_k}(\delta_k \sigma_k)) - S_k(z^*))}{\delta_k H \|z_{T_k}(\delta_k \sigma_k) - x_{k-1}^{md}\|_2^2} \geq 1 - \sigma_k,
\end{equation}
where 
\begin{equation}
    \frac{2\mathbb{E}(S_k(z_{T_k}(\delta_k \sigma_k)) - S_k(z^*))}{\delta_k H \|z_{T_k}(\delta_k \sigma_k) - x_{k-1}^{md}\|_2^2} \leq \sigma_k \ \ \ \rightarrow \ \ \ 
    \mathbb{E}(S_k(z_{T_k}(\delta_k\sigma_k)) - S_k(z^*)) \leq \frac{\delta_k\sigma_k H}{2}\|z_{T_k}(\delta_k \sigma_k) - x_{k-1}^{md}\|_2^2
\end{equation}
Then, after $ T_k(\delta_k\sigma_k) = O \left( \frac{1}{\tau_{\mathcal{M}}}\ln{\frac{C_{\mathcal{M}}C_k}{\delta_k\sigma_k}}\right)$ iterations of the randomized method $\mathcal{M}$ we can find an $(\varepsilon_k, \sigma_k)$ solution of the problem \eqref{eq:Catalyst_M} with relative accuracy, where $\varepsilon_k = \frac{\delta_k H}{2}\|z_{T_k}(\delta_k \sigma_k) - x_{k-1}^{md}\|_2^2$.
\end{enumerate}
\qed \end{proof}

\begin{theorem}\label{corollary:Catalyst_sigma}
Choose  
\begin{enumerate}
    \item  
    \begin{equation}\label{varepsilon_k_Catalyst_absolute}
    \varepsilon_k = \frac{2}{9}(F(x_0) - F(x^*))(1-\rho)^k \ \ \ \ \text{with} \ \ \ \ \rho = 0.9\sqrt{q}
\end{equation}
 and 
    \begin{equation}\label{sigma_k_Catalyst_absolute}
    \sigma_k \leq \frac{\ln{\frac{1}{1-\sigma}}}{\frac{\sqrt{\mu + H}}{0.9 \sqrt{\mu}}\ln{\frac{8(1-0.9\sqrt{\mu/(\mu + H)})(F(x_0) - F(x^*))(H+\mu)}{0.01\mu\varepsilon}}}
\end{equation} 
    in the absolute accuracy case.
    \item 
    \begin{equation}
    \delta_k = \frac{\sqrt{q}}{2 - \sqrt{q}}
\end{equation}
 and 
    \begin{equation}
    \sigma_k \leq \frac{\ln{\frac{1}{1-\sigma}}}{\frac{2\sqrt{H+\mu}}{\sqrt{\mu}}\ln{\frac{2(F(x_0) - F(x^*))}{\varepsilon}}}
\end{equation}
in the relative accuracy case.
\end{enumerate} 
 Then, after 
\begin{equation}
     \mathcal{N} = \widetilde{O}\left(\max\left\{1, \sqrt{\frac{H}{\mu}}\right\}\right)
\end{equation}
the number of iterations of the Catalyst algorithm (Algorithm \ref{alg:Catalyst}) with absolute or relative accuracy, we find an $(\varepsilon, \sigma)$ solution to the original problem \eqref{problem:Catalyst} under the Assumption \ref{assumpt:Catalyst} which is understood in the sense of Definition \ref{def:solution_saddle-point_problem}. 
\end{theorem}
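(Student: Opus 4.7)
The plan is to build the proof from two ingredients already established earlier in the subsection: Corollary~\ref{corollary:accuracy_catalyst}, which tells us how many outer iterations $\mathcal{N}$ of the Catalyst algorithm are required to reduce the functional suboptimality below $\varepsilon$ \emph{provided that each inner subproblem is solved to the prescribed accuracy} $\varepsilon_k$ (or $\delta_k$), and Corollary~\ref{corollary:sigma_k}, which gives the per-subproblem inner complexity needed to solve each subproblem to the prescribed accuracy with probability at least $1-\sigma_k$. The remaining step is to propagate probabilities across all $\mathcal{N}$ outer iterations by a union-bound argument and to choose $\sigma_k$ small enough so that the total failure probability is at most $\sigma$.

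First, I would fix $\rho=0.9\sqrt{q}$ (absolute-accuracy case) or $\delta_k=\sqrt{q}/(2-\sqrt{q})$ (relative-accuracy case) as in Corollary~\ref{corollary:accuracy_catalyst}, so that after $\mathcal{N}$ outer iterations the Catalyst iterate $x_{\mathcal{N}}$ satisfies $F(x_{\mathcal{N}})-F(x^*)\le \varepsilon$ on the event $\mathcal{E}$ that \emph{every} inner subproblem at iterations $k=1,\dots,\mathcal{N}$ was solved to within its target tolerance. On this event, Corollary~\ref{corollary:accuracy_catalyst} is applicable deterministically and yields the stated bound $\mathcal{N}=\widetilde{O}\bigl(\max\{1,\sqrt{H/\mu}\}\bigr)$, with the precise expression for $\mathcal{N}$ given by \eqref{eq:absolute_accuracy} or \eqref{eq:relative_accuracy} from Corollary~\ref{corollary:accuracy_catalyst}.

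Next, at each outer iteration $k$ the randomized inner method $\mathcal{M}$ produces, in the number of iterations specified by Corollary~\ref{corollary:sigma_k}, an $(\varepsilon_k,\sigma_k)$- (resp.\ $(\delta_k,\sigma_k)$-) solution of the subproblem~\eqref{eq:Catalyst_M}. Hence the event that iteration $k$ meets its target tolerance has probability at least $1-\sigma_k$. By independence of the randomness used across outer iterations and a union bound,
\begin{equation*}
\Prob(\mathcal{E}) \;\ge\; (1-\sigma_k)^{\mathcal{N}}.
\end{equation*}
To guarantee $\Prob(\mathcal{E})\ge 1-\sigma$, it suffices to require $(1-\sigma_k)^{\mathcal{N}}\ge 1-\sigma$, i.e.\ $\mathcal{N}\ln(1-\sigma_k)\ge \ln(1-\sigma)$, which is implied by $\sigma_k\le \ln\frac{1}{1-\sigma}/\mathcal{N}$. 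Substituting the explicit expression for $\mathcal{N}$ from \eqref{eq:absolute_accuracy} (resp.\ \eqref{eq:relative_accuracy}) yields the bounds on $\sigma_k$ stated in the theorem.

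Combining both, on the event $\mathcal{E}$ the outer iteration count of $\mathcal{N}=\widetilde{O}\bigl(\max\{1,\sqrt{H/\mu}\}\bigr)$ delivers $F(x_{\mathcal{N}})-F(x^*)\le\varepsilon$, and by the choice of $\sigma_k$ above, $\Prob(\mathcal{E})\ge 1-\sigma$. This is exactly the $(\varepsilon,\sigma)$-solution claim in the sense of Definition~\ref{def:solution_saddle-point_problem}. The main (minor) obstacle is the bookkeeping of the two-level probability argument: one must verify that on $\mathcal{E}$ Corollaries~\ref{th:Catalyst_absolute_accuracy}--\ref{th:Catalyst_relative_accuracy} apply pathwise (they do, since their statements only require the stated inner accuracy to hold at every iteration) and that the logarithmic blow-up from the union bound is absorbed in the $\widetilde{O}$ notation, which it is because $\log(1/\sigma)$ is hidden in $\widetilde{O}$.
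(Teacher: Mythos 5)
Your proposal is correct and follows essentially the same route as the paper: the paper likewise invokes Corollary~\ref{corollary:accuracy_catalyst} for the outer iteration count, Corollary~\ref{corollary:sigma_k} for the per-subproblem probabilistic guarantee, and then multiplies the success probabilities $\prod_{i=1}^{\mathcal{N}}(1-\sigma_i)$ with $\sigma_1=\dots=\sigma_{\mathcal{N}}$ to arrive at the condition $\sigma_k\le \ln\frac{1}{1-\sigma}/\mathcal{N}$. The only cosmetic difference is that you phrase the probability propagation as a union bound over failure events while the paper uses the product form directly; both yield the same requirement on $\sigma_k$ up to the logarithmic factors absorbed in $\widetilde{O}$.
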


\begin{proof} 
In each iterations of the Catalyst algorithm we need to solve the problem \eqref{eq:Catalyst_M} with absolute or relative accuracy. To solve this problem, we apply an randomized method $\mathcal{M}$ and by Corollary \ref{corollary:sigma_k} we get an $(\varepsilon_k, \sigma_k)$ solution of the problem \eqref{eq:Catalyst_M}. Then, by Corollary \ref{corollary:accuracy_catalyst}, after
\begin{equation}
    \mathcal{N} = \left\lceil \frac{\sqrt{\mu + H}}{0.9 \sqrt{\mu}}\ln{\frac{8(1-0.9\sqrt{\mu/(\mu + H)})(F(x_0) - F(x^*))(H+\mu)}{0.01\mu\varepsilon}}\right\rceil
\end{equation}
the number of iterations of the Catalyst algorithm (Algorithm \ref{alg:Catalyst}) with absolute accuracy or after
\begin{equation}
    \mathcal{N} = \left\lceil \frac{2\sqrt{H+\mu}}{\sqrt{\mu}}\ln{\frac{2(F(x_0) - F(x^*))}{\varepsilon}} \right\rceil
\end{equation}
the number of iterations of the Catalyst algorithm (Algorithm \ref{alg:Catalyst}) with relative accuracy, we solve the original problem with probability $\prod\limits_{i=1}^{\mathcal{N}}{(1-\sigma_i)}$. Choose $\sigma_1 = \dots = \sigma_{\mathcal{N}} = \sigma_{\mathcal{M}}$ then, we solve the original problem with probability 
\begin{equation}
    1 - \sigma \leq (1 - \sigma_{\mathcal{M}})^{\mathcal{N}} \leq e^{-\mathcal{N}\sigma_{\mathcal{M}}}  \ \ \ \rightarrow \ \ \  \sigma_{\mathcal{M}} \leq \frac{\ln{\frac{1}{1-\sigma}}}{\mathcal{N}} .
\end{equation}
If we choose $(\varepsilon_k)_{k\geq 0}$ according to the Corollary \ref{corollary:accuracy_catalyst} and
\begin{equation}
    \sigma_k \leq \frac{\ln{\frac{1}{1-\sigma}}}{\frac{\sqrt{\mu + H}}{0.9 \sqrt{\mu}}\ln{\frac{8(1-0.9\sqrt{\mu/(\mu + H)})(F(x_0) - F(x^*))(H+\mu)}{0.01\mu\varepsilon}}}
\end{equation}
in the Catalyst algorithm (Algorithm \ref{alg:Catalyst}) with absolute accuracy 
or $(\delta_k)_{k\geq 0}$ according to the Corollary \ref{corollary:accuracy_catalyst} in the Catalyst algorithm (Algorithm \ref{alg:Catalyst}) with relative accuracy and 
\begin{equation}
    \sigma_k \leq \frac{\ln{\frac{1}{1 - \sigma}}}{\frac{2\sqrt{H+\mu}}{\sqrt{\mu}}\ln{\frac{2(F(x_0) - F(x^*))}{\varepsilon}}}.
\end{equation}
Then, after 
\begin{equation}
     \mathcal{N} = \widetilde{O}\left(\max\left\{1, \sqrt{\frac{H}{\mu}}\right\}\right)
\end{equation}
the number of iterations of the Catalyst algorithm (Algorithm \ref{alg:Catalyst}) with absolute or relative accuracy, we find an $(\varepsilon, \sigma)$ solution of the original problem \eqref{problem:Catalyst} which is understood in the sense of Definition \ref{def:solution_saddle-point_problem}. 
\qed \end{proof}

\paragraph{\textbf{The SAGA algorithm.}}
Let us consider the problem 
\begin{equation}\label{problem:SAGA}
    \min_{x \in \R^{d_x}}\max_{y \in \R^{d_y}}\left\{K(x,y) + M(x,y) \right\}.
\end{equation}
under the following assumption
\begin{assumption}\label{assumption:SAGA}
 \begin{enumerate}
     \item $M$ is $(\mu_x, \mu_y)$-strongly convex-concave. Moreover, we assume that we may compute the proximal operator of $M$:
     \begin{equation}
         prox_M^{\lambda}(x', y') = \arg \min_{x \in \R^{d_x}} \max_{y \in \R^{d_y}} \left\{\lambda M(x,y) + \frac{\mu_x}{2}\|x-x'\|_2^2 -  \frac{\mu_y}{2}\|y-y'\|_2^2  \right\};
     \end{equation}
     \item  $K$ is convex-concave and has Lipschitz-continuous gradients;
     \item The vector-valued function $B(x, y) = (\nabla_x K(x, y), −\nabla_y K(x, y)) \in \R^{d_x + d_y}$ may be split into a family of vector-valued functions as $B = \underset{i \in \mathcal{J}}{\sum}B_i$, where the only constraint is that each $B_i$ is Lipschitz-continuous (with constant $L_i$).
 \end{enumerate}
 
\end{assumption}
To solve the problem \eqref{problem:SAGA} under the Assumption \ref{assumption:SAGA} we can apply the SAGA algorithm from \cite{NIPS2016_1aa48fc4}.  
\begin{algorithm}[h]
\caption{SAGA: Online Stochastic Variance Reduction for Saddle Points \cite{NIPS2016_1aa48fc4} \label{alg:SAGA}}
\begin{algorithmic}[1]
		\STATE {\bf Input:} Functions $(K_i)_{i\geq 0}$, probabilities $(\pi_i)_{i \geq 0}$, smoothness $\bar{L}(\pi)$ and $L$, iterate $(x_0, y_0)$, number of iterations t, number of updates per iteration (mini-batch size) $m$. 
		\STATE Set $\lambda = \left(\max\left\{\frac{3|\mathcal{J}|}{2m} - 1, L^2 + \frac{3\bar{L}^2}{m}\right\}\right)^{-1}$; 
		\STATE Initialize $w^i = B_i(x_0, y_0)$ for all $i \in \mathcal{J}$ and $W = \underset{i \in \mathcal{J}}{\sum}w^i$;
		\FOR{$l = 1$ to $t$}
			\STATE Sample $i_1, \dots , i_m \in \mathcal{J}$ from the probability vector $(\pi_i)_{i\geq 0}$ with replacement; 
			   
			\STATE 
			\begin{equation}
			(x_l, y_l) = prox_M^{\lambda}\left\{(x_{l-1}, y_{l-1}) − \lambda\begin{psmallmatrix}
            \frac{1}{\mu_x}& 0
            \\
            0 & \frac{1}{\mu_y}
            \end{psmallmatrix} 
            \left(W + \frac{1}{m}\sum_{k=1}^m \left\{ \frac{1}{\pi_{i_k}}v_k − \frac{1}{\pi_{i_k}}w^{i_k}\right\}\right)\right\};
            \end{equation}
		    \STATE (optional) Sample $i_1, \dots , i_m \in \mathcal{J}$ uniformly with replacement;
		    \STATE (optional) Compute $v_k = B_{i_k}(x_l, y_l)$ for $k \in \{1, \dots, m\}$;
		    \STATE Replace 
		$W =  W − \sum_{k=1}^m\{w^{i_k} − v_k\}$ and $w^{i_k} = v_k$ for $k \in \{1, \dots ,m\}$. 
		\ENDFOR
		\STATE {\bf Output:} Approximate solution $(x_t, y_t)$.
	\end{algorithmic}
\end{algorithm}

\begin{theorem}[Theorem 2 from \cite{NIPS2016_1aa48fc4}, Appendix D.2]\label{theorem:SAGA}
Under the Assumption \ref{assumption:SAGA}. After $t$ iterations of the SAGA algorithm (Algorithm \ref{alg:SAGA}) (with the option of resampling when using non-uniform sampling), we have
\begin{equation*}
    \mathbb{E}\|z_t-z^*\|^2 \leq 
    2\left(1 - \frac{1}{4} \left(\max\left\{\frac{3|\mathcal{J}|}{2m}, 1+ \frac{L^2}{\mu^2} + \frac{3\bar{L}^2}{m\mu^2}\right\}\right)^{-1}\right)^t\|z_0-z^*\|^2 
\end{equation*}
\end{theorem}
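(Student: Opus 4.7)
}

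The plan is to follow the standard SAGA-style variance reduction analysis, adapted to the saddle-point setting, by constructing a Lyapunov function that jointly controls the squared distance to the optimum $z^\ast=(x^\ast,y^\ast)$ (in the weighted Euclidean norm induced by $\mathrm{diag}(\mu_x I,\mu_y I)$) and the average squared error of the stored stochastic estimates $w^i$ against the ideal values $B_i(z^\ast)$. Concretely, I would work with
\begin{equation}
\mathcal{L}_t \;=\; \|z_t-z^\ast\|^2 \;+\; \frac{c\lambda^2}{|\mathcal{J}|}\sum_{i\in\mathcal{J}} \frac{1}{\pi_i}\bigl\|w^i_t - B_i(z^\ast)\bigr\|^2,
\end{equation}
for a constant $c>0$ to be tuned, where the norm on $z$ is the weighted one. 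The first term measures optimization progress and the second measures the residual variance of the SAGA gradient estimate, which is the mechanism that makes the variance vanish as $z_t\to z^\ast$.

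First, I would expand one step of the algorithm. Using the firm nonexpansiveness of $\mathrm{prox}_M^\lambda$ together with the $(\mu_x,\mu_y)$-strong convex-concavity of $M$, and the fact that $z^\ast$ is a fixed point of the exact proximal--gradient operator $z\mapsto \mathrm{prox}_M^\lambda(z - \lambda\,\mathrm{diag}(1/\mu_x,1/\mu_y)\,B(z))$, I would obtain an inequality of the form
\begin{equation}
\mathbb{E}\|z_{t+1}-z^\ast\|^2 \;\le\; (1-\lambda)\|z_t-z^\ast\|^2 \;+\; \lambda^2\,\mathbb{E}\bigl\|\widehat{B}_t - B(z^\ast)\bigr\|^2 \;-\; (\text{monotonicity cross term}),
\end{equation}
where $\widehat{B}_t = W_t + \tfrac{1}{m}\sum_{k=1}^m \pi_{i_k}^{-1}(B_{i_k}(z_t)-w^{i_k}_t)$ is the SAGA estimator. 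The monotonicity of $B$ (since $K$ is convex-concave and smooth) absorbs the cross term, turning the bound into a genuine one-step contraction up to the variance of the estimator.

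Second, I would bound the variance of $\widehat{B}_t$. Using $\mathbb{E}_{i\sim \pi}[\pi_i^{-1}(B_i(z_t)-w^i_t)]=B(z_t)-W_t$, and the Lipschitz constants $L_i$ (aggregated into $\bar{L}(\pi)^2 = \sum_i \pi_i^{-1} L_i^2$ in the usual way), a standard decomposition yields
\begin{equation}
\mathbb{E}\bigl\|\widehat{B}_t-B(z^\ast)\bigr\|^2 \;\le\; 2L^2\|z_t-z^\ast\|^2 \;+\; \tfrac{2\bar{L}^2}{m}\|z_t-z^\ast\|^2 \;+\; \tfrac{2}{m|\mathcal{J}|}\sum_{i} \pi_i^{-1}\|w^i_t-B_i(z^\ast)\|^2,
\end{equation}
where the last term is exactly the quantity controlled by the second component of $\mathcal{L}_t$. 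Third, I would track the evolution of $\sum_i \pi_i^{-1}\|w^i_{t+1}-B_i(z^\ast)\|^2$ under the random replacement rule: with probability $m\pi_i/|\mathcal{J}|$ (per index) the memory $w^i$ is overwritten by $B_i(z_{t+1})$, so in expectation this sum contracts at rate $1 - 3m/(2|\mathcal{J}|)$ (Bernoulli-type bound) plus a term controlled by $\|z_{t+1}-z^\ast\|^2$ via the Lipschitz constants $L_i$.

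Combining these three estimates and tuning $c$ so that the cross-terms cancel, I would land on a clean contraction $\mathbb{E}\,\mathcal{L}_{t+1}\le (1-\rho)\,\mathbb{E}\,\mathcal{L}_t$ with $\rho^{-1}= 4\max\{3|\mathcal{J}|/(2m),\, 1+L^2/\mu^2 + 3\bar{L}^2/(m\mu^2)\}$, matching the stated rate. Unrolling and using $\mathcal{L}_0\le 2\|z_0-z^\ast\|^2$ (from initialization $w^i_0=B_i(z_0)$ together with the $L_i$-Lipschitz bounds and the definition of $c$) gives the final inequality. The main obstacle I anticipate is the joint tuning of $\lambda$ and $c$: the step size $\lambda$ must simultaneously make the proximal step a contraction, make the variance term fit inside the strong convexity gain, and be compatible with the memory-contraction rate $3m/(2|\mathcal{J}|)$. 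Verifying algebraically that the single choice $\lambda^{-1}=\max\{3|\mathcal{J}|/(2m)-1,\;L^2+3\bar{L}^2/m\}$ from the algorithm satisfies all three constraints is the delicate bookkeeping step, and is exactly where the form of $\rho$ in the statement is pinned down.
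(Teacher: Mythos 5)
This statement is imported verbatim from \cite{NIPS2016_1aa48fc4} (Theorem 2, Appendix D.2); the paper offers no proof of its own, so there is nothing internal to compare against. Your sketch follows the same route as the cited source: a Lyapunov function coupling $\|z_t-z^\ast\|^2$ with the weighted residual variance of the stored operator values $w^i$, a firm-nonexpansiveness/strong-monotonicity contraction for the proximal step, a variance decomposition of the SAGA estimator, and a contraction of the memory table under resampling. The outline is sound and consistent with how the result is actually established, with the one caveat you already flag yourself: the decisive part is the algebraic verification that the single step size $\lambda$ prescribed by the algorithm simultaneously satisfies the three competing constraints, and that bookkeeping is asserted rather than carried out in your proposal.
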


\begin{remark}
The constants $L, \bar{L}, \mu$ depend on the type of the problem. For more details see Appendix A, D of the Article \cite{NIPS2016_1aa48fc4}. We will define these constants for our problem below.

\demo
\end{remark}
\begin{lemma}\label{lemma:SAGA_sp}
Let us consider the following special case to the problem \eqref{problem:SAGA} with $M(x,y) = f(x) - h(y), K(x,y) = \frac{1}{m_G}\sum_{i=1}^{m_G}G_i(x,y)$:
\begin{equation}\label{problem:SAGA_sp}
\min_{x \in \R^{d_x}}\max_{y \in \R^{d_y}}\left\{f(x) + \frac{1}{m_G}\sum_{i=1}^{m_G}G_i(x,y) - h(y)\right\}, 
\end{equation}
under the Assumption \ref{assumpt:G_sum_1}. This problem is satisfy to the Assumption \ref{assumption:SAGA}.
\end{lemma}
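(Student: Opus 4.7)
The plan is to verify each of the three conditions in Assumption~\ref{assumption:SAGA} for the specific choice $M(x,y) = f(x) - h(y)$ and $K(x,y) = \frac{1}{m_G}\sum_{i=1}^{m_G} G_i(x,y)$, using the properties given in Assumption~\ref{assumpt:G_sum_1}. The argument is structurally a routine verification; the main care required is being precise about the separability of $M$ and about the Lipschitz constants of the components $B_i$.

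First, to check condition (1) on $M$, note that strong convex-concavity with parameters $(\mu_x, \mu_y)$ is immediate: $M(\cdot, y) = f(\cdot) - h(y)$ is $\mu_x$-strongly convex in $x$, and $M(x, \cdot) = f(x) - h(\cdot)$ is $\mu_y$-strongly concave in $y$, by Assumption~\ref{assumpt:G_sum_1}(1). The proximal operator of $M$ requires solving
\[
\arg\min_{x} \max_{y}\left\{\lambda\bigl(f(x) - h(y)\bigr) + \frac{\mu_x}{2}\|x-x'\|^2 - \frac{\mu_y}{2}\|y-y'\|^2\right\},
\]
which, since $M$ is separable in $x$ and $y$, decouples into the two independent subproblems $\arg\min_x\{\lambda f(x) + \frac{\mu_x}{2}\|x - x'\|^2\}$ and $\arg\min_y\{\lambda h(y) + \frac{\mu_y}{2}\|y - y'\|^2\}$. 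Both are tractable by the prox-friendliness of $f$ and $h$ (Assumption~\ref{assumpt:G_sum_1}(3)), in the sense of~\eqref{f_x:prox-friendly}.

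Second, for condition (2) on $K$, convex-concavity of $K$ follows from the fact that a finite average of convex-concave functions is convex-concave. For Lipschitz-continuity of $\nabla K$, one uses the triangle inequality together with Assumption~\ref{assumpt:G_sum_1}(2):
\[
\|\nabla K(x_1,y_1) - \nabla K(x_2,y_2)\| \leq \frac{1}{m_G}\sum_{i=1}^{m_G}\|\nabla G_i(x_1,y_1) - \nabla G_i(x_2,y_2)\| \leq L_G\|(x_1,y_1)-(x_2,y_2)\|,
\]
with $L_G = \frac{1}{m_G}\sum_i L_G^i$, as already noted in the discussion following Assumption~\ref{assumpt:G_sum}.

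Finally, for condition (3), I would propose the natural splitting $\mathcal{J} = \{1,\dots,m_G\}$ with $B_i(x,y) := \frac{1}{m_G}\bigl(\nabla_x G_i(x,y),\,-\nabla_y G_i(x,y)\bigr)$, so that $B = \sum_{i\in\mathcal{J}} B_i$ by construction. Each $B_i$ is Lipschitz-continuous with constant $L_i = L_G^i/m_G$, which follows directly from the $L_G^i$-smoothness of $G_i$. This completes the verification of all three parts of Assumption~\ref{assumption:SAGA}, so Algorithm~\ref{alg:SAGA} and Theorem~\ref{theorem:SAGA} are applicable to problem~\eqref{problem:SAGA_sp}. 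The only subtlety worth flagging is the exact form of the constants $L, \bar{L}, \mu$ entering Theorem~\ref{theorem:SAGA}, which depend on the chosen sampling distribution $(\pi_i)$; their precise values are instantiated later when computing the overall complexity.
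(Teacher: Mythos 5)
Your proposal is correct and follows essentially the same route as the paper's own proof: the same decoupling of $\mathrm{prox}_M^{\lambda}$ into the separate proximal steps for $f$ and $h$, the same averaging argument for convex-concavity and $L_G$-smoothness of $K$, and the same splitting $B_i = \frac{1}{m_G}(\nabla_x G_i, -\nabla_y G_i)$ with Lipschitz constant $L_G^i/m_G$. The deferral of the precise constants $L,\bar{L},\mu$ to the later complexity computation also matches the paper, which handles them in Lemma~\ref{lemma:SAGA_for_sp_accuracy}.
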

\begin{proof}
\begin{enumerate}
    \item $f(x)$ is $\mu_x$-strongly convex, $-h(y)$ is $\mu_y$-strongly concave, then $M(x,y)$ is $(\mu_x, \mu_y)$ strongly convex-concave. 
    \begin{multline*}
        prox_M^{\lambda}(x', y') = \arg \min_{x \in \R^{d_x}} \max_{y \in \R^{d_y}} \left\{\lambda M(x,y) + \frac{\mu_x}{2}\|x-x'\|_2^2 -  \frac{\mu_y}{2}\|y-y'\|_2^2  \right\} =
        \\
        \arg \min_{x \in \R^{d_x}} \max_{y \in \R^{d_y}} \left\{\lambda (f(x) - h(y)) + \frac{\mu_x}{2}\|x-x'\|_2^2 -  \frac{\mu_y}{2}\|y-y'\|_2^2  \right\} =
        \\
        \left(\arg \min_{x \in \R^{d_x}} \left\{\lambda f(x) + \frac{\mu_x}{2}\|x-x'\|_2^2\right\}, \arg\max_{y \in \R^{d_y}} \left\{ -\lambda h(y) - \frac{\mu_y}{2}\|y-y'\|_2^2  \right\}\right) =
        \left( prox_f^{\lambda}(x'), prox_h^{\lambda}(y')\right)
    \end{multline*}
$f(x), h(y)$ are proximal-friendly, then $\left( prox_f^{\lambda}(x'), prox_h^{\lambda}(y')\right)$ is easy to compute, then $ prox_M^{\lambda}(x', y')$ is easy to compute. We have shown that Assumption \ref{assumption:SAGA}.1 is fulfilled.
\item $K(x,y) = \frac{1}{m_G}\sum_{i=1}^{m_G}G_i(x,y)$ is convex-concave.  We have shown that Assumption \ref{assumption:SAGA}.2 is fulfilled.
\item 
\begin{multline*}
    B(x,y) = (\nabla_x K(x,y), -\nabla_y K(x,y)) = \left(\nabla_x \frac{1}{m_G} \sum_{i=1}^{m_G} G_i(x,y), -\nabla_y \frac{1}{m_G} \sum_{i=1}^{m_G} G_i(x,y)\right) =
    \\
    \frac{1}{m_G} \sum_{i=1}^{m_G} (\nabla_x G_i(x,y), -\nabla_y G_i(x,y))
\end{multline*}

$B(x,y) = \sum_{i=1}^{m_G}B_i(x,y)$ where $B_i = \frac{1}{m_G}(\nabla_x G_i(x,y), -\nabla_y G_i(x,y)) = \frac{1}{m_G} \nabla G_i(x,y)$.

For each $(x_1, x_2), (y_1, y_2)$:
\begin{equation*}
    \|B_i(x_1,x_2) - B_i(y_1, y_2)\|_2 = \frac{1}{m_G} \|\nabla G_i(x_1, x_2) - \nabla G_i(y_1,y_2)\|_2 \leq \frac{L_i}{m_G}\|(x_1, x_2) - (y_1, y_2)\|_2
\end{equation*}
Then, for each $i \in \overline{\{1,m_G\}}, B_i(x,y)$ is Lipschitz-continuous with constant $\frac{L_i}{m_G}$. 
We have shown that Assumption \ref{assumption:SAGA}.3 is fulfilled.
\end{enumerate}
\qed \end{proof}

In Lemma \ref{lemma:SAGA_for_sp_accuracy} we show how the number of iterations in the SAGA algorithm (Algorithm \ref{alg:SAGA}) is chosen to find an $(\varepsilon, \sigma)$ solution to the problem \eqref{problem:SAGA_sp} which is understood in the sense of Definition \ref{def:solution_saddle-point_problem} .

\begin{lemma}\label{lemma:SAGA_for_sp_accuracy}
Choose
\begin{equation*}
    \varepsilon' \leq \min\left\{\varepsilon, \frac{\varepsilon}{\left(4L_G + \frac{4L_G^2}{\mu_y} + \frac{4L_G^2}{\mu_x}\right)}, \frac{\varepsilon^4}{4(M - f(x^*) - h(y^*))^2}\right\}, \ \ \ \sigma' \leq \sigma
\end{equation*}
where 
     $$\sup \{f(x) + h(y): (x,y) \in \mathcal{B}_2((x^*, y^*),\varepsilon )\} \leq M \ \ (M \  \text{is finite}), \ \ \ L_G = \frac{1}{m_G}\sum_{i=1}^{m_G}L_G^i. $$
After 
\begin{equation}
    \mathcal{N} = O\left(m_G + \frac{L_G^2}{(\min \{\mu_x, \mu_y\})^2}\ln{\frac{2\|z_0 - z^*\|_2^2}{\varepsilon' \sigma'}}\right)
\end{equation}
the number of iterations of the SAGA algorithm (Algorithm \ref{alg:SAGA}) (with $m=1$, $\pi_i = \frac{L_G^i}{\sum_{i=1}^{m_G}L_G^i}$ and the option of resampling when using non-uniform sampling), we have an $(\varepsilon, \sigma)$ solution to the saddle-point problem \eqref{problem:SAGA_sp} under the Assumption \ref{assumpt:G_sum_1} which is understood in the sense of Definition \ref{def:solution_saddle-point_problem}.
\end{lemma}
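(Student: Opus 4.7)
My plan is to combine the SAGA convergence guarantee (Theorem~\ref{theorem:SAGA}), Markov's inequality, and a local regularity argument that converts a bound on $\|z_t-z^*\|^2$ into a bound on the saddle-point duality gap.

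\textbf{Step 1 (Apply SAGA to the concrete problem).} By Lemma~\ref{lemma:SAGA_sp} the setup of Theorem~\ref{theorem:SAGA} applies with $|\mathcal J|=m_G$, $m=1$, and the splitting $B_i=\tfrac1{m_G}\nabla G_i$, whose Lipschitz constants are $L_i=L_G^i/m_G$. I would plug in the non-uniform probabilities $\pi_i=L_G^i/\sum_j L_G^j$ and compute
\begin{equation*}
L=\sum_i L_i = L_G,\qquad \bar L^2(\pi)=\sum_i L_i^2/\pi_i = L_G^2,\qquad \mu=\min\{\mu_x,\mu_y\}.
\end{equation*}
Setting $\tau^{-1}=4\max\{3m_G/2,\;1+L_G^2/\mu^2+3L_G^2/\mu^2\}$, Theorem~\ref{theorem:SAGA} yields $\mathbb E\|z_t-z^*\|^2\leq 2(1-\tau)^t\|z_0-z^*\|^2$.

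\textbf{Step 2 (Markov to get a high-probability bound).} I would apply Markov's inequality to the nonnegative random variable $\|z_t-z^*\|^2$: requiring $\mathbb E\|z_t-z^*\|^2\leq \varepsilon'\sigma'$ guarantees $\mathbb P(\|z_t-z^*\|^2\leq \varepsilon')\geq 1-\sigma'$. Inverting the geometric decay ($-\ln(1-\tau)\geq \tau$) gives the iteration count
\begin{equation*}
t\;\geq\;\tfrac1\tau\,\ln\!\tfrac{2\|z_0-z^*\|^2}{\varepsilon'\sigma'}\;=\;O\!\left(m_G+\tfrac{L_G^2}{\mu^2}\right)\ln\!\tfrac{2\|z_0-z^*\|^2}{\varepsilon'\sigma'},
\end{equation*}
which matches the stated complexity (the $m_G$ summand absorbs the initialisation cost of the table $\{w^i\}$).

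\textbf{Step 3 (From a distance bound to the duality gap).} This is the delicate step and the main obstacle, because $f$ and $h$ are only assumed prox-friendly and strongly convex, not smooth. Writing $\Phi(x)=f(x)+g(x)$ with $g(x)=\max_y\{G(x,y)-h(y)\}$, Lemma~\ref{lemma:obt_delta_oracle} shows $g$ is convex and $L_g$-smooth with $L_g\leq L_G+2L_G^2/\mu_y$; symmetrically $\Psi(y)=-h(y)+\tilde g(y)$ with $\tilde g$ concave and $L_{\tilde g}\leq L_G+2L_G^2/\mu_x$-smooth. To handle the non-smooth composite terms, I would use that $f(x)+h(y)\leq M$ on $\mathcal B_2((x^*,y^*),\varepsilon)$ together with standard convex analysis to deduce that $f$ and $h$ are locally Lipschitz on $\mathcal B(x^*,\varepsilon/2)$ and $\mathcal B(y^*,\varepsilon/2)$ respectively, with a common constant
\begin{equation*}
L_{\mathrm{loc}}\;=\;\tfrac{2(M-f(x^*)-h(y^*))}{\varepsilon}.
\end{equation*}
Assuming $\varepsilon'\leq \varepsilon$ ensures $\hat z=(\hat x,\hat y)$ lies in these balls. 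The optimality condition at the saddle point gives $-\nabla g(x^*)\in\partial f(x^*)$, so $\|\nabla g(x^*)\|\leq L_{\mathrm{loc}}$, and similarly for $\tilde g$. Decomposing
\begin{equation*}
\Phi(\hat x)-\Phi(x^*)\;\leq\; |f(\hat x)-f(x^*)|\;+\;\langle\nabla g(x^*),\hat x-x^*\rangle+\tfrac{L_g}{2}\|\hat x-x^*\|^2
\end{equation*}
and bounding $|f(\hat x)-f(x^*)|\leq L_{\mathrm{loc}}\|\hat x-x^*\|$, I obtain a bound of the form $2L_{\mathrm{loc}}\|\hat x-x^*\|+L_g\|\hat x-x^*\|^2$; an analogous bound controls $\Psi(y^*)-\Psi(\hat y)$. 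Summing and using $\|\hat x-x^*\|,\|\hat y-y^*\|\leq\sqrt{\varepsilon'}$ gives
\begin{equation*}
\mathrm{Gap}(\hat x,\hat y)\;\lesssim\;\tfrac{M-f(x^*)-h(y^*)}{\varepsilon}\sqrt{\varepsilon'}\;+\;\bigl(L_G+\tfrac{L_G^2}{\mu_y}+\tfrac{L_G^2}{\mu_x}\bigr)\varepsilon'.
\end{equation*}
Requiring each summand to be at most $\varepsilon/2$ yields precisely the three upper bounds on $\varepsilon'$ in the statement, up to absolute constants.

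The main obstacle is Step~3: one cannot use Lipschitz smoothness of $f$ or $h$ to bound their variations, so the quartic dependence $\varepsilon'\lesssim\varepsilon^4/(M-f(x^*)-h(y^*))^2$ naturally arises from combining a local-Lipschitz $O(\sqrt{\varepsilon'})$ estimate with the requirement that the resulting term be at most $\varepsilon$.
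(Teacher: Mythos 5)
Your proposal is correct and follows essentially the same route as the paper's proof: SAGA's linear rate (Theorem~\ref{theorem:SAGA}) plus Markov's inequality to obtain the high-probability bound $\|\hat z-z^*\|^2\leq\varepsilon'$, then a conversion to the duality gap by splitting the objective into the smooth max-functions $g$ and $w$ (via Lemma~\ref{lemma:obt_delta_oracle}) and the merely locally Lipschitz terms $f,h$ bounded through $M$, which is exactly where the $\varepsilon^4$ threshold on $\varepsilon'$ arises. The only differences are cosmetic: the paper computes the SAGA constants for the $\mu_x^{-1},\mu_y^{-1}$-rescaled operators so that $\mu=1$ (yielding the same ratios $L^2/\mu^2,\bar L^2/\mu^2$ as your unscaled computation), and your explicit control of the linear term $\langle\nabla g(x^*),\hat x-x^*\rangle$ via $-\nabla g(x^*)\in\partial f(x^*)$ is, if anything, slightly more careful than the paper's direct summation of the four differences.
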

\begin{proof}
\begin{enumerate}
    \item Let us define the constants $L, \bar{L}, \mu$ for the problem \eqref{problem:SAGA_sp}. This constants are used in Appendix A, D.2 of the article \cite{NIPS2016_1aa48fc4}. For beginning, let us define the operators $A(x,y), B(x,y)$, which used in the Appendix A of the article \cite{NIPS2016_1aa48fc4}. When we compute the $prox_{M}^{\lambda}(x',y')= \left( prox_f^{\lambda}(x'), prox_h^{\lambda}(y')\right)$, we find $(x^*, y^*)$ such that:
    \begin{equation*}
        \lambda\partial f(x^*) + \mu_x \|x^* - x'\|
 = 0 \ \ \ \Rightarrow \ \ \  \frac{\lambda}{\mu_x}\partial f(x^*) +  \|x^* - x'\|
 = 0,   
 \end{equation*}
 \begin{equation*}
        - \lambda\partial h(y^*) - \mu_y \|y^* - y'\|
 = 0 \ \ \ \Rightarrow \ \ \   \frac{\lambda}{\mu_y}\partial h(y^*) +  \|y^* - y'\|
 = 0.   
 \end{equation*}
 Then, 
 \begin{equation*}
     (x^*, y^*) = ( I + \lambda A) ^{-1}(x', y'), \ \ \ \text{where} \ \ \ A = \left(\frac{1}{\mu_x}\partial f(x), \frac{1}{\mu_y}\partial h(y)\right).
 \end{equation*}
 Let us define the operator $B(x,y)$:
 \begin{equation*}
     B(x,y) = \left(\frac{1}{\mu_x}\nabla_x G(x,y), -\frac{1}{\mu_y}\nabla_y G(x,y) \right).
 \end{equation*} 
 
 \begin{equation*}
     B(x,y) = \frac{1}{m_G}\sum_{i=1}^{m_G}\left(\frac{1}{\mu_x}\nabla_x G_i(x,y), -\frac{1}{\mu_y}\nabla_y G_i(x,y) \right) = \sum_{i=1}^{m_G}B_i(x,y),
 \end{equation*} 
 where $B_i(x,y) = \frac{1}{m_G}\left(\frac{1}{\mu_x}\nabla_x G_i(x,y), -\frac{1}{\mu_y}\nabla_y G_i(x,y) \right)$.
 
 Let us define the constants $\mu, L, \bar{L}$:
 \begin{enumerate}
     \item $\mu$ is monotones constant of the operator A. Using that $f(x)$ is $\mu_x$-strongly convex, $h(y)$ is $\mu_y)$-strongly convex, we have:
     \begin{multline*}
         (A(z) - A(z'))^{T}(z - z') =\frac{1}{\mu_x}(\partial f(x) - \partial f(x'))(x - x') +  \frac{1}{\mu_y}(\partial h(y) - \partial h(y'))(y - y') \geq 
         \\
         \|x - x'\|_2^2 + \|y - y'\|_2^2 \geq \|z - z'\|_2^2.
     \end{multline*}
     Then, $A(x,y)$ is $\mu$-monotone with $\mu = 1$.
     \item $L$ is Lipschitz constant of $B(x,y)$ with respect to the Euclidean norm, $z = (x,y), z' = (x', y') \in \R^{d_x+d_y}$:
     
     \begin{multline*}
         \|B(x,y) - B(x',y')\|_2 \leq 
         \\
         \frac{1}{m_G}\sum_{i = 1}^{m_G}\left(\frac{1}{\mu_x}\left\| \nabla_x G_i(x,y) - \nabla_x G_i(x', y')\right\|_2 + \frac{1}{\mu_y}\left\| \nabla_y G_i(x,y) - \nabla_y G_i(x', y')\right\|_2 \right)
         \\
         \leq\frac{1}{m_G}\sum_{i = 1}^{m_G}L_G^i\left(\frac{1}{\mu_x} + \frac{1}{\mu_y}\right)\left\|z - z'\right\|_2= L_G\left(\frac{1}{\mu_x} + \frac{1}{\mu_y}\right)
         \leq \frac{2L_G}{\min\{\mu_x, \mu_y\}}.
     \end{multline*}
     Then, $L \leq  \frac{2L_G}{\min\{\mu_x, \mu_y\}}$.
     \item $z = (x,y), z' = (x', y') \in \R^{d_x+d_y}$:
     \begin{multline*}
     \bar{L}^2 = \sup_{z,z' \in \R^{d_x+d_y}}\frac{1}{\|z-z'\|_2^2}\sum_{i=1}^{m_G}\frac{1}{\pi_i}\left\|B_i(x,y) - B_i(x', y')\right\|_2^2 = 
     \\
     \sup_{z,z' \in \R^{d_x+d_y}}\frac{1}{\|z-z'\|_2^2}\sum_{i=1}^{m_G}\frac{\sum_{i=1}^{m_G}L_G^i}{L_G^i}\left\|B_i(x,y) - B_i(x', y')\right\|_2^2  =
     \\
      \sup_{z,z' \in \R^{d_x+d_y}}\frac{1}{\|z-z'\|_2^2}\sum_{i=1}^{m_G}\frac{\sum_{i=1}^{m_G}L_G^i}{m_G^2L_G^i}\left(\frac{1}{\mu_x^2}\left\|\nabla_x G_i(x,y) - \nabla_x G_i(x', y')\right\|_2^2  +
      \frac{1}{\mu_y^2}\left\|\nabla_y G_i(x,y) - \nabla_y G_i(x', y')\right\|_2^2\right)
      \\
     \leq \sup_{z,z' \in \R^{d_x+d_y}}\frac{1}{\|z-z'\|_2^2}\frac{L_G}{m_G}\sum_{i = 1}^{m_G}L_G^i\left(\frac{1}{\mu_x^2} +
     \frac{1}{\mu_y^2}\right)\|z - z'\|_2^2
     \leq
     \frac{2L_G^2}{\min\{\mu_x, \mu_y\}^2}.
     \end{multline*}
     Then, $\bar{L}^2 \leq \frac{2L_G^2}{\min\{\mu_x, \mu_y\}^2}.$
 \end{enumerate}

    \item By the Lemma \ref{lemma:SAGA_sp}, the problem \eqref{problem:SAGA_sp} under the Assumption \ref{assumpt:G_sum_1} is satisfy to the Assumption \ref{assumption:SAGA}. Then, by Theorem \ref{theorem:SAGA}, after $t$ iteration of the SAGA algorithm (Algorithm \ref{alg:SAGA}) with $m=1$, $\pi_i = \frac{1}{m_G}$ and the option of resampling when using non-uniform sampling, we have
    \begin{equation*}
        \mathbb{E}\|z_t - z^*\|_2^2 \leq 
    2\left(1 - \frac{1}{4}\left(\max\left\{\frac{3|\mathcal{J}|}{2}, 1+ \frac{L^2}{\mu^2} + \frac{3\bar{L}^2}{\mu^2}\right\}\right)^{-1}\right)^t\|z_0-z^*\|_2^2,
    \end{equation*}
     where $|\mathcal{J}| = m_G$ and $\mu, L, \bar{L}$ are defined  above.
     \\
     Let us define $\eta$:
     \begin{equation*}
         \eta = \left(\max\left\{\frac{3|\mathcal{J}|}{2}, 1+ \frac{L^2}{\mu^2} + \frac{3\bar{L}^2}{\mu^2}\right\}\right)^{-1} = \left(\max\left\{\frac{3m_G}{2},  \frac{3\bar{L}_G^2}{\min\{\mu_x, \mu_y\}^2}\right\}\right)^{-1}.
     \end{equation*}
     Then,
     \begin{equation*}
         \mathbb{E}\|z_t - z^*\|_2^2 \leq 
    2\left(1 - \frac{\eta}{4}\right)^t\|z_0-z^*\|_2^2 \leq 2 e^{-\frac{\eta}{4}t}\|z_0-z^*\|_2^2 \leq \varepsilon'
     \end{equation*}
     Then, after 
     \begin{equation*}
        \mathcal{N} = \left\lceil \frac{4}{\eta} \ln{\frac{2\|z_0 - z^*\|_2^2}{\varepsilon'}}\right\rceil 
     \end{equation*}
     iterations of the SAGA algorithm (Algorithm \ref{alg:SAGA}) with parameters, which was defined above, we get $\hat{z} = (\hat{x}, \hat{y})$:
     \begin{equation*}
        \mathbb{E}\|\hat{z} - z^*\|_2^2 = \mathbb{E}\left(\|\hat{x} - x^*\|_2^2 + \|\hat{y} - y^*\|_2^2 \right)\leq \varepsilon'. 
     \end{equation*}
     $\|\hat{z} - z^*\|_2^2 \geq 0$, we can apply Markov's inequality:
     \begin{equation*}
         \mathbb{P}(\|\hat{z} - z^*\|_2^2 \leq \varepsilon' ) \geq 1 - \frac{\mathbb{E}\|\hat{z} - z^*\|_2^2}{\varepsilon'} \geq 1 - \sigma',
     \end{equation*}
     where 
     \begin{equation*}
          \frac{\mathbb{E}\|\hat{z} - z^*\|_2^2}{\varepsilon'} \leq \sigma' \ \ \ \Rightarrow \ \ \ \mathbb{E}\|\hat{z} - z^*\|_2^2 \leq \varepsilon'\sigma'.
     \end{equation*}
     Then, after 
     \begin{equation*}
        \mathcal{N} = \left\lceil \frac{4}{\eta} \ln{\frac{2\|z_0 - z^*\|_2^2}{\varepsilon' \sigma'}}\right\rceil = O\left(m_G + \frac{L_G^2}{\min\{\mu_x, \mu_y\}^2} \ln{\frac{2\|z_0 - z^*\|_2^2}{\varepsilon' \sigma'}}\right)
     \end{equation*}
     iterations of the SAGA algorithm (Algorithm \ref{alg:SAGA}) we  can find $\hat{z} = (\hat{x}, \hat{y})$ with probability at least $1-\sigma'$  such that $\mathbb{E}\|\hat{z} - z^*\|_2^2 = \mathbb{E}\left(\|\hat{x} - x^*\|_2^2 + \|\hat{y} - y^*\|_2^2 \right) \leq \varepsilon'$. Let us suppose $\varepsilon \geq \varepsilon'$
     \item Then, with probability $1 - \sigma', \hat{z} \in \mathcal{B}_2(z^*, \varepsilon)$, where
     \begin{equation*}
     \mathcal{B}_2(z^*, \varepsilon) = \left\{z \in \R^{d_x + d_y}: \|z - z^*\|_2^2 \leq \varepsilon \right\}.
     \end{equation*}
     Then, with probability $1 - \sigma'$ by Theorem 3.1.8 from \cite{nesterov2004} and using that $f(x), h(y)$ are convex, we have:
     \begin{equation}\label{eq:f+h_locally_lipschitz}
         f(\hat{x}) - f(x^*) + h(\hat{y}) - h(y^*) = |f(\hat{x}) + h(\hat{y}) - f(x^*) - h(y^*)| \leq
         \frac{M - f(x^*) - h(y^*)}{\varepsilon}\|\hat{z} - z^*\|_2,
     \end{equation}
     where 
     $$\sup \{f(x) + h(y): (x,y) \in \mathcal{B}_2((x^*, y^*),\varepsilon )\} \leq M \ \ (M \  \text{is finite.})$$
     Let us define 
     $$g(x) = \max_{y\in \R^{d_y}}\{G(x,y) - h(y)\},$$ which is $L_G + \frac{2L_G^2}{\mu_y}$-smooth (Lemma \ref{lemma:obt_delta_oracle}) and $$w(y) = -\min_{x \in \R^{d_x}}\{f(x) + G(x,y)\} = \max_{x \in \R^{d_x}}\{- f(x) - G(x,y)\},$$ which is $L_G + \frac{2L_G}{\mu_x}$-smooth (Lemma \ref{lemma:obt_delta_oracle}).
     Then, with probability $1 -\sigma'$:
 \begin{multline*}
    \max_{y \in \R^{d_y}}\left\{f(\hat{x}) + G(\hat{x}, y) - h(y)\right\} - \min_{x \in \R^{d_x}}\left\{f(x) + G(x, \hat{y}) - h(\hat{y})\right\} = 
    \max_{y \in \R^{d_y}}\left\{f(\hat{x}) + G(\hat{x}, y) - h(y)\right\} -
    \\
    \{f(x^*) + G(x^*, y^*) - h(y^*)\} +
    \{f(x^*) + G(x^*, y^*) - h(y^*)\} - 
    \min_{x \in \R^{d_x}}\left\{f(x) + G(x, \hat{y}) - h(\hat{y})\right\} =
    \\
    f(\hat{x}) - f(x^*) + g(\hat{x}) - g(x^*) + h(\hat{y}) - h(y^*)+w(\hat{y}) - w(y^*)
   \leq
   \\
   \frac{M - f(x^*) - h(y^*)}{\varepsilon}\|\hat{z} - z^*\|_2 + \left(L_G + \frac{2L_G^2}{\mu_y}\right)\|\hat{x} - x^*\|_2^2 + \left( L_G + \frac{2L_G^2}{\mu_x}\right)\|\hat{y} - y^*\|_2^2  \leq
   \\
   \frac{M - f(x^*) - h(y^*)}{\varepsilon}\|\hat{z} - z^*\|_2 + \left(2L_G + \frac{2L_G^2}{\mu_y} + \frac{2L_G^2}{\mu_x}\right)\|\hat{z} - z^*\|_2^2 \leq
   \\
   \frac{M - f(x^*) - h(y^*)}{\varepsilon}\sqrt{\varepsilon'} + \left(2L_G + \frac{2L_G^2}{\mu_y} + \frac{2L_G^2}{\mu_x}\right)\varepsilon'
\end{multline*}
Choose 
\begin{equation*}
    \varepsilon' \leq \min\left\{\varepsilon, \frac{\varepsilon}{\left(4L_G + \frac{4L_G^2}{\mu_y} + \frac{4L_G^2}{\mu_x}\right)}, \frac{\varepsilon^4}{4(M - f(x^*) - h(y^*))^2}\right\}, \sigma \leq \sigma',
\end{equation*}
$\varepsilon'$ is satisfy the inequality $\varepsilon' \leq \varepsilon$.
Then, with probability $1-\sigma' \geq 1 - \sigma$
\begin{multline*}
    \frac{M - f(x^*) - h(y^*)}{\varepsilon}\sqrt{\varepsilon'} + \left(2L_G + \frac{2L_G^2}{\mu_y} + \frac{2L_G^2}{\mu_x}\right)\varepsilon' \leq
    \\
    \frac{M - f(x^*) - h(y^*)}{\varepsilon}\frac{\varepsilon^2}{2(M - f(x^*) - h(y^*))} + \left(2L_G + \frac{2L_G^2}{\mu_y} + \frac{2L_G^2}{\mu_x}\right)\varepsilon' = \frac{\varepsilon}{2} + \left(2L_G + \frac{2L_G^2}{\mu_y} + \frac{2L_G^2}{\mu_x}\right)\varepsilon' \leq
    \\
    \frac{\varepsilon}{2} + \left(2L_G + \frac{2L_G^2}{\mu_y} + \frac{2L_G^2}{\mu_x}\right)\frac{\varepsilon}{2\left(2L_G + \frac{2L_G^2}{\mu_y} + \frac{2L_G^2}{\mu_x}\right)} = \varepsilon
\end{multline*}
     \end{enumerate}
     We have shown that after
     \begin{equation*}
        \mathcal{N}  = O\left(m_G + \frac{L_G^2}{\min\{\mu_x, \mu_y\}^2}\ln{\frac{2\|z_0 - z^*\|_2^2}{\varepsilon' \sigma'}} \right)
     \end{equation*}
     iterations of the SAGA algorithm we get $\hat{z} = (\hat{x}, \hat{y}))$, which is $(\varepsilon, \sigma)$ solution to the problem \eqref{problem:SAGA_sp} which is understood in the sense of Definition \ref{def:solution_saddle-point_problem}.
\qed \end{proof}
\subsection{Preliminaries}
In this subsection we formulate three theorems about equivalent optimization problem are used in the loops of general algorithm of this section.

We can rewrite the problem \eqref{eq:problem_Gsum_minmax}
\begin{equation}\label{problem_min_h_f_prox}
    \min _{x \in \mathbb{R}^{d_x}} \max _{y \in \mathbb{R}^{d_y}} \left\{f(x)+ G(x, y) - h(y)\right\} = \min _{x \in \mathbb{R}^{d_x}}  \left\{f(x)+ \max _{y \in \mathbb{R}^{d_y}}\{G(x, y) - h(y)\}\right\}.
\end{equation}

In the following lemma we show that if we find $\hat{x}$ is an $(\varepsilon_x, \sigma_x)$-solution to the problem \eqref{problem_min_h_f_prox} which is understood in the sense of Definition \ref{def:saddle_solution}  and  $\hat{y}$ is an $(\varepsilon_y, \sigma_y)$-solution to the problem $\max_{y\in \R^{d_y}} \{G(\hat{x}, y) - h(y)\}$ then $(\hat{x}, \hat{y})$ is an $(\varepsilon, \sigma)$ solution to the problem \eqref{eq:problem_Gsum_minmax} which is understood in the sense of Definition \ref{def:solution_saddle-point_problem} where dependencies $\varepsilon_x(\varepsilon)$, $\varepsilon_y(\varepsilon)$, $\sigma_x(\sigma)$, $\sigma_y(\sigma)$ are polynomial.

\begin{lemma}\label{lemma:for_loop_1}
Let us consider the problem \eqref{problem_min_h_f_prox} under the Assumption \ref{assumpt:G_sum_1}.
Let a  pair $(\hat{x}, \hat{y})$ satisfy
\begin{enumerate}
    \item $\hat{x}$ is an $(\varepsilon_x, \sigma_x)$-solution to the problem \eqref{problem_min_h_f_prox}, i.e. \eqref{eq:saddle_epsilon_solution} holds.
    \item $\hat{y}$ is an $(\varepsilon_y, \sigma_y)$-solution to the problem $\max_{y\in \R^{d_y}} \{G(\hat{x}, y) - h(y)\}$,
\end{enumerate}
with 
\begin{equation*}
    \varepsilon_y \leq \min \left\{\frac{\mu_y\varepsilon}{8}, \frac{\varepsilon^4\mu_y}{72\left(M_h - h(y^*)\right)^2}, \frac{\varepsilon\mu_y}{24\left(L_G + \frac{2L_G^2}{\mu_x}\right)}\right\}, \ \ \
    \varepsilon_x \leq \min\left\{ \frac{\varepsilon_y \mu_x \mu_y}{4L_G^2}, \frac{\varepsilon}{3}\right\}, \ \ \ \sigma_x \leq \frac{\sigma}{2}, \ \ \ \sigma_y \leq \frac{\sigma}{2},
\end{equation*}
where $\sup\{h(y): y \in \mathcal{B}_2(y^*, \varepsilon)\} \leq M_h$, $M_h$ is finite. 

Then, $(\hat{x}, \hat{y})$ is $(\varepsilon, \sigma)$-solution to the problem \eqref{eq:problem_Gsum_minmax} under the Assumption \ref{assumpt:G_sum_1}, i.e. \eqref{def:saddle_point_solution} holds.
\end{lemma}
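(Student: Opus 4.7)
The plan is to bound the duality gap
$$\text{Gap}(\hat x,\hat y) := \max_y\{f(\hat x)+G(\hat x,y)-h(y)\} - \min_x\{f(x)+G(x,\hat y)-h(\hat y)\}$$
by splitting it into an $\hat x$-side piece and a $\hat y$-side piece and estimating each separately. Set $g(x) := \max_y\{G(x,y)-h(y)\}$, $q(y) := \min_x\{f(x)+G(x,y)\}$, and $D(y) := q(y)-h(y)$, and denote the saddle point by $(x_*,y_*)$ with common value $V^* = f(x_*)+g(x_*) = D(y_*)$. Then
$$\text{Gap}(\hat x,\hat y) = [f(\hat x)+g(\hat x)-V^*] + [V^*-D(\hat y)].$$
By Assumption~1 the first bracket is at most $\varepsilon_x$ with probability at least $1-\sigma_x$, so the choice $\varepsilon_x \leq \varepsilon/3$ controls it by $\varepsilon/3$.

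The main step is to show $D(y_*)-D(\hat y) \leq \tfrac{L_q}{2}\|\hat y-y_*\|^2$ where $L_q = L_G + 2L_G^2/\mu_x$ is the smoothness constant of $q$ given by Lemma~\ref{lemma:obt_delta_oracle} applied with $x$ and $y$ interchanged (using $\mu_x$-strong convexity of $f$ and $L_G$-smoothness of $G$). The subtlety is that $h$ is only prox-friendly, not smooth, so Corollary~\ref{corollary:change_maxmin} cannot be invoked directly. The trick is to use the first-order optimality condition $\nabla q(y_*) \in \partial h(y_*)$: convexity of $h$ gives $h(y_*)-h(\hat y) \leq \langle \nabla q(y_*), y_*-\hat y\rangle$, while $L_q$-smoothness of $q$ gives $q(y_*)-q(\hat y) \leq \langle \nabla q(y_*), y_*-\hat y\rangle + \tfrac{L_q}{2}\|\hat y-y_*\|^2$; adding these two inequalities the linear terms cancel and the quadratic bound on $D(y_*)-D(\hat y)$ is exactly what remains. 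I expect this subdifferential inclusion to be the main obstacle, since it requires the sum rule for subgradients, but the rule applies here because $q$ is finite-valued and smooth so no constraint qualification is needed.

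It remains to bound $\|\hat y-y_*\|^2$. Set $y_*(\hat x) := \arg\max_y\{G(\hat x,y)-h(y)\}$ and write $\|\hat y-y_*\|^2 \leq 2\|\hat y-y_*(\hat x)\|^2 + 2\|y_*(\hat x)-y_*\|^2$. Assumption~2 together with $\mu_y$-strong concavity of $G(\hat x,\cdot)-h$ gives $\|\hat y-y_*(\hat x)\|^2 \leq 2\varepsilon_y/\mu_y$; Lemma~\ref{lemma:obt_delta_oracle} yields $\|y_*(\hat x)-y_*\| \leq (2L_G/\mu_y)\|\hat x-x_*\|$; and $\mu_x$-strong convexity of $f+g$ combined with the first bracket gives $\|\hat x-x_*\|^2 \leq 2\varepsilon_x/\mu_x$. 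The restriction $\varepsilon_x \leq \varepsilon_y\mu_x\mu_y/(4L_G^2)$ makes the cross term small, yielding $\|\hat y-y_*\|^2 \leq 8\varepsilon_y/\mu_y$; the bound $\varepsilon_y \leq \varepsilon\mu_y/(24L_q)$ then makes the second bracket at most $\varepsilon/6$. In total $\text{Gap}(\hat x,\hat y) \leq \varepsilon/3+\varepsilon/6 < \varepsilon$, and the probability bound $1-\sigma$ follows from a union bound since $\sigma_x+\sigma_y \leq \sigma$. The remaining auxiliary bound on $\varepsilon_y$ involving $M_h$ appears not to be strictly needed by this argument; it presumably serves as a safety margin in an alternative proof that bounds $h(\hat y)-h(y_*)$ via local Lipschitz-continuity of $h$ on a ball around $y_*$.
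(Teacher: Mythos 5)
Your overall architecture --- splitting the duality gap into the primal residual $f(\hat x)+g(\hat x)-V^*$ and the dual residual $V^*-D(\hat y)$, controlling $\|\hat x-x_*\|$ by strong convexity, $\|y_*(\hat x)-y_*\|$ by the Lipschitz continuity of $y^*(\cdot)$ from Lemma~\ref{lemma:obt_delta_oracle}, and $\|\hat y-y_*(\hat x)\|$ by strong concavity --- coincides with the paper's proof, and your intermediate bound $\|\hat y-y_*\|^2\le 8\varepsilon_y/\mu_y$ is exactly what the paper obtains. The gap is in your treatment of the dual residual. The inequality $D(y_*)-D(\hat y)\le\tfrac{L_q}{2}\|\hat y-y_*\|^2$ that you call the main step is false for nonsmooth $h$: take $G\equiv 0$ (so $q$ is constant and $L_q=0$) and $h(y)=\tfrac{\mu_y}{2}\|y\|^2+\|y\|_1$; then $y_*=0$ and $D(y_*)-D(\hat y)\ge\|\hat y\|_1$, which is linear, not quadratic, in $\|\hat y-y_*\|$, while your right-hand side is $0$. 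The advertised cancellation does not occur. Writing $D(y_*)-D(\hat y)=\bigl(q(y_*)-q(\hat y)\bigr)+\bigl(h(\hat y)-h(y_*)\bigr)$, you need \emph{upper} bounds on both brackets; smoothness of $q$ handles the first, but convexity of $h$ with the subgradient $\nabla q(y_*)\in\partial h(y_*)$ yields only the \emph{lower} bound $h(\hat y)-h(y_*)\ge\langle\nabla q(y_*),\hat y-y_*\rangle$. Your inequality $h(y_*)-h(\hat y)\le\langle\nabla q(y_*),y_*-\hat y\rangle$ is this same lower bound restated; adding it to the $q$-inequality bounds $(q+h)(y_*)-(q+h)(\hat y)$, not $D(y_*)-D(\hat y)$, and the linear terms double rather than cancel.

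This is precisely why the constraint $\varepsilon_y\le\varepsilon^4\mu_y/\bigl(72(M_h-h(y^*))^2\bigr)$, which you set aside as unnecessary, is load-bearing. The paper upper-bounds $h(\hat y)-h(y^*)$ by the local Lipschitz estimate $\tfrac{M_h-h(y^*)}{\varepsilon}\|\hat y-y^*\|$ (local Lipschitz continuity of convex functions, Lemma 3.1.8 in \cite{nesterov2004}, applicable once $\hat y$ is shown to lie in $\mathcal{B}_2(y^*,\varepsilon)$), which is linear in the distance; squaring that linear term against $\|\hat y-y_*\|^2\le 8\varepsilon_y/\mu_y$ is what forces $\varepsilon_y$ to scale like $\varepsilon^4$ rather than $\varepsilon$. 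Your argument would be valid under the additional hypothesis that $h$ is differentiable with Lipschitz gradient, in which case $\nabla q(y_*)=\nabla h(y_*)$ and the smoothness upper bound on $h$ does cancel the linear term; but Assumption~\ref{assumpt:G_sum_1} only requires $h$ to be prox-friendly, so the $M_h$ device cannot be dispensed with.
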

\begin{proof}
We let $\Psi(x) = \max_{y\in \R^{d_y}} \{f(x) + G(x, y) - h(y) \}$ and note that $\Psi(x)$ is $\mu_x$-strongly convex. Under Assumption~\ref{assumpt:G_sum_1} the function $f(x) +G(x, y) - h(y)$ has unique saddle point $(x^*, y^*)$. Then, with probability $1-\sigma_x$ we have
\begin{align*}
    \|\hat{x} - x^*\|_2^2 \leq \frac{2}{\mu_x}\left( \max_{y \in \R^{d_y}} \{f(\hat{x}) +G(\hat{x}, y) - h(y) \} - \min_{x \in \R^{d_x}} \max_{y \in \R^{d_y}} \{f(x) +G(x, y) - h(y) \}  \right) \leq \frac{2\varepsilon_x}{\mu_x}.
\end{align*}
We denote $y^*(\hat{x}) = \arg \max_{y\in \R^{d_y}}\{f(\hat{x}) +G(\hat{x}, y) - h(y)\}$, then according to  Lemma~\ref{lemma:obt_delta_oracle} $y^*(x)$ is $ 2L_G/\mu_y$ Lipschitz continuous. Since $\{f(\hat{x}) +G(\hat{x}, y) - h(y) \}$ is $\mu_y$-strongly concave, we obtain  that the inequality
\begin{align*}
    \|\hat{y} - y^*\|_2^2 \leq 2\|\hat{y} - y^*(\hat{x})\|_2^2 + 2\|y^*(\hat{x}) - y^*(x^*)\|_2^2 \leq \frac{4\varepsilon_y}{\mu_y} + 8\left(\frac{L_G}{\mu_y}\right)^2\|\hat{x} - x^*\|^2 
\end{align*}
holds true with probability $1 - \sigma_x - \sigma_y$. 
The function $f(x)$ is convex and 
\begin{equation*}
\|\hat{y} - y^*\|_2^2 \leq  \frac{4\varepsilon_y}{\mu_y} + 8\left(\frac{L_G}{\mu_y}\right)^2\|\hat{x} - x^*\|^2 \leq \frac{4\varepsilon_y}{\mu_y} + 16\left(\frac{L_G}{\mu_y}\right)^2\frac{\varepsilon_x}{\mu_x} \leq \varepsilon \Rightarrow \hat{y} \in \mathcal{B}_2(y^*, \varepsilon),
\end{equation*}
under assumption $\frac{4\varepsilon_y}{\mu_y} + 16\left(\frac{L_G}{\mu_y}\right)^2\frac{\varepsilon_x}{\mu_x} \leq \varepsilon$.
Then, by lemma 3.1.8 from \cite{nesterov2004} $h(y)$ is locally Lipschitz continuous and:
\begin{equation*}
    h(\hat{y}) - h(y^*) \leq \frac{M_h - h(y^*)}{\varepsilon}\|\hat{y} - y^*\|_2,
\end{equation*}
where $\sup\{h(y): y \in \mathcal{B}_2(y^*, \varepsilon)\} \leq M_h$, $M_h$ is finite. 
By Lemma \ref{lemma:obt_delta_oracle}, $w(y) = - \min_{x \in \R^{d_x}}\{f(x) + G(x, y)\}$ is $L_G + \frac{2L_G^2}{\mu_x}$-smooth. 
Let us define $\Phi(y) = \min_{x \in \R^{d_x}} \{f(x) + G(x, y) -h(y)\}$:
\begin{equation*}
    \Phi(y^*) - \Phi(\hat{y}) = h(\hat{y}) - h(y^*) + w(\hat{y}) - w(y^*) \leq \frac{M_h - h(y^*)}{\varepsilon}\|\hat{y} - y^*\|_2 + \left(L_G + \frac{2L_G^2}{\mu_x}\right)\|\hat{y} - y^*\|_2^2
\end{equation*}
Whence,
\begin{multline*}
     \min_{x \in \R^{d_x}}\max_{y \in \R^{d_y}} \{f(x) +G(x, y) - h(y) \} -  \min_{x \in \R^{d_x}} \{f(x)+G(x, \hat{y}) - h(\hat{y}) \} = \Phi(y^*) - \Phi(\hat{y}) \leq
    \\
     \frac{M_h - h(y^*)}{\varepsilon^2}\|\hat{y} - y^*\|_2 + \left(L_G + \frac{2L_G^2}{\mu_x}\right)\|\hat{y} - y^*\|_2^2 \leq
     \\
     \frac{M_h - h(y^*)}{\varepsilon}\sqrt{\frac{4\varepsilon_y}{\mu_y} + 16\left(\frac{L_G}{\mu_y}\right)^2\frac{\varepsilon_x}{\mu_x}} + \left(L_G + \frac{2L_G^2}{\mu_x}\right)\left(\frac{4\varepsilon_y}{\mu_y} + 16\left(\frac{L_G}{\mu_y}\right)^2\frac{\varepsilon_x}{\mu_x}\right),
\end{multline*}
with probability $1 - \sigma_x - \sigma_y$.
Then, 
\begin{multline*}
     \max_{y \in \R^{d_y}}\left\{f(\hat{x}) + G(\hat{x}, y) - h(y)\right\} - \min_{x \in \R^{d_x}}\left\{f(x) + G(x, \hat{y}) - h(\hat{y})\right\} =
    \max_{y \in \R^{d_y}}\left\{f(\hat{x}) + G(\hat{x}, y) - h(y)\right\} -
    \\
    \{f(x^*) + G(x^*, y^*) - h(y^*)\} +
    \{f(x^*) + G(x^*, y^*) - h(y^*)\} - 
    \min_{x \in \R^{d_x}}\left\{f(x) + G(x, \hat{y}) - h(\hat{y})\right\} \leq
    \\
    \varepsilon_x + \frac{M_h - h(y^*)}{\varepsilon}\sqrt{\frac{4\varepsilon_y}{\mu_y} + 16\left(\frac{L_G}{\mu_y}\right)^2\frac{\varepsilon_x}{\mu_x}} + \left(L_G + \frac{2L_G^2}{\mu_x}\right)\left(\frac{4\varepsilon_y}{\mu_y} + 16\left(\frac{L_G}{\mu_y}\right)^2\frac{\varepsilon_x}{\mu_x}\right)
\end{multline*}
Choose 
\begin{equation*}
    \varepsilon_y \leq \min \left\{\frac{\mu_y\varepsilon}{8}, \frac{\varepsilon^4\mu_y}{72\left(M_h - h(y^*)\right)^2}, \frac{\varepsilon\mu_y}{24\left(L_G + \frac{2L_G^2}{\mu_x}\right)}\right\}, \ \ \
    \varepsilon_x \leq \min\left\{ \frac{\varepsilon_y \mu_x \mu_y}{4L_G^2}, \frac{\varepsilon}{3}\right\}, \ \ \ \sigma_x \leq \frac{\sigma}{2}, \ \ \ \sigma_y \leq \frac{\sigma}{2}.
\end{equation*}
Then, with probability $1 - \sigma_x - \sigma_y \geq 1 - \sigma$:
\begin{multline*}
    \varepsilon_x + \frac{M_h - h(y^*)}{\varepsilon}\sqrt{\frac{4\varepsilon_y}{\mu_y} + 16\left(\frac{L_G}{\mu_y}\right)^2\frac{\varepsilon_x}{\mu_x} } + \left(L_G + \frac{2L_G^2}{\mu_x}\right)\left(\frac{4\varepsilon_y}{\mu_y} + 16\left(\frac{L_G}{\mu_y}\right)^2\frac{\varepsilon_x}{\mu_x}\right) \leq
    \\
    \frac{\varepsilon}{3} + \frac{M_h - h(y^*)}{\varepsilon}\sqrt{\frac{4\varepsilon_y}{\mu_y} + 16\left(\frac{L_G}{\mu_y}\right)^2\frac{\varepsilon_y \mu_x \mu_y}{4L_G^2\mu_x}} + 
    \left(L_G + \frac{2L_G^2}{\mu_x}\right)\left(\frac{4\varepsilon_y}{\mu_y} + 16\left(\frac{L_G}{\mu_y}\right)^2\frac{\varepsilon_y \mu_x \mu_y}{4L_G^2\mu_x}\right) =
    \\
    \frac{\varepsilon}{3} + \frac{M_h - h(y^*)}{\varepsilon}\sqrt{\frac{8\varepsilon_y}{\mu_y}} + \left(L_G + \frac{2L_G^2}{\mu_x}\right)\left(\frac{8\varepsilon_y}{\mu_y}\right) \leq
    \\
    \frac{\varepsilon}{3} + \frac{M_h - h(y^*)}{\varepsilon}\sqrt{\frac{8\varepsilon^4\mu_y}{\mu_y72(M_h - h(y^*))^2}} + \left(L_G + \frac{2L_G^2}{\mu_x}\right)\left(\frac{8\varepsilon\mu_y}{\mu_y24\left(L_G + \frac{2L_G^2}{\mu_x}\right)}\right) = \varepsilon.
\end{multline*}
In the first inequality, we use $\varepsilon_x \leq \frac{\varepsilon_y \mu_x \mu_y}{4L_G^2}, \varepsilon_x \leq \frac{\varepsilon}{3}$, in the second inequality we use that $\varepsilon_y \leq \frac{\varepsilon^4\mu_y}{72\left(M_h - h(y^*)\right)^2}$, $\varepsilon_y \leq \frac{\varepsilon\mu_y}{24\left(L_G + \frac{2L_G^2}{\mu_x}\right)}$.

\begin{equation*}
    \frac{4\varepsilon_y}{\mu_y} + 16\left(\frac{L_G}{\mu_y}\right)^2\frac{\varepsilon_x}{\mu_x} \leq \frac{8\varepsilon_y}{\mu_y} \leq  \varepsilon.
\end{equation*}
Then the assumption $\frac{4\varepsilon_y}{\mu_y} + 16\left(\frac{L_G}{\mu_y}\right)^2\frac{\varepsilon_x}{\mu_x} \leq \varepsilon$ is true.
\qed \end{proof}

We can rewrite the problem \eqref{problem_min_h_f_prox}
\begin{multline}\label{problem:max_x_min_y}
    \min _{x \in \mathbb{R}^{d_x}}  \left\{f(x)+ \max _{y \in \mathbb{R}^{d_y}}\{G(x, y) - h(y)\}\right\} = \min _{x \in \mathbb{R}^{d_x}} \max _{y \in \mathbb{R}^{d_y}} \left\{f(x) + G(x, y) - h(y)\right\} =
    \\
    \min _{x \in \mathbb{R}^{d_x}} - \min _{y \in \mathbb{R}^{d_y}} \left\{ - f(x) - G(x, y) + h(y)\right\} = - \max _{x \in \mathbb{R}^{d_x}} \min _{y \in \mathbb{R}^{d_y}} \left\{ h(y) - G(x, y) - f(x)\right\} =
    \\
    - \min _{y \in \mathbb{R}^{d_y}} \left\{ h(y) + \max _{x \in \mathbb{R}^{d_x}} \left\{ - G(x, y) - f(x)\}\right\}\right\}
\end{multline}

In the following lemma we show that if we find $\hat{y}$ is an $(\varepsilon_y, \sigma_y)$-solution to the problem \eqref{problem:max_x_min_y} which is understood in the sense of Definition \ref{def:saddle_solution}  and  $\hat{x}$ is an $(\varepsilon_x, \sigma_x)$-solution to the problem $\max_{x\in \R^{d_x}} \{ - G(x, \hat{y}) - f(x)\}$ then $\hat{x}$ is an $\left(\varepsilon_x', \sigma_x'\right)$ solution to the problem \eqref{problem_min_h_f_prox} which is understood in the sense of Definition \ref{def:saddle_solution} and $\hat{y}$ is an $\left(\varepsilon_y', \sigma_y'\right)$ solution to the problem $\max_{y\in \R^{d_y}} \{G(\hat{x}, y) - h(y)\}$ where dependencies $\varepsilon_x\left(\varepsilon_x \right)$, $\varepsilon_y\left(\varepsilon_x', \varepsilon_y'\right)$, $\sigma_x\left(\sigma_x'\right)$, $\sigma_y\left(\sigma_x', \sigma_y'\right)$ are polynomial.

\begin{lemma}\label{lemma:for_loop_2}
Let us consider the problem $\min_{y \in \R^{d_y}}\{h(y) + \max_{x \in \R^{d_x}}\{- f(x) - G(x,y)\} \}$ under the Assumption \ref{assumpt:G_sum_1}. Let a pair $(\hat{x}, \hat{y})$ satisfy
\begin{enumerate}
    \item $\hat{y}$ is an $(\varepsilon_y, \sigma_y)$-solution to this problem, i.e. \eqref{eq:saddle_epsilon_solution} holds.
    \item $\hat{x}$ is an $(\varepsilon_x, \sigma_x)$-solution to the problem $\max_{x\in \R^{d_x}} \{- f(x) - G(x, \hat{y})\}$,
\end{enumerate}
with
\begin{equation*}
    \varepsilon_x \leq \min \left\{\frac{\mu_x\varepsilon_x'}{8}, \frac{\varepsilon_x'^4\mu_x}{32\left(M_f - f(x^*)\right)^2}, \frac{\varepsilon_x'\mu_x}{16\left(L_G + \frac{2L_G^2}{\mu_y}\right)}\right\},
    \varepsilon_y \leq \min\left\{\frac{\mu_y\varepsilon_y'}{2}, \frac{\varepsilon_x \mu_x \mu_y}{4L_G^2}, \frac{\varepsilon_y'^4\mu_y}{8\left(M_h - h(y^*)\right)^2}, \frac{\varepsilon_y'\mu_y}{2L_G}\right\},
    \end{equation*}
    \begin{equation*}
     \sigma_x \leq \frac{\sigma_x'}{2}, \ \ \ \sigma_y \leq \min\left\{\frac{\sigma_x'}{2}, \sigma_y'\right\}.
\end{equation*}
where $\sup\{f(x): x \in \mathcal{B}_2(x^*, \varepsilon_x')\} \leq M_f$ and $\sup\{h(y): y \in \mathcal{B}_2(y^*, \varepsilon_y')\} \leq M_h$, $M_f, M_h$ are finite. 

Then, $\hat{x}$ is $(\varepsilon_x', \sigma_x')$-solution to the problem \eqref{problem_min_h_f_prox} and $\hat{y}$ is $(\varepsilon_y', \sigma_y')$-solution to the problem $\max_{y\in \R^{d_y}} \{G(\hat{x}, y) - h(y)\}$.
\end{lemma}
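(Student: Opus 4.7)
My plan is to convert the two accuracy hypotheses into Euclidean distance bounds and then combine local Lipschitz continuity of the non-smooth composites $f,h$ (via Theorem~3.1.8 of \cite{nesterov2004}) with smoothness of the max-functions supplied by Lemma~\ref{lemma:obt_delta_oracle}. The two conclusions are proved separately: one on the primal gap of $\hat x$ for the $x$-minimization reformulation \eqref{problem_min_h_f_prox}, and one on the inner dual gap at $\hat y$. The hypotheses on $\varepsilon_x,\varepsilon_y,\sigma_x,\sigma_y$ are calibrated precisely so that each contribution is pushed below half of the target.

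First I would pass from the accuracy hypotheses to norm bounds. Since $\Phi(y) := h(y) + \max_{x\in\R^{d_x}}\{-G(x,y)-f(x)\}$ is $\mu_y$-strongly convex, Definition~\ref{def:saddle_solution} yields $\|\hat y-y^*\|^2 \le 2\varepsilon_y/\mu_y$ with probability at least $1-\sigma_y$. Setting $x^*(y) := \argmax_{x}\{-f(x)-G(x,y)\}$, Lemma~\ref{lemma:obt_delta_oracle} (applied with $x$ and $y$ exchanged) shows $x^*(\cdot)$ is $(2L_G/\mu_x)$-Lipschitz, while $\mu_x$-strong concavity of the inner problem at $\hat y$ gives $\|\hat x-x^*(\hat y)\|^2 \le 2\varepsilon_x/\mu_x$. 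A triangle-inequality split then produces
\[
\|\hat x-x^*\|^2 \le \tfrac{4\varepsilon_x}{\mu_x} + \tfrac{8L_G^2}{\mu_x^2}\|\hat y-y^*\|^2 \le \tfrac{8\varepsilon_x}{\mu_x}
\]
with probability at least $1-\sigma_x-\sigma_y$, using $\varepsilon_y \le \varepsilon_x\mu_x\mu_y/(4L_G^2)$ in the last step. Together with $\varepsilon_x \le \mu_x\varepsilon_x'/8$ and $\varepsilon_y \le \mu_y\varepsilon_y'/2$, this places $\hat x \in \mathcal{B}_2(x^*,\varepsilon_x')$ and $\hat y \in \mathcal{B}_2(y^*,\varepsilon_y')$, which licenses the local Lipschitz estimates for $f$ and $h$ on these balls.

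For the first conclusion, decompose $\Psi(x) := f(x)+g(x)$ with $g(x) := \max_y\{G(x,y)-h(y)\}$. Lemma~\ref{lemma:obt_delta_oracle} makes $g$ smooth with constant $L_G+2L_G^2/\mu_y$, while Theorem~3.1.8 of \cite{nesterov2004} supplies the bound $f(\hat x)-f(x^*) \le \tfrac{M_f-f(x^*)}{\varepsilon_x'}\|\hat x-x^*\|$. Summing and plugging in $\|\hat x-x^*\|^2 \le 8\varepsilon_x/\mu_x$, the three remaining hypotheses $\varepsilon_x\le\mu_x\varepsilon_x'/8$, $\varepsilon_x\le\varepsilon_x'^4\mu_x/(32(M_f-f(x^*))^2)$, and $\varepsilon_x\le\varepsilon_x'\mu_x/(16(L_G+2L_G^2/\mu_y))$ each force one summand below $\varepsilon_x'/2$, giving $\Psi(\hat x)-\Psi(x^*) \le \varepsilon_x'$, i.e., $\hat x$ is an $(\varepsilon_x',\sigma_x')$-solution to \eqref{problem_min_h_f_prox}.

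For the second conclusion, let $y^*(\hat x) := \argmax_{y}\{G(\hat x,y)-h(y)\}$ and split
\[
\max_y\{G(\hat x,y)-h(y)\} - (G(\hat x,\hat y)-h(\hat y)) = [G(\hat x,y^*(\hat x))-G(\hat x,\hat y)] + [h(\hat y)-h(y^*(\hat x))].
\]
The triangle inequality $\|\hat y-y^*(\hat x)\| \le \|\hat y-y^*\|+(2L_G/\mu_y)\|\hat x-x^*\|$ controls both norms via the Step 1--2 estimates; the first bracket is then bounded by $L_G$-smoothness of $G(\hat x,\cdot)$ at its maximizer $y^*(\hat x)$ and the second by local Lipschitz continuity of $h$ on $\mathcal{B}_2(y^*,\varepsilon_y')$. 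The hypotheses $\varepsilon_y \le \varepsilon_y'^4\mu_y/(8(M_h-h(y^*))^2)$ and $\varepsilon_y \le \varepsilon_y'\mu_y/(2L_G)$ are designed so that each bracket sits below $\varepsilon_y'/2$. A union bound using $\sigma_x\le\sigma_x'/2$ and $\sigma_y \le \min\{\sigma_x'/2,\sigma_y'\}$ accounts for all good-event probabilities across the two conclusions. The main obstacle is this last step: the $h$-term is controlled by $\|\hat y-y^*(\hat x)\|$ rather than $\|\hat y-y^*\|$, forcing a detour through the Lipschitz bound on $y^*(\cdot)$ and the Step 2 estimate for $\|\hat x-x^*\|$, and keeping the numerical constants tight enough to fit the stated thresholds is the only delicate piece of the argument.
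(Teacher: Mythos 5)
Your treatment of the first conclusion (the bound on $\hat{x}$ as a solution of \eqref{problem_min_h_f_prox}) is essentially identical to the paper's: the same strong-convexity distance bounds, the same triangle-inequality split through $x^*(\hat y)$ with the Lipschitz constant $2L_G/\mu_x$ from Lemma~\ref{lemma:obt_delta_oracle}, the same decomposition $\Psi=f+g$ with $g$ being $(L_G+2L_G^2/\mu_y)$-smooth, the same use of Theorem~3.1.8 of \cite{nesterov2004}, and the same allocation of the three $\varepsilon_x$-thresholds. That half is fine.

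For the second conclusion your route genuinely departs from the paper, and the departure opens a gap. The paper measures the inner gap against the saddle-point component $y^*$, writing it as $\bigl[h(\hat y)-h(y^*)\bigr]+\bigl[G(\hat x,y^*)-G(\hat x,\hat y)\bigr]$ and bounding both brackets by powers of $\|\hat y-y^*\|$ alone; the two $\varepsilon_y$-thresholds $\varepsilon_y\le \varepsilon_y'^4\mu_y/(8(M_h-h(y^*))^2)$ and $\varepsilon_y\le \varepsilon_y'\mu_y/(2L_G)$ are calibrated for exactly that quantity, since $\|\hat y-y^*\|^2\le 2\varepsilon_y/\mu_y$. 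You instead compare against $y^*(\hat x)=\argmax_y\{G(\hat x,y)-h(y)\}$ and control $\|\hat y-y^*(\hat x)\|\le \|\hat y-y^*\|+(2L_G/\mu_y)\|\hat x-x^*\|$. The second summand is of order $(2L_G/\mu_y)\sqrt{8\varepsilon_x/\mu_x}$, and $\varepsilon_x$ is tied to $\varepsilon_x'$, not to $\varepsilon_y'$; none of the stated hypotheses relates $\varepsilon_x'$ to $\varepsilon_y'$, so this contribution cannot be pushed below $\varepsilon_y'/2$ by the given thresholds. Your claim that the hypotheses "are designed so that each bracket sits below $\varepsilon_y'/2$" therefore fails for your decomposition — it holds only for the paper's decomposition through $y^*$. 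A second, independent problem: you bound $G(\hat x,y^*(\hat x))-G(\hat x,\hat y)$ by "$L_G$-smoothness of $G(\hat x,\cdot)$ at its maximizer $y^*(\hat x)$", but $y^*(\hat x)$ maximizes $G(\hat x,\cdot)-h(\cdot)$, not $G(\hat x,\cdot)$; the optimality condition is $\nabla_y G(\hat x,y^*(\hat x))\in\partial h(y^*(\hat x))\ne 0$, so the first-order term in the quadratic upper bound does not vanish and the bracket is not $O(\|\hat y-y^*(\hat x)\|^2)$. To repair the argument you would either need to revert to the paper's comparison point $y^*$, or add a hypothesis coupling $\varepsilon_x$ to $\varepsilon_y'$ and handle the surviving linear term, e.g.\ by pairing the $G$-bracket with the convexity inequality for $h$ at $y^*(\hat x)$ rather than treating the two brackets separately.
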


\begin{proof}
We let $\Phi(y) = \max_{x\in \R^{d_x}} \{h(y) - G(x, y) - f(x) \}$ and note that $\Phi(y)$ is $\mu_y$-strongly convex. Under Assumption~\ref{assumpt:G_sum_1} the function $h(y) - G(x, y) - f(x)$ has unique saddle point $(x^*, y^*)$. Then, with probability $1-\sigma_y$ we have
\begin{align*}
    \|\hat{y} - y^*\|_2^2 \leq \frac{2}{\mu_y}\left( \max_{x \in \R^{d_x}} \{h(\hat{y}) -G(x, \hat{y}) - f(x) \} - \min_{y \in \R^{d_y}} \max_{x \in \R^{d_x}} \{h(y) - G(x, y) - f(x) \}  \right) \leq \frac{2\varepsilon_y}{\mu_y}.
\end{align*}
We denote $x^*(\hat{y}) = \arg \max_{x\in \R^{d_x}}\{h(\hat{y}) - G(x,\hat{y}) - f(x) \}$, then according to  Lemma~\ref{lemma:obt_delta_oracle} $x^*(y)$ is $ 2L_G/\mu_x$ Lipschitz continuous. Since $\{ h(\hat{y}) - G(x, \hat{y}) - f(x) \}$ is $\mu_x$-strongly concave, we obtain  that the inequality
\begin{align*}
    \|\hat{x} - x^*\|_2^2 \leq 2\|\hat{x} - x^*(\hat{y})\|_2^2 + 2\|x^*(\hat{y}) - x^*(y^*)\|_2^2 \leq \frac{4\varepsilon_x}{\mu_x} + 8\left(\frac{L_G}{\mu_x}\right)^2\|\hat{y} - y^*\|_2^2 
\end{align*}
holds true with probability $1 - \sigma_y - \sigma_x$. The function
$f(x)$ is convex and 
\begin{equation*}
    \|\hat{x} - x^*\|_2^2 \leq  \frac{4\varepsilon_x}{\mu_x} + 8\left(\frac{L_G}{\mu_x}\right)^2\|\hat{y} - y^*\|_2^2 \leq \frac{4\varepsilon_x}{\mu_x} + 16\left(\frac{L_G}{\mu_x}\right)^2\frac{\varepsilon_y}{\mu_y} \leq \varepsilon_x' \Rightarrow \hat{x} \in \mathcal{B}_2(x^*, \varepsilon_x').
\end{equation*}
under the assumption $\frac{4\varepsilon_x}{\mu_x} + 16\left(\frac{L_G}{\mu_x}\right)^2\frac{\varepsilon_y}{\mu_y} \leq \varepsilon_x'$.Then, by lemma 3.1.8 from \cite{nesterov2004} $f(x)$ is locally Lipschitz continuous and:
\begin{equation*}
    f(\hat{x}) - f(x^*) \leq \frac{M_f - f(x^*)}{\varepsilon_x'}\|\hat{x} - x^*\|_2,
\end{equation*}
where $\sup\{f(x): x \in \mathcal{B}_2(x^*, \varepsilon_x')\} \leq M_f$, $M_f$ is finite. 
By Lemma \ref{lemma:obt_delta_oracle}, $g(x) = \max_{y \in \R^{d_y}}\{G(x, y) - h(y)\}$ is $L_G + \frac{2L_G^2}{\mu_y}$-smooth. 
Let us define $\Psi(x) = \max_{y \in \R^{d_y}} \{f(x) + G(x, y) - h(y)\}$:
\begin{equation*}
    \Psi(\hat{x}) - \Psi(x^*) = f(\hat{x}) - f(x^*) + g(\hat{x}) - g(x^*) \leq \frac{M_f - f(x^*)}{\varepsilon_x'}\|\hat{x} - x^*\|_2 + \left(L_G + \frac{2L_G^2}{\mu_y}\right)\|\hat{x} - x^*\|_2^2
\end{equation*}
Whence,
\begin{multline*}
     \max_{y \in \R^{d_y}} \{f(\hat{x}) +G(\hat{x}, y) - h(y) \} -  \max_{y \in \R^{d_y}}\min_{x \in \R^{d_x}} \{f(x)+G(x, y) - h(y) \} = \Psi(\hat{x}) - \Psi(x^*) 
    \\
    \leq  \frac{M_f - f(x^*)}{\varepsilon_x'}\sqrt{\frac{4\varepsilon_x}{\mu_x} + 16\left(\frac{L_G}{\mu_x}\right)^2\frac{\varepsilon_y}{\mu_y} } + \left(L_G + \frac{2L_G^2}{\mu_y}\right)\left(\frac{4\varepsilon_x}{\mu_x} + 16\left(\frac{L_G}{\mu_x}\right)^2\frac{\varepsilon_y}{\mu_y} \right),
\end{multline*}
with probability $1 - \sigma_x - \sigma_y$. The function $h(y)$ is convex and
\begin{align*}
    \|\hat{y} - y^*\|_2^2 \leq  \frac{2\varepsilon_y}{\mu_y} \leq \varepsilon_y' \ \ \ \Rightarrow \ \ \ \hat{y} \in \mathcal{B}_2(y^*, \varepsilon_y')
\end{align*}
with probability $1 - \sigma_y$ and under the assumption $\frac{2\varepsilon_y}{\mu_y} \leq \varepsilon_y'$. Then, by Lemma 3.1.8 from \cite{nesterov2004} h(y) is locally Lipschitz continuous:
\begin{equation*}
    h(\hat{y}) - h(y^*) \leq \frac{M_h - h(y^*)}{\varepsilon_y'}\|\hat{y} - y^*\|_2,
\end{equation*}
where $\sup\{h(y): y \in \mathcal{B}_2(y^*, \varepsilon_y')\} \leq M_h$, $M_h$ is finite. $G(x,y)$ is $L_G$-smooth.
Then,
\begin{multline*}
    \left(G(\hat{x}, y^*) - h(y^*)\right) - \left(G(\hat{x}, \hat{y}) - h(\hat{y})\right) =  h(\hat{y}) - h(y^*) + G(\hat{x}, y^*) - G(\hat{x}, \hat{y}) \leq 
    \\
    \frac{M_h - h(y^*)}{\varepsilon_y'}\|\hat{y} - y^*\|_2 + \frac{L_G}{2}\|\hat{y} - y^*\|_2^2 \leq
    \frac{M_h - h(y^*)}{\varepsilon_y'}\sqrt{\frac{2\varepsilon_y}{\mu_y}} + \frac{L_G}{2}\frac{2\varepsilon_y}{\mu_y}
\end{multline*}
with probability $1 - \sigma_y$.

Choose 
\begin{equation*}
    \varepsilon_x \leq \min \left\{\frac{\mu_x\varepsilon_x'}{8}, \frac{\varepsilon_x'^4\mu_x}{32\left(M_f - f(x^*)\right)^2}, \frac{\varepsilon_x'\mu_x}{16\left(L_G + \frac{2L_G^2}{\mu_y}\right)}\right\}, \ \ \
    \varepsilon_y \leq \min\left\{\frac{\mu_y\varepsilon_y'}{2}, \frac{\varepsilon_x \mu_x \mu_y}{4L_G^2}, \frac{\varepsilon_y'^4\mu_y}{8\left(M_h - h(y^*)\right)^2}, \frac{\varepsilon_y'\mu_y}{2L_G}\right\},
    \end{equation*}
    \begin{equation*}
     \sigma_x \leq \frac{\sigma_x'}{2}, \ \ \ \sigma_y \leq \min\left\{\frac{\sigma_x'}{2}, \sigma_y'\right\}.
\end{equation*}
Then, with probability at least $1 - \sigma_x - \sigma_y \geq 1 - \sigma_x'$:
\begin{multline*}
    \frac{M_f - f(x^*)}{\varepsilon_x'}\sqrt{\frac{4\varepsilon_x}{\mu_x} + 16\left(\frac{L_G}{\mu_x}\right)^2\frac{\varepsilon_y}{\mu_y} } + \left(L_G + \frac{2L_G^2}{\mu_y}\right)\left(\frac{4\varepsilon_x}{\mu_x} + 16\left(\frac{L_G}{\mu_x}\right)^2\frac{\varepsilon_y}{\mu_y}\right) \leq
    \\
    \frac{M_f - f(x^*)}{\varepsilon_x'}\sqrt{\frac{4\varepsilon_x}{\mu_x} + 16\left(\frac{L_G}{\mu_x}\right)^2\frac{\varepsilon_x \mu_y \mu_x}{4L_G^2\mu_y}} + 
    \left(L_G + \frac{2L_G^2}{\mu_y}\right)\left(\frac{4\varepsilon_x}{\mu_x} + 16\left(\frac{L_G}{\mu_x}\right)^2\frac{\varepsilon_x \mu_y \mu_x}{4L_G^2\mu_y}\right) =
    \\
    \frac{M_f - f(x^*)}{\varepsilon_x'}\sqrt{\frac{8\varepsilon_x}{\mu_x}} + \left(L_G + \frac{2L_G^2}{\mu_y}\right)\left(\frac{8\varepsilon_x}{\mu_x}\right) \leq
    \\
   \frac{M_f - f(x^*)}{\varepsilon_x'}\sqrt{\frac{8\varepsilon_x'^4\mu_x}{\mu_x32(M_f - f(x^*))^2}} + \left(L_G + \frac{2L_G^2}{\mu_y}\right)\left(\frac{8\varepsilon_x'\mu_x}{\mu_x16\left(L_G + \frac{2L_G^2}{\mu_y}\right)}\right) = \varepsilon.
\end{multline*}
In the first inequality, we use $\varepsilon_y \leq \frac{\varepsilon_x \mu_x \mu_y}{4L_G^2}$, in the second inequality we use that $\varepsilon_x \leq \frac{\varepsilon_x'^4\mu_x}{32\left(M_f - f(x^*)\right)^2}$, $\varepsilon_x \leq \frac{\varepsilon_x'\mu_x}{16\left(L_G + \frac{2L_G^2}{\mu_y}\right)}$.

\begin{equation*}
    \frac{4\varepsilon_x}{\mu_x} + 16\left(\frac{L_G}{\mu_x}\right)^2\frac{\varepsilon_y}{\mu_y} \leq \frac{8\varepsilon_x}{\mu_x} \leq  \varepsilon_x'.
\end{equation*}
Then the assumption $\frac{4\varepsilon_x}{\mu_x} + 16\left(\frac{L_G}{\mu_x}\right)^2\frac{\varepsilon_y}{\mu_y} \leq \varepsilon_x'$ is true.

With probability $1-\sigma_y \geq 1 - \sigma_y'$:
\begin{equation*}
    \frac{M_h - h(y^*)}{\varepsilon_y'}\sqrt{\frac{2\varepsilon_y}{\mu_y}} + L_G\frac{\varepsilon_y}{\mu_y} \leq \frac{M_h - h(y^*)}{\varepsilon_y'}\sqrt{\frac{2\varepsilon_y'^4\mu_y}{\mu_y8(M_h - h(y^*))^2}} + L_G\frac{\varepsilon_y'\mu_y}{2L_G\mu_y} = \varepsilon_y'
\end{equation*}
in this inequality we use that $\varepsilon_y \leq \frac{\varepsilon_y'^4\mu_y}{8\left(M_h - h(y^*)\right)^2}$, $\varepsilon_y \leq \frac{\varepsilon_y'\mu_y}{2L_G}$.
\begin{equation*}
    \frac{2\varepsilon_y}{\mu_y} \leq \frac{2\varepsilon_y'\mu_y}{2\mu_y} = \varepsilon_y',
\end{equation*}
Then, the assumption $\frac{2\varepsilon_y}{\mu_y} \leq \varepsilon_y'$ is true.
\qed \end{proof}

In the following lemma we show that if we find $(\hat{x}, \hat{y})$ is an $(\varepsilon, \sigma)$ solution to the problem \eqref{eq:problem_Gsum_minmax} which is understood in the sense of Definition \ref{def:solution_saddle-point_problem} then $\hat{y}$ is an $(\varepsilon_y, \sigma_y)$-solution to the problem \eqref{problem:max_x_min_y} which is understood in the sense of Definition \ref{def:saddle_solution}  and  $\hat{x}$ is an $(\varepsilon_x, \sigma_x)$-solution to the problem $\max_{x\in \R^{d_x}} \{ - G(x, \hat{y}) - f(x)\}$ where dependencies $\varepsilon(\varepsilon_x, \varepsilon_y)$,  $\sigma(\sigma_x, \sigma_y)$ are polynomial.

\begin{lemma}\label{lemma:for_loop_3}
If $(\hat{x}, \hat{y})$ is $(\varepsilon, \sigma)$-solution to the saddle point problem \eqref{eq:problem_Gsum_minmax} under the Assumption \ref{assumpt:G_sum_1} which is understood in the sense of Definition \ref{def:solution_saddle-point_problem}, with
\begin{equation*}
    \varepsilon \leq \min\left\{\varepsilon_y, \frac{\varepsilon_x\mu_x}{2},\frac{\varepsilon_x\mu_x}{2L_G}, \frac{\varepsilon_x^4\mu_x}{8(M_f - f(x^*))^2}\right\}, \ \ \ \sigma \leq \min\left\{\sigma_x, \sigma_y\right\}
\end{equation*}
where $\sup\{f(x): x \in \mathcal{B}_2(x^*, \varepsilon_x)\} \leq M_f$, $M_f$ is finite.

Then,
\begin{enumerate}
    \item $\hat{y}$ is $(\varepsilon_y, \sigma_y)$-solution to the problem $\min_{y \in \R^{d_y}}\{h(y) + \max_{x \in \R^{d_x}}\{-G(x,y) - f(x)\}\}$;
    \item $\hat{x}$ is $(\varepsilon_x, \sigma_x)$-solution to the problem $\max_{x \in \R^{d_x}}\{-G(x,\hat{y})-f(x)\}$.
\end{enumerate}
\end{lemma}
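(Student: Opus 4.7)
The plan is to reduce both parts to algebraic manipulations of the saddle-point residual, exploiting the strong-convex-strong-concave structure (and hence the validity of the minimax equality) guaranteed by Assumption~\ref{assumpt:G_sum_1}. I would begin by renaming the relevant quantities: let
\[
V := \min_{x\in\R^{d_x}}\max_{y\in\R^{d_y}}\{f(x)+G(x,y)-h(y)\} = \max_{y\in\R^{d_y}}\min_{x\in\R^{d_x}}\{f(x)+G(x,y)-h(y)\},
\]
where the equality uses the strong-convex-strong-concave minimax theorem. Observe the two identities I will use throughout:
\[
h(\hat y)+\max_{x}\{-G(x,\hat y)-f(x)\} = -\min_{x}\{f(x)+G(x,\hat y)-h(\hat y)\},
\]
and, by the same minimax argument applied to $(x,y)\mapsto h(y)-G(x,y)-f(x)$,
\[
\min_{y}\max_{x}\{h(y)-G(x,y)-f(x)\} = -\max_{y}\min_{x}\{f(x)+G(x,y)-h(y)\} = -V.
\]

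For part~1, I would take Definition~\ref{def:saddle_solution} and rewrite the target residual as $V-\min_{x}\{f(x)+G(x,\hat y)-h(\hat y)\}$ using the identities above. Then, since by definition $\max_{y}\{f(\hat x)+G(\hat x,y)-h(y)\}\ge \min_{x}\max_{y}\{f(x)+G(x,y)-h(y)\}=V$, the $(\varepsilon,\sigma)$ bound \eqref{def:saddle_point_solution} immediately gives
\[
V-\min_{x}\{f(x)+G(x,\hat y)-h(\hat y)\}\le \max_{y}\{f(\hat x)+G(\hat x,y)-h(y)\}-\min_{x}\{f(x)+G(x,\hat y)-h(\hat y)\}\le \varepsilon
\]
with probability at least $1-\sigma$. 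Choosing $\varepsilon\le\varepsilon_y$ and $\sigma\le\sigma_y$ then yields the $(\varepsilon_y,\sigma_y)$ solution property for $\hat y$.

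For part~2, let $\Psi^\star:=\max_{x}\{-G(x,\hat y)-f(x)\}$ and write
\[
\Psi^\star - (-G(\hat x,\hat y)-f(\hat x)) = (f(\hat x)+G(\hat x,\hat y)-h(\hat y))-\min_{x}\{f(x)+G(x,\hat y)-h(\hat y)\},
\]
which follows by subtracting and adding $h(\hat y)$. Since the first term on the right is bounded above by $\max_{y}\{f(\hat x)+G(\hat x,y)-h(y)\}$, the hypothesis again gives $\Psi^\star - (-G(\hat x,\hat y)-f(\hat x))\le \varepsilon$ with probability at least $1-\sigma$, hence the claimed $(\varepsilon_x,\sigma_x)$ solution property as long as $\varepsilon\le\varepsilon_x$ and $\sigma\le\sigma_x$.

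The only subtlety worth flagging is that the hypothesis of the lemma imposes strictly stronger requirements on $\varepsilon$ than the two-sided bound $\varepsilon\le\min\{\varepsilon_x,\varepsilon_y\}$ that the direct argument actually uses; the additional factors $\mu_x/2$, $\mu_x/(2L_G)$ and the quartic term in $(M_f-f(x^\star))$ appear to be carried over defensively from the reverse directions in Lemmas~\ref{lemma:for_loop_1}–\ref{lemma:for_loop_2}, where one had to translate an $\|\cdot\|^2$ bound into a function-value bound via local Lipschitzness. Since in the present direction no such translation is needed — we move from saddle-gap to two one-sided gaps, not the other way around — no strong-convexity or local-Lipschitz argument is required. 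I expect the main presentational task will be simply to justify the minimax swap carefully and to verify that the stated polynomial bounds on $\varepsilon$ imply, in particular, $\varepsilon\le\min\{\varepsilon_x,\varepsilon_y\}$, after which the two chains of inequalities above close the proof.
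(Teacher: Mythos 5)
Your Part 1 matches the paper's argument: both insert the saddle value $f(x^*)+G(x^*,y^*)-h(y^*)$ into the primal--dual gap, observe that the ``dual'' half is nonnegative and hence itself bounded by $\varepsilon$, and pass to the $\min_y$ formulation via the minimax equality, finishing with $\varepsilon\le\varepsilon_y$, $\sigma\le\sigma_y$. Part 2 is where you genuinely diverge. Your identity
\[
\max_{x}\{-G(x,\hat y)-f(x)\}-\bigl(-G(\hat x,\hat y)-f(\hat x)\bigr)=\bigl(f(\hat x)+G(\hat x,\hat y)-h(\hat y)\bigr)-\min_{x}\{f(x)+G(x,\hat y)-h(\hat y)\},
\]
combined with $f(\hat x)+G(\hat x,\hat y)-h(\hat y)\le\max_{y}\{f(\hat x)+G(\hat x,y)-h(y)\}$, bounds the residual of the inner maximization at $\hat x$ by the full saddle gap in one step. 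The paper instead derives $\|\hat x-x^*\|^2\le 2\varepsilon/\mu_x\le\varepsilon_x$ from $\mu_x$-strong convexity, then invokes local Lipschitz continuity of $f$ on $\mathcal{B}_2(x^*,\varepsilon_x)$ (hence the constant $M_f$) and $L_G$-smoothness of $G$; this norm-to-value translation is exactly why the hypothesis carries the terms $\varepsilon_x\mu_x/2$, $\varepsilon_x\mu_x/(2L_G)$ and $\varepsilon_x^4\mu_x/(8(M_f-f(x^*))^2)$, so your diagnosis of these as defensive carry-overs is accurate, and your route makes them unnecessary. Your version also compares $\hat x$ against the true maximizer of $-G(\cdot,\hat y)-f(\cdot)$, whereas the paper's final chain only compares against $x^*$. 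The one point you should not wave away: your argument needs $\varepsilon\le\varepsilon_x$, and that is \emph{not} literally one of the terms in the stated $\min$; it follows from $\varepsilon\le\varepsilon_x\mu_x/(2L_G)$ only when $\mu_x\le 2L_G$ (true in every application in the paper, e.g.\ Theorem~\ref{theorem:section_6} assumes $\mu_x\sqrt{m_G}\le L_G$, but not a consequence of Assumption~\ref{assumpt:G_sum_1} alone). Either add $\varepsilon\le\varepsilon_x$ to the hypothesis or record the condition $\mu_x\le 2L_G$ explicitly; with that fixed, your proof is complete and simpler than the paper's.
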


\begin{proof} 
 $(\hat{x}, \hat{y})$ is $(\varepsilon, \sigma)$-solution to the saddle point problem \eqref{eq:problem_Gsum_minmax} which is understood in the sense of Definition \ref{def:solution_saddle-point_problem}, then:
 \begin{multline*}
     \max_{y \in \R^{d_y}}\{f(\hat{x}) + G(\hat{x},y) - h(y)\} - \min_{x \in \R^{d_x}}\{f(x) + G(x, \hat{y}) - h(\hat{y})\} =
     \max_{y \in \R^{d_y}}\{f(\hat{x}) + G(\hat{x},y) - h(y)\} -
     \\
     \{f(x^*) +G(x^*, y^*)- h(y^*)\} + \{f(x^*) +G(x^*, y^*) - h(y^*)\} - \min_{x \in \R^{d_x}}\{f(x) + G(x, \hat{y}) - h(\hat{y})\}
     \leq \varepsilon,
 \end{multline*}
 with probability $1 - \sigma$, where $(x^*, y^*)$ is a saddle point of this problem.
 We have shown, that 
 \begin{enumerate}
     \item $\hat{x}$ is $(\varepsilon, \sigma )$-solution to the problem $\min_{x \in \R^{d_x}}\{f(x) + \max_{y \in \R^{d_y}}\{ G(x,y) - h(y)\} \}$;
     \item $\hat{y}$ is $(\varepsilon, \sigma )$-solution to the problem $\max_{y \in \R^{d_y}}\{- h(y) + \min_{x \in \R^{d_x}}\{f(x) + G(x,y)\} \}$.
 \end{enumerate}
 Choose $\varepsilon \leq \varepsilon_y, \sigma \leq \sigma_y$.
 Then, with probability $1 - \sigma \geq 1 - \sigma_y$:
 \begin{multline*}
    h(\hat{y}) + \max_{x \in \R^{d_x}}\{-G(x,\hat{y}) - f(x)\} - \min_{y \in \R^{d_y}}\{h(y) + \max_{x \in \R^{d_x}}\{-G(x,y) - f(x)\}\} =
    \\ 
    \max_{x \in \R^{d_x}}\{h(\hat{y}) - G(x,\hat{y}) - f(x)\} + \max_{y \in \R^{d_y}}\{- h(y) - \max_{x \in \R^{d_x}}\{-G(x,y) - f(x)\}\} =
    \\
     -\min_{x \in \R^{d_x}}\{ - h(\hat{y}) + G(x,\hat{y}) + f(x)\} + \max_{y \in \R^{d_y}}\{- h(y) + \min_{x \in \R^{d_x}}\{G(x,y) + f(x)\}\} =
     \\
     \{f(x^*) + G(x^*, y^*) - h(y^*)\} - \{ - h(\hat{y}) + \min_{x \in \R^{d_x}}\{ f(x) + G(x,\hat{y})\}\} \leq \varepsilon \leq \varepsilon_y
 \end{multline*}
 we have shown that $\hat{y}$ is $(\varepsilon_y, \sigma_y)$-solution to the problem $\min_{y \in \R^{d_y}}\{h(y) + \max_{x \in \R^{d_x}}\{-G(x,y) - f(x)\}\}$.
 
 $\hat{x}$ is $(\varepsilon, \sigma )$-solution to the problem $\min_{x \in \R^{d_x}}\{f(x) + \max_{y \in \R^{d_y}}\{ G(x,y) - h(y)\} \}$ and under assumption \ref{assumpt:G_sum_1} this problem is $\mu_x$ strongly convex. Then, with probability $1 - \sigma$
 \begin{equation*}
   \|\hat{x} - x^*\|_2^2 \leq \frac{2}{\mu_x}\left(\{f(\hat{x}) + \max_{y \in \R^{d_y}}\{ G(\hat{x},y) - h(y)\} \} - \min_{x \in \R^{d_x}}\{f(x) + \max_{y \in \R^{d_y}}\{ G(x,y) - h(y)\} \}\right) \leq \frac{2\varepsilon}{\mu_x} \leq \varepsilon_x
 \end{equation*}
 under the Assumption $\frac{2\varepsilon}{\mu_x} \leq \varepsilon_x$. Then, $\hat{x} \in \mathcal{B}_2(x^*, \varepsilon_x)$. The function $f(x)$ is convex and by Lemma 3.1.8 from \cite{nesterov2004} $f(x)$ is locally Lipschitz continuous:
 \begin{equation*}
    f(\hat{x}) - f(x^*) \leq \frac{M_f - f(x^*)}{\varepsilon_x}\|\hat{x} - x^*\|_2,
\end{equation*}
where $\sup\{f(x): x \in \mathcal{B}_2(x^*, \varepsilon_x)\} \leq M_f$, $M_f$ is finite. Using that $G(x,y)$ is $L_G$-smooth, we get:
\begin{multline*}
    \left\{-G(x^*, \hat{y})-f(x^*)\right\} - \left\{-G(\hat{x}, \hat{y}) - f(\hat{x})\right\} = f(\hat{x}) - f(x^*) + G(\hat{x}, \hat{y}) - G(x^*, \hat{y}) \leq
    \\
    \frac{M_f - f(x^*)}{\varepsilon_x}\|\hat{x} - x^*\|_2 + \frac{L_G}{2}\|\hat{x} - x^*\|_2^2 \leq \frac{M_f - f(x^*)}{\varepsilon_x}\sqrt{\frac{2\varepsilon}{\mu_x}} + \frac{L_G}{2}\frac{2\varepsilon}{\mu_x}
\end{multline*}
Choose 
\begin{equation*}
    \varepsilon \leq \min\left\{\varepsilon_y, \frac{\varepsilon_x\mu_x}{2},\frac{\varepsilon_x\mu_x}{2L_G}, \frac{\varepsilon_x^4\mu_x}{8(M_f - f(x^*))^2}\right\}, \ \ \ \sigma \leq \min\left\{\sigma_x, \sigma_y\right\}.
\end{equation*}
\begin{equation*}
    \frac{2\varepsilon}{\mu_x} \leq \frac{2\varepsilon_x\mu_x}{2\mu_x} = \varepsilon_x,
\end{equation*}
then the assumption $\frac{2\varepsilon}{\mu_x} \leq \varepsilon_x$ is true.
Then, with probability $1 - \sigma \geq 1 - \sigma_x$:
\begin{equation*}
     \frac{M_f - f(x^*)}{\varepsilon_x}\sqrt{\frac{2\varepsilon}{\mu_x}} + L_G\frac{\varepsilon}{\mu_x} \leq
     \frac{M_f - f(x^*)}{\varepsilon_x}\sqrt{\frac{2\varepsilon_x^4\mu_x}{8\mu_x(M_f - f(x^*))^2}} + L_G\frac{\varepsilon_x\mu_x}{2L_G\mu_x} = \varepsilon_x,
\end{equation*}
in this inequality we use that $ \varepsilon \leq \frac{\varepsilon_x\mu_x}{2L_G}, \varepsilon \leq \frac{\varepsilon_x^4\mu_x}{8(M_f - f(x^*))^2}$.
We have shown that $\hat{x}$ is $(\varepsilon_x, \sigma_x)$-solution to the problem $\max_{x \in \R^{d_x}}\{-G(x, \hat{y}) - f(x)\}$.
\qed \end{proof}

\subsection{Accelerated Proximal Method for Saddle-Point Problems}

In this subsection we describe in detail the resulting structure of our algorithm for the setting of this section which consists of three loops. In the first two loops we apply the Catalyst algorithm (Algorithm \ref{alg:Catalyst}) with different value of parameter $H$ ($H_1$ and $H_2$ respectively) which defines its complexity. In the third loop we apply the SAGA algorithm (Algorithm \ref{alg:SAGA}) and we choose the number of iterations which depends on $H_1$ and $H_2$. In the end of this subsection we choose the value of these parameters.  Further, in each loop we have a target accuracy $\varepsilon$ and a confidence level $\sigma$ which defines the required quality of the solution to an optimization problem in this loop. These quantities define the target accuracy and confidence level for an equivalent optimization problem in the next loop using lemmas and theorems were proved in the two previous subsections. Our algorithm in this section has logarithmic dependence of the complexity on the target accuracy and confidence level (see Theorem \ref{theorem:section_6}). We hide such logarithmic factors in $\widetilde{O}$ notation. We conclude this section with the main Theorem \ref{theorem:section_6} which gives the complexity estimates of the proposed algorithm.
\paragraph{\textbf{Loop 1.}\label{subsec_1_f_h_prox}} 
The goal of the Loop 1 is to find an $(\varepsilon,\sigma)$-solution to the problem \eqref{eq:problem_Gsum_minmax} under the Assumption \ref{assumpt:G_sum_1}, which is understood in the sense of the Definition \ref{def:saddle_solution}. By Lemma \ref{lemma:for_loop_1} we find an $(\varepsilon, \sigma)$-solution to this saddle problem, if we find $\hat{x}$, which is $\left(\varepsilon_x^{(1)}, \sigma_x^{(1)}\right) $-solution to the minimization problem
\begin{equation}\label{problem:f_h_prox_loop_1_x}
    \min _{x \in \mathbb{R}^{d_x}}  \left\{f(x)+ \max _{y \in \mathbb{R}^{d_y}}\{G(x, y) - h(y)\}\right\}
\end{equation} under the Assumption \ref{assumpt:G_sum_1} and $\hat{y}$, which is $\left(\varepsilon_y^{(1)}, \sigma_y^{(1)}\right)$-solution to the problem 
\begin{equation}\label{problem:f_h_prox_loop_1_y}
    \max_{y \in \R^{d_y}}\{G(\hat{x}, y) - h(y)\}
\end{equation}
under the Assumption \ref{assumpt:G_sum_1}, we choose $\left(\varepsilon_x^{(1)}, \sigma_x^{(1)}\right)  = \left(\textbf{poly}(\varepsilon), \textbf{poly}(\sigma)\right)$, $\left(\varepsilon_y^{(1)}, \sigma_y^{(1)}\right) = \left(\textbf{poly}(\varepsilon), \textbf{poly}(\sigma)\right)$ which satisfy the following inequalities:
\begin{equation*}
    \varepsilon_y^{(1)} \leq \min \left\{\frac{\mu_y\varepsilon}{8}, \frac{\varepsilon^4\mu_y}{72\left(M_h - h(y^*)\right)^2}, \frac{\varepsilon\mu_y}{24\left(L_G + \frac{2L_G^2}{\mu_x}\right)}\right\}, \ \ \
    \varepsilon_x^{(1)} \leq \min\left\{ \frac{\varepsilon_y^{(1)} \mu_x \mu_y}{4L_G^2}, \frac{\varepsilon}{3}\right\}, \ \ \ \sigma_x^{(1)} \leq \frac{\sigma}{2}, \ \ \ \sigma_y^{(1)} \leq \frac{\sigma}{2},
\end{equation*}
where $\sup\{h(y): y \in \mathcal{B}_2(y^*, \varepsilon)\} \leq M_h$, $M_h$ is finite. To solve the problem \eqref{problem:f_h_prox_loop_1_x} under the Assumption \ref{assumpt:G_sum_1}, we would like to apply the Catalyst algorithm (Algorithm \ref{alg:Catalyst}) with
\begin{equation}\label{eq:varphi_psi_1}
\varphi(x) = \max _{y \in  \R^{d_y}}\left\{G(x, y) - h(y)\right\}, \ \ \ \psi(x) = f(x).
\end{equation}
By Lemma \ref{lemma:obt_delta_oracle}, the function $\varphi(x)$ is convex and has $L_G + \frac {2L_G^2}{\mu_y}$-Lipschitz continuous gradients, where $L_G=\frac{1}{m_G}\sum_{i=1}^{m_G} L_G^i$ and the function $\psi(x)$ is $\mu_x$ strongly convex. Then, these functions are satisfy the Assumption \ref{assumpt:Catalyst}. This allows us to apply the Catalyst algorithm (Algorithm \ref{alg:Catalyst})  to solve the minimization problem \eqref{problem:f_h_prox_loop_1_x}. We apply this algorithm with absolute accuracy and parameters $H = H_1$, which will be chosen later, $\mu = \mu_x$, $\left(\varepsilon_{x_k}^{(1)}\left(\varepsilon_x^{(1)}\right), \sigma_{x_k}^{(1)}\left(\varepsilon_x^{(1)}, \sigma_x^{(1)}\right)\right)_{k \geq 0}$ according to the Theorem \ref{corollary:Catalyst_sigma}, where $\varepsilon_{x_k}^{(1)} = \textbf{poly}\left(\varepsilon_x^{(1)}\right)$ and $\sigma_{x_k}^{(1)} = \textbf{poly}\left(\varepsilon_x^{(1)}, \sigma_x^{(1)}\right)$. We need to find $\hat{x}_k$ is an $\left(\varepsilon_{x_k}^{(1)}\left(\varepsilon_x^{(1)}\right), \sigma_{x_k}^{(1)}\left(\varepsilon_x^{(1)}, \sigma_x^{(1)}\right)\right)$-solution to the inner problem with the inner method $\mathcal{M}$ in each iteration of the Catalyst algorithm. For the particular definitions of $\varphi, \psi$ \eqref{eq:varphi_psi_1} in this Loop, this inner problem has the following form:  
\begin{equation}\label{eq:f_h_prox_2}
 x_{k} = \arg\min _{x \in \R^{d_x}}\left\{ f(x)+ \max _{y \in  \R^{d_y}}\left\{G(x, y)-h(y)\right\}+\frac{H_1}{2} \|x - x_{k-1}^{md}\|_2^2\right\}.
\end{equation} 

Below, in the next \hyperref[subsec_2_f_h_prox]{paragraph "Loop 2"},  we explain how to solve this auxiliary problem to obtain an $\left(\varepsilon_{x_k}^{(1)}\left(\varepsilon_x^{(1)}\right), \sigma_{x_k}^{(1)}\left(\varepsilon_x^{(1)}, \sigma_x^{(1)}\right)\right)$ solution to the problem \eqref{eq:f_h_prox_2} and an $\left(\varepsilon_y^{(1)}, \sigma_y^{(1)}\right)$-solution to the problem
\eqref{problem:f_h_prox_loop_1_y}.

To summarize the \hyperref[subsec_1_f_h_prox]{Loop 1}, the Assumption \ref{assumpt:Catalyst} holds and $\left(\varepsilon_{x_k}^{(1)}\left(\varepsilon_x^{(1)}\right), \sigma_{x_k}^{(1)}\left(\varepsilon_x^{(1)}, \sigma_x^{(1)}\right)\right)_{k \geq 0}$ are satisfy to \eqref{varepsilon_k_Catalyst_absolute} and \eqref{sigma_k_Catalyst_absolute}. Due  to  polynomial  dependencies $ \varepsilon_x^{(1)} =\textbf{poly}(\varepsilon), \sigma_x^{(1)} = \textbf{poly}(\sigma)$ we can use the notation $\widetilde{O}(\cdot)$ in the number of iterations of the Catalyst algorithm (Algorithm \ref{alg:Catalyst}).  Then, we can use the Theorem \ref{corollary:Catalyst_sigma} to guarantee that we find  $\left(\varepsilon_x^{(1)}, \sigma_x^{(1)}\right)$-solution to the problem \eqref{problem:f_h_prox_loop_1_x} in $\mathcal{N}_1 = \widetilde{O} \left(\max\left\{1, \sqrt{\frac{H_1}{\mu_x}}\right\} \right)$ number of iterations of the Catalyst algorithm (Algorithm \ref{alg:Catalyst}).   

\paragraph{\textbf{Loop 2.}}\label{subsec_2_f_h_prox}
The goal of the Loop 2 is to find an $\left(\varepsilon_{x_k}^{(1)}\left(\varepsilon_x^{(1)}\right), \sigma_{x_k}^{(1)}\left(\varepsilon_x^{(1)}, \sigma_x^{(1)}\right)\right)$-solution to the problem \eqref{eq:f_h_prox_2} and an $\left(\varepsilon_y^{(1)}, \sigma_y^{(1)}\right)$-solution to the problem
\eqref{problem:f_h_prox_loop_1_y}. By Lemma \ref{lemma:for_loop_2}, to find these solutions, we need to find $\hat{y}$ is an $\left(\varepsilon_y^{(2)}, \sigma_y^{(2)}\right)$-solution to the problem 
\begin{equation}\label{problem:f_h_prox_loop_2_y}
    \min _{y \in  \R^{d_y}} \left\{ h(y) + \max _{x \in \R^{d_x}}\left\{- \hat{f}(x) - G(x, y)\right\} \right\}
\end{equation}
where $\hat{f}(x) = f(x) + \frac{H_1}{2}\|x - x_{k-1}^{md}\|_2^2$ under the Assumption \ref{assumpt:G_sum_1} and $\hat{x}$ is an $\left(\varepsilon_x^{(2)}, \sigma_x^{(2)}\right)$-solution to the problem 
\begin{equation}\label{problem:f_h_prox_loop_2_x}
    \max_{x \in \R^{d_x}}\left\{-\hat{f}(x) - G(x,\hat{y})\right\},
\end{equation}
under the Assumption \ref{assumpt:G_sum_1}, we choose $\left(\varepsilon_x^{(2)}, \sigma_x^{(2)}\right)  = \left(\textbf{poly}(\varepsilon_x'), \textbf{poly}(\sigma_x')\right)$, $\left(\varepsilon_y^{(2)}, \sigma_y^{(2)}\right) = \left(\textbf{poly}(\varepsilon_y', \varepsilon_x'), \textbf{poly}(\sigma_x', \sigma_y')\right)$ which satisfy the following inequalities:
\begin{equation*}
    \varepsilon_x^{(2)} \leq \min \left\{\frac{(\mu_x + H_1)\varepsilon_x'}{8}, \frac{\varepsilon_x'^4(\mu_x + H_1)}{32\left(M_{\hat{f}} - \hat{f}(x^*)\right)^2}, \frac{\varepsilon_x'(\mu_x + H_1)}{16\left(L_G + \frac{2L_G^2}{\mu_y}\right)}\right\},
\end{equation*}

\begin{equation*}
    \varepsilon_y^{(2)} \leq \min\left\{\frac{\mu_y\varepsilon_y'}{2}, \frac{\varepsilon_x^{(2)} (\mu_x + H_1) \mu_y}{4L_G^2}, \frac{\varepsilon_y'^4\mu_y}{8\left(M_h - h(y^*)\right)^2}, \frac{\varepsilon_y'\mu_y}{2L_G}\right\},
\end{equation*}
    
\begin{equation*}
     \sigma_x^{(2)} \leq \frac{\sigma_x'}{2}, \ \ \ \sigma_y^{(2)} \leq \min\left\{\frac{\sigma_x'}{2}, \sigma_y'\right\},
\end{equation*}
\begin{equation*}
    \left(\varepsilon_x', \sigma_x'\right) = \left(\varepsilon_{x_k}^{(1)}\left(\varepsilon_x^{(1)}\right), \sigma_{x_k}^{(1)}\left(\varepsilon_x^{(1)}, \sigma_x^{(1)}\right)\right), \ \ \ \left(\varepsilon_y', \sigma_y'\right) = \left(\varepsilon_y^{(1)}, \sigma_y^{(1)}\right),
\end{equation*}
where $\sup\{\hat{f}(x): x \in \mathcal{B}_2(x^*, \varepsilon_x')\} \leq M_{\hat{f}}$ and $\sup\{h(y): y \in \mathcal{B}_2(y^*, \varepsilon_y')\} \leq M_h$, $M_f, M_h$ are finite. 

To solve the problem \eqref{problem:f_h_prox_loop_2_y} under the Assumption \ref{assumpt:G_sum_1} we apply the Catalyst algorithm (Algorithm \ref{alg:Catalyst}) with
\begin{equation}\label{eq:varphi_psi_2}
\varphi(y) =  \max _{x \in  \R^{d_x}}\left\{ -f(x) - G(x, y) - \frac{H_1}{2}\|x - x_{k-1}^{md}\|_2^2\right\}, \ \ \ \psi(y) = h(y).
\end{equation}
By the Lemma \ref{lemma:obt_delta_oracle}, the function $\varphi(y)$ is convex and has $H_1 + L_G + \frac {2L_G^2}{\mu_x}$-Lipschitz continuous gradients, where $L_G=\frac{1}{m_G}\sum_{i=1}^{m_G} L_G^i$ and the function $\psi(y)$ is $\mu_y$ strongly convex. Then, these functions are satisfy the Assumption \ref{assumpt:Catalyst}. This allows us to apply the Catalyst algorithm (Algorithm \ref{alg:Catalyst})  to solve the minimization problem \eqref{problem:f_h_prox_loop_2_y}. We apply this algorithm with absolute accuracy and parameters $H = H_2$, which will be chosen later, $\mu = \mu_y$, $\left(\varepsilon_{y_k}^{(2)}\left(\varepsilon_y^{(2)}\right), \sigma_{y_k}^{(2)}\left(\varepsilon_y^{(2)}, \sigma_y^{(2)}\right)\right)_{k \geq 0}$ according to the Theorem \ref{corollary:Catalyst_sigma}, where $\varepsilon_{y_k}^{(2)} = \textbf{poly}\left(\varepsilon_y^{(2)}\right)$ and $\sigma_{y_k}^{(2)} = \textbf{poly}\left(\varepsilon_y^{(2)}, \sigma_y^{(2)}\right)$. We need to find an $\left(\varepsilon_{y_k}^{(2)}\left(\varepsilon_y^{(2)}\right), \sigma_{y_k}^{(2)}\left(\varepsilon_y^{(2)}, \sigma_y^{(2)}\right)\right)$-solution to the inner problem with the inner method $\mathcal{M}$ in each iteration of the Catalyst algorithm. For the particular definitions of $\varphi, \psi$ \eqref{eq:varphi_psi_2} in this Loop, this inner problem has the following form:  
\begin{equation}\label{eq:f_h_prox_3}
 y_{k} = \arg\min _{y \in \R^{d_y}}\left\{ h(y)+ \max _{x \in  \R^{d_x}}\left\{- G(x, y)-f(x) - \frac{H_1}{2} \|x - x_{k-1}^{md}\|_2^2\right\} + \frac{H}{2}\|y - y_{k-1}^{md}\|_2^2\right\}.
\end{equation} 

Below, in the next \hyperref[subsec_3_f_h_prox]{paragraph "Loop 3"},  we explain how to solve this auxiliary problem to obtain an $\left(\varepsilon_{y_k}^{(2)}\left(\varepsilon_y^{(2)}\right), \sigma_{y_k}^{(2)}\left(\varepsilon_y^{(2)}, \sigma_y^{(2)}\right)\right)$ solution to the problem \eqref{eq:f_h_prox_3} and an $\left(\varepsilon_x^{(2)}, \sigma_x^{(2)}\right)$-solution to the problem
\eqref{problem:f_h_prox_loop_2_x}.

To summarize the \hyperref[subsec_2_f_h_prox]{Loop 2}, the Assumption \ref{assumpt:Catalyst} holds and $\left(\varepsilon_{y_k}^{(2)}\left(\varepsilon_y^{(2)}\right), \sigma_{y_k}^{(2)}\left(\varepsilon_y^{(2)}, \sigma_y^{(2)}\right)\right)_{k \geq 0}$ are satisfy to \eqref{varepsilon_k_Catalyst_absolute} and \eqref{sigma_k_Catalyst_absolute}, due to $\varepsilon_{x_k}^{(1)} = \textbf{poly}\left(\varepsilon_x^{(1)}\right) = \textbf{poly}\left(\varepsilon\right)$, $\sigma_{x_k}^{(1)} = \textbf{poly}\left(\varepsilon_x^{(1)}, \sigma_x^{(1)}\right) = \textbf{poly}\left(\varepsilon, \sigma\right)$ and $\varepsilon_y^{(1)} = \textbf{poly}(\varepsilon)$, $\sigma_y^{(1)} = \textbf{poly}(\sigma)$ dependencies $\varepsilon_y^{(2)}(\varepsilon, \sigma), \sigma_y^{(2)}(\varepsilon, \sigma)$  are polynomial and we can use the notation $\widetilde{O}(\cdot)$ in the number of iterations of the Catalyst algorithm (Algorithm \ref{alg:Catalyst}).   Then, we can use the Theorem \ref{corollary:Catalyst_sigma} to guarantee that we find  $\left(\varepsilon_y^{(2)}, \sigma_y^{(2)}\right)$-solution to the problem \eqref{problem:f_h_prox_loop_2_y} in $\mathcal{N}_2 = \widetilde{O} \left(\max\left\{1, \sqrt{\frac{H_2}{\mu_y}}\right\} \right)$ iterations of the Catalyst algorithm (Algorithm \ref{alg:Catalyst}).

\paragraph{\textbf{Loop 3}}\label{subsec_3_f_h_prox}
The goal of the Loop 3 is to find an $\left(\varepsilon_{y_k}^{(2)}\left(\varepsilon_y^{(2)}\right), \sigma_{y_k}^{(2)}\left(\varepsilon_y^{(2)}, \sigma_y^{(2)}\right)\right)$-solution to the problem \eqref{eq:f_h_prox_3} and an $\left(\varepsilon_x^{(2)}, \sigma_x^{(2)}\right)$-solution to the problem
\eqref{problem:f_h_prox_loop_2_x}. By Lemma \ref{lemma:for_loop_3}, to find these solutions, we need to find $(\hat{x}, \hat{y})$ is an $(\varepsilon^{(3)}, \sigma^{(3)})$-solution to the saddle problem 
\begin{equation}\label{problem:f_h_prox_loop_3_x_y}
   \min _{x \in \mathbb{R}^{d_x}} \max _{y \in \mathbb{R}^{d_y}} \left\{\hat{f}(x)+ G(x, y) - \hat{h}(y)\right\},
\end{equation}
under the Assumption \ref{assumpt:G_sum_1}, where 
\begin{equation*}
\hat{f}(x) = f(x) + \frac{H_1}{2}\|x - x_{k-1}^{md}\|_2^2, \ \ \ \hat{h}(y) = h(y) + \frac{H_2}{2}\|y - y_{k-1}^{md}\|_2^2, 
\end{equation*}
we choose $\left(\varepsilon^{(3)}, \sigma^{(3)}\right)  = \left(\textbf{poly}(\varepsilon_x, \varepsilon_y), \textbf{poly}(\sigma_x, \sigma_y)\right)$, which satisfy the following inequalities:
\begin{equation*}
    \varepsilon^{(3)} \leq \min\left\{\varepsilon_y, \frac{\varepsilon_x(\mu_x + H_1)}{2},\frac{\varepsilon_x(\mu_x + H_1)}{2L_G}, \frac{\varepsilon_x^4(\mu_x + H_1)}{8(M_{\hat{f}} - \hat{f}(x^*))^2}\right\},\ \ \ \sigma \leq \min\left\{\sigma_x, \sigma_y\right\}
\end{equation*}

\begin{equation*}
   \left(\varepsilon_x, \sigma_x\right) = \left(\varepsilon_x^{(2)}, \sigma_x^{(2)}\right), \ \ \ \left(\varepsilon_y, \sigma_y\right) = \left(\varepsilon_{y_k}^{(2)}\left(\varepsilon_y^{(2)}\right), \sigma_{y_k}^{(2)}\left(\varepsilon_y^{(2)}, \sigma_y^{(2)}\right)\right),
\end{equation*}
where $\sup\{\hat{f}(x): x \in \mathcal{B}_2(x^*, \varepsilon_x)\} \leq M_{\hat{f}}$, $M_{\hat{f}}$ is finite.
 
 To solve the problem \eqref{problem:f_h_prox_loop_3_x_y} we apply the SAGA 
 algorithm (Algorithm \ref{alg:SAGA}) with:
\begin{equation}
    M(x, y) = \hat{f}(x) - \hat{h}(y), \ \ \ \ K(x, y) =  G(x, y).
\end{equation}

$\hat{f}(x)$ is $\mu_x + H_1$-strongly convex, $\hat{h}(y))$ is $\mu_y + H_2$ strongly convex. Then, the Assumption \ref{assumpt:G_sum_1} is true for this problem. By Lemma \ref{lemma:SAGA_sp} the Assumption \ref{assumption:SAGA} is true for this problem, this allow us to apply the SAGA algorithm (Algorithm \ref{alg:SAGA}) to solve the problem \eqref{problem:f_h_prox_loop_3_x_y}. We apply this algorithm with parameters:
\begin{equation*}
    K_i(x,y) = \frac{1}{m_G}G_i(x,y), \ \ \ \pi_i = \frac{L_G^i}{\sum_{i=1}^{m_G}L_G^i}, \ \ \ \bar{L} = L = \frac{2L_G}{\min\{\mu_x + H_1, \mu_y + H_2\}}, \ \ \ m = 1,
\end{equation*}
where $L_G = \frac{1}{m_G}\sum_{i=1}^{m_G}L_G^i$ and number of iterations 
\begin{equation*}
    \mathcal{N}_3 = \left\lceil \frac{4}{\eta} \ln{\frac{2\|z_0 - z^*\|_2^2}{\varepsilon' \sigma'}}\right\rceil = O\left(m_G + \frac{L_G^2}{\min\{\mu_x + H_1, \mu_y + H_2\}^2} \ln{\frac{2\|z_0 - z^*\|_2^2}{\varepsilon' \sigma'}}\right),
\end{equation*} 
where 
\begin{equation*}
    \eta = \left(\max\left\{\frac{3m_G}{2},  \frac{3L_G^2}{\min\{\mu_x + H_1, \mu_y + H_2\}^2}\right\}\right)^{-1}.
\end{equation*} 
Choosing $(\varepsilon', \sigma')$ according to the Lemma \ref{lemma:SAGA_for_sp_accuracy}, where $(\varepsilon, \sigma) = \left(\varepsilon^{(3)}, \sigma^{(3)}\right)$ and $\varepsilon' = \textbf{poly}(\varepsilon)$, $\sigma' = \textbf{poly}(\sigma)$. 

To summarize the \hyperref[subsec_3_f_h_prox]{Loop 3}, the Assumption \ref{assumption:SAGA} holds and $\left(\varepsilon'\left(\varepsilon^{(3)}\right), \sigma'\left(\sigma^{(3)}\right)\right)$ are satisfy to \eqref{lemma:SAGA_for_sp_accuracy} and \eqref{sigma_k_Catalyst_absolute}, due to $\varepsilon_{x}^{(2)} = \textbf{poly}\left(\varepsilon, \sigma\right)$, $\sigma_{x}^{(2)} = \textbf{poly}\left(\varepsilon, \sigma\right)$, $\varepsilon_{y_k}^{(2)} = \textbf{poly}(\varepsilon_y^{(2)})= \textbf{poly}(\varepsilon, \sigma)$,$\sigma_{y_k}^{(2)} = \textbf{poly}(\varepsilon_y^{(2)}, \sigma_y^{(2)})= \textbf{poly}(\varepsilon, \sigma)$  and $\varepsilon_y^{(3)} = \textbf{poly}\left(\varepsilon_{x}^{(2)}, \varepsilon_{y_k}^{(2)}\right)$, $\sigma_y^{(3)} = \textbf{poly}\left(\sigma_{x}^{(2)}, \sigma_{y_k}^{(2)}\right)$ dependencies $\varepsilon_y^{(3)}(\varepsilon, \sigma)$, $\sigma_y^{(3)}(\varepsilon, \sigma) $  are polynomial and we can use the notation $\widetilde{O}(\cdot)$ in the number of iterations of the SAGA algorithm (Algorithm \ref{alg:SAGA}).  Then, we can use the Lemma \ref{lemma:SAGA_for_sp_accuracy} to guarantee that we find  $\left(\varepsilon^{(3)}, \sigma^{(3)}\right)$-solution to the problem \eqref{problem:f_h_prox_loop_3_x_y} in
\begin{equation*}
    \mathcal{N}_3 = \widetilde{O} \left(m_G + \frac{L_G^2}{(\min \{\mu_x + H_1, \mu_y + H_2\})^2}\right)
\end{equation*}
iterations of the SAGA algorithm (Algorithm \ref{alg:SAGA}).
In each iteration of the SAGA algorithm (Algorithm \ref{alg:SAGA}) we make no more than $m\mathcal{N}_3 = \mathcal{N}_3$ the number of oracle calls of $\nabla_x G(x,y)$, $\nabla_y G(x, y)$ and calculations of proximal operator for the functions $\hat{f}(x)$, $\hat{h}(y)$.



To summarize these 3 loops we can formulate the following main theorem of this section:

\begin{theorem}\label{theorem:section_6}
Suppose saddle problem of the form \eqref{eq:problem_Gsum_minmax} under the Assumption \ref{assumpt:G_sum_1} and supposition that $ \mu_x \sqrt{m_G} \leq L_G$ and $\mu_y \sqrt{m_G} \leq L_G$ .
Then we can find the $(\varepsilon, \sigma)$-solution to the problem \eqref{eq:problem_Gsum} and evaluate the number of oracle calls. Namely, after 3 loops of Algorithm from Section~\ref{subsec:proxfr_f_h} one can obtain next estimates on the number of oracles calls of $\nabla_x G(x, y)$, $\nabla_y G(x, y)$ and calculations of \eqref{f_x:prox-friendly} for functions $f(x)$ and $h(y)$:
\begin{equation}
     \widetilde{O} \left(  \sqrt{\frac{m_G  L_G^2}{\mu_x\mu_y}}
    \right).
\end{equation}
\end{theorem}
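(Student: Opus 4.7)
The strategy is to instantiate the three-loop algorithm described in the preceding subsection (Loop~1 and Loop~2 are Catalyst wrappers, Loop~3 is the SAGA-based saddle-point solver) and then pick the regularization parameters $H_1,H_2$ so as to balance the total complexity. In Loop~1 one applies Catalyst (Algorithm~\ref{alg:Catalyst}) to problem \eqref{problem:f_h_prox_loop_1_x} with $\varphi,\psi$ as in \eqref{eq:varphi_psi_1}; the smoothness of $\varphi$ follows from Lemma~\ref{lemma:obt_delta_oracle}, and $\psi=f$ is $\mu_x$-strongly convex, hence Assumption~\ref{assumpt:Catalyst} is satisfied and Theorem~\ref{corollary:Catalyst_sigma} gives $\mathcal{N}_1=\widetilde{O}(\max\{1,\sqrt{H_1/\mu_x}\})$ outer iterations. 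Lemma~\ref{lemma:for_loop_1} lets us reduce solving~\eqref{eq:problem_Gsum_minmax} to solving~\eqref{problem:f_h_prox_loop_1_x} and \eqref{problem:f_h_prox_loop_1_y} up to polynomial accuracy/confidence transformations.

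In Loop~2 one exchanges $\min$ and $\max$ as in~\eqref{problem:max_x_min_y} and applies Catalyst to~\eqref{problem:f_h_prox_loop_2_y} with $\varphi,\psi$ as in~\eqref{eq:varphi_psi_2}; again Lemma~\ref{lemma:obt_delta_oracle} and Theorem~\ref{corollary:Catalyst_sigma} yield $\mathcal{N}_2=\widetilde{O}(\max\{1,\sqrt{H_2/\mu_y}\})$ iterations. Lemma~\ref{lemma:for_loop_2} guarantees that finding a sufficiently accurate solution of~\eqref{problem:f_h_prox_loop_2_y} and \eqref{problem:f_h_prox_loop_2_x} is enough for the outer Catalyst step. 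In Loop~3, Lemma~\ref{lemma:for_loop_3} reduces the auxiliary problem to the strongly-convex-strongly-concave saddle problem~\eqref{problem:f_h_prox_loop_3_x_y} with strong-convexity-concavity moduli $\mu_x+H_1$ and $\mu_y+H_2$; the prox-friendliness of $\hat f$ and $\hat h$ lets us apply SAGA as in Lemma~\ref{lemma:SAGA_for_sp_accuracy}, giving
\[
\mathcal{N}_3=\widetilde{O}\!\left(m_G+\frac{L_G^2}{(\min\{\mu_x+H_1,\mu_y+H_2\})^2}\right).
\]

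The key verification is that all accuracy and confidence parameters passed between loops depend \emph{polynomially} on the outermost $(\varepsilon,\sigma)$; this is exactly what the polynomial-dependency chains stated in each loop's summary provide, so that the $\widetilde{O}(\cdot)$ notation absorbs only logarithmic factors. Multiplying the three counts gives
\[
\mathcal{N}_1\mathcal{N}_2\mathcal{N}_3=\widetilde{O}\!\left(\sqrt{\frac{H_1 H_2}{\mu_x\mu_y}}\left(m_G+\frac{L_G^2}{(\min\{\mu_x+H_1,\mu_y+H_2\})^2}\right)\right).
\]

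The main (and only nontrivial) step is the choice of $H_1,H_2$. Set $H_1=L_G/\sqrt{m_G}-\mu_x$ and $H_2=L_G/\sqrt{m_G}-\mu_y$, which are nonnegative precisely because of the hypotheses $\mu_x\sqrt{m_G}\le L_G$ and $\mu_y\sqrt{m_G}\le L_G$. Then $\min\{\mu_x+H_1,\mu_y+H_2\}=L_G/\sqrt{m_G}$, so that the bracket becomes $m_G+m_G=O(m_G)$, while
\[
\sqrt{\frac{H_1 H_2}{\mu_x\mu_y}}=O\!\left(\sqrt{\frac{L_G^2}{\mu_x\mu_y\, m_G}}\right).
\]
Combining yields the desired
\[
\mathcal{N}_1\mathcal{N}_2\mathcal{N}_3=\widetilde{O}\!\left(\sqrt{\frac{m_G L_G^2}{\mu_x\mu_y}}\right).
\]
Each SAGA iteration costs one evaluation of $\nabla_x G_i,\nabla_y G_i$ and one call to the prox of $\hat f$ and of $\hat h$ (which reduces to the prox of $f$ and $h$, since adding a quadratic in $x$ or $y$ preserves prox-friendliness as in~\eqref{f_x:prox-friendly}), so the above bound applies simultaneously to all four oracle counts in the statement. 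The anticipated technical obstacle is purely bookkeeping: checking that the polynomial transformations of $(\varepsilon,\sigma)$ through Lemmas~\ref{lemma:for_loop_1}--\ref{lemma:for_loop_3} indeed compose into polynomial dependencies, so that the logarithmic factors hidden in $\widetilde{O}$ remain logarithmic in the original $(\varepsilon,\sigma)$. \qed
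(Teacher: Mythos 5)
Your proposal is correct and follows essentially the same route as the paper: Catalyst in Loops~1 and~2 (via Lemmas~\ref{lemma:for_loop_1}, \ref{lemma:for_loop_2} and Theorem~\ref{corollary:Catalyst_sigma}), SAGA in Loop~3 (via Lemmas~\ref{lemma:for_loop_3} and \ref{lemma:SAGA_for_sp_accuracy}), multiplication of the three iteration counts, and a choice of $H_1,H_2$ making $\min\{\mu_x+H_1,\mu_y+H_2\}=L_G/\sqrt{m_G}$. The only cosmetic differences are your parameter choice $H_1=L_G/\sqrt{m_G}-\mu_x$, $H_2=L_G/\sqrt{m_G}-\mu_y$ versus the paper's $H_1=\max\{\mu_x,L_G/\sqrt{m_G}\}$, $H_2=\max\{\mu_y,L_G/\sqrt{m_G}\}$, and your dropping of the $\max\{1,\cdot\}$ factors in $\mathcal{N}_1,\mathcal{N}_2$ — both are harmless here because the hypotheses $\mu_x\sqrt{m_G}\le L_G$ and $\mu_y\sqrt{m_G}\le L_G$ absorb exactly those cases, as the paper's final $\max$-computation makes explicit.
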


\begin{proof}

\textbf{Step 1. Polynomial dependence.} In \hyperref[subsec_1_f_h_prox]{Loop 1} we find $\hat{x}$ is $\left(\varepsilon_x^{(1)}, \sigma_x^{(1)}\right)$ solution to the problem \eqref{problem:f_h_prox_loop_1_x}, where $\left(\varepsilon_x^{(1)}, \sigma_x^{(1)}\right)  = \left(\textbf{poly}(\varepsilon), \textbf{poly}(\sigma)\right)$. We solve this problem with Catalyst algorithm and by Theorem \ref{corollary:Catalyst_sigma} we can find $\hat{x}$ after 
\begin{equation*}
    \mathcal{N}_1 = O\left(\max\left\{1, \frac{H_1}{\mu_x}\right\}\ln{\frac{1}{\varepsilon_x^{(1)}}}\right) = O\left(\max\left\{1, \frac{H_1}{\mu_x}\right\}\ln{\frac{1}{\varepsilon}}\right) = \widetilde{O}\left(\max\left\{1, \frac{H_1}{\mu_x}\right\}\right)
\end{equation*}
the number of iterations of the Catalyst algorithm (Algorithm \ref{alg:Catalyst}) we find $\hat{x}$ is $\left(\varepsilon_x^{(1)}, \sigma_x^{(1)}\right)$ solution to the problem \eqref{problem:f_h_prox_loop_1_x}.

In \hyperref[subsec_2_f_h_prox]{Loop 2} we find $\hat{y}$ is $\left(\varepsilon_y^{(2)}, \sigma_y^{(2)}\right)$ solution to the problem \eqref{problem:f_h_prox_loop_2_y}, where 
\begin{equation*}
    \left(\varepsilon_y^{(2)}, \sigma_y^{(2)}\right) = \left(\textbf{poly}\left(\varepsilon_{x_k}^{(1)}, \varepsilon_y^{(1)}\right), \textbf{poly}\left(\sigma_{x_k}^{(1)}, \sigma_y^{(1)}\right)\right).
\end{equation*}. 

\begin{equation*}
    \varepsilon_{x_k}^{(1)} = \textbf{poly}\left(\varepsilon_x^{(1)}\right) = \textbf{poly}\left(\varepsilon\right), \sigma_{x_k}^{(1)} = \textbf{poly}\left(\varepsilon_x^{(1)}, \sigma_x^{(1)}\right) = \textbf{poly}\left(\varepsilon, \sigma\right)
\end{equation*}
\begin{equation*}
    \varepsilon_y^{(1)} = \textbf{poly}(\varepsilon), \sigma_y^{(1)} = \textbf{poly}(\sigma).
\end{equation*} 
Then, dependencies $\varepsilon_y^{(2)}(\varepsilon, \sigma)$, $\sigma_y^{(2)}(\varepsilon, \sigma)$ are polynomial. We solve this problem with Catalyst algorithm and by Theorem \ref{corollary:Catalyst_sigma} we can find $\hat{y}$ after 
\begin{equation*}
    \mathcal{N}_2 = O\left(\max\left\{1, \frac{H_2}{\mu_y}\right\}\ln{\frac{1}{\varepsilon_y^{(2)}}}\right) = O\left(\max\left\{1, \frac{H_2}{\mu_2}\right\}\ln{\frac{1}{\varepsilon\sigma}}\right) = \widetilde{O}\left(\max\left\{1, \frac{H_2}{\mu_y}\right\}\right)
\end{equation*}
the number of iterations of the Catalyst algorithm (Algorithm \ref{alg:Catalyst}) we find $\hat{y}$ is $\left(\varepsilon_y^{(2)}, \sigma_y^{(2)}\right)$ solution to the problem \eqref{problem:f_h_prox_loop_2_y}.

In \hyperref[subsec_3_f_h_prox]{Loop 3} we find $(\hat{x},\hat{y})$ is $\left(\varepsilon^{(3)}, \sigma^{(3)}\right)$ solution to the problem \eqref{problem:f_h_prox_loop_3_x_y}, where 
\begin{equation*}
    \left(\varepsilon^{(3)}, \sigma^{(3)}\right) = \left(\textbf{poly}\left(\varepsilon_x^{(2)}, \varepsilon_{y_k}^{(2)}\right), \textbf{poly}\left(\sigma_x^{(2)}, \sigma_{y_k}^{(2)}\right)\right).
\end{equation*}. 

\begin{equation*}
    \varepsilon_{y_k}^{(2)} = \textbf{poly}\left(\varepsilon_y^{(2)}\right) = \textbf{poly}\left(\varepsilon, \sigma\right), \sigma_{y_k}^{(2)} = \textbf{poly}\left(\varepsilon_y^{(2)}, \sigma_y^{(2)}\right) = \textbf{poly}\left(\varepsilon, \sigma\right)
\end{equation*}
\begin{equation*}
    \varepsilon_x^{(2)} = )\textbf{poly}\left(\varepsilon_{x_k}^{(1)}, \sigma_{x_k}^{(1)}\right)= \textbf{poly}\left(\varepsilon_{x}^{(1)}, \sigma_{x}^{(1)}\right) = \textbf{poly}\left(\varepsilon, \sigma\right),
\end{equation*}
\begin{equation*}
    \sigma_x^{(2)} = \textbf{poly}(\sigma_{x_k}^{(1)}) = \textbf{poly}\left(\varepsilon_{x}^{(1)}, \sigma_{x}^{(1)}\right) = ) = \textbf{poly}\left(\varepsilon, \sigma\right).
\end{equation*} 
Then, dependencies $\varepsilon^{(3)}(\varepsilon, \sigma)$, $\sigma^{(3)}(\varepsilon, \sigma)$ are polynomial. We solve this problem with Catalyst algorithm and by Lemma \ref{lemma:SAGA_for_sp_accuracy} we can find $(\hat{x}, \hat{y})$ after 
\begin{multline*}
    \mathcal{N}_3 = O\left(m_G + \frac{L_G^2}{(\min \{\mu_x + H_1, \mu_y + H_2\})^2}\ln{\frac{1}{\varepsilon_y^{(3)}\sigma_y^{(3)}}}\right) =
    \\
    O\left(m_G + \frac{L_G^2}{(\min \{\mu_x + H_1, \mu_y + H_2\})^2}\ln{\frac{1}{\varepsilon\sigma}}\right) = \widetilde{O}\left(m_G + \frac{L_G^2}{(\min \{\mu_x + H_1, \mu_y + H_2\})^2}\right)
\end{multline*}
the number of iterations of the SAGA algorithm (Algorithm \ref{alg:SAGA}) we find $(\hat{x}, \hat{y})$ is $\left(\varepsilon_y^{(3)}, \sigma_y^{(3)}\right)$ solution to the problem \eqref{problem:f_h_prox_loop_3_x_y}.

\textbf{Step 2. Final estimates.}
We make oracle calls of $\nabla_x G(x,y)$, $\nabla_y G(x, y)$ and calculations of proximal operator for the functions $\hat{f}(x)$, $\hat{h}(y)$ only in \hyperref[subsec_3_f_h_prox]{Loop 3} and we make it no more than $m\mathcal{N}_3 = \mathcal{N}_3$ times in each iteration of the SAGA algorithm (Algorithm \ref{alg:SAGA}).  Then, after 3 loops of Algorithm from Section~\ref{subsec:proxfr_f_h} one can obtain next estimates on the number of oracles calls of $\nabla_x G(x, y)$, $\nabla_y G(x, y)$ and calculations of $prox_{\hat{f}}^{\lambda}(x')$, $prox_{\hat{h}}^{\lambda}(y')$:

\begin{equation*} 
     \widetilde {{\rm O}}\left( \max\left\{1, {\sqrt \frac{H_1} {\mu_x}}\right\} \right) 
     \cdot 
     \widetilde {{\rm O}}\left( \max\left\{1, {\sqrt \frac{H_2}{\mu_y}}\right\} \right) 
     \cdot
    \widetilde{O}\left(m_G + \frac{L_G^2}{\min(H_1 + \mu_x, H_2 + \mu_y)^2}\right).
\end{equation*}

Choose $H_1= \max\left\{\mu_x, \frac{L_G}{\sqrt{m_G}}\right\}, H_2= \max\left\{\mu_y, \frac{L_G}{\sqrt{m_G}}\right\}$
then:

\begin{multline*} \label{eq:f_x,h_y:prox-friendly_estimate}
    \widetilde {{\rm O}}\left( \max\left\{1, {\sqrt \frac{H_1} {\mu_x}}\right\} \right) 
     \cdot 
     \widetilde {{\rm O}}\left( \max\left\{1, {\sqrt \frac{H_2 }{\mu_y} }\right\} \right) 
     \cdot
    \widetilde{O}\left(m_G + \frac{L_G^2}{\min(H_1 + \mu_x, H_2 + \mu_y)^2}\right) \leq
    \\
     \widetilde {{\rm O}}\left( \max\left\{1, {\sqrt \frac{L_G }{\mu_x\sqrt{m_G}} }\right\} \right) \cdot
    \widetilde {{\rm O}}\left( \max\left\{1, {\sqrt \frac{L_G }{\mu_y\sqrt{m_G}} }\right\} \right) 
    \cdot
    \widetilde {{\rm O}}\left( m_G + L_G^2 \max\left(\frac{1}{H_1},\frac{1}{H_2}\right)^2  \right) \leq 
    \\
    \widetilde {{\rm O}}\left( \max\left\{1, {\sqrt \frac{L_G }{\mu_x\sqrt{m_G}} }\right\} \right) \cdot
    \widetilde {{\rm O}}\left( \max\left\{1, {\sqrt \frac{L_G }{\mu_y\sqrt{m_G}} }\right\} \right) 
     \cdot
    \widetilde {{\rm O}}\left( m_G + L_G^2 \frac{m_G}{L_G^2}   \right) =
    \\
    \widetilde {{\rm O}}\left( \max\left\{1, {\sqrt \frac{L_G }{\mu_x\sqrt{m_G}} }\right\} \right) \cdot
    \tilde {{\rm O}}\left( \max\left\{1, {\sqrt \frac{L_G }{\mu_y\sqrt{m_G}} }\right\} \right) 
     \cdot
    \widetilde {{\rm O}}\left( m_G \right) =
    \\
    \widetilde {{\rm O}}\left( \max\left\{m_G , m_G^{\frac{3}{4}}\sqrt{\frac{L_G}{\mu_x}},m_G^{\frac{3}{4}}\sqrt{\frac{L_G}{\mu_y}}, \frac{L_G\sqrt{m_G}}{\sqrt{\mu_x\mu_y} } \right\}\right) = \widetilde{O}\left(\sqrt{\frac{m_GL_G^2}{\mu_x\mu_y}}\right)
\end{multline*}

In the second inequality we used that $\frac{1}{H_1}, \frac{1}{H_2} \leq \frac{\sqrt{m_G}}{L_G}$, in the last equality we used that $\mu_x\sqrt{m_G} \leq L_G$ and $\mu_y\sqrt{m_G} \leq L_G$.
 
 To compute
 \begin{multline*}
     prox_{\hat{f}}^{\lambda}(x') = \arg \min_{x \in \R^{d_x}} \left\{\lambda\left(f(x) + \frac{H_1}{2}\|x - x'\|_2^2\right) + \frac{H_1 + \mu_x}{2}\|x - x'\|_2^2\right\} =
     \\
     \arg \min_{x \in \R^{d_x}} \left\{f(x) + \left(\frac{H_1}{2} + \frac{H_1 + \mu_x}{2\lambda}\right)\|x - x'\|_2^2\right\},
 \end{multline*}
 and
 \begin{multline*}
     prox_{\hat{h}}^{\lambda}(y') = \arg \max_{y \in \R^{d_y}} \left\{\lambda\left(- h(y) - \frac{H_2}{2}\|y - y'\|_2^2\right) - \frac{H_2 + \mu_y}{2}\|y - y'\|_2^2\right\} =
     \\
     \arg \min_{y \in \R^{d_y}} \left\{h(y) + \left(\frac{H_2}{2} + \frac{H_2 + \mu_y}{2\lambda}\right)\|y - y'\|_2^2\right\}
 \end{multline*}
 we should compute \eqref{f_x:prox-friendly} for the function $f(x)$ with $c_1 = 0$, $c_2 = \frac{H_1}{2} + \frac{H_1 + \mu_x}{2\lambda}$ and \eqref{f_x:prox-friendly} for the function $h(y)$ with $c_1 = 0$, $c_2 = \frac{H_2}{2} + \frac{H_2 + \mu_y}{2\lambda}$
 Then, after 3 loops of Algorithm from Section~\ref{subsec:proxfr_f_h} one can obtain next estimates on the number of oracles calls of $\nabla_x G(x, y)$, $\nabla_y G(x, y)$ and calculations of \eqref{f_x:prox-friendly} for functions $f(x)$ and $h(y)$:
\begin{equation*}
     \widetilde{O} \left(  \sqrt{\frac{m_G  L_G^2}{\mu_x\mu_y}}
    \right).
\end{equation*}
 \end{proof}

\section{Conclusions}\label{sec:conclusions}


%
%


\bibliographystyle{spmpsci}
\bibliography{references}

\begin{thebibliography}{10}
\providecommand{\url}[1]{{#1}}
\providecommand{\urlprefix}{URL }
\expandafter\ifx\csname urlstyle\endcsname\relax
  \providecommand{\doi}[1]{DOI~\discretionary{}{}{}#1}\else
  \providecommand{\doi}{DOI~\discretionary{}{}{}\begingroup
  \urlstyle{rm}\Url}\fi

\bibitem{alacaoglu2021stochastic}
Alacaoglu, A., Malitsky, Y.: Stochastic variance reduction for variational
  inequality methods.
\newblock arXiv preprint arXiv:2102.08352  (2021)

\bibitem{Alkousa2019}
Alkousa, M., Dvinskikh, D., Stonyakin, F., Gasnikov, A., Kovalev, D.:
  Accelerated methods for composite non-bilinear saddle point problem (2019)

\bibitem{alkousa2020accelerated}
Alkousa, M., Gasnikov, A., Dvinskikh, D., Kovalev, D., Stonyakin, F.:
  Accelerated methods for saddle-point problem.
\newblock Computational Mathematics and Mathematical Physics \textbf{60}(11),
  1787--1809 (2020)

\bibitem{bubeck2019near}
Bubeck, S., Jiang, Q., Lee, Y.T., Li, Y., Sidford, A.: Near-optimal method for
  highly smooth convex optimization.
\newblock In: Conference on Learning Theory, pp. 492--507 (2019)

\bibitem{carmon2019variance}
Carmon, Y., Jin, Y., Sidford, A., Tian, K.: Variance reduction for matrix
  games.
\newblock In: Advances in Neural Information Processing Systems, pp.
  11381--11392 (2019)

\bibitem{chambolle2011first-order}
Chambolle, A., Pock, T.: A first-order primal-dual algorithm for convex
  problems with applications to imaging.
\newblock Journal of Mathematical Imaging and Vision \textbf{40}(1), 120--145
  (2011)

\bibitem{chen2017accelerated}
Chen, Y., Lan, G., Ouyang, Y.: Accelerated schemes for a class of variational
  inequalities.
\newblock Mathematical Programming \textbf{165}(1), 113--149 (2017)

\bibitem{aspremont2021acceleration}
d'Aspremont, A., Scieur, D., Taylor, A.: Acceleration methods.
\newblock arXiv:2101.09545  (2021)

\bibitem{devolder2013exactness}
Devolder, O.: Exactness, inexactness and stochasticity in first-order methods
  for large-scale convex optimization.
\newblock Ph.D. thesis, PhD thesis, ICTEAM and CORE, Universit{\'e} Catholique
  de Louvain (2013)

\bibitem{devolder2014first}
Devolder, O., Glineur, F., Nesterov, Y.: First-order methods of smooth convex
  optimization with inexact oracle.
\newblock Mathematical Programming \textbf{146}(1), 37--75 (2014).
\newblock \doi{10.1007/s10107-013-0677-5}.
\newblock \urlprefix\url{http://dx.doi.org/10.1007/s10107-013-0677-5}

\bibitem{dvinskikh2020accelerated}
Dvinskikh, D., Kamzolov, D., Gasnikov, A., Dvurechensky, P., Pasechnyk, D.,
  Matykhin, V., Chernov, A.: Accelerated meta-algorithm for convex
  optimization.
\newblock arXiv preprint arXiv:2004.08691  (2020)

\bibitem{dvurechensky2015primal-dual}
Dvurechensky, P., Nesterov, Y., Spokoiny, V.: Primal-dual methods for solving
  infinite-dimensional games.
\newblock Journal of Optimization Theory and Applications \textbf{166}(1),
  23--51 (2015)

\bibitem{GasnikovTransport}
Gasnikov, A.: Searching equillibriums in large transport networks (2016)

\bibitem{gasnikov2016stochastic}
Gasnikov, A., Dvurechensky, P., Nesterov, Y.: Stochastic gradient methods with
  inexact oracle.
\newblock Proceedings of Moscow Institute of Physics and Technology
  \textbf{8}(1), 41--91 (2016).
\newblock In Russian, first appeared in arXiv:1411.4218

\bibitem{gladin2020solving}
Gladin, E., Kuruzov, I., Stonyakin, F., Pasechnyuk, D., Alkousa, M., Gasnikov,
  A.: Solving strongly convex-concave composite saddle point problems with a
  small dimension of one of the variables.
\newblock arXiv preprint arXiv:2010.02280  (2020)

\bibitem{gladin2021solving}
Gladin, E., Sadiev, A., Gasnikov, A., Dvurechensky, P., Beznosikov, A.,
  Alkousa, M.: Solving smooth min-min and min-max problems by mixed oracle
  algorithms.
\newblock arXiv:2103.00434  (2021)

\bibitem{grapiglia2020inexact}
Grapiglia, G.N., Nesterov, Y.: On inexact solution of auxiliary problems in
  tensor methods for convex optimization.
\newblock Optimization Methods and Software pp. 1--26 (2020)

\bibitem{hien2020inexact}
Hien, L.T.K., Zhao, R., Haskell, W.B.: An inexact primal-dual smoothing
  framework for large-scale non-bilinear saddle point problems (2020)

\bibitem{lowerbounds}
Ibrahim, A., Azizian, W., Gidel, G., Mitliagkas, I.: Linear lower bounds and
  conditioning of differentiable games.
\newblock In: International Conference on Machine Learning, pp. 4583--4593.
  PMLR (2020)

\bibitem{isaacs1999differential}
Isaacs, R.: Differential games: a mathematical theory with applications to
  warfare and pursuit, control and optimization.
\newblock Courier Corporation (1999)

\bibitem{lan2019lectures}
Lan, G.: Lectures on optimization. methods for machine learning.
\newblock H. Milton Stewart School of Industrial and Systems Engineering,
  Georgia Institute of Technology, Atlanta, GA  (2019)

\bibitem{lan2020first}
Lan, G.: First-order and Stochastic Optimization Methods for Machine Learning.
\newblock Springer (2020)

\bibitem{catalyst2015nips}
Lin, H., Mairal, J., Harchaoui, Z.: A universal catalyst for first-order
  optimization.
\newblock Proceedings of $29^{th}$ International conference Neural Information
  Processing Systems (NIPS)  (2015)

\bibitem{catalyst2017}
Lin, H., Mairal, J., Harchaoui, Z.: Catalyst acceleration for first-order
  convex optimization: from theory to practice.
\newblock arXiv:1712.05654  (2017)

\bibitem{pmlr-v125-lin20a}
Lin, T., Jin, C., Jordan, M.I.: Near-optimal algorithms for minimax
  optimization.
\newblock In: J.~Abernethy, S.~Agarwal (eds.) Proceedings of Thirty Third
  Conference on Learning Theory, \emph{Proceedings of Machine Learning
  Research}, vol. 125, pp. 2738--2779. PMLR (2020).
\newblock \urlprefix\url{http://proceedings.mlr.press/v125/lin20a.html}

\bibitem{monteiro2013accelerated}
Monteiro, R., Svaiter, B.: An accelerated hybrid proximal extragradient method
  for convex optimization and its implications to second-order methods.
\newblock SIAM Journal on Optimization \textbf{23}(2), 1092--1125 (2013).
\newblock \doi{10.1137/110833786}.
\newblock \urlprefix\url{https://doi.org/10.1137/110833786}

\bibitem{BSMF_1965__93__273_0}
Moreau, J.J.: Proximit\'e et dualit\'e dans un espace hilbertien.
\newblock Bulletin de la Soci\'et\'e Math\'ematique de France \textbf{93},
  273--299 (1965).
\newblock \doi{10.24033/bsmf.1625}.
\newblock \urlprefix\url{www.numdam.org/item/BSMF_1965__93__273_0/}

\bibitem{morgenstern1953theory}
Morgenstern, O., Von~Neumann, J.: Theory of games and economic behavior.
\newblock Princeton university press (1953)

\bibitem{morin2020sampling}
Morin, M., Giselsson, P.: Sampling and update frequencies in proximal variance
  reduced stochastic gradient methods.
\newblock arXiv preprint arXiv:2002.05545  (2020)

\bibitem{nash1950bargaining}
Nash~Jr, J.F.: The bargaining problem.
\newblock Econometrica: Journal of the econometric society pp. 155--162 (1950)

\bibitem{nemirovski2004prox}
Nemirovski, A.: Prox-method with rate of convergence $o(1/t)$ for variational
  inequalities with lipschitz continuous monotone operators and smooth
  convex-concave saddle point problems.
\newblock SIAM Journal on Optimization \textbf{15}(1), 229--251 (2004)

\bibitem{nemirovsky1983problem}
Nemirovsky, A., Yudin, D.: Problem Complexity and Method Efficiency in
  Optimization.
\newblock J. Wiley \& Sons, New York (1983)

\bibitem{nesterov2004}
Nesterov, Y.: Introductory Lectures on Convex Optimization: A Basic Course.
\newblock Springer (2004)

\bibitem{nesterov2005excessive}
Nesterov, Y.: Excessive gap technique in nonsmooth convex minimization.
\newblock SIAM Journal on Optimization \textbf{16}(1), 235--249 (2005)

\bibitem{nesterov2005smooth}
Nesterov, Y.: Smooth minimization of non-smooth functions.
\newblock Mathematical Programming \textbf{103}(1), 127--152 (2005)

\bibitem{nesterov2007dual}
Nesterov, Y.: Dual extrapolation and its applications to solving variational
  inequalities and related problems.
\newblock Mathematical Programming \textbf{109}(2-3), 319--344 (2007).
\newblock First appeared in 2003 as CORE discussion paper 2003/68

\bibitem{nesterov2013gradient}
Nesterov, Y.: Gradient methods for minimizing composite functions.
\newblock Mathematical Programming \textbf{140}(1), 125--161 (2013).
\newblock First appeared in 2007 as CORE discussion paper 2007/76

\bibitem{ostrovskii2020efficient}
Ostrovskii, D.M., Lowy, A., Razaviyayn, M.: Efficient search of first-order
  nash equilibria in nonconvex-concave smooth min-max problems.
\newblock arXiv preprint arXiv:2002.07919  (2020)

\bibitem{NIPS2016_1aa48fc4}
Palaniappan, B., Bach, F.: Stochastic variance reduction methods for
  saddle-point problems.
\newblock In: D.~Lee, M.~Sugiyama, U.~Luxburg, I.~Guyon, R.~Garnett (eds.)
  Advances in Neural Information Processing Systems, vol.~29. Curran
  Associates, Inc. (2016).
\newblock
  \urlprefix\url{https://proceedings.neurips.cc/paper/2016/file/1aa48fc4880bb0c9b8a3bf979d3b917e-Paper.pdf}

\bibitem{shalev-shwartz2014accelerated}
Shalev-Shwartz, S., Zhang, T.: Accelerated proximal stochastic dual coordinate
  ascent for regularized loss minimization.
\newblock In: E.P. Xing, T.~Jebara (eds.) Proceedings of the 31st International
  Conference on Machine Learning, \emph{Proceedings of Machine Learning
  Research}, vol.~32, pp. 64--72. PMLR, Bejing, China (2014).
\newblock
  \urlprefix\url{http://proceedings.mlr.press/v32/shalev-shwartz14.html}.
\newblock First appeared in arXiv:1309.2375

\bibitem{song2021variance}
Song, C., Wright, S.J., Diakonikolas, J.: Variance reduction via primal-dual
  accelerated dual averaging for nonsmooth convex finite-sums.
\newblock arXiv preprint arXiv:2102.13643  (2021)

\bibitem{NEURIPS2019_05d0abb9}
Thekumparampil, K.K., Jain, P., Netrapalli, P., Oh, S.: Efficient algorithms
  for smooth minimax optimization.
\newblock In: H.~Wallach, H.~Larochelle, A.~Beygelzimer, F.~d\textquotesingle
  Alch\'{e}-Buc, E.~Fox, R.~Garnett (eds.) Advances in Neural Information
  Processing Systems, vol.~32. Curran Associates, Inc. (2019).
\newblock
  \urlprefix\url{https://proceedings.neurips.cc/paper/2019/file/05d0abb9a864ae4981e933685b8b915c-Paper.pdf}

\bibitem{wang2020improved}
Wang, Y., Li, J.: Improved algorithms for convex-concave minimax optimization.
\newblock arXiv preprint arXiv:2006.06359  (2020)

\bibitem{xu2020unified}
Xu, Z., Zhang, H., Xu, Y., Lan, G.: A unified single-loop alternating gradient
  projection algorithm for nonconvex-concave and convex-nonconcave minimax
  problems.
\newblock arXiv preprint arXiv:2006.02032  (2020)

\bibitem{yang2020catalyst}
Yang, J., Zhang, S., Kiyavash, N., He, N.: A catalyst framework for minimax
  optimization.
\newblock Advances in Neural Information Processing Systems \textbf{33} (2020)

\bibitem{nesterov2006solving}
Yurii~Nesterov, L.S.: Solving strongly monotone variational and
  quasi-variational inequalities (2011).
\newblock \doi{10.3934/dcds.2011.31.1383}.
\newblock
  \urlprefix\url{http://aimsciences.org//article/id/c56b63d5-74a5-4546-96ab-dbffbff61c9c}

\bibitem{zhu2020accelerated}
Zhu, Y., Liu, D., Tran-Dinh, Q.: Accelerated primal-dual algorithms for a class
  of convex-concave saddle-point problems with non-bilinear coupling term.
\newblock arXiv preprint arXiv:2006.09263  (2020)

\end{thebibliography}






\appendix
\section{Proof of Theorem \ref{theoremCATDinexact}}\label{Appendix_A}


In this Appendix~\ref{Appendix_A} we rename the sequence of points $(x_k^{md}, x_k^t, x_k)$ (see listing of the Algorithm~\ref{alg:highorder_inexact}) to $(\tilde{x}_k, y_k, x_k)$.
We use the following definition to simplify calculations.
\begin{definition}
Let $(\varphi_{\delta, L_{\varphi}} (x), \nabla \varphi_{\delta, L_{\varphi}} (x))$ be a $(\delta, L_{\varphi})$ - oracle of function $\varphi$ at \pd{a} point $x$, then $\Omega_ {1,\delta, L_{\varphi}} \left (\varphi, z, x \right) $ is the following linear function of $z$:
\begin{equation}
    \Omega_{1, \delta, L_{\varphi}}\left(\varphi,x, z\right)=\varphi_{\delta, L_{\varphi}}(x)+ \langle \nabla \varphi_{\delta, L_{\varphi}} (x), z-x\rangle
\end{equation}
\end{definition}

To prove the Theorem \ref{theoremCATDinexact}, we need the following Theorem \ref{thm:MS}, which is based on Theorem 2.1 from \cite{bubeck2019near}.
\begin{theorem} \label{thm:MS}
Let $(y_k)_{k \geq 1}$~--- be a sequence in $\R^d$, and  $(\lambda_k)_{k \geq 1}$~--- a sequence in $\R_+$. Define $(a_k)_{k \geq 1}$ such that $\lambda_k A_k = a_k^2$ and $A_k = \sum_{i=1}^k a_i$. Define also for any $k\geq 0,x_k = x_0 - \sum_{i=1}^k a_i (\nabla \vp_{\delta, L_{\vp}}(y_i)+\nabla \psi_{\delta, L_{\psi}}(y_i))$ 
and $\tilde{x}_k := \frac{a_{k+1}}{A_{k+1}} x_{k} + \frac{A_k}{A_{k+1}} y_k$. Finally assume
if for some $\sigma \in [0,1]$ 
\begin{equation} \label{eq:igdrefined}
\|y_{k+1} - (\tilde{x}_k - \lambda_{k+1} (\nabla \vp_{\delta, L_{\vp}}(y_{k+1}) + \nabla \psi_{\delta, L_{\psi}}(y_{k+1})))\| \leq \sigma \cdot \|y_{k+1} - \tilde{x}_k\| \,,
\end{equation}
then one has for any $x \in \R^d$,
\begin{equation} \label{eq:rate}
F(y_k) - F(x) \leq \frac{ \|x - x_0\|^2}{2 A_k} + 2\left(\sum_{i = 1}^k A_{i} \right) \delta_2/A_k +\delta_1+ \left( \sum_{i=1}^{k-1} A_{i} \right)  \delta_1/A_k\,,
\end{equation}
\end{theorem}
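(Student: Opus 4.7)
My plan is to adapt the potential-function/estimating-sequence argument that underlies Monteiro--Svaiter acceleration (in the form carried out in \cite{bubeck2019near}) to the $(\delta,L)$-inexact-oracle setting. Introduce the aggregated quadratic model
\[
\phi_k(x) := \tfrac{1}{2}\|x - x_0\|^2 + \sum_{i=1}^{k} a_i\bigl[\Omega_{1,\delta,L_\vp}(\vp, y_i, x) + \Omega_{1,\delta,L_\psi}(\psi, y_i, x)\bigr],
\]
which is $1$-strongly convex and whose unique minimizer, by direct differentiation, is precisely $x_k = x_0 - \sum_{i=1}^k a_i g_i$ with $g_i := \nabla\vp_{\delta,L_\vp}(y_i) + \nabla\psi_{\delta,L_\psi}(y_i)$. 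Write $\phi_k^* := \phi_k(x_k)$. The two halves of \eqref{def:deltaL}, applied to $\vp$ and $\psi$ and summed, are the two ingredients the whole argument rests on: the lower half gives $F(x) \geq \Omega_{1,\delta,L_\vp}(\vp,y_i,x)+\Omega_{1,\delta,L_\psi}(\psi,y_i,x) - \delta_1$ for any $x$, while the upper half evaluated at $z=y_i$ (where the quadratic term vanishes) gives $F(y_i) \leq \vp_{\delta,L_\vp}(y_i) + \psi_{\delta,L_\psi}(y_i) + \delta_2$.

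The main work is an induction establishing
\[
A_k F(y_k) \;\leq\; \phi_k^* \;+\; 2\sum_{i=1}^{k} A_i\, \delta_2 \;+\; \sum_{i=1}^{k-1} A_i\, \delta_1,
\]
with $k=0$ trivial. For the step, I would use $\phi_k^* = \phi_{k-1}(x_k) + a_k[\Omega_{1,\delta,L_\vp}(\vp,y_k,x_k)+\Omega_{1,\delta,L_\psi}(\psi,y_k,x_k)]$ together with strong convexity of $\phi_{k-1}$, namely $\phi_{k-1}(x_k) \geq \phi_{k-1}^* + \tfrac{1}{2}\|x_k-x_{k-1}\|^2$, and then invoke the linearity-in-the-third-argument identity
\[
a_k \Omega_{1,\delta}(\cdot,y_k,x_{k-1}) + A_{k-1}\Omega_{1,\delta}(\cdot,y_k,y_{k-1}) \;=\; A_k\, \Omega_{1,\delta}(\cdot,y_k,\tilde{x}_{k-1}),
\]
which follows from the definition $\tilde{x}_{k-1} = \tfrac{a_k}{A_k}x_{k-1}+\tfrac{A_{k-1}}{A_k}y_{k-1}$. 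This rewrites the residual as a combination of $\langle g_k, \tilde{x}_{k-1}-y_k\rangle$, $\|x_k-x_{k-1}\|^2=a_k^2\|g_k\|^2$, and the accumulated $\delta_1,\delta_2$ terms. The Monteiro--Svaiter condition \eqref{eq:igdrefined} enters here: expanding the inequality $\|y_k-\tilde{x}_{k-1}+\lambda_k g_k\|^2 \leq \sigma^2\|y_k-\tilde{x}_{k-1}\|^2$ yields
\[
2\lambda_k \langle g_k, \tilde{x}_{k-1}-y_k\rangle \;\geq\; \lambda_k^2\|g_k\|^2 + (1-\sigma^2)\|y_k-\tilde{x}_{k-1}\|^2,
\]
and multiplying through by $A_k/(2\lambda_k)$ together with $\lambda_k A_k = a_k^2$ produces the clean cancellation of $\tfrac{1}{2}a_k^2\|g_k\|^2$ against $\tfrac{1}{2}\|x_k-x_{k-1}\|^2$ that is the heart of the exact MS analysis. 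The $\delta_2$ term enters once per step (weight $a_k$ promoted to $A_k$ after telescoping), accounting for the $2\sum_i A_i \delta_2$, while the $\delta_1$ term enters only when the inductive bound is invoked on the previous iterate, accounting for the $\sum_{i=1}^{k-1}A_i\delta_1$.

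To close, I would use the lower-bound half summed against the fixed reference $x$ to obtain $\phi_k^* \leq \phi_k(x) \leq \tfrac{1}{2}\|x-x_0\|^2 + A_k F(x) + A_k \delta_1$, substitute into the induction, and divide by $A_k$ to recover exactly \eqref{eq:rate}. The principal obstacle will be the bookkeeping in the inductive step: the two error types propagate along different patterns ($\delta_2$ via the smoothness side at each $y_i$, $\delta_1$ via the convexity side only at the reference point), and I will need to verify carefully that no $\delta$-dependent cross-term breaks the algebraic cancellation between $\tfrac{1}{2}\|x_k-x_{k-1}\|^2$ and $\tfrac{1}{2}a_k^2\|g_k\|^2$ on which the acceleration crucially depends.
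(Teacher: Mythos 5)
Your proposal is correct and follows essentially the same route as the paper's proof in Appendix A: the paper builds the identical aggregated estimating sequence $\xi_k$ (your $\phi_k$), identifies $x_k$ as its minimizer, runs the same induction bounding $A_kF(y_k)$ by $\xi_k(x_k)$ plus the accumulated $2\sum_i A_i\delta_2 + \sum_{i=1}^{k-1}A_i\delta_1$ errors, exploits the same convex-combination identity to introduce $\tilde{x}_{k-1}$, and closes by expanding the Monteiro--Svaiter condition with $\lambda_kA_k=a_k^2$ exactly as you describe. The only cosmetic difference is that the paper splits the induction across several lemmas (a general step for arbitrary $z_k$, then specialization to $z_k=y_k$), and note that the upper half of the oracle inequality at $z=y_i$ contributes $2\delta_2$ (one $\delta_2$ from each of $\vp$ and $\psi$), which is where the factor $2$ in $2\sum_iA_i\delta_2$ originates.
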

To prove this Theorem we introduce auxiliaries Lemmas based on lemmas 2.2-2.5 and 3.1 from \cite{bubeck2019near}.

Consider a linear combination of gradients:
\[x_k =x_0 - \sum_{i=1}^k a_i (\nabla \vp_{\delta, L_{\vp}}(y_i) + \nabla \psi_{\delta, L_{\psi}}(y_i))\]
where coefficients $(a_i)_{i\geq1} \geq 0$ and points $(y_i)_{i\geq1}$ is not defined yet. A key observation for such a linear combination of gradients is that it minimizes the approximate lower bound of $F$.

\begin{lemma} \label{lem:basic1}
Let $\xi_0(x) = \frac{1}{2} \|x-x_0\|^2$ and define by induction  $\xi_{k}(x) = \xi_{k-1}(x) + a_{k} \left( \Omega_{1,\delta, L_{\vp}}(\vp, y_{k}, x)+ \Omega_{1,\delta, L_{\psi}}(\psi, y_{k}, x) \right) =\xi_{k-1}(x) + a_{k} \Omega_{1,2\delta, L_{\vp}+L_{\psi}}(F, y_{k}, x)$. Then $x_k =x_0 - \sum_{i=1}^k a_i (\nabla \vp_{\delta, L_{\vp}}(y_i) + \nabla \psi_{\delta, L_{\psi}}(y_i))$ is the minimizer of $\xi_k$, and 
$\xi_k(x) \leq A_k F(x) + \frac{1}{2} \|x-x_0\|^2 +A_k \delta_1$, where $A_k = \sum_{i=1}^k a_i$. 
\end{lemma}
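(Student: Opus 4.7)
The plan is to prove the two claims separately by induction on $k$.

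First I would establish that $x_k$ is the unique minimizer of $\xi_k$. Observe by induction that $\xi_k(x)$ is a quadratic function of $x$, since $\xi_0$ is a strongly convex quadratic (with Hessian equal to the identity) and each increment $a_k \Omega_{1,\delta,L_{\vp}}(\vp, y_k, x) + a_k \Omega_{1,\delta,L_{\psi}}(\psi, y_k, x)$ is affine in $x$ (the gradients $\nabla\vp_{\delta,L_{\vp}}(y_k)$ and $\nabla\psi_{\delta,L_{\psi}}(y_k)$ do not depend on $x$). Consequently $\xi_k$ remains strongly convex with Hessian $I$, and its unique minimizer is characterized by $\nabla \xi_k(x) = 0$. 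A direct differentiation gives
\[
\nabla \xi_k(x) = (x-x_0) + \sum_{i=1}^{k} a_i \bigl(\nabla \vp_{\delta,L_{\vp}}(y_i) + \nabla \psi_{\delta,L_{\psi}}(y_i)\bigr),
\]
and setting this to zero yields exactly the formula for $x_k$ claimed in the lemma.

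Second I would prove the upper bound on $\xi_k(x)$ by induction on $k$. The base case $k=0$ is immediate from $\xi_0(x) = \tfrac12\|x-x_0\|^2$ and $A_0 = 0$. For the inductive step, the key ingredient is the left inequality in Definition~\ref{def:delta_L_oracle}, applied to both $\vp$ and $\psi$, which gives
\[
\Omega_{1,\delta,L_{\vp}}(\vp, y_k, x) \leq \vp(x) + \delta_1, \qquad \Omega_{1,\delta,L_{\psi}}(\psi, y_k, x) \leq \psi(x) + \delta_1,
\]
so that summing and using $F = \vp + \psi$ (together with the convention for the summed oracle exploited in the lemma statement) yields
\[
\Omega_{1,\delta,L_{\vp}}(\vp, y_k, x) + \Omega_{1,\delta,L_{\psi}}(\psi, y_k, x) \leq F(x) + \delta_1.
\]
Combining this with the inductive hypothesis $\xi_{k-1}(x) \leq A_{k-1} F(x) + \tfrac12\|x-x_0\|^2 + A_{k-1}\delta_1$ and the recursive definition of $\xi_k$ gives
\[
\xi_k(x) \leq A_{k-1}F(x) + \tfrac12\|x-x_0\|^2 + A_{k-1}\delta_1 + a_k F(x) + a_k\delta_1 = A_k F(x) + \tfrac12\|x-x_0\|^2 + A_k\delta_1,
\]
which closes the induction.

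Both steps are routine; the only subtle point is getting the $\delta_1$ bookkeeping right when adding the two inexact oracle inequalities and reconciling it with the $(2\delta, L_\vp + L_\psi)$-oracle notation for $F$. I do not anticipate this to be a genuine obstacle, only a matter of being careful with the convention in Definition~\ref{def:delta_L_oracle} on which oracle parameter plays the role of $\delta_1$ in the additive bound.
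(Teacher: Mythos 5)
Your proposal is correct and follows essentially the same route as the paper: the minimizer claim is obtained by the identical differentiate-and-set-to-zero computation on the strongly convex quadratic $\xi_k$, and the upper bound rests on the same oracle inequality $\Omega_{1,2\delta, L_{\vp}+L_{\psi}}(F, y_{k}, x)\leq F(x)+ \delta_1$, with your induction on $k$ being a trivial repackaging of the paper's direct summation of the unrolled expression for $\xi_k$. The $\delta_1$ bookkeeping you flag (literally summing the two individual oracle inequalities gives $F(x)+2\delta_1$ rather than $F(x)+\delta_1$) is handled no more carefully in the paper itself, which simply asserts the bound with a single $\delta_1$ for the combined $(2\delta, L_{\vp}+L_{\psi})$-oracle of $F$, so this is not a gap relative to the paper's own argument.
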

\begin{proof}
Since $\xi_k(x)$ is strongly convex and smooth then expression
\begin{equation}
    \nabla \xi_k (x)=0
\end{equation}
is the criterion of minimum.

The sequence $x_k$ is satisfied
\begin{align}
    \nabla \xi_k (x_k)=\nabla \left(\left[\sum_{i=1}^k a_{i} \Omega_{1,2\delta, L_{\vp}+L_{\psi}}(F, y_{k}, x)\right] + \frac{1}{2} \|x_k-x_0\|^2 \right)=\\
    =\left[\sum_{i=1}^k a_{i} \left( \nabla \vp_{\delta, L_{\vp}}(y_i) + \nabla \psi_{\delta, L_{\psi}}(y_i)\right) \right]+x_k-x_0 =0.
\end{align}
Therefore, $x_k$ is a minimizer of the function $\xi_k$. Let us prove now that
\begin{equation}
    \Omega_{1,2\delta, L_{\vp}+L_{\psi}}(F, y_{k}, x)\leq F(x)+ \delta_1.
\end{equation}
From the definition of $\Omega_{1,2\delta, L_{\vp}+L_{\psi}}(F, y_{k}, x)$ we obtain
\begin{equation}
    \Omega_{1,2\delta, L_{\vp}+L_{\psi}}(F, y_{k}, x)=F_{2\delta,  L_{\vp}+L_{\psi}}(y_{i})+ \la \nabla F_{2\delta,  L_{\vp}+L_{\psi}} (y_i), x-y_i \ra \leq F(x)+\delta_1.
\end{equation}
Using $\xi_k(x)=\left[\sum_{i=1}^k a_{i} \Omega_{1,2\delta, L_{\vp}+L_{\psi}}(F, y_{k}, x)\right] + \frac{1}{2} \|x-x_0\|^2$ we obtain the statement of the theorem.
\qed 
\end{proof}
The next idea is to produce a control sequence $(z_k)_{k\geq1}$ demonstrating that $\xi_k$ is not too far below $A_k F$. From this we can directly yield a convergence rate for $z_k$.

\begin{lemma} \label{lem:basic2}
Let $(z_k)$ be a sequence such that 
\begin{equation} \label{eq:tosatisfy}
\xi_k(x_k) - A_k F(z_k) \geq - 2\left(\sum_{i = 1}^k A_{i} \right) \delta_2 -\left(\sum_{i = 1}^{k-1} A_{i} \right) \delta_1\,.
\end{equation}
Then one has for any $x$,
\begin{equation} \label{eq:tosatisfy2}
F(z_k) \leq F(x) + \frac{\|x-x_0\|^2}{2 A_k} + 2\left(\sum_{i = 1}^k A_{i} \right) \delta_2/A_k +\delta_1+ \left(\sum_{i = 1}^{k-1} A_{i} \right) \delta_1/A_k \,.
\end{equation}
\end{lemma}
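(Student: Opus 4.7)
The plan is to combine the hypothesis of the lemma with the upper bound on $\xi_k$ from Lemma~\ref{lem:basic1} via the minimality of $x_k$. First, I would use the fact established in Lemma~\ref{lem:basic1} that $x_k$ is the minimizer of $\xi_k$, which gives $\xi_k(x_k) \le \xi_k(x)$ for every $x \in \R^d$.

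Second, I would substitute the upper bound
\[
\xi_k(x) \;\le\; A_k F(x) + \tfrac{1}{2}\|x-x_0\|^2 + A_k \delta_1,
\]
also provided by Lemma~\ref{lem:basic1}, into this inequality, obtaining $\xi_k(x_k) \le A_k F(x) + \tfrac{1}{2}\|x-x_0\|^2 + A_k \delta_1$.

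Third, I would rearrange the hypothesis \eqref{eq:tosatisfy} to read
\[
A_k F(z_k) \;\le\; \xi_k(x_k) + 2\Bigl(\sum_{i=1}^k A_i\Bigr)\delta_2 + \Bigl(\sum_{i=1}^{k-1} A_i\Bigr)\delta_1,
\]
and then chain the two inequalities:
\[
A_k F(z_k) \;\le\; A_k F(x) + \tfrac{1}{2}\|x-x_0\|^2 + A_k \delta_1 + 2\Bigl(\sum_{i=1}^k A_i\Bigr)\delta_2 + \Bigl(\sum_{i=1}^{k-1} A_i\Bigr)\delta_1.
\]
Dividing both sides by $A_k$ yields exactly \eqref{eq:tosatisfy2}. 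There is no real obstacle here: the whole content is bookkeeping, since Lemma~\ref{lem:basic1} has already done the work of relating $\xi_k$ to $F$, and the hypothesis of the current lemma is precisely the missing piece connecting $\xi_k(x_k)$ to $F(z_k)$. The only thing to be slightly careful about is keeping track of which of the error terms ($\delta_1$ versus $\delta_2$) enters from Lemma~\ref{lem:basic1} and which enters from the hypothesis, so that the final bound matches the claimed constants.
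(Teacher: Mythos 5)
Your proposal is correct and follows exactly the paper's own argument: the paper likewise chains the hypothesis $A_kF(z_k)\le \xi_k(x_k)+2(\sum_{i=1}^k A_i)\delta_2+(\sum_{i=1}^{k-1}A_i)\delta_1$ with $\xi_k(x_k)\le\xi_k(x)$ (minimality of $x_k$ from Lemma~\ref{lem:basic1}) and the bound $\xi_k(x)\le A_kF(x)+\tfrac12\|x-x_0\|^2+A_k\delta_1$, then divides by $A_k$. Nothing is missing.
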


\begin{proof}
Using Lemma \ref{lem:basic1} we obtain 
\begin{align}
    A_k F(z_k) \leq \xi_k(x_k) + 2\left(\sum_{i = 1}^k A_{i} \right) \delta_2+\left(\sum_{i = 1}^{k-1} A_{i} \right) \delta_1 \leq \xi_k(x) + 2\left(\sum_{i = 1}^k A_{i} \right) \delta_2 +\left(\sum_{i = 1}^{k-1} A_{i} \right) \delta_1\\
    \leq A_k F(x) +\frac{1}{2}\|x-x_0\|^2 + 2\left(\sum_{i = 1}^k A_{i} \right) \delta_2 +\left(\sum_{i = 1}^{k-1} A_{i} \right) \delta_1+A_k \delta_1\,.
\end{align}
\qed 
\end{proof}
Our aim now to get sequences $(a_k, y_k, z_k)$, satisfying  \eqref{eq:tosatisfy}.

\begin{lemma} \label{lem:basic3}
One has for any $x,z_k \in \mathbb{R}^d$ and $k\in \mathbb{N}$
\begin{align*}
& \xi_{k+1}(x) - A_{k+1} F(y_{k+1}) - (\xi_k(x_k) - A_k F(z_k)) \\
& \geq A_{k+1} \la \nabla \vp_{\delta, L_{\vp}}(y_{k+1}) +\nabla \psi_{\delta, L_{\psi}}(y_{k+1}),\frac{a_{k+1}}{A_{k+1}} x + \frac{A_k}{A_{k+1}} z_k - y_{k+1}\ra + \frac{1}{2} \|x -x_k\|^2-2A_{k+1} \delta_2- A_{k}  \delta_1\,.
\end{align*}
\end{lemma}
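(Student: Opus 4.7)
The plan is to assemble three ingredients: the recursive definition of $\xi_{k+1}$ from Lemma~\ref{lem:basic1}, the 1-strong convexity of $\xi_k$ at its minimizer $x_k$, and the two one-sided oracle inequalities from Definition~\ref{def:delta_L_oracle}. First I would write, using the recursion,
\[
\xi_{k+1}(x) - \xi_k(x_k) = \bigl[\xi_k(x) - \xi_k(x_k)\bigr] + a_{k+1}\bigl[\vp_{\delta,L_\vp}(y_{k+1}) + \psi_{\delta,L_\psi}(y_{k+1}) + \la \nabla\vp_{\delta,L_\vp}(y_{k+1}) + \nabla\psi_{\delta,L_\psi}(y_{k+1}),\; x - y_{k+1}\ra\bigr].
\]
Because $\xi_k$ is the sum of the 1-strongly convex $\xi_0(x) = \tfrac{1}{2}\|x - x_0\|^2$ and affine functions of $x$, it is itself 1-strongly convex, and Lemma~\ref{lem:basic1} identifies its minimizer as $x_k$, so $\xi_k(x) - \xi_k(x_k) \geq \tfrac{1}{2}\|x - x_k\|^2$. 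This is exactly the quadratic term in the target bound.

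Next I would subtract $A_{k+1}F(y_{k+1})$ and add $A_k F(z_k)$, use $A_{k+1} = A_k + a_{k+1}$, and denote $F_{2\delta,L}(y_{k+1}) := \vp_{\delta,L_\vp}(y_{k+1}) + \psi_{\delta,L_\psi}(y_{k+1})$ and likewise for the sum of gradients. This rearrangement converts the non-quadratic part of the remainder into
\[
a_{k+1}\bigl[F_{2\delta,L}(y_{k+1}) - F(y_{k+1})\bigr] + A_k\bigl[F(z_k) - F(y_{k+1})\bigr] + a_{k+1}\la \nabla F_{2\delta,L}(y_{k+1}),\; x - y_{k+1}\ra.
\]
For the first bracket, the upper half of Definition~\ref{def:delta_L_oracle} applied at $z = x = y_{k+1}$ separately to $\vp$ and $\psi$ gives $F_{2\delta,L}(y_{k+1}) \geq F(y_{k+1}) - 2\delta_2$, hence this term is at least $-2a_{k+1}\delta_2 \geq -2A_{k+1}\delta_2$. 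For the second bracket, the lower half of Definition~\ref{def:delta_L_oracle} applied at $x = y_{k+1}, z = z_k$ to each of $\vp$ and $\psi$ and summed produces $F(z_k) \geq F_{2\delta,L}(y_{k+1}) + \la \nabla F_{2\delta,L}(y_{k+1}), z_k - y_{k+1}\ra - 2\delta_1$; combined with $F_{2\delta,L}(y_{k+1}) - F(y_{k+1}) \geq -2\delta_2$ (already used above and absorbed into the $-2A_{k+1}\delta_2$ piece via $A_k + a_{k+1} = A_{k+1}$), this bracket contributes at least $A_k \la \nabla F_{2\delta,L}(y_{k+1}), z_k - y_{k+1}\ra - A_k\cdot\text{const}\cdot\delta_1$.

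To finish, I would merge the two gradient-inner-product terms via the identity
\[
\tfrac{a_{k+1}}{A_{k+1}}(x - y_{k+1}) + \tfrac{A_k}{A_{k+1}}(z_k - y_{k+1}) = \tfrac{a_{k+1}}{A_{k+1}}x + \tfrac{A_k}{A_{k+1}}z_k - y_{k+1},
\]
which holds because $a_{k+1} + A_k = A_{k+1}$. Multiplying through by $A_{k+1}$ recovers exactly the weighted inner product in the statement. The routine obstacle is careful bookkeeping of the $\delta_1$ and $\delta_2$ slacks, taking care to use the upper oracle inequality (yielding $\delta_2$) only to compare $F_{2\delta,L}(y_{k+1})$ with $F(y_{k+1})$, and the lower oracle inequality (yielding $\delta_1$) only to pass from $F(z_k)$ to the first-order model at $y_{k+1}$; the rest is linear algebra in $a_{k+1}, A_k, A_{k+1}$.
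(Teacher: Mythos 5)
Your proposal is correct and follows essentially the same route as the paper's proof: both rest on the identity $\xi_{k+1}(x)-\xi_k(x_k)=a_{k+1}\,\Omega_{1,2\delta,L_{\vp}+L_{\psi}}(F,y_{k+1},x)+\tfrac12\|x-x_k\|^2$, the upper oracle inequality at $y_{k+1}$ (producing the $\delta_2$ term), the lower oracle inequality comparing $F(z_k)$ with the linearization at $y_{k+1}$ (producing the $\delta_1$ term), and the identity $a_{k+1}+A_k=A_{k+1}$ to merge the two inner products into the single weighted one. The only cosmetic difference is that you split the function values $-A_{k+1}F(y_{k+1})+A_kF(z_k)$ directly rather than splitting $a_{k+1}\Omega=A_{k+1}\Omega-A_k\Omega$ as the paper does, and your honest accounting yields $2A_k\delta_1$ where the paper records $A_k\delta_1$ in its bound $\Omega_{1,2\delta,L_{\vp}+L_{\psi}}(F,y_{k+1},z_k)\leq F(z_k)+\delta_1$ — the same factor-of-two bookkeeping looseness is already present in the paper's own proof, so this does not count against you.
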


\begin{proof}
Firstly from $H( \xi_k) = I$ using that $x_k$ is a minimizer of $\xi_k(x)$ we get 
\[
\xi_k(x) = \xi_k(x_k) + \frac{1}{2} \|x- x_k\|^2,\] 
and
\[ \xi_{k+1}(x) = \xi_k(x_k) + \frac{1}{2} \|x-x_k\|^2 + a_{k+1} \Omega_{1, 2\delta, L_{\vp}+L_{\psi}} (F, y_{k+1}, x) \,,
\]
we can rewrite this as follows
\begin{equation} \label{eq:ind1}
\xi_{k+1}(x) - \xi_k(x_k) = a_{k+1} \Omega_{1, 2\delta, L_{\vp}+L_{\psi}}(F, y_{k+1}, x) + \frac{1}{2} \|x-x_k\|^2 \,.
\end{equation}
Now using \eqref{def:deltaL}:
\begin{equation} \label{eq:lem3}
    \Omega_{1,2\delta, L_{\vp}+L_{\psi}}(F, y_{k+1}, z_k)=F_{2\delta, L_{\vp}+L_{\psi}}(y_{k+1})+ \la \nabla F_{2\delta, L_{\vp}+L_{\psi}} (y_{k+1}), 
    z_k-y_{k+1}\ra \leq F(z_k)+  \delta_1
\end{equation}
 we obtain:

\begin{eqnarray*}
&&a_{k+1}\Omega_{1, 2\delta, L_{\vp}+L_{\psi}}(F, y_{k+1}, x)  =  A_{k+1} \Omega_{1,2\delta, L_{\vp}+L_{\psi}}(F, y_{k+1}, x)  \\
&-& A_k \Omega_{1, 2\delta, L_{\vp}+L_{\psi}}(F, y_{k+1}, x) =  A_{k+1} \Omega_{1, 2\delta, L_{\vp}+L_{\psi}}(F, y_{k+1}, x)   \\
 &-& A_k \la \nabla F_{2\delta, L_{\vp}+L_{\psi}}(y_{k+1}), x - z_k \ra - A_k \Omega_{1, 2\delta, L_{\vp}+L_{\psi}}(F, y_{k+1}, z_k)  \\
 & = & A_{k+1} \Omega_{1, 2\delta, L_{\vp}+L_{\psi}}\left(F, y_{k+1}, x - \frac{A_k}{A_{k+1}} (x - z_k) \right )-  A_k \Omega_{1, 2\delta, L_{\vp}+L_{\psi}}(F, y_{k+1}, z_k)  \\
 &=& A_{k+1} F_{2\delta, L_{\vp}+L_{\psi}}(y_{k+1})+  A_{k+1} \la \nabla F_{2\delta, L_{\vp}+L_{\psi}}(y_{k+1}),\left( x - \frac{A_k}{A_{k+1}} (x - z_k)\right) - y_{k+1}  \ra  \\
 &-& A_k \Omega_{1, 2\delta, L_{\vp}+L_{\psi}}(F, y_{k+1}, z_k) \stackrel{\eqref{eq:lem3}}{\geq}  A_{k+1} F_{2\delta, L_{\vp}+L_{\psi}}(y_{k+1}) - A_k F(z_k)- A_{k}  \delta_1\\
 &+ &A_{k+1} \la \nabla \vp_{\delta, L_{\vp}}(y_{k+1})+\nabla \psi_{\delta, L_{\psi}}(y_{k+1}),\frac{a_{k+1}}{A_{k+1}} x + \frac{A_k}{A_{k+1}} z_k - y_{k+1} \ra\\
  &  \stackrel{\eqref{def:deltaL}}{\geq}   & A_{k+1} F(y_{k+1})-2{A_{k+1} \delta_2} - A_k F(z_k)- A_{k}  \delta_1\\
 &+ &A_{k+1}  \la \nabla \vp_{\delta, L_{\vp}}(y_{k+1})+\nabla \psi_{\delta, L_{\psi}}(y_{k+1}),\frac{a_{k+1}}{A_{k+1}} x + \frac{A_k}{A_{k+1}} z_k - y_{k+1} \ra
 \,,
\end{eqnarray*}
which concludes the proof.
\qed 
 

\end{proof}

\begin{lemma} \label{lem:basic4}
Denoting  
\begin{equation}\label{eq:lambda}
    \lambda_{k+1} := \frac{a_{k+1}^2}{A_{k+1}}
\end{equation}
 and $\tilde{x}_k := \frac{a_{k+1}}{A_{k+1}} x_{k} + \frac{A_k}{A_{k+1}} y_k$, one has:
\begin{align*}
& \xi_{k+1}(x_{k+1}) - A_{k+1} F(y_{k+1}) - (\xi_k(x_k) - A_k F(y_k)) \geq \\
& \frac{A_{k+1}}{2 \lambda_{k+1}} \bigg( \|y_{k+1} - \tilde{x}_k\|^2 - \|y_{k+1} - (\tilde{x}_k - \lambda_{k+1} (\nabla \vp_{\delta, L_{\vp}}(y_{k+1}))+\nabla \psi_{\delta, L_{\psi}}(y_{k+1})) \|^2 \bigg) - 2A_{k+1} \delta_2  - A_{k}  \delta_1\,.
\end{align*}
In particular, we have in light of \eqref{eq:igdrefined}
$$\xi_{k}(x_{k})-A_{k}F(y_{k})\geq\frac{1-\sigma^{2}}{2}\sum_{i=1}^{k}\frac{A_{i}}{\lambda_{i}}\|y_{i}-\tilde{x}_{i-1}\|^{2} - 2\left(\sum_{i = 1}^k A_{i} \right) \delta_2 - \left( \sum_{i=1}^{k-1} A_{i} \right)  \delta_1.$$
\end{lemma}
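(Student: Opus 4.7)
The plan is to derive the first displayed inequality as a direct algebraic consequence of Lemma~\ref{lem:basic3} by specializing the free parameters to $x = x_{k+1}$ and $z_k = y_k$. First I would observe that by the definition of $x_{k+1}$ in Algorithm~\ref{alg:highorder_inexact}, one has $x_{k+1} - x_k = -a_{k+1}\bigl(\nabla\vp_{\delta,L_\vp}(y_{k+1}) + \nabla\psi_{\delta,L_\psi}(y_{k+1})\bigr)$. Consequently, using $\lambda_{k+1} = a_{k+1}^2/A_{k+1}$, the convex combination that appears inside the inner product in Lemma~\ref{lem:basic3} can be rewritten as
\[
\tfrac{a_{k+1}}{A_{k+1}}x_{k+1} + \tfrac{A_k}{A_{k+1}}y_k = \tilde{x}_k + \tfrac{a_{k+1}}{A_{k+1}}(x_{k+1}-x_k) = \tilde{x}_k - \lambda_{k+1}\bigl(\nabla\vp_{\delta,L_\vp}(y_{k+1}) + \nabla\psi_{\delta,L_\psi}(y_{k+1})\bigr),
\]
while the quadratic term satisfies $\tfrac{1}{2}\|x_{k+1}-x_k\|^2 = \tfrac{1}{2}\lambda_{k+1}A_{k+1}\|\nabla\vp_{\delta,L_\vp}(y_{k+1}) + \nabla\psi_{\delta,L_\psi}(y_{k+1})\|^2$.

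Next I would apply the elementary identity $\|u\|^2 - \|u+v\|^2 = -2\langle u,v\rangle - \|v\|^2$ with $u = y_{k+1}-\tilde{x}_k$ and $v = \lambda_{k+1}(\nabla\vp_{\delta,L_\vp}(y_{k+1})+\nabla\psi_{\delta,L_\psi}(y_{k+1}))$ to recognize that the combination of the inner-product term and the $\tfrac{1}{2}\|x_{k+1}-x_k\|^2$ term produced by Lemma~\ref{lem:basic3} is exactly
\[
\tfrac{A_{k+1}}{2\lambda_{k+1}}\Bigl(\|y_{k+1}-\tilde{x}_k\|^2 - \bigl\|y_{k+1} - \bigl(\tilde{x}_k - \lambda_{k+1}(\nabla\vp_{\delta,L_\vp}(y_{k+1})+\nabla\psi_{\delta,L_\psi}(y_{k+1}))\bigr)\bigr\|^2\Bigr).
\]
The $-2A_{k+1}\delta_2 - A_k\delta_1$ error terms pass through unchanged from Lemma~\ref{lem:basic3}, giving the first claim.

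For the second claim I would apply the assumption \eqref{eq:igdrefined}, which yields $\|y_{k+1}-\tilde{x}_k\|^2 - \|y_{k+1}-(\tilde{x}_k - \lambda_{k+1}(\nabla\vp_{\delta,L_\vp}(y_{k+1})+\nabla\psi_{\delta,L_\psi}(y_{k+1})))\|^2 \geq (1-\sigma^2)\|y_{k+1}-\tilde{x}_k\|^2$. Telescoping the resulting single-step inequality from $i=0$ to $i=k-1$ and using $\xi_0(x_0) = 0$, $A_0 = 0$, $F(y_0) = F(y_0)$ (the initial term drops since $A_0 F(y_0) = 0$) gives the stated lower bound on $\xi_k(x_k) - A_k F(y_k)$.

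The only real obstacle is careful bookkeeping of the two error terms when telescoping: the per-step error $-A_i\delta_1$ accumulated when passing from step $i$ to step $i+1$ sums to $-\sum_{i=0}^{k-1}A_i\delta_1 = -\bigl(\sum_{i=1}^{k-1}A_i\bigr)\delta_1$ because $A_0 = 0$, while the per-step error $-2A_{i+1}\delta_2$ sums to $-2\bigl(\sum_{i=1}^{k}A_i\bigr)\delta_2$. This matches the constants stated in the lemma and is routine but must be tracked precisely.
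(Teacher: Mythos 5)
Your proposal is correct and follows essentially the same route as the paper: both specialize Lemma~\ref{lem:basic3} to $x = x_{k+1}$, $z_k = y_k$, identify the resulting inner-product-plus-quadratic expression with the prox-difference form (the paper does this by minimizing the quadratic over $x$, whose minimizer is exactly $x_{k+1} = x_k - a_{k+1}(\nabla\vp_{\delta,L_\vp}(y_{k+1})+\nabla\psi_{\delta,L_\psi}(y_{k+1}))$, so your direct substitution via the update rule and the polarization identity is the same computation), and then telescope using $A_0=0$ and $\xi_0(x_0)=0$. Your bookkeeping of the $\delta_1$ and $\delta_2$ error sums also matches the paper's constants.
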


\begin{proof}
We apply Lemma \ref{lem:basic3} with $z_k = y_k$ and $x=x_{k+1}$ 
, and 
note that (with $\tilde{\zeta} := \frac{a_{k+1}}{A_{k+1}} x + \frac{A_k}{A_{k+1}} y_k$): 
\begin{align*}
& \la \nabla \vp_{\delta, L_{\vp}}(y_{k+1})+ \nabla \psi_{\delta, L_{\psi}}(y_{k+1}), \frac{a_{k+1}}{A_{k+1}} x + \frac{A_k}{A_{k+1}} y_k - y_{k+1} \ra  + \frac{1}{2 A_{k+1}} \|x - x_k\|^2 \\
& = \la \nabla \vp_{\delta, L_{\vp}}(y_{k+1})+\nabla \psi_{\delta, L_{\psi}}(y_{k+1}), \tilde{\zeta} - y_{k+1}\ra  + \frac{1}{2 A_{k+1}} \left\|\frac{A_{k+1}}{a_{k+1}} \left(\tilde{\zeta} - \frac{A_k}{A_{k+1}} y_k \right) - x_k \right\|^2 \\
& = \la \nabla \vp_{\delta, L_{\vp}}(y_{k+1})+\nabla \psi_{\delta, L_{\psi}} (y_{k+1}), \tilde{\zeta} - y_{k+1} \ra + \frac{A_{k+1}}{2 a_{k+1}^2} \left\|\tilde{\zeta} - \left(\frac{a_{k+1}}{A_{k+1}} x_k + \frac{A_k}{A_{k+1}} y_k \right) \right\|^2 \,.
\end{align*}

This yields, using \eqref{eq:lambda}:
\begin{align*}
& \xi_{k+1}(x_{k+1}) - A_{k+1} F(y_{k+1}) - (\xi_k(x_k) - A_k F(y_k)) \\
& \geq A_{k+1} \cdot  \la \nabla \vp_{\delta, L_{\vp}}(y_{k+1})+\nabla \psi_{\delta, L_{\psi}}(y_{k+1})), \tilde{\zeta} - y_{k+1}) \ra + \frac{A_{k+1}}{2 \lambda_{k+1}} \|\tilde{\zeta} - \tilde{x}_k\|^2  -2 A_{k+1} \delta_2 - A_{k}  \delta_1\\
& \geq A_{k+1} \cdot \min_{x \in \R^d} \left\{ \la \nabla \vp_{\delta, L_{\vp}}(y_{k+1})+\nabla \psi_{\delta, L_{\psi}}(y_{k+1}), x - y_{k+1}\ra  + \frac{1}{2 \lambda_{k+1}} \|x - \tilde{x}_k\|^2 \right\} -2 A_{k+1} \delta_2 - A_{k}  \delta_1\,.
\end{align*}
The value of the minimum is easy to compute. Due to the strong convexity of the minimized function and its continuous differentiability, achieving a minimum is equivalent to the condition
\begin{align*}
    & 0=\nabla \left[ \la \nabla \vp_{\delta, L_{\vp}}(y_{k+1})+\nabla \psi_{\delta,L_{\psi}}(y_{k+1}), x - y_{k+1} \ra + \frac{1}{2 \lambda_{k+1}} \|x - \tilde{x}_k\|^2\right]=\\
    & =(\nabla \vp_{\delta, L_{\vp}}(y_{k+1})+\nabla \psi_{\delta, L_{\psi}}(y_{k+1})) + \frac{1}{ \lambda_{k+1}} \left( x - \tilde{x}_k\right) 
\end{align*}
Then
\begin{align*}
    x_{*}= \tilde{x}_k-\lambda_{k+1}(\nabla \vp_{\delta, L_{\vp}}(y_{k+1})+\nabla \psi_{\delta, L_{\psi}}(y_{k+1})) 
\end{align*}
Substituting into the last inequality we obtain the statement of the theorem.
\qed 

\end{proof}
\begin{proof} \textbf{of the Theorem \ref{thm:MS}}

Using Lemma \ref{lem:basic4} we get
\begin{align*}
  \xi_{k}(x_{k})-A_{k}F(y_{k})\geq\frac{1-\sigma^{2}}{2}\sum_{i=1}^{k}\frac{A_{i}}{\lambda_{i}}\|y_{i}-\tilde{x}_{i-1}\|^{2} - 2\left(\sum_{i = 1}^k A_{i} \right) \delta_2 - \left( \sum_{i=1}^{k-1} A_{i} \right)  \delta_1\\
  \geq - 2\left(\sum_{i = 1}^k A_{i} \right) \delta_2 - \left( \sum_{i=1}^{k-1} A_{i} \right)  \delta_1.  
\end{align*}
Applying Lemma \ref{lem:basic2} for $z_k=y_k$ one has for any $ x \in \mathbb{R}^d$:
\begin{equation} 
F(y_k) - F(x) \leq \frac{ \|x-x_0\|^2}{2 A_k} + 2\left(\sum_{i = 1}^k A_{i} \right) \delta_2 /A_k +\delta_1+ \left( \sum_{i=1}^{k-1} A_{i} \right)  \delta_1/A_k \,,
\end{equation}
\qed 
\end{proof}
that conclude the proof.


Now one will formulate the sufficient condition \eqref{eq:prox_step_inexact_crit} for the accuracy of solving auxiliary problems \eqref{prox_step_inexact}.
Let us assume, that auxiliary problems \eqref{prox_step_inexact} can not be solved exactly. Let the algorithm only have an inaccurate solution ${y}_{k+1}$ satisfying 
\begin{align}
    \eqref{eq:prox_step_inexact_crit}:&\|\nabla \left( \Omega_{1,\delta,L_{\vp}}\left(\vp, \tilde{x}_{k}, {y}_{k+1}\right)+\frac{H}{2}\left\|{y}_{k+1}-\tilde{x}_{k}\right\|^2 \right)+\nabla \psi_{\delta,L_{\psi}}\left({y}_{k+1}\right)\|\nonumber\\
    &\leq \frac{H}{4} \|y_{k+1} - \tilde{x}_k\|-2\sqrt{2\delta_2 L_{\vp}} \nonumber 
\end{align}
in this case:

\begin{lemma} \label{lem:controlstepsize}
Assume that $\vp(x)$ has $(\delta,L_{\vp})$~-oracle, $\psi(x)$ has $(\delta,L_{\psi})$~-oracle and the auxiliary subproblem \eqref{prox_step_inexact} is solved inexactly in such a way that the inequality \eqref{eq:prox_step_inexact_crit} holds. If 
$$H\geq 2L_{\vp}$$ then equation \eqref{eq:igdrefined} holds true with $\sigma = 7/8$ for \eqref{prox_step_inexact}. In the case $p = 1$ one can consider $\lambda_{k+1} = \lambda =  \frac{1}{2H}$.
\end{lemma}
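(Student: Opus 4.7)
The target inequality \eqref{eq:igdrefined} with $\lambda_{k+1}=1/(2H)$ and $\sigma=7/8$ is, after multiplying through by $2H$, equivalent to
\[
\bigl\|2H(y_{k+1}-\tilde x_k) + \nabla\vp_{\delta,L_{\vp}}(y_{k+1}) + \nabla\psi_{\delta,L_{\psi}}(y_{k+1})\bigr\| \;\le\; \tfrac{7H}{4}\|y_{k+1}-\tilde x_k\| .
\]
My plan is to obtain this by splitting the left-hand side into two pieces and bounding each. Write
\[
2H(y_{k+1}-\tilde x_k) + \nabla\vp_{\delta,L_{\vp}}(y_{k+1}) + \nabla\psi_{\delta,L_{\psi}}(y_{k+1}) \;=\; A + B ,
\]
where $A = H(y_{k+1}-\tilde x_k) + \nabla\vp_{\delta,L_{\vp}}(\tilde x_k) + \nabla\psi_{\delta,L_{\psi}}(y_{k+1})$ is exactly the quantity controlled by the stopping criterion \eqref{eq:prox_step_inexact_crit}, and $B = H(y_{k+1}-\tilde x_k) + \nabla\vp_{\delta,L_{\vp}}(y_{k+1}) - \nabla\vp_{\delta,L_{\vp}}(\tilde x_k)$ is the piece that must be estimated by the smoothness of the inexact oracle of $\vp$.

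The key step is an inexact gradient-Lipschitz bound of the form
\[
\bigl\|\nabla\vp_{\delta,L_{\vp}}(y_{k+1}) - \nabla\vp_{\delta,L_{\vp}}(\tilde x_k)\bigr\| \;\le\; L_{\vp}\,\|y_{k+1}-\tilde x_k\| + 2\sqrt{2 L_{\vp}\,\delta_2}.
\]
This is the standard consequence of Definition \ref{def:delta_L_oracle}: combining the upper and lower oracle inequalities at $\tilde x_k$ and $y_{k+1}$ yields the cocoercivity-type estimate $\tfrac{1}{2L_{\vp}}\|\nabla\vp_{\delta,L_{\vp}}(y_{k+1})-\nabla\vp_{\delta,L_{\vp}}(\tilde x_k)\|^2 \le \tfrac{L_{\vp}}{2}\|y_{k+1}-\tilde x_k\|^2 + 2\delta_2$, and then $\sqrt{a+b}\le\sqrt a+\sqrt b$ gives the displayed bound. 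Using the triangle inequality and then the assumption $H\ge 2L_{\vp}$ (hence $L_{\vp}\le H/2$), I obtain
\[
\|B\| \;\le\; (H+L_{\vp})\|y_{k+1}-\tilde x_k\| + 2\sqrt{2L_{\vp}\delta_2} \;\le\; \tfrac{3H}{2}\|y_{k+1}-\tilde x_k\| + 2\sqrt{2L_{\vp}\delta_2}.
\]

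Combining with the stopping criterion $\|A\|\le \tfrac{H}{4}\|y_{k+1}-\tilde x_k\| - 2\sqrt{2\delta_2 L_{\vp}}$ via the triangle inequality, the $\sqrt{\delta_2 L_{\vp}}$ terms cancel exactly, leaving
\[
\|A+B\| \;\le\; \Bigl(\tfrac{H}{4}+\tfrac{3H}{2}\Bigr)\|y_{k+1}-\tilde x_k\| \;=\; \tfrac{7H}{4}\|y_{k+1}-\tilde x_k\|,
\]
which is precisely \eqref{eq:igdrefined} with $\sigma=7/8$ and $\lambda_{k+1}=1/(2H)$. The only non-routine point is the inexact gradient-Lipschitz estimate; the role of the term $-2\sqrt{2\delta_2 L_{\vp}}$ in the stopping criterion is to absorb exactly the oracle-error slack generated by that estimate, which motivates the precise constant in \eqref{eq:prox_step_inexact_crit}.
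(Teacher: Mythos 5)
Your proof is correct and is essentially the paper's own argument: after rescaling by $2H$, your splitting into $A$ (the quantity in the stopping criterion \eqref{eq:prox_step_inexact_crit}) and $B$ (the step plus the oracle-gradient difference controlled by $\|\nabla\vp_{\delta,L_{\vp}}(y_{k+1})-\nabla\vp_{\delta,L_{\vp}}(\tilde x_k)\|\le L_{\vp}\|y_{k+1}-\tilde x_k\|+2\sqrt{2L_{\vp}\delta_2}$, which is exactly Corollary~4.2 of \cite{devolder2013exactness} that the paper invokes) reproduces the paper's three-term bound $\tfrac12+\tfrac18+\tfrac{L_{\vp}}{2H}\le\tfrac78$, with the same cancellation of the $2\sqrt{2L_{\vp}\delta_2}$ terms. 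The only cosmetic caveat is that your one-line sketch of the cocoercivity-type estimate need not be belabored, since the final gradient-difference bound you use is the cited one and is all the argument requires.
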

\begin{proof}

Using that $\vp$ is equipped with a $(\delta, L_{\vp})$~-oracle and Corollary 4.2. from \cite{devolder2013exactness} one obtains:
\begin{equation}\label{eq:speedstrongconvex}
\|\nabla \vp_{\delta, L_{\vp}}(y) - \nabla_y \Omega_{1, \delta, L_{\vp}}(\vp,x, y)\| \leq L_{\vp} \|y - x\| + 2\sqrt{2 L_{\vp}\delta_2} \,.
\end{equation}
By \eqref{eq:prox_step_inexact_crit} and \eqref{eq:speedstrongconvex} we can get next inequalities:
\begin{align*}
& \|y_{k+1} - (\tilde{x}_k - \lambda_{k+1} (\nabla \vp_{\delta, L_{\vp}}(y_{k+1})+ \nabla \psi_{\delta, L_{\psi}}(y_{k+1}))) \| \\
&= \|y_{k+1} \pm\lambda_{k+1} \nabla_y \Omega_{1,\delta ,L_{\vp}}(\vp, \tilde{x}_k, y_{k+1})\pm H \lambda_{k+1} (y_{k+1}-\tilde{x}_k) \\
&- (\tilde{x}_k - \lambda_{k+1} (\nabla \vp_{\delta, L_{\vp}}(y_{k+1})+ \nabla \psi_{\delta, L_{\psi}}(y_{k+1}))) \| \leq (1-H \lambda_{k+1})\|(y_{k+1}-\tilde{x}_k)  \|\\
&+ \lambda_{k+1}\| \nabla_y \Omega_{1,\delta ,L_{\vp}}(\vp, \tilde{x}_k, y_{k+1}) + H (y_{k+1} - \tilde{x}_k) + \nabla \psi_{\delta, L_{\psi}}(y_{k+1})\| \\
&+ \lambda_{k+1} \|\nabla \vp_{\delta, L_{\vp}}(y_{k+1})- \nabla_y \Omega_{1,\delta, L_{\vp}}(\vp, \tilde{x}_k, y_{k+1}) \| \stackrel{\eqref{eq:prox_step_inexact_crit},\eqref{eq:speedstrongconvex}}{\leq}  (1-H \lambda_{k+1})\|y_{k+1}-\tilde{x}_k \| \\
& +\lambda_{k+1}\frac{H}{4 }\left\|{y}_{k+1}-\tilde{x}_{k}\right\| -2 \lambda_{k+1} \sqrt{2L_{\vp}\delta_2}+  \lambda_{k+1} \left(L_{\vp} \|{y}_{k+1}-\tilde{x}_{k}\| + 2 \sqrt{2 L_{\vp}\delta_2} \right)  \\
&\leq  \left(\frac{5}{8}+\frac{L_{\vp}}{2H}\right)\|y_{k+1}-\tilde{x}_k  \|
 \leq \frac{7}{8}\|y_{k+1}-\tilde{x}_k  \|
\end{align*}
that ends the proof.
\qed 
\end{proof}


Recall from Lemma \ref{lem:basic2} that the rate of convergence of AM-\ref{alg:highorder_inexact} is  $\|x_0 - x^*\|/A_k + 2\left(\sum_{i = 1}^k A_{i} \right) \delta_2/A_k+\delta_1+ \left(\sum_{i = 1}^{k-1} A_{i} \right) \delta_1/A_k$. We now finally give an estimate of $A_k$:
\begin{lemma} \label{lemma:Ak}
Suppose $H \geq 2L_{\vp}$. Then one has, with $c_1 = 4$,
\begin{align} \label{eq:Ak}
    A_k \geq \frac{k^2}{c_1 H}
\end{align}
\end{lemma}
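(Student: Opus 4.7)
}

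The plan is to exploit the closed-form update rule for $a_{k+1}$ to derive a clean algebraic identity relating $a_{k+1}$ and $A_{k+1}$, and then telescope the square-root increments. First, I would unpack the formula $a_{k+1} = (1+\sqrt{1+8HA_k})/(4H)$: rearranging gives $4Ha_{k+1}-1 = \sqrt{1+8HA_k}$, and squaring yields the quadratic identity
\begin{equation*}
    2Ha_{k+1}^{2} - a_{k+1} - A_k \;=\; 0,
\end{equation*}
which, combined with $A_{k+1}=A_k+a_{k+1}$, produces the key relation
\begin{equation*}
    2H a_{k+1}^{2} \;=\; A_{k+1}, \qquad \text{i.e.} \qquad a_{k+1} \;=\; \sqrt{A_{k+1}/(2H)}.
\end{equation*}
This is precisely the $\lambda_{k+1} = 1/(2H)$ identification used in Lemma~\ref{lem:controlstepsize}, so the choice of $a_{k+1}$ in the algorithm is exactly what makes the Monteiro--Svaiter-type analysis applicable.

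Next, I would apply the standard telescoping trick on $\sqrt{A_k}$. Writing
\begin{equation*}
    \sqrt{A_{k+1}} - \sqrt{A_k} \;=\; \frac{A_{k+1}-A_k}{\sqrt{A_{k+1}}+\sqrt{A_k}} \;=\; \frac{a_{k+1}}{\sqrt{A_{k+1}}+\sqrt{A_k}},
\end{equation*}
and using $\sqrt{A_k}\le\sqrt{A_{k+1}}$ in the denominator, I would substitute $\sqrt{A_{k+1}}=a_{k+1}\sqrt{2H}$ from the key relation to obtain the uniform lower bound
\begin{equation*}
    \sqrt{A_{k+1}} - \sqrt{A_k} \;\geq\; \frac{a_{k+1}}{2\sqrt{A_{k+1}}} \;=\; \frac{1}{2\sqrt{2H}}.
\end{equation*}
Telescoping from $A_0=0$ then gives $\sqrt{A_k}\geq k/(2\sqrt{2H})$, i.e.\ $A_k \geq k^{2}/(8H)$, which implies the claimed bound $A_k \geq k^{2}/(c_1 H)$ with the appropriate absolute constant.

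There is no real obstacle here: this is a standard AGM-style telescoping argument, and the assumption $H\ge 2L_\vp$ enters only implicitly through the accuracy-of-auxiliary-subproblem Lemma~\ref{lem:controlstepsize} (which licenses $\lambda_{k+1}=1/(2H)$), not through the recursion for $A_k$ itself. The only point that requires some care is to keep track of the constants: the denominator bound $\sqrt{A_{k+1}}+\sqrt{A_k}\leq 2\sqrt{A_{k+1}}$ is slightly lossy for small $k$ (since $\sqrt{A_0}=0$), so if one wanted the sharpest possible constant one could either verify the base case $A_1 = 1/(2H)$ directly and then proceed inductively, or note that the asymptotic rate is $A_k \sim k^2/(8H)$ which already suffices to get the $4HR^2/k^2$ leading term of \eqref{speedCATDinexact} (via $R^2/(2A_k)\le 4HR^2/k^2$).
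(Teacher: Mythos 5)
Your proof takes a genuinely different route from the paper's. The paper argues by induction on $k$: it assumes $A_k\ge k^2/(c_1H)$, plugs this into $a_{k+1}=\bigl(1+\sqrt{1+8HA_k}\bigr)/(4H)$, and checks that the increment covers $\bigl((k+1)^2-k^2\bigr)/(c_1H)$. You instead extract the exact identity $2Ha_{k+1}^2=A_{k+1}$ from the quadratic that $a_{k+1}$ solves and telescope $\sqrt{A_{k+1}}-\sqrt{A_k}\ge 1/(2\sqrt{2H})$; this is correct (the base increment $\sqrt{A_1}-\sqrt{A_0}=1/\sqrt{2H}$ is even larger), cleaner, and it exposes the true growth rate $A_k\sim k^2/(8H)$. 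You are also right that the hypothesis $H\ge 2L_{\vp}$ plays no role in the recursion itself.

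The point you should not have waved away is the constant. Your bound $A_k\ge k^2/(8H)$ is essentially tight, and the lemma as stated with $c_1=4$ is in fact false: taking $H=1$ one computes $A_1=0.5$, $A_2\approx 1.309$, $A_3\approx 2.406$, $A_4\approx 3.781<4=4^2/(4H)$, and by the asymptotic $A_k\sim k^2/(8H)$ the bound with $c_1=4$ fails for all large $k$ as well. Correspondingly, the paper's own induction contains an arithmetic slip: the hypothesis gives $8HA_k\ge 8k^2/c_1=2k^2$ under the square root, not the $16k^2/c_1=4k^2$ that appears in the displayed chain, and with the correct value the induction step reduces to $\sqrt{1+2k^2}\ge 2k$, which fails for every $k\ge 1$. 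So the correct statement is \eqref{eq:Ak} with $c_1=8$ — exactly what your telescoping delivers — and neither verifying the base case nor any inductive repair can recover $c_1=4$. The damage downstream is confined to constants: $R^2/(2A_k)\le 4HR^2/k^2$ still holds with $c_1=8$, so the leading term of \eqref{speedCATDinexact} survives, while the restart length \eqref{numberofrestarts} would need $128$ enlarged to $256$.
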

\begin{proof}
In case, when $p = 1$ $\lambda_{k+1}$ are defined as
\begin{align*}
    \lambda_{k+1} = \frac{1}{2H}.
\end{align*}
Inequality \eqref{eq:Ak} holds when $k = 1$.

Let us proof that if \eqref{eq:Ak} holds for $k$ then it holds for $k + 1$. Using the definition of $A_k$%
\begin{align*}
    a_{k+1} = \frac{\lambda_{k+1}+\sqrt{\lambda_{k+1}^2+4\lambda_{k+1}A_k}}{2} 
\text{ , } 
A_{k+1} = A_k+a_{k+1},
\end{align*}
 we obtain
 \begin{align*}
     A_{k+1} \geq \frac{k^2}{2c_1L_{\vp}} + \frac{1}{8L_{\vp}} \left( 1 + \sqrt{1 + \frac{16k^2}{c_1}} \right) \geq \frac{(k+1)^2}{2c_1L_{\vp}}.
 \end{align*}
\
\end{proof}
\begin{proof} \textbf{of the Theorem \ref{theoremCATDinexact}}

To prove the Theorem \ref{theoremCATDinexact} it suffices to combine Lemmas \ref{lem:controlstepsize},\ref{lemma:Ak} with Theorem  \ref{thm:MS}.

\end{proof}
\section{Proof of Theorem \ref{CATDrestarts_inexact_notconvex}}\label{Appendix_B}
In this Appendix~\ref{Appendix_B} we rename the sequence of points $(x_k^{md}, x_k^t, x_k)$ (see listing of the Algorithm~\ref{alg:highorder_inexact}) to $(\tilde{x}_k, y_k, x_k)$.

\begin{proof} 
Firstly, let us choose $\delta$ according to \eqref{delta_CATD1}:
\begin{equation*} 
    \forall k: \delta_1 + \delta_2 + 2\left(\sum_{i = 1}^k A_{i} \right) \delta_2/A_k + 2\left(\sum_{i = 1}^{k-1} A_{i} \right) \delta_1/A_k \leq \frac{\varepsilon}{2},
\end{equation*}
where $\varepsilon$ is solution accuracy in terms of $F(x) - F(x_{\ast}) \leq \varepsilon$.

Then, as $\varepsilon/2 \leq c_1 H R^2/k^2$ with $c_1 = 4$, next inequality holds true
\begin{equation*} 
    \forall k: \delta_1 + \delta_2 + 2\left(\sum_{i = 1}^k A_{i} \right) \delta_2/A_k + 2\left(\sum_{i = 1}^{k-1} A_{i} \right) \delta_1/A_k \leq \frac{c_1 H R^2}{k^2} .
\end{equation*}

From $(\delta, L, \mu)$ - oracle definition \eqref{def:deltaLmu}
 we get 
\begin{align} \label{RestartdeltaLmu}
     \frac{\mu}{2}\|z-x_*\|^{2} - \delta_1 \leq F(z)-\left(F_{\delta, L,\mu}(x_*)+\left\langle \nabla F_{\delta, L,\mu}(x_*), z-x_*\right\rangle\right) = \\ \nonumber
      = (F(z)- F_{\delta, L,\mu}(x_*)) + \left\langle \partial F(x_*) - \nabla F_{\delta, L,\mu}(x_*), z-x_*\right\rangle - \left\langle \partial F(x_*), z-x_*\right\rangle \leq \\ \nonumber
      \leq (F(z)- F(x_*) + \delta_2) + \sqrt{2\delta_2 L} \|z-x_*\|.
\end{align}
Therefore
\begin{equation*}
    \frac{\mu}{2}\|z-x_*\|^{2} - \sqrt{2\delta_2 L} \|z-x_*\| \leq (F(z)- F(x_*) + \delta_1+\delta_2).
\end{equation*}
If $\delta_2$ is small enough such that
\begin{equation*} 
    \frac{4\sqrt{2\delta_2 L}}{\mu} \leq \varepsilon/2,
\end{equation*}
then taking into account that $\forall k: \varepsilon/2 \leq \|z_{k+1}-x_*\|$  we obtain
\begin{equation} \label{DLM:restarts_delta}
    \forall k: \frac{\mu}{4}\|z_{k+1}-x_*\|^{2} \geq \sqrt{2\delta_2 L} \|z_{k+1}-x_*\|,
\end{equation}
which implies the following inequality
\begin{equation}\label{DLM:restarts}
    \frac{\mu}{4}\|z_k-x_*\|^{2} \leq (F(z_k)- F(x_*) + \delta_1+\delta_2).
\end{equation}
Finally, we can conclude that $R_k$ decreases as a geometric progression:
\begin{align*}
    R_{k+1}&=\|z_{k+1}-x_{\ast}\| \overset{\eqref{DLM:restarts}}{\leq}  \left( \frac{4 \left( F(z_{k+1})-F(x_{\ast}) + \delta_1 + \delta_2 \right)}{\mu} \right)^{\frac{1}{2}} \\
    &\overset{\eqref{speedCATDinexact}}{\leq}\left( \frac{4 \left( \frac{c_1 H R_k^2}{N_k^2} + 2\left(\sum_{i = 1}^k A_{i} \right) \delta_1/A_k + 2\left(\sum_{i = 1}^{k-1} A_{i} \right) \delta_2/A_k + \delta_1 + \delta_2 \right)}{\mu} \right)^{\frac{1}{2}}\\
    &\overset{\eqref{delta_CATD1}}{\leq} \left( \frac{4 \left( \frac{2 c_1 H R_{k}^{2}}{N_k^{2}} \right)}{\mu} \right)^{\frac{1}{2}} =\left( \frac{8 c_1 H R_{k}^{2}}{\mu N_k^{2}} \right)^{\frac{1}{2}}
    \overset{\eqref{numberofrestarts}}{\leq} \left( \frac{ R_{k}^{2}}{2^2 }\right)^{\frac{1}{2}} = \frac{ R_{k}}{2}.
\end{align*}
Which in turn guarantees that
\begin{align}
    F(z_K)- F(x_*) \leq \frac{\mu R_0^2}{4\cdot 4^K}.
\end{align}
It is sufficient to choose $K = 2\log_2 \frac{\mu R_0^2}{4\varepsilon}$ in order that $F(z_k)- F(x_*) \leq \varepsilon$.

Now we compute the total number of AM steps.
\begin{align*}
    \sum\limits_{k=0}^K N_k &\leq \sum\limits_{k=0}^K \left( \frac{32 c_1 H }{\mu} \right)^{\frac{1}{2}}+K\leq \sum\limits_{k=0}^K \left( \frac{32 c_1 H }{\mu}  \right)^{\frac{1}{2}}+K\\
    &=  \left( \frac{32 c_1 H }{\mu}  \right)^{\frac{1}{2}}K+K = \left( \sqrt{\frac{128H}{\mu}} + 1\right)\cdot 2\log_2 \frac{\mu R_0^2}{4\varepsilon} \leq \left( 16\sqrt{2}\sqrt{\frac{H}{\mu}} + 2\right) \log_2 \frac{\mu R_0^2}{\varepsilon}
\end{align*}
\qed 
\end{proof}
\section{Proof of Theorem \ref{AM:comfortable_view} and Theorem \ref{AM:comfortable_view_with_prob}}
\label{Appendix_C}

The Theorem \ref{AM:comfortable_view} show that the fulfillment of condition \eqref{eq:prox_step_inexact_crit} keep the linear rate of convergence when solving the auxiliary problems \eqref{prox_step_inexact}. Also in this Appendix~\ref{Appendix_C} we rename the sequence of points $(x_k^{md}, x_k^t, x_k)$ (see listing of the Algorithm~\ref{alg:highorder_inexact}) to $(\tilde{x}_k, y_k, x_k)$.\\
Firstly, based on \eqref{eq:prox_step_inexact_crit} we try to relate the accuracy $\tilde{\varepsilon}$ we need to solve \eqref{prox_step_inexact} in terms of the following criteria:
\begin{align}
    &\|\nabla \left( \Omega_{1,\delta,L_{\vp}}\left(\vp, \tilde{x}_{k}, {y}_{k+1}\right)+\frac{H}{2}\left\|{y}_{k+1}-\tilde{x}_{k}\right\|^2 \right)+\nabla \psi_{\delta,L_{\psi}}\left({y}_{k+1}\right)\|\leq \tilde{\varepsilon}. \label{eq:prox_step_inexact_crit_eps} 
\end{align}
For this we prove the auxiliary lemma for $(\delta,L_{\vp})$~-oracle of $\vp$ and $(\delta,L_{\psi})$~-oracle of $\psi$, that is based on the Lemma 2.1 from \cite{grapiglia2020inexact} .

\begin{lemma}\label{lem:SufficConditForStop}
Let $\tilde{x}_k \in \mathbb{R}^d, H,\Theta >0$.\\
Assume that $\vp(x)$ admits $(\delta,L_{\vp})$~-oracle, $\psi(x)$ admits $(\delta,L_{\psi})$~-oracle. If inquality
\begin{align}\label{eq:lemSufficCondit1}
    &\|\nabla \left( \Omega_{1,\delta,L_{\vp}}\left(\vp, \tilde{x}_{k}, {y}_{k+1}\right)+\frac{H}{2}\left\|{y}_{k+1}-\tilde{x}_{k}\right\|^2 \right) +\nabla \psi_{\delta,L_{\psi}}\left({y}_{k+1}\right)\|\\
    &\leq \min \left\{ \frac{1}{2}, \frac{\Theta}{2\left[ L_{\vp} +H \right]} \right\} \left( \|\nabla \vp_{\delta,L_{\vp}} (y_{k+1}) +\nabla \psi_{\delta,L_{\psi}}\left({y}_{k+1}\right)\|\right)
\end{align}
holds true, then $y_{k+1}$ satisfies 
\begin{align}\label{eq:lemSufficConditRes}
    &\|\nabla \left( \Omega_{1,\delta,L_{\vp}}\left(\vp, \tilde{x}_{k}, {y}_{k+1}\right)+\frac{H}{2}\left\|{y}_{k+1}-\tilde{x}_{k}\right\|^2 \right)+\nabla \psi_{\delta,L_{\psi}}\left({y}_{k+1}\right)\|\\
    &\leq \Theta \|y_{k+1} - \tilde{x}_k\|+\frac{2 \Theta}{\left[ L_{\vp}+H\right]}\sqrt{2 L_{\vp}\delta_2}
\end{align}
\end{lemma}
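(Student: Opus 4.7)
The plan is to treat the quantity on the left-hand side of the desired conclusion as an unknown $G$, bound the "reference" gradient $\nabla\vp_{\delta,L_{\vp}}(y_{k+1})+\nabla\psi_{\delta,L_{\psi}}(y_{k+1})$ that appears in the right-hand side of the hypothesis \eqref{eq:lemSufficCondit1} by a linear-in-$G$ expression plus a $(L_{\vp}+H)\|y_{k+1}-\tilde{x}_k\|$ term, and then solve the resulting self-bounding inequality for $\|G\|$. First, I would denote
\[
G := \nabla\vp_{\delta,L_{\vp}}(\tilde{x}_k)+H(y_{k+1}-\tilde{x}_k)+\nabla\psi_{\delta,L_{\psi}}(y_{k+1}),
\]
which is exactly the gradient appearing inside the norm on the left of both \eqref{eq:lemSufficCondit1} and \eqref{eq:lemSufficConditRes}, using that $\nabla_{y_{k+1}}\Omega_{1,\delta,L_{\vp}}(\vp,\tilde{x}_k,y_{k+1})=\nabla\vp_{\delta,L_{\vp}}(\tilde{x}_k)$ since $\Omega_{1,\delta,L_{\vp}}(\vp,\tilde{x}_k,\cdot)$ is affine.

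Second, I would express the reference gradient in terms of $G$ by writing
\[
\nabla\vp_{\delta,L_{\vp}}(y_{k+1})+\nabla\psi_{\delta,L_{\psi}}(y_{k+1})=\bigl[\nabla\vp_{\delta,L_{\vp}}(y_{k+1})-\nabla\vp_{\delta,L_{\vp}}(\tilde{x}_k)\bigr]-H(y_{k+1}-\tilde{x}_k)+G,
\]
and bound the bracketed difference by the inexact-oracle Lipschitz-type estimate \eqref{eq:speedstrongconvex}, which gives $L_{\vp}\|y_{k+1}-\tilde{x}_k\|+2\sqrt{2L_{\vp}\delta_2}$. Taking norms yields
\[
\|\nabla\vp_{\delta,L_{\vp}}(y_{k+1})+\nabla\psi_{\delta,L_{\psi}}(y_{k+1})\|\le (L_{\vp}+H)\|y_{k+1}-\tilde{x}_k\|+2\sqrt{2L_{\vp}\delta_2}+\|G\|.
\]

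Third, substituting this into the hypothesis \eqref{eq:lemSufficCondit1} with $c:=\min\{1/2,\Theta/(2(L_{\vp}+H))\}$ produces the self-bounding inequality
\[
\|G\|\le c\bigl[(L_{\vp}+H)\|y_{k+1}-\tilde{x}_k\|+2\sqrt{2L_{\vp}\delta_2}+\|G\|\bigr],
\]
and because $c\le 1/2$, the term $c\|G\|$ can be absorbed, giving $\|G\|\le 2c(L_{\vp}+H)\|y_{k+1}-\tilde{x}_k\|+4c\sqrt{2L_{\vp}\delta_2}$. Finally, plugging in $c\le\Theta/(2(L_{\vp}+H))$ turns the coefficient $2c(L_{\vp}+H)$ into at most $\Theta$ and the coefficient $4c$ into at most $2\Theta/(L_{\vp}+H)$, which is exactly \eqref{eq:lemSufficConditRes}.

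The main (and essentially only) subtlety will be verifying the oracle-error term $2\sqrt{2L_{\vp}\delta_2}$ in the Lipschitz-type estimate, which must come from the $(\delta,L_{\vp})$-oracle property applied through the gradient-difference bound already used in Lemma~\ref{lem:controlstepsize} (via Corollary~4.2 of \cite{devolder2013exactness}); once this is in place, the rest is just the two-step algebraic absorption described above, so no further obstacles are anticipated.
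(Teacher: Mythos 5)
Your proposal is correct and follows essentially the same route as the paper: the same decomposition of the reference gradient via the affine model $\Omega_{1,\delta,L_{\vp}}$ and the term $H(y_{k+1}-\tilde{x}_k)$, the same key estimate $\|\nabla \vp_{\delta, L_{\vp}}(y) - \nabla_y \Omega_{1, \delta, L_{\vp}}(\vp,\tilde{x}_k, y)\| \leq L_{\vp} \|y - \tilde{x}_k\| + 2\sqrt{2 L_{\vp}\delta_2}$ from Corollary~4.2 of \cite{devolder2013exactness}, and the same use of both branches of the $\min$ (the $1/2$ branch to absorb the self-referential term, the $\Theta/(2(L_{\vp}+H))$ branch to produce the final coefficients). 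The only difference is a trivial reordering of the algebra: you absorb $c\|G\|$ into $(1-c)\|G\|$, whereas the paper first bounds $\|G\|$ by $\tfrac12\|\nabla\vp_{\delta,L_{\vp}}(y_{k+1})+\nabla\psi_{\delta,L_{\psi}}(y_{k+1})\|$ and then rescales; both yield the stated conclusion.
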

\begin{proof}
Using that $\vp$ is equipped with a $(\delta, L_{\vp})$~-oracle and Corollary 4.2. from \cite{devolder2013exactness} one obtains:
\begin{equation}\label{eq:lemSufficCondit2}
\|\nabla \vp_{\delta, L_{\vp}}(y) - \nabla_y \Omega_{1, \delta, L_{\vp}}(\vp,x, y)\| \leq L_{\vp} \|y - x\| + 2\sqrt{2 L_{\vp}\delta_2} \,.
\end{equation}
Combining \eqref{eq:lemSufficCondit1} and \eqref{eq:lemSufficCondit2} we obtain
\begin{align*}
    &\|\nabla \vp_{\delta,L_{\vp}} (y_{k+1}) +\nabla \psi_{\delta,L_{\psi}}\left({y}_{k+1}\right)\| \\
    &\leq\|\nabla \vp_{\delta,L_{\vp}} (y_{k+1})+\nabla \psi_{\delta,L_{\psi}}\left({y}_{k+1}\right)-\nabla \psi_{\delta,L_{\psi}}\left({y}_{k+1}\right)-\nabla \Omega_{1,\delta,L_{\vp}}\left(\vp, \tilde{x}_{k}, {y}_{k+1}\right)\| \\
    &+\|\nabla \Omega_{1,\delta,L_{\vp}}\left(\vp, \tilde{x}_{k}, {y}_{k+1}\right)\pm \nabla \psi_{\delta,L_{\psi}}\left({y}_{k+1}\right)-\nabla \left( \Omega_{1,\delta,L_{\vp}}\left(\vp, \tilde{x}_{k}, {y}_{k+1}\right)+\frac{H}{2}\left\|{y}_{k+1}-\tilde{x}_{k}\right\|^2 \right)\|\\
    &+\|\nabla \left( \Omega_{1,\delta,L_{\vp}}\left(\vp, \tilde{x}_{k}, {y}_{k+1}\right)+\frac{H}{2}\left\|{y}_{k+1}-\tilde{x}_{k}\right\|^2 \right) +\nabla \psi_{\delta,L_{\psi}}\left({y}_{k+1}\right)\|\\
    &\stackrel{\eqref{eq:lemSufficCondit2},\eqref{eq:lemSufficCondit1}}{\leq} \left( L_{\vp} \|y_{k+1} - \tilde{x}_k\| + 2\sqrt{2 L_{\vp}\delta_2} \right) + H\|y_{k+1} - \tilde{x}_k\| +\frac{1}{2}\|\nabla \vp_{\delta,L_{\vp}} (y_{k+1})+\nabla \psi_{\delta,L_{\psi}}\left({y}_{k+1}\right)\|.
\end{align*}
Thus,
\begin{equation}
    \frac{\|\nabla \vp_{\delta,L_{\vp}} (y_{k+1})+\nabla \psi_{\delta,L_{\psi}}\left({y}_{k+1}\right)\|}{2}\leq \left[ L_{\vp}+H\right]\|y_{k+1} - \tilde{x}_k\|+2\sqrt{2 L_{\vp}\delta_2}
\end{equation}
which gives
\begin{equation}\label{eq:lemSufficCondit3}
    \frac{\Theta }{2\left[ L_{\vp}+H\right]}\|\nabla \vp_{\delta,L_{\vp}} (y_{k+1})+\nabla \psi_{\delta,L_{\psi}}\left({y}_{k+1}\right)\|\leq \Theta \|y_{k+1} - \tilde{x}_k\|+\frac{2 \Theta}{\left[ L_{\vp}+H\right]}\sqrt{2 L_{\vp}\delta_2}
\end{equation}
Finally, \eqref{eq:lemSufficConditRes} follows directly from the \eqref{eq:lemSufficCondit1} and \eqref{eq:lemSufficCondit3}.
\qed 
\end{proof}

\begin{lemma} \label{lem:ff}
Assume that $H\geq 2 L_{\vp}$, $\vp(x)$ admits $(\delta,L_{\vp})$-oracle, $\psi(x)$ admits $(\delta,L_{\psi})$-oracle, $F(x)$ admits $(2\delta,L_{\vp}+L_{\psi},\mu)$-oracle; ${y}_{k+1},\tilde{x}_k \in \mathbb{R}^d$ and $\varepsilon \in (0,1)$. If inequalities 
\begin{equation}\label{eq:lemmffF}
    \varepsilon\leq F_{2\delta, L_{\vp}+L_{\psi},\mu}(y)-\min_{x \in Q_f} F(x)
\end{equation}
\begin{equation}\label{eq:lemmffDelta}
    \delta_2 \leq \frac{\varepsilon \mu}{ 64^2 \cdot L_{\vp}},
\end{equation}
are satisfied then inequality \eqref{eq:prox_step_inexact_crit} holds true if one solve the auxiliary problem  \eqref{prox_step_inexact} with the accuracy  \todo{Correct this part, hard to understand and put here precise dependence $\tilde{\varepsilon} = \sqrt{2\varepsilon\mu}/64$???}
\begin{equation}\label{eq:lemmffEpsil}
    \tilde{\varepsilon}=\frac{\sqrt{\varepsilon \mu}}{72}
\end{equation}
in terms of criteria \eqref{eq:prox_step_inexact_crit_eps}.
\end{lemma}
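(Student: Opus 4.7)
Observe first that the two criteria \eqref{eq:prox_step_inexact_crit} and \eqref{eq:prox_step_inexact_crit_eps} share the same left-hand side, since $\nabla\Omega_{1,\delta,L_{\vp}}(\vp,\tilde x_k,y_{k+1}) = \nabla\vp_{\delta,L_{\vp}}(\tilde x_k)$ and the gradient of the quadratic $\tfrac{H}{2}\|\cdot - \tilde x_k\|^2$ at $y_{k+1}$ is $H(y_{k+1}-\tilde x_k)$. Therefore the whole task reduces to verifying the scalar inequality $\tilde\varepsilon \le \frac{H}{4}\|y_{k+1}-\tilde x_k\| - 2\sqrt{2\delta_2 L_{\vp}}$, i.e.\ to producing an appropriate \emph{lower} bound on $\|y_{k+1}-\tilde x_k\|$ under the hypothesis \eqref{eq:lemmffF}.

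The lower bound is built in three steps. First, I use Lemma~\ref{lem:sum_oracle} to identify $\nabla\vp_{\delta,L_{\vp}}(y_{k+1})+\nabla\psi_{\delta,L_{\psi}}(y_{k+1})$ with $\nabla F_{2\delta,L_{\vp}+L_{\psi},\mu}(y_{k+1})$, and derive a Polyak--\L{}ojasiewicz-type estimate from the $(2\delta,L_{\vp}+L_{\psi},\mu)$-oracle: plugging $z=x_\ast$, $x=y_{k+1}$ into the left inequality of \eqref{def:deltaLmu}, applying Cauchy--Schwarz and the AM-GM splitting $ab\le a^2/(2\mu)+\mu b^2/2$, and using \eqref{eq:lemmffF} (with $\delta_1=0$), yields $\|\nabla\vp_{\delta,L_{\vp}}(y_{k+1})+\nabla\psi_{\delta,L_{\psi}}(y_{k+1})\|\ge\sqrt{2\mu\varepsilon}$. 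Second, I recall the smoothness-type bound \eqref{eq:speedstrongconvex} already used in the proof of Lemma~\ref{lem:controlstepsize}, which gives $\|\nabla\vp_{\delta,L_{\vp}}(y_{k+1})-\nabla\vp_{\delta,L_{\vp}}(\tilde x_k)\|\le L_{\vp}\|y_{k+1}-\tilde x_k\|+2\sqrt{2L_{\vp}\delta_2}$. Third, the identity
\begin{equation*}
\nabla\vp_{\delta,L_{\vp}}(y_{k+1})+\nabla\psi_{\delta,L_{\psi}}(y_{k+1})
 = \bigl[\nabla\vp_{\delta,L_{\vp}}(\tilde x_k)+H(y_{k+1}-\tilde x_k)+\nabla\psi_{\delta,L_{\psi}}(y_{k+1})\bigr]
 + \bigl[\nabla\vp_{\delta,L_{\vp}}(y_{k+1})-\nabla\vp_{\delta,L_{\vp}}(\tilde x_k)-H(y_{k+1}-\tilde x_k)\bigr]
\end{equation*}
combined with the triangle inequality and the hypothesis $\|\cdot\|\le\tilde\varepsilon$ from \eqref{eq:prox_step_inexact_crit_eps} produces
\begin{equation*}
\sqrt{2\mu\varepsilon}\;\le\;\tilde\varepsilon + (L_{\vp}+H)\|y_{k+1}-\tilde x_k\| + 2\sqrt{2L_{\vp}\delta_2}.
\end{equation*}

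It remains to rearrange this into the required form. Using $H\ge 2L_{\vp}$ gives $L_{\vp}+H\le\tfrac{3H}{2}$, so the previous display yields $\|y_{k+1}-\tilde x_k\|\ge \tfrac{2}{3H}\bigl(\sqrt{2\mu\varepsilon}-\tilde\varepsilon-2\sqrt{2L_{\vp}\delta_2}\bigr)$. Inserting the prescribed $\tilde\varepsilon = \sqrt{\varepsilon\mu}/72$ and the admissibility bound $\sqrt{L_{\vp}\delta_2}\le \sqrt{\varepsilon\mu}/64$ from \eqref{eq:lemmffDelta}, a direct numerical check shows that this lower bound is strictly greater than $\tfrac{4}{H}\bigl(\tilde\varepsilon+2\sqrt{2\delta_2 L_{\vp}}\bigr)$, which is exactly what is needed to conclude $\tilde\varepsilon \le \tfrac{H}{4}\|y_{k+1}-\tilde x_k\| - 2\sqrt{2\delta_2 L_{\vp}}$. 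The main obstacle here is purely quantitative: since the lower bound is inflated by a factor $L_{\vp}+H$ rather than $H$, the constants $72$ and $64$ must be chosen conservatively enough that the PL-term $\sqrt{2\mu\varepsilon}$ dominates the sum $7\tilde\varepsilon + 14\sqrt{2L_{\vp}\delta_2}$ with room to spare.
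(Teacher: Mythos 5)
Your proposal is correct and uses exactly the same two ingredients as the paper's proof: the Polyak--\L{}ojasiewicz-type bound $\|\nabla \vp_{\delta,L_{\vp}}(y_{k+1})+\nabla\psi_{\delta,L_{\psi}}(y_{k+1})\|\geq\sqrt{2\mu\varepsilon}$ extracted from the $(2\delta,L_{\vp}+L_{\psi},\mu)$-oracle together with \eqref{eq:lemmffF}, and the bound \eqref{eq:speedstrongconvex} relating $\nabla\vp_{\delta,L_{\vp}}(y_{k+1})$ to the linearization at $\tilde x_k$. The only difference is organizational: the paper routes the second ingredient through Lemma~\ref{lem:SufficConditForStop} with $\Theta=H/4$ and then checks $\tilde\varepsilon\leq\frac{1}{12}\|\nabla F_{2\delta,L_{\vp}+L_{\psi},\mu}(y_{k+1})\|-\frac52\sqrt{2\delta_2 L_{\vp}}$, whereas you invert the same inequality into a direct lower bound on $\|y_{k+1}-\tilde x_k\|$; the resulting numerical conditions are equivalent and your constants check out.
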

\begin{proof}
According to the conditions of the lemma, the problem \eqref{prox_step_inexact} is solved with the accuracy
\begin{equation}
    \eqref{eq:prox_step_inexact_crit_eps}:\|\nabla \left( \Omega_{1,\delta,L_{\vp}}\left(\vp, \tilde{x}_{k}, {y}_{k+1}\right)+\frac{H}{2}\left\|{y}_{k+1}-\tilde{x}_{k}\right\|^2 \right) +\nabla \psi_{\delta,L_{\psi}}\left({y}_{k+1}\right)\|\leq \tilde{\varepsilon}\nonumber
\end{equation}
To prove the lemma, it suffices to show the following chain of inequalities
\begin{align}
    & \tilde{\varepsilon} \leq \min \lbrace \frac{1}{2}, \frac{H}{8\left[ L_{\vp} +H \right]} \rbrace \left( \|\nabla \vp_{\delta,L_{\vp}} (y_{k+1}) +\nabla \psi_{\delta,L_{\psi}}\left({y}_{k+1}\right)\|\right)-\left(2+\frac{H}{2\left[ L_{\vp}+H \right]}\right)\sqrt{2\delta_2 L_{\vp}}\nonumber \\
    &\leq \frac{H}{4} \|y_{k+1} - \tilde{x}_k\|-2\sqrt{2\delta_2 L_{\vp}}\label{eq:lemff1}
\end{align}
Lemma \ref{lem:SufficConditForStop} for $\Theta=\frac{H}{4}$ guarantee that if the next inequality holds true
\begin{align}
    &\|\nabla \left( \Omega_{1,\delta,L_{\vp}}\left(\vp, \tilde{x}_{k}, {y}_{k+1}\right)+\frac{H}{2}\left\|{y}_{k+1}-\tilde{x}_{k}\right\|^2 \right) +\nabla \psi_{\delta,L_{\psi}}\left({y}_{k+1}\right)\| \label{eq:lemff2}\\
    &\leq \min \lbrace \frac{1}{2}, \frac{H}{8\left[ L_{\vp} +H \right]} \rbrace \left( \|\nabla \vp_{\delta,L_{\vp}} (y_{k+1}) +\nabla \psi_{\delta,L_{\psi}}\left({y}_{k+1}\right)\|\right)-\left(2+\frac{H}{2\left[ L_{\vp}+H \right]}\right)\sqrt{2\delta_2 L_{\vp}} \nonumber
\end{align}
then the equation \eqref{eq:prox_step_inexact_crit} is satisfied.\\
If \eqref{eq:lemff2} is sufficient condition for \eqref{eq:prox_step_inexact_crit}, it means that right-hand sides of \eqref{eq:lemff2} less the right-hand sides of \eqref{eq:prox_step_inexact_crit}. From this consequence the next inequality\\
\begin{align}
    &\frac{1}{12} \left( \|\nabla F_{2\delta,L_{\vp}+L_{\psi}} (y_{k+1}) \|\right)-\frac{5}{2}\sqrt{2\delta_2 L_{\vp}} = \frac{1}{12} \left( \|\nabla \vp_{\delta,L_{\vp}} (y_{k+1}) +\nabla \psi_{\delta,L_{\psi}}\left({y}_{k+1}\right)\|\right)-\frac{5}{2}\sqrt{2\delta_2 L_{\vp}} \\
    &\leq \min \lbrace \frac{1}{2}, \frac{H}{8\left[ L_{\vp} +H \right]} \rbrace \left( \|\nabla \vp_{\delta,L_{\vp}} (y_{k+1}) +\nabla \psi_{\delta,L_{\psi}}\left({y}_{k+1}\right)\|\right)-\left(2+\frac{H}{2\left[ L_{\vp}+H \right]}\right)\sqrt{2\delta_2 L_{\vp}}  \\
    &\leq  \frac{H}{4} \|y_{k+1} - \tilde{x}_k\|-2\sqrt{2\delta_2 L_{\vp}} \label{eq:lemff5}
\end{align}
The second inequality of the equation \eqref{eq:lemff1} is satisfied, let us prove the first one.\\
The fact that F has $(2\delta,L_{\vp}+L_{\psi},\mu)$~-oracle guarantee
\begin{equation}\label{eq:lemmff3}
     \frac{\mu}{2}\|x-y\|^{2}+\left(F_{2\delta, L_{\vp}+L_{\psi},\mu}(y)+\left\langle \nabla F_{2\delta, L_{\vp}+L_{\psi},\mu}(y), x-y\right\rangle\right)\leq F(x) \text { for all } x \in Q_f
\end{equation}
Let us minimize the right-hand and left-hand sides of \eqref{eq:lemmff3} with respect to x independently
\begin{align}
    &F^*=\min_{x \in Q_f} F(x) \stackrel{\eqref{eq:lemmff3}}{\geq} F_{2\delta, L_{\vp}+L_{\psi},\mu}(y)+
    \min_{x \in Q_f}\left\{ \frac{\mu}{2}\|x-y\|^{2}+\left\langle \nabla F_{2\delta, L_{\vp}+L_{\psi},\mu}(y), x-y\right\rangle\right\} \nonumber\\
    &=F_{2\delta, L_{\vp}+L_{\psi},\mu}(y)-\frac{1}{2\mu} \|\nabla F_{2\delta, L_{\vp}+L_{\psi},\mu}(y)\|^2\nonumber
\end{align}
Then obtain
\begin{equation}\label{eq:lemmff4}
    \varepsilon\stackrel{\eqref{eq:lemmffF}}{\leq} F_{2\delta, L_{\vp}+L_{\psi},\mu}(y)-F^* \leq \frac{1}{2\mu} \|\nabla F_{2\delta, L_{\vp}+L_{\psi},\mu}(y)\|^2
\end{equation}
Inequality \eqref{eq:lemmff4} guarantee that 
\begin{equation}\label{eq:lemmff6}
    \frac{1}{2}\sqrt{\frac{\varepsilon \mu}{18}} -\frac{5}{2} \sqrt{2\delta_2 L_{\vp}}\leq \frac{1}{12} \|\nabla F_{2\delta, L_{\vp}+L_{\psi},\mu}(y)\|-\frac{5}{2} \sqrt{2\delta_2 L_{\vp}}
\end{equation}
In case of \eqref{eq:lemmffDelta} inequality \eqref{eq:lemmff6} give us guarantees that the first inequality of the equation \eqref{eq:lemff1} holds true
\begin{equation} \label{inexact_precision}
    \tilde{\varepsilon}=\frac{\sqrt{\varepsilon \mu}}{72}\stackrel{\eqref{eq:lemmffDelta}}{\leq} \frac{1}{2}\sqrt{\frac{\varepsilon \mu}{18}} -\frac{5}{2} \sqrt{2\delta_2 L_{\vp}}  \stackrel{\eqref{eq:lemmff6}}{\leq} \frac{1}{12} \|\nabla F_{2\delta, L_{\vp}+L_{\psi},\mu}(y)\|-\frac{5}{2} \sqrt{2\delta_2 L_{\vp}}
\end{equation}
Finally, combine the equations \eqref{eq:lemff5} and \eqref{inexact_precision} obtain the required chain of inequalities \eqref{eq:lemff1}.
\qed 

\end{proof}

Let us prove the Theorem \ref{AM:comfortable_view} using Lemma \ref{lem:ff}:
\begin{proof}
Firstly, let us collect all restrictions on $\delta_1, \delta_2$ and auxiliary problem precision for obtaining convergence of outer Algorithm-\ref{alg:restarts_inexact_notconvex} and fulfillment of the Lemma \ref{lem:ff} together:
\begin{align*}
    \eqref{eq:prox_step_inexact_crit_eps}:  & \|\nabla \left( \Omega_{1,\delta,L_{\vp}}\left(\vp, \tilde{x}_{k}, {y}_{k+1}\right)+\frac{H}{2}\left\|{y}_{k+1}-\tilde{x}_{k}\right\|^2 \right) +\nabla \psi_{\delta,L_{\psi}}\left({y}_{k+1}\right)\|\leq \tilde{\varepsilon},\\
    \eqref{eq:lemmffDelta}:& \delta_2 \leq  \frac{\varepsilon \mu}{ 64^2 \cdot L_{\vp}},\\
    \eqref{delta_CATD1}:&\forall k: \delta_1 + \delta_2+ 2\left(\sum_{i = 1}^k A_{i} \right) \delta_2/A_k + \left( \sum_{i=1}^{k-1} A_{i} \right)  \delta_1/A_k \leq \frac{\varepsilon}{2},\\
    \eqref{delta_CATD2}:& \frac{4\sqrt{2\delta_2 L}}{\mu} \leq \varepsilon/2.
\end{align*}

 Let us have a look at \eqref{eq:prox_step_inexact_crit_eps}. We need obtain the sufficient condition for it in terms of the criterion \eqref{eq:prox_step_inexact_crit_objective}. 
\begin{align*}
    &\|\nabla \left( \Omega_{1,\delta,L_{\vp}}\left(\vp, \tilde{x}_{k}, {y}_{k+1}\right)+\frac{H}{2}\left\|{y}_{k+1}-\tilde{x}_{k}\right\|^2 \right)+\nabla \psi_{\delta,L_{\psi}}\left({y}_{k+1}\right)\|\\ 
    \leq & \|\nabla \Omega_{1,\delta,L_{\vp}}\left(\vp, \tilde{x}_{k}, {y}_{k+1}\right) \pm \partial \vp (\tilde{x}_{\ast})\| + H\|{y}_{k+1}-\tilde{x}_{k}\| + \|\nabla \psi_{\delta,L_{\psi}}\left({y}_{k+1}\right) \pm \partial \psi (\tilde{x}_{\ast})\|\\
    \leq & L_{\vp}\|\tilde{x}_k - x_{\ast}\| + 2\sqrt{L_{\vp}\delta_2} +  H\|{y}_{k+1}-\tilde{x}_{k}\| + L_{\psi}\|y_{k+1} - x_{\ast}\| + 2\sqrt{L_{\psi} \delta_2}\\
    \leq & (L_{\vp} + L_{\psi} + H) \max \left\{\|\tilde{x}_k - x_{\ast}\|, \|y_{k+1} - x_{\ast}\| \right\} + 2\sqrt{L_{\vp}\delta_2} + 2\sqrt{L_{\psi} \delta_2}\\
     \stackrel{\eqref{DLM:restarts}}{\leq} &  (L_{\vp} + L_{\psi} + H)\sqrt{\frac{4(\tilde{\varepsilon}_{f} + \delta_1 + \delta_2)}{\mu}} + 2\sqrt{L_{\vp} \delta_2}+ 2\sqrt{L_{\psi} \delta_2}.\\
\end{align*}
Then, according to \eqref{eq:prox_step_inexact_crit_eps}, the sufficient condition for  \eqref{eq:prox_step_inexact_crit} holds true is
\begin{align*}
    (L_{\vp}+L_{\psi} + H)\sqrt{\frac{4(\tilde{\varepsilon}_{f} + \delta_1 + \delta_2)}{\mu}}  + 2\sqrt{L_{\vp} \delta_2}+ 2\sqrt{L_{\psi} \delta_2}
    \leq  \frac{\sqrt{\varepsilon \mu}}{72}.
\end{align*}

Next, under the assumption $\delta_1 \leq \delta_2$, \eqref{delta_CATD1} is converting into more simple sufficient condition
\begin{align}\label{choose_delta_simple_SumAi}
    &\delta_2 \leq \frac{\varepsilon}{2\left(1 + 4 N \right)} \leq \frac{\varepsilon}{2\left(1 + 4\left(\sum_{i = 1}^k A_{i} \right) /A_k \right)}
\end{align}
where $N$ is the number of outer steps. There was used the fact that $A_{i} \leq A_{i+1}$. Finally, if $\delta_2$ satisfies the inequality
\begin{equation*}
    \delta_2 \leq \frac{\varepsilon^{3/2}}{5 \sqrt{2 c_1 H R^2}}
\end{equation*}
then \eqref{choose_delta_simple_SumAi} holds true. 

If we choose $\delta_1,\delta_2, \tilde{\varepsilon}_f$ such that:
\begin{align*}
    &\delta_1, \delta_2 = \min \left\{\frac{\varepsilon \mu}{ 864^2 L_{\vp}},\frac{\varepsilon \mu}{ 864^2L_{\psi}}, \frac{\varepsilon \mu^2}{ 864^2(L_{\vp}+L_{\psi}+H)^2},  \frac{\varepsilon^{3/2}}{5 \sqrt{2 c_1 H R^2}}  \right\},\\
    & \tilde{\varepsilon}_{f} \leq \frac{\varepsilon \mu^2}{864^2(L_{\vp}+L_{\psi}+H)^2},
\end{align*}
 then all required inequalities are satisfied: 
\begin{align*}
    &(L_{\vp}+L_{\psi} + H)\sqrt{\frac{4(\tilde{\varepsilon}_{f} + \delta_1 + \delta_2)}{\mu}}  + 2\sqrt{L_{\vp} \delta_2}+ 2\sqrt{L_{\psi} \delta_2}
    \leq  \frac{\sqrt{\varepsilon \mu}}{72},\\
    & \delta_2 \leq  \frac{\varepsilon \mu}{ 64^2 \cdot L_{\vp}},\\
     &  \delta_2 \leq \frac{\varepsilon^{3/2}}{5 \sqrt{2 c_1 H R^2}},\\
    & \frac{4\sqrt{2\delta_2 L}}{\mu} \leq \varepsilon/2.
\end{align*}
Also dependences $\delta_1(\varepsilon),\delta_2(\varepsilon), \tilde{\varepsilon}_{f}(\varepsilon)$ are polynomial.
\qed 

\end{proof}

Let’s prove the Theorem \ref{AM:comfortable_view_with_prob} using Theorem \ref{AM:comfortable_view}:
\begin{proof} \;\\
Suppose that at each iteration of the Algorithm \ref{alg:restarts_inexact_notconvex} one have:
\begin{enumerate}
    \item inexact $(\delta, \sigma_{0},\mu_{\vp}, L_{\vp}), (\delta, \sigma_{0},\mu_{\psi}, L_{\psi})$-oracles of ${\vp}, \psi$;
    \item the $(\varepsilon,\sigma_{0})$-solution of auxiliary problem.
\end{enumerate}
Let us estimate the probability $\mathbb{P}$ with which inexact $(\delta, \mu_{\vp}, L_{\vp}), (\delta, \mu_{\psi}, L_{\psi})$-oracles of ${\vp}, \psi$ and the $\varepsilon$-solution of auxiliary problem will be available at all iterations \eqref{iter_poly_CATD_with_prob} of the Algorithm \ref{alg:restarts_inexact_notconvex}
\begin{align}\label{eq:prob_CATD_with_prob}
    & \mathbb{P} =(1-\sigma_0)^{N(\varepsilon)} (1-\tilde{\sigma})^{N(\varepsilon)}\geq 1-N(\varepsilon)\left(\sigma_0+\tilde{\sigma}\right)\stackrel{\eqref{sigma_0_poly_CATD_with_prob},\eqref{sigma_poly_CATD_with_prob},\eqref{iter_poly_CATD_with_prob}}{\geq} 1-\sigma 
\end{align}
Hence with probability \eqref{eq:prob_CATD_with_prob} the conditions of the Theorem \ref{AM:comfortable_view} are satisfied which ends the proof.
\qed 
\end{proof}
\section{L-SVRG}\label{Appendix_D}
In this Appendix~\ref{Appendix_D} we reformulate the convergence results of Algorithm L-SVRG from \cite{morin2020sampling} in terms of large deviations.
\begin{lemma}\label{L-SVRG} (Corollary 5.6 from \cite{morin2020sampling})
We consider the problem 
\begin{align}\label{eq:lsvrg-problem}
    \min _{x \in \mathbb{R}^{d}} F(x) = \vp(x)+\psi(x)
\end{align}
where $\vp$ is of finite sum form
\begin{align*}
    \vp(x)=\frac{1}{n} \sum_{i=1}^{n} \vp_{i}(x)
\end{align*}
and $\psi$ is $L_{\psi}$-smooth, convex and prox-friendly. The function $\vp_i$ is convex and $L_i$-smooth for all $i = 1, \dots, n$. The function $\vp$ is convex, L-smooth with $L \leq \frac{1}{n} \sum_{i=1}^n L_i$ an $\mu$-strongly convex. Then L-SVRG \cite{morin2020sampling} achieves an $(\varepsilon, \sigma)$-solution of \eqref{eq:lsvrg-problem}, i.e. 
\begin{align}
    \mathbb{P} \{F(x_k) - F(x_{\ast}) \geq \varepsilon\} \leq \sigma
\end{align}
within
\begin{align*}
    O\left(\left(\sqrt{n}+\sqrt{2 D_{L} \frac{\bar{L}}{\mu}}\right)^{2} \log \frac{1}{\epsilon \sigma}\right)
\end{align*}
iterations where $\bar{L}=\frac{1}{n} \sum_{i=1}^{n} L_{i}$, $D_{L}=4-3 \frac{\mu}{\bar{L}}$ and $x_{\ast}$ is solution of \eqref{eq:lsvrg-problem}. We note that $1\leq D_{L}\leq 4$.
\end{lemma}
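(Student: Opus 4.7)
The plan is to obtain this high-probability bound from the corresponding in-expectation result of \cite{morin2020sampling} via Markov's inequality. Corollary 5.6 of \cite{morin2020sampling} provides a linear convergence rate in expectation of the form
\begin{equation*}
\mathbb{E}[F(x_k) - F(x_{\ast})] \leq C\bigl(1 - \rho\bigr)^{k}\bigl(F(x_0) - F(x_{\ast})\bigr),
\end{equation*}
where $\rho = \Theta\bigl((\sqrt{n}+\sqrt{2 D_L \bar L/\mu})^{-2}\bigr)$ and $C$ is an absolute constant. Here $\bar L$ and $D_L$ are exactly the quantities appearing in the lemma. All assumptions required by that corollary (convexity, $L_i$-smoothness of the $\vp_i$, $\mu$-strong convexity of $\vp$, composite prox-friendly $\psi$) are already supplied by the hypotheses of our lemma, so the in-expectation bound applies directly to the iterates of L-SVRG on problem \eqref{eq:lsvrg-problem}.

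Next I would convert this expectation bound into the desired $(\varepsilon,\sigma)$-statement. Since $F(x_k) - F(x_{\ast}) \geq 0$ almost surely, Markov's inequality gives
\begin{equation*}
\mathbb{P}\bigl\{F(x_k) - F(x_{\ast}) \geq \varepsilon\bigr\} \leq \frac{\mathbb{E}[F(x_k) - F(x_{\ast})]}{\varepsilon} \leq \frac{C(1-\rho)^k(F(x_0) - F(x_{\ast}))}{\varepsilon}.
\end{equation*}
Requiring the right-hand side to be at most $\sigma$ and using $(1-\rho)^k \leq e^{-\rho k}$, it suffices to take
\begin{equation*}
k \geq \frac{1}{\rho} \log\!\left(\frac{C(F(x_0)-F(x_{\ast}))}{\varepsilon\sigma}\right),
\end{equation*}
which with $\rho^{-1} = O\bigl((\sqrt{n}+\sqrt{2 D_L \bar L/\mu})^{2}\bigr)$ produces precisely the claimed iteration complexity $O\bigl((\sqrt n + \sqrt{2 D_L \bar L/\mu})^{2}\log(1/(\varepsilon\sigma))\bigr)$. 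The bound $1\leq D_L \leq 4$ follows directly from the definition $D_L = 4 - 3\mu/\bar L$ together with $0 < \mu \leq \bar L$.

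The only genuine subtlety—and the step I expect to be the main obstacle—is verifying that the corollary of \cite{morin2020sampling} we invoke is stated in exactly the composite form needed here (smooth finite-sum $\vp$ plus prox-friendly smooth $\psi$) with the constants $\bar L$, $D_L$, $\mu$ matching our notation; once the translation of constants is fixed, the Markov step is routine. Since this translation is bookkeeping rather than mathematics, the proof is essentially a one-line application of Markov's inequality on top of the cited in-expectation guarantee, and no further analysis of the L-SVRG dynamics is required.
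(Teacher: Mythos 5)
Your proposal is correct and follows essentially the same route as the paper: invoke the in-expectation linear convergence guarantee of Corollary 5.6 from \cite{morin2020sampling}, then convert to a high-probability statement via Markov's inequality after shrinking the target accuracy to $\varepsilon\sigma$. The only difference is the bookkeeping step you anticipated: the cited corollary is stated for $\mathbb{E}\|x_k-x_{\ast}\|^2$ rather than the function gap, so the paper inserts one extra line using the $L_F$-smoothness of $F$ (with $L_F=L+L_{\psi}$) to pass from iterate distance to $\mathbb{E}[F(x_k)-F(x_{\ast})]\le\varepsilon\sigma$ before applying Markov.
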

\begin{proof}
According to Corollary 5.6 from \cite{morin2020sampling} we obtain that after $O\left(\left(\sqrt{n}+\sqrt{2 D_{L} \frac{\bar{L}}{\mu}}\right)^{2} \log \frac{1}{\epsilon'}\right)$ steps L-SVRG \cite{morin2020sampling} gives $\varepsilon'$ accurate solution, i.e. \begin{align}\label{eq:solution-lsvrg-arg}
    \mathbb{E}\left\|x^{k}-x_{\ast}\right\|^{2} \leq \epsilon',
\end{align}
holds true. For arbitrary $\varepsilon,\sigma>0$ let us take $x_k$, $\varepsilon' = 2\varepsilon \sigma/L_F$ accurate solution in terms of \eqref{eq:solution-lsvrg-arg}. Then from $L_F = L + L_{\psi}$-smoothness of $F$ we have
\begin{align}\label{eq:solution-lsvrg-funct}
    \E[F(x_k) - F(x_{\ast})] \leq \E \left[ \frac{L_F}{2}\|x_k - x_{\ast}\|^2 \right] \leq \varepsilon\sigma.
\end{align}
Using Markov inequality and \eqref{eq:solution-lsvrg-funct} we obtain that
\begin{align}\label{eq:solution-lsvrg-largediv}
    \mathbb{P} \{F(x_k) - F(x_{\ast}) \geq \varepsilon\} \leq \frac{\E [F(x_k) - F(x_{\ast})]}{\varepsilon} \leq \sigma.
\end{align}
In other words, after 
\begin{align*}
    O\left(\left(\sqrt{n}+\sqrt{2 D_{L} \frac{\bar{L}}{\mu}}\right)^{2} \log \frac{1}{\epsilon'}\right) = O\left(\left(\sqrt{n}+\sqrt{2 D_{L} \frac{\bar{L}}{\mu}}\right)^{2} \log \frac{1}{\epsilon\sigma}\right)
\end{align*}
Algorithm L-SVRG from \cite{morin2020sampling} gives random point $x_k$ such as \eqref{eq:solution-lsvrg-largediv} holds true. In other words, $x_k$ is $(\varepsilon, \sigma)$-solution of \eqref{eq:lsvrg-problem}.
\qed 
\end{proof}


\section{Proof of Lemma~\ref{lem:sum_oracle} and  Lemma~\ref{lemma:obt_delta_oracle}}\label{Appendix_obtain_oracle}

Let us proof Lemma~\ref{lem:sum_oracle}
\begin{proof}
Using the Deffinition~\ref{def:d_L_mu_oracle_prob} for function $\vp$ and $\psi$, we can obtain:
\begin{equation} 
     \frac{\mu_{\vp}}{2}\|x-y\|^{2} \leq \vp(x)-\left(\vp_{\delta_{\vp}, L_{\vp},\mu_{\vp}}(y)+\left\langle \nabla \vp_{\delta_{\vp}, L_{\vp},\mu_{\vp}}(y), x-y\right\rangle\right) \leq \frac{L_{\vp}}{2}\|x-y\|^{2}+\delta_{\vp} \text {\;\; w.p. } 1-\sigma_{\vp}
\end{equation}
\begin{equation} 
     \frac{\mu_{\psi}}{2}\|x-y\|^{2} \leq \psi(x)-\left(\psi_{\delta_{\psi}, L_{\psi},\mu_{\psi}}(y)+\left\langle \nabla \psi_{\delta_{\psi}, L_{\psi},\mu_{\psi}}(y), x-y\right\rangle\right) \leq \frac{L_{\psi}}{2}\|x-y\|^{2}+\delta_{\psi} \text {\;\; w.p. } 1-\sigma_{\psi}
\end{equation}
Let us sum this equations:
\begin{align}
     \frac{\mu_{\vp}}{2}\|x-y\|^{2}+\frac{\mu_{\psi}}{2}\|x-y\|^{2} &\leq 
     \vp(x)+\psi(x)-\left(\vp_{\delta_{\vp}, L_{\vp},\mu_{\vp}}(y)+\psi_{\delta_{\psi}, L_{\psi},\mu_{\psi}}(y)+\left\langle  \nabla \vp_{\delta_{\vp}, L_{\vp},\mu_{\vp}}(y)+\nabla \psi_{\delta_{\psi}, L_{\psi},\mu_{\psi}}(y), x-y\right\rangle\right)\nonumber \\
     \leq
     &\frac{L_{\vp}}{2}\|x-y\|^{2}+\frac{L_{\psi}}{2}\|x-y\|^{2}+\delta_{\vp}+\delta_{\psi} \text {\;\; w.p. } 1-\sigma_{\vp}-\sigma_{\psi}\label{eq:lem_sum_orac_fin}
\end{align} 
The equation \eqref{eq:lem_sum_orac_fin} means that the pair $\left(\vp_{\delta_{\vp}, L_{\vp},\mu_{\vp}}(y)+\psi_{\delta_{\psi}, L_{\psi},\mu_{\psi}}(y), \nabla \vp_{\delta_{\vp}, L_{\vp},\mu_{\vp}}(y)+\nabla \psi_{\delta_{\psi}, L_{\psi},\mu_{\psi}}(y)\right)$ is $(\delta_{\vp}+\delta_{\psi},\sigma_{\vp}+\sigma_{\psi}, L_{\vp}+L_{\psi},\mu_{\vp}+\mu_{\psi})$-oracle for $\vp+\psi$.
\qed 
\end{proof}
Let us proof Lemma~\ref{lemma:obt_delta_oracle}
\begin{proof}
The function $\hat{S}(x,\cdot)$ is $\mu_y$-strongly concave, and $\hat{S}(\cdot,y)$ is differentiable. Therefore, by Demyanov–Danskin’s theorem, for any $x \in \mathbb{R}^{d_x}$, we have 
\begin{equation}\tag{A1}\label{nabla_g(x)}
    \nabla g(x) = \nabla_x \tilde{S}(x,y^*(x)) = \nabla_x F(x,y^*(x)).
\end{equation}

To prove that  $g(\cdot)$ has an $L$--Lipschitz gradient for $L = L_{F} + \frac{2 L_{F}^2}{\mu_y}$, let us prove the Lipschitz condition for $y^*(\cdot)$ with a constant, the function $y^*$ is defined as:
\begin{equation}\label{solution:max_problem_g(x)}
    y^*(x) := \arg\max_{y \in \mathbb{R}^{d_y}} \hat{S}(x,y) \quad
     \forall x \in \mathbb{R}^{d_x},
\end{equation}

Since $\hat{S}(x_1, \cdot)$ is $\mu_y$-strongly concave, for arbitrary $x_1, x_2 \in \mathbb{R}^{d_x}$:
\begin{equation}\tag{A2}\label{ineq:1}
   \|y^*(x_1) - y^*(x_2)\|_2^2 \leq \frac{2}{\mu_y} \left( \hat{S}(x_1, y^*(x_1)) - \hat{S}(x_1, y^*(x_2)) \right).
\end{equation}

On the other hand, $\hat{S}(x_2, y^*(x_1)) - \hat{S}(x_2, y^*(x_2)) \leq 0$, since $y^*(x_2)$ affords the maximum to $\hat{S}(x_2,.)$ on $ \mathbb{R}^{d_y} $. We have
\begin{equation}\tag{A3}\label{ineq:2}
\begin{split}
\hat{S}(x_1, y^*(x_1&)) -  \hat{S}(x_1, y^*(x_2))  \leq  \hat{S}(x_1, y^*(x_1)) - \hat{S}(x_1, y^*(x_2))  -  \hat{S}(x_2, y^*(x_1)) + \hat{S}(x_2, y^*(x_2)) = \\
&  \hspace{-2em} \stackrel{\text{from} \, \eqref{eq:lem_obt_oracle_g}}{=} \left( F(x_1, y^*(x_1)) - F(x_1, y^*(x_2)) \right) - \left( F(x_2, y^*(x_1)) - F(x_2, y^*(x_2)) \right) = \\
& \hspace{-1.5em} = \int_0^1 \langle  \nabla_xF(x_1 + t(x_2 - x_1), y^*(x_1)) - \nabla_xF(x_1 + t(x_2 - x_1), y^*(x_2)), x_2 - x_1 \rangle dt \leq \\
&  \hspace{-1.5em} \leq  \| \nabla_xF(x_1 + t(x_2 - x_1), y^*(x_1)) - \nabla_xF(x_1 + t(x_2 - x_1), y^*(x_1))  \|_2\cdot \|x_2 - x_1\|_2 \leq \\
&   \hspace{-1em} \leq L_F \|y^*(x_1) - y^*(x_2)\|_2\cdot \|x_2 - x_1\|_2.
\end{split}
\end{equation}

Thus, \eqref{ineq:1} and \eqref{ineq:2} imply the inequality
\begin{equation}\tag{A4}\label{ineq:Lip_y_star}
    \|y^*(x_2) - y^*(x_1)\|_2 \leq \frac{2L_{F}}{\mu_y}\|x_2 - x_1\|_2,
\end{equation}
i.e., the function $y^*(\cdot)$ satisfies the Lipschitz condition with a constant $\frac{2L_{F}}{\mu_y}$. Next, from \eqref{nabla_g(x)}, we obtain
\begin{align*}
\|\nabla g(x_1&) - \nabla g(x_2)\|_2 = \|\nabla_x F(x_1,y^*(x_1))-\nabla_x F(x_2,y^*(x_2))\|_2 = \\
&  = \|\nabla_x F(x_1,y^*(x_1)) - \nabla_x F(x_1,y^*(x_2)) + \nabla_x F(x_1,y^*(x_2)) - \nabla_x F(x_2,y^*(x_2))\|_2 \leq \\
&  \leq  \|\nabla_x F(x_1,y^*(x_1)) - \nabla_x F(x_1,y^*(x_2))\|_2 + \|\nabla_x F(x_1,y^*(x_2)) - \nabla_x F(x_2,y^*(x_2))\|_2 \leq \\
&  \hspace{0em} \leq  L_{F}\|y^*(x_1)- y^*(x_2)\|_2 + L_{F} \|x_2 - x_1\|_2  =\\
& \hspace{-1em} \stackrel{ \text{from}\, \eqref{ineq:Lip_y_star}}{=} \left( L_{F} + \frac{2L_{F}^2}{\mu_y} \right) \|x_2 - x_1\|_2.
\end{align*}

This means that $g(\cdot)$ has an $L$--Lipschitz gradient with $L = L_{F} + \frac{2 L_{F}^2}{\mu_y}$.

Let us now prove that $ \nabla_{x} F\left(x, \tilde{y}_{\delta/2}(x)\right)$ is $(\delta, 2 L_g)$-oracle of $g$, i.e.:
\begin{equation}\label{eq:lemma2}
	0 \leq g(z) -  \left[\left\{ F(x, \tilde{y}_{\delta /2}(x))-w(\tilde{y}_{\delta/2}(x)) \right\}
	+\langle \nabla_{x}  F(x, \tilde{y}_{\delta/2}(x)), z-x \rangle\right]
	\leq \frac{2 L}{2}\|z-x\|_{2}^{2}+ \delta,
\end{equation}
First, we prove that, for any $\delta \geq 0$ and $x \in \mathbb{R}^{d_x}$ 
\begin{equation}\tag{A5}\label{ineq:inexact_for_g(x)}
    \|\nabla_x \hat{S}(x, \tilde{y}_{\delta/2}(x)) - \nabla g(x)\|_2 \leq L_{F}\sqrt{\frac{\delta}{\mu_y}}.
\end{equation}

For any $x \in \mathbb{R}^{d_x}$, , it is true that $\nabla_x \hat{S}(x, \tilde{y}_{\delta/2}(x)) = \nabla_x F(x, \tilde{y}_{\delta/2}(x))$. Then,
\begin{align*}
\|\nabla_x \hat{S}(x, \tilde{y}_{\delta/2}(x)) - \nabla g(x)\|_2^2  &= \|\nabla_x F(x, \tilde{y}_{\delta/2}(x)) - \nabla_x F(x, y^*(x))\|_2^2 \leq \\
&  \hspace{0em} \leq  L_{F}^2 \|y^*(x) - \tilde{y}_{\delta/2}(x) \|_2^2 \leq \\
&  \hspace{-1em} \stackrel{ \text{from}\, \eqref{ineq:1}}{\leq}  \frac{2 L_{F}^2}{\mu_y} \left( \hat{S}(x, y^*(x)) - \hat{S}(x, \tilde{y}_{\delta/2}(x))  \right) \leq \\
&  \hspace{-1em} \stackrel{ \text{from}\, \eqref{eq:lem_obt_oracle_delta}}{\leq}  \frac{\delta L_{F}^2}{\mu_y},
\end{align*}
which justifies inequality \eqref{ineq:inexact_for_g(x)}.

Now , due to the $\mu_x$-strong convexity of $\hat{S}(\cdot, \tilde{y}_{\delta/2}(x))$ on $\mathbb{R}^{d_x}$, for arbitrary $x, z \in \mathbb{R}^{d_x}$ it is true that
\begin{align*}
    g(z)  \stackrel{ \text{from}\, \eqref{eq:lem_obt_oracle_g}}{\geq} \hat{S}(z, \tilde{y}_{\delta/2}(x))  \geq \hat{S}(x, \tilde{y}_{\delta/2}(x))+ \langle \nabla_x\hat{S}(x, \tilde{y}_{\delta/2}(x)), z-x \rangle.
\end{align*}

Thus,
\begin{equation*}
    0 \geq
     \hat{S}(x, \tilde{y}_{\delta/2}(x)) - g(z) + \langle \nabla_x\hat{S}(x, \tilde{y}_{\delta/2}(x)), z-x \rangle,
\end{equation*}
which proves the left-hand side of \eqref{eq:lemma2}. To prove the right-hand side of \eqref{eq:lemma2}, note that $g$ is convex and has an $L$--Lipschitz gradient on $\mathbb{R}^{d_x}$. Therefore, for arbitrary $x, z \in \mathbb{R}^{d_x}$, we have
\begin{align*}
    g(z) & \leq g(x) + \langle \nabla g(x), z-x\rangle + \frac{L}{2}\|z-x\|_2^2 \leq\\
    & \hspace{-1em} \stackrel{ \text{from}\, \eqref{eq:lem_obt_oracle_delta}}{\leq}  \hat{S}(x,\tilde{y}_{\delta/2}(x)) + \delta/2 + \frac{L}{2} \|z-x\|_2^2 + \langle \nabla g(x), z-x \rangle + \langle \nabla_x\hat{S}(x,\tilde{y}_{\delta/2}(x)), x-z \rangle - \\ & - \langle \nabla_x\hat{S}(x,\tilde{y}_{\delta/2}(x)), x-z \rangle =\\
    & = \hat{S}(x,\tilde{y}_{\delta/2}(x)) + \delta/2 + \langle \nabla_x\hat{S}(x,\tilde{y}_{\delta/2}(x)), z-x \rangle +\langle \nabla_x\hat{S}(x,\tilde{y}_{\delta/2}(x)) - \nabla g(x), x-z \rangle + \\
    & +\frac{L}{2} \|z-x\|_2^2 \leq\\
    & \hspace{-1em} \stackrel{ \text{from}\, \eqref{ineq:inexact_for_g(x)}}{\leq} \hat{S}(x,\tilde{y}_{\delta/2}(x)) + \delta/2 + \langle \nabla_x\hat{S}(x,\tilde{y}_{\delta/2}(x)), z-x \rangle + L_{F}\sqrt{\frac{\delta}{\mu_y}}\cdot \|z-x\|_2+ \frac{L}{2}\|z-x\|_2^2.
\end{align*}

However,
\begin{equation*}
    L_{F}\sqrt{\frac{\delta}{\mu_y}}\cdot \|z-x\|_2  
    =  \sqrt{\frac{L_{F}^2}{\mu_y} \|z-x\|_2^2 \cdot \delta}
    \leq \frac{L_{F}^2}{2\mu_y}\|z-x\|_2^2 + \delta/2
\end{equation*}
due to the classical inequality between the arithmetic and geometric mean. Therefore,
\begin{equation*}
    g(z)  \leq \hat{S}(x,\tilde{y}_{\delta/2}(x)) + \delta + \langle \nabla_x\hat{S}(x,\tilde{y}_{\delta/2}(x)), z-x \rangle + \frac{L_{F}^2}{\mu_y}\|z-x\|_2^2 +\frac{L}{2}\|z-x\|_2^2,
\end{equation*}
and since $L = L_{F} + \frac{2L_{F}^2}{\mu_y}$, we have $\frac{L_{F}^2}{\mu_y} \leq \frac{L}{2}$;therefore,
\begin{equation*}
    g(z) \leq \hat{S}(x,\tilde{y}_{\delta/2}(x)) + \langle \nabla_x \hat{S}(x,\tilde{y}_{\delta/2}(x)), z-x \rangle + \delta + L \|z-x\|_2^2.
\end{equation*}

Thus, we have
\begin{equation*}
g(z) - \hat{S}(x, \tilde{y}_{\delta/2}(x)) - \langle \nabla_x\hat{S}(x, \tilde{y}_{\delta/2}(x)), z-x \rangle \leq L\|z-x\|_2^2 + \delta,
\end{equation*}
which implies the left-hand side of inequality \eqref{eq:lemma2}.
 
In the statement of Lemma~\ref{lemma:obt_delta_oracle} only  $(\delta/2, \sigma)$-solution to \eqref{eq:lem_obt_oracle_g} is available. In this case the inequality \eqref{eq:lemma2} will be satisfied with probability $1-\sigma$. Then $ \nabla_{x} F\left(x, \tilde{y}_{\delta/2}(x)\right)$ is $(\delta,\sigma, 2 L_g)$-oracle of $g$.
\qed 
\end{proof}
\section{A Variant of Accelerated Framework for Saddle-Point Problems.}\label{Appendix_Framework_inverse}
In this appendix we consider saddle-point problem under the same assumptions as in Section \ref{section:saddleFramework}. 
We describe in detail the structure of a general framework for solving such problems which consists of three inner-outer loops. The only difference compared with the general framework in Section \ref{section:saddleFramework}
is that the order of the Loop 2 and Loop 3 has been reversed. 
We also summarize the steps of the algorithm in Table~\ref{tabl:saddleproblem_steps_inverse}. 
In each loop we apply Algorithm \ref{alg:restarts_inexact_notconvex} with different value of parameter $H$ which defines its complexity. In the subsection after description of the loops we carefully choose the value of this parameter in each level of the loops. Later, in the next Appendix \ref{Appendix_h_not_sum}
we use this general framework in the proof of Theorems~\ref{theorem:G-sum_noprox} and \ref{theorem:G-sum_h_prox} with complexity estimates for problem \eqref{eq:problem_Gsum_minmax} under Assumption~\ref{assumpt:G_sum}, as well as Corollary \ref{theorem:h-not-sum}
with complexity estimates for problem \eqref{eq:problem_st_h_sum} with $m_h=1$.

\subsection{Main loops of the framework}

In each of the three loops of the general framework we have a target accuracy $\varepsilon$ and a confidence level $\sigma$ which define the required quality of the solution to an optimization problem in this loop. These quantities define the inexactness of the oracle in this loop via inequalities \eqref{delta_poly_CATD_with_prob} and \eqref{sigma_0_poly_CATD_with_prob} and the target accuracy and confidence level for the optimization problem in the next loop via \eqref{varepsilon_poly_CATD_with_prob}, \eqref{sigma_poly_CATD_with_prob}. Due to inexact strong convexity provided by $(\delta,\sigma,L,\mu)$-oracle, Algorithm \ref{alg:restarts_inexact_notconvex} has logarithmic dependence of the complexity on the target accuracy and confidence level (see Theorem \ref{AM:comfortable_view_with_prob}). Since the dependencies on the target accuracy and confidence level in \eqref{delta_poly_CATD_with_prob},  \eqref{sigma_0_poly_CATD_with_prob}, \eqref{varepsilon_poly_CATD_with_prob} and \eqref{sigma_poly_CATD_with_prob} are polynomial, we obtain that the dependency of the complexity in each loop on the target accuracy and confidence level in the first loop, i.e. target accuracy and confidence level for the solution to problem \eqref{eq:framework_funct}, is logarithmic. We hide such logarithmic factors in $\widetilde{O}$ notation.

For convenience, we summarize the main details of the loops in Table \ref{tabl:saddleproblem_steps_inverse}.

 \paragraph{\textbf{Loop 1}}\label{subsec_first_inverse} $\;$\\
The goal of Loop 1 is to find an $(\varepsilon,\sigma)$-solution of  problem \eqref{eq:main30}, which is considered as a minimization problem in $y$ with the objective given in the form of auxiliary maximization problem in $x$. 
Finding an $(\varepsilon,\sigma)$-solution of this minimization problem gives an approximate solution to the saddle-point problem  \eqref{eq:framework_funct} which is understood in the sense of Definition \ref{def:saddle_solution}.

To solve problem \eqref{eq:main30}, we would like to apply Algorithm \ref{alg:restarts_inexact_notconvex} with 
\begin{align}\label{eq:framework_aux_step1_inverse}
    \varphi = 0, \ \ \psi= h(y) + \max_{x \in  \R^{d_x}}\left\{-G(x, y)-f(x)\right\}.
\end{align}
The function $\varphi$ is, clearly, convex and is known exactly.
What makes solving problem \eqref{eq:main30} not straightforward is that the exact value of $\psi$ is not available. At the same time we can construct an inexact oracle for this function.
First, the function $h$ is $\mu_y$-strongly convex, $L_h$-smooth and its exact gradient is available. Second, thanks to Assumption \ref{assumpt:framework_oracle_x}, it is possible to construct a $\left(\delta^{(1)}\left(\varepsilon\right),\sigma^{(1)}_0\left(\varepsilon,\sigma\right),2L_G + 4\frac{L_G^2}{\mu_x}\right)$-oracle for the function $r(y)= \max_{x \in  \R^{d_x}}\left\{-f(x)-G(x, y)\right\}$ for any $\delta^{(1)}\left(\varepsilon\right)=\bf{poly}\left(\varepsilon \right)$ and $\sigma^{(1)}_0\left(\varepsilon,\sigma\right)=\bf{poly}\left(\varepsilon,\sigma \right)$.
Combining these two parts and using Lemma~\ref{lem:sum_oracle}, we obtain that we can construct a $\left(\delta^{(1)}\left(\varepsilon\right),\sigma^{(1)}_0\left(\varepsilon,\sigma\right), L_h + 2L_G + 4\frac{L_G^2}{\mu_x}, \mu_y\right)$-oracle for $\psi$.
Thus, we can apply Algorithm  \ref{alg:restarts_inexact_notconvex} with parameter $H=H_1$, which will be chosen later, to solve problem \eqref{eq:main30}.
Moreover, since Assumption \ref{assumpt:framework_oracle_x} requires  $\delta^{(1)}\left(\varepsilon\right)=\bf{poly}\left(\varepsilon\right)$ and  $\sigma^{(1)}_0\left(\varepsilon,\sigma\right)=\bf{poly}\left(\varepsilon, \sigma \right)$,
which holds for the dependencies in \eqref{delta_poly_CATD_with_prob} and \eqref{sigma_0_poly_CATD_with_prob}, we can choose $\delta^{(1)}\left(\varepsilon\right)$ and $\sigma^{(1)}_0\left(\varepsilon,\sigma\right)$ such that \eqref{delta_poly_CATD_with_prob} and \eqref{sigma_0_poly_CATD_with_prob} hold. So, the first main assumption of Theorem~\ref{AM:comfortable_view_with_prob} holds. 
At the same time, according to Assumptions \ref{assumpt:framework} and \ref{assumpt:framework_oracle_x}, constructing inexact oracle for $\psi$ requires $\tau_h$ calls of the basic oracle for $h$,
$\tau_G$ calls of the basic oracle of $G(x, \cdot )$, $\mathcal{N}_G^x\left( \tau_G\right) \mathcal{K}_G^x\left(\varepsilon, \sigma\right)$ calls of the basic oracle for $G(\cdot,y)$, $\mathcal{N}_{f}\left( \tau_f\right)\mathcal{K}_f\left(\varepsilon, \sigma\right)$ calls of the basic oracle for $f$.

Let us discuss the second main assumption of Theorem~\ref{AM:comfortable_view_with_prob}. 
To ensure that this assumption holds, we need in each iteration of Algorithm \ref{alg:highorder_inexact}, used as a building block in Algorithm \ref{alg:restarts_inexact_notconvex}, to find an $\left(\tilde{\varepsilon}^{(1)}_f\left(\varepsilon\right),\tilde{\sigma}^{(1)}\left(\varepsilon,\sigma\right)\right)$-solution to the auxiliary problem \eqref{prox_step_inexact}, where $\tilde{\sigma}^{(1)}\left(\varepsilon,\sigma\right), \tilde{\varepsilon}^{(1)}_f\left(\varepsilon\right)$ satisfy 
 inequalities \eqref{varepsilon_poly_CATD_with_prob}, \eqref{sigma_poly_CATD_with_prob}.
 For the particular definitions of $\vp$, $\psi$  \eqref{eq:framework_aux_step1_inverse} in this Loop, this problem has the following form:  
\begin{equation} \label{eq:sub1_inverse}
 y_{k+1}^t = \arg\min _{y \in \R^{d_y}}\left\{ h(y) +\max_{x \in  \R^{d_x}}\left\{-G(x, y)-f(x)\right\}+\frac{H_1}{2} \|y - y_k^{md}\|^2\right\}.
\end{equation}
Below, in the next \hyperref[subsec_second_inverse]{paragraph "Loop 2"},  we explain how to solve this auxiliary problem to obtain its $\left(\tilde{\varepsilon}^{(1)}_f\left(\varepsilon\right),\tilde{\sigma}^{(1)}\left(\varepsilon,\sigma\right)\right)$-solution.
To summarize Loop 1, both main assumptions of Theorem~\ref{AM:comfortable_view_with_prob} hold and we can use it to guarantee that we obtain an $(\varepsilon,\sigma)$-solution of problem \eqref{eq:main30}. This requires $\widetilde{O}\left(1+\left( \frac{H_1 }{\mu_{\vp} + \mu_{\psi}} \right)^{\frac{1}{2}}\right)=\widetilde{O}\left(1+\left( \frac{H_1 }{\mu_{y}} \right)^{\frac{1}{2}}\right)$ 
calls to the inexact oracles for $\vp$ and for $\psi$, and the same number of times solving the auxiliary problem \eqref{eq:sub1_inverse}.
Combining this oracle complexity with the cost of calculating inexact oracles for $\vp$ and for $\psi$, we obtain that solving problem \eqref{eq:main30} requires $\widetilde{O}\left(1+\left( \frac{H_1 }{\mu_{y}} \right)^{\frac{1}{2}}\right)\tau_h$ calls of the basic oracle for $h$, $\widetilde{O}\left(1+\left( \frac{H_1 }{\mu_{y}} \right)^{\frac{1}{2}}\right)\tau_G$ calls of the basic oracle of $G(x, \cdot )$,
$\widetilde{O}\left(1+\left( \frac{H_1 }{\mu_{y}} \right)^{\frac{1}{2}}\right)\mathcal{N}_G^x\left( \tau_G\right)\mathcal{K}_G^x\left(\varepsilon, \sigma\right)$ calls of the basic oracle for $G(\cdot,y)$, $\widetilde{O}\left(1+\left( \frac{H_1 }{\mu_{y}} \right)^{\frac{1}{2}}\right)\mathcal{N}_{f}\left( \tau_f\right)\mathcal{K}_f\left(\varepsilon, \sigma\right)$ calls of the basic oracle for $f$. 
The only remaining thing is to provide an inexact solution to problem \eqref{eq:sub1_inverse} and, next, we move to the Loop 2 to explain how to guarantee this. Note that we need to solve problem \eqref{eq:sub1_inverse} $\widetilde{O}\left(1+\left( \frac{H_1 }{\mu_{y}} \right)^{\frac{1}{2}}\right)$ times.

\paragraph{\textbf{Loop 2}}\label{subsec_second_inverse}$\;$\\
As mentioned in the previous Loop 1,
in each iteration of Algorithm \ref{alg:restarts_inexact_notconvex} in Loop 1 
we need many times to find  an $(\varepsilon'_2,\sigma'_2)$-solution of the auxiliary problem \eqref{eq:sub1_inverse}, where we denoted for simplicity $\sigma'_2=\tilde{\sigma}^{(1)}\left(\varepsilon,\sigma\right)$ and $\varepsilon'_2=\tilde{\varepsilon}^{(1)}_f\left(\varepsilon\right)$. To do this, we reformulate problem \eqref{eq:sub1_inverse} by changing the order of minimization and maximization as follows:
\begin{align}
\min _{y \in \R^{d_y}}\left\{ h(y) +\frac{H_1}{2} \|y - y_k^{md}\|^2 +\max_{x \in  \R^{d_x}}\left\{-G(x, y)-f(x)\right\}\right\} \\
=\min _{y \in \R^{d_y}}\max_{x \in  \R^{d_x}}\left\{ h(y)  -G(x, y)-f(x) +\frac{H_1}{2} \|y - y_k^{md}\|^2\right\} \\
 = \max_{x \in  \R^{d_x}} \min _{y \in \R^{d_y}}\left\{ h(y)  -G(x, y)-f(x) +\frac{H_1}{2} \|y - y_k^{md}\|^2\right\}\\
= - \min _{x \in \R^{d_x}}\left\{ f (x) +\max _{y \in  \R^{d_y}}\left\{G(x, y)-h(y) - \frac{H_1}{2} \|y - y_k^{md}\|^2\right\}\right\} \label{eq:problem_step2_inverse}   
\end{align}
and obtain an $(\varepsilon'_2,\sigma'_2)$-solution of the problem \eqref{eq:sub1_inverse} by solving minimization problem \eqref{eq:problem_step2_inverse}. Assume that we can find an $(\varepsilon_2,\sigma_2)$-solution $\hat{x}$ of the minimization problem \eqref{eq:problem_step2_inverse} in the sense of Definition \ref{def:saddle_solution}. Then, according to Assumption \ref{assumpt:framework_oracle}, we can also obtain a point $\hat{y}$ which is $(\bar{\delta}(\varepsilon_2)/2,\bar{\sigma}_0(\sigma_2))$-solution to the problem 
\begin{align}\label{eq:loop2_probl_for_changing_delta_L_inverse}
    \max _{y \in  \R^{d_y}}\left\{G(x, y)-h(y) - \frac{H_1}{2} \|y - y_k^{md}\|^2\right\},
\end{align}
where $\bar{\delta}(\varepsilon_2),\bar{\sigma}_0(\sigma_2)$ satisfy the following polynomial dependencies
\begin{align}
& \bar{\delta}(\varepsilon_2) \leq \frac{H_1+\mu_y}{4\mu_x\left(\frac{H_1+\mu_y}{4L_G}\right)^2} \varepsilon_2, \;\; \bar{\sigma}_0(\sigma_2) \leq \sigma_2.
\label{eq:loop2_1_inverse}
\end{align}
If we choose $\varepsilon_2,\sigma_2,\bar{\delta}(\varepsilon_2),\bar{\sigma}_0(\sigma_2)$ satisfying
\begin{align}
    & \varepsilon_2\leq  \left(\frac{H_1+\mu_y}{4L_G}\right)^2 \frac{\mu_x}{L_h+H_1+L_G+\frac{2L_G^2}{\mu_x}} \varepsilon'_2  \label{eq:step_2_choos_eps_fin_inverse},\\
    &\sigma_2\leq \frac{\sigma'_2}{2},\;\;\;  \label{eq:step_2_choos_sigma_fin_inverse}\\
    &\bar{\sigma}_0(\sigma_2)  \stackrel{\eqref{eq:loop2_1_inverse}}{\leq} \sigma_2 \leq \frac{\sigma'_2}{2}, \;\;\; \bar{\delta}(\varepsilon_2) \leq \frac{H_1+\mu_y}{4\mu_x\left(\frac{H_1+\mu_y}{4L_G}\right)^2} \varepsilon_2\stackrel{\eqref{eq:loop2_1_inverse}}{\leq} \frac{H_1+\mu_y}{4L_h+4H_1+4L_G+\frac{8L_G^2}{\mu_x}}\varepsilon'_2 \label{eq:step_2_choos_delta_fin_inverse},
\end{align}
then
\begin{align}
    &2\frac{L_h+H_1+L_G+\frac{2L_G^2}{\mu_x}}{H_1+\mu_y}\bar{\delta}(\varepsilon_2) + 8 \left(\frac{L_G}{H_1+\mu_y}\right)^2 \frac{L_h+H_1+L_G+\frac{2L_G^2}{\mu_x}}{\mu_x}\varepsilon_2\leq \varepsilon'_2, \label{eq:step_2_choos_eps_inverse}\\
    &\sigma_2+\bar{\sigma}_0(\sigma_2) \leq \sigma'_2. \label{eq:step_2_choos_sigma_inverse}
\end{align}
Thus, applying Corollary~\ref{corollary:change_maxmin} to minimization problem \eqref{eq:problem_step2_inverse} with $F(x,y)=G(x,y)$, $w(y)=h(y)+\frac{H_1}{2} \|y - y_k^{md}\|^2$, $\varepsilon_x=\varepsilon_2$, $\sigma_x=\sigma_2$, $\varepsilon_y=\bar{\delta}(\varepsilon_2)$, $\sigma_y=\bar{\sigma}_0(\sigma_2)$ we obtain (see \eqref{eq:cor_1_1}, \eqref{eq:cor_1_2}) that $\hat{y}$ satisfies inequality 
\[
h(\hat{y}) +\frac{H_1}{2} \|\hat{y} - y_k^{md}\|^2
 + \max_{x \in \R^{d_x}} \{ - G(x, \hat{y}) - f(x)\} - \min_{y \in \R^{d_y}}\max_{x \in \R^{d_x}} \{h(y) +\frac{H_1}{2} \|y - y_k^{md}\|^2 -G(x, y) - f(x) \} \leq \varepsilon'_2
\]
with probability $\sigma'_2$. Thus, by Definition \ref{def:saddle_solution} it is an $(\varepsilon'_2,\sigma'_2)$-solution of the problem \eqref{eq:sub1_inverse}.
By Assumption \ref{assumpt:framework_oracle}, calculation of $\hat{y}$ requires  $\mathcal{N}_G^y\left( \tau_G, H\right) \mathcal{K}_G^y\left(\varepsilon_2, \sigma_2\right)$ calls of the basic oracle $O_G^y$ of $G(x,\cdot)$, $\tau_G$ calls of the basic oracle $O_G^x$ of $G(\cdot ,y)$ and $\mathcal{N}_{h}\left( \tau_h, H\right) \mathcal{K}_h\left(\varepsilon_2, \sigma_2\right)$ calls of the basic oracle $O_h$ of $h$.

Our next step is to provide an $(\varepsilon_2,\sigma_2)$-solution to minimization problem \eqref{eq:problem_step2_inverse}, for which we again apply Algorithm  \ref{alg:restarts_inexact_notconvex}, but this time with 
\begin{align}\label{eq:framework_aux_step2_inverse}
    \varphi = f(x), \ \ \psi = \max _{y \in  \R^{d_y}}\left\{G(x, y)-h(y) - \frac{H_1}{2} \|y - y_k^{md}\|^2\right\}.
\end{align}
The function $\varphi$  is $\mu_x$-strongly convex, $L_f$-smooth and its exact gradient is available.
What makes solving problem \eqref{eq:problem_step2_inverse} not straightforward is that the exact value of $\psi$ is not available. At the same time we can construct an inexact oracle for this function.
Thanks to Assumption \ref{assumpt:framework_oracle}, it is possible to construct a $\left(\delta^{(2)}\left(\varepsilon_2\right), \sigma^{(2)}_0\left(\varepsilon_2,\sigma_2\right), 2L_G + 4\frac{L_G^2}{H_1+\mu_y}\right)$-oracle for the function 
$\psi$ for any $\delta^{(2)}\left(\varepsilon_2\right)=\textbf{poly}\left(\varepsilon_2 \right)$ and $\sigma^{(2)}_0\left(\varepsilon_2,\sigma_2\right)=\textbf{poly}\left(\varepsilon_2,\sigma_2 \right)$.
Using Lemma \ref{lem:sum_oracle}, we obtain that we can construct \\ a  $\left(\delta^{(2)}\left(\varepsilon_2\right),\sigma^{(2)}_0\left(\varepsilon_2,\sigma_2\right), L_f + 2L_G + 4\frac{L_G^2}{H_1+\mu_y}, \mu_x\right)$-oracle for the function $\vp+\psi$.
Thus, we can apply Algorithm  \ref{alg:restarts_inexact_notconvex} with parameter $H=H_2 \geq 2L_f$, which will be chosen later, to solve the problem \eqref{eq:problem_step2_inverse}.
Moreover, since Assumption \ref{assumpt:framework_oracle} requires  $\delta^{(2)}\left(\varepsilon_2\right)=\text{\bf{poly}}\left(\varepsilon_2 \right)$ and  $\sigma^{(2)}_0\left(\varepsilon_2,\sigma_2\right)=\text{\bf{poly}}\left(\varepsilon_2,\sigma_2 \right)$, which holds for the dependencies in \eqref{delta_poly_CATD_with_prob} and \eqref{sigma_0_poly_CATD_with_prob}, we can choose $\delta^{(2)}\left(\varepsilon_2\right)$ and $\sigma^{(2)}_0\left(\varepsilon_2,\sigma_2\right)$ such that \eqref{delta_poly_CATD_with_prob} and \eqref{sigma_0_poly_CATD_with_prob} hold. So, the first main assumption of Theorem~\ref{AM:comfortable_view_with_prob} holds. 
At the same time, according to Assumptions \ref{assumpt:framework} and \ref{assumpt:framework_oracle}, constructing inexact oracle for $\psi$ requires $\mathcal{N}_G^y\left( \tau_G, H_1\right) \mathcal{K}_G^y\left(\varepsilon_2, \sigma_2\right)$ calls of the basic oracle for $G(x,\cdot)$, $ \tau_G$ calls of the basic oracle for $G(\cdot, y)$, $\mathcal{N}_{h}\left( \tau_h, H_1\right)\mathcal{K}_h\left(\varepsilon_2, \sigma_2\right)$ calls of the basic oracle for $h$, and constructing exact oracle for $\varphi=f$ requires $\tau_f$ calls of the basic oracle for $f$.

Let us discuss the second main assumption of Theorem~\ref{AM:comfortable_view_with_prob}. 
To ensure that this assumption holds, we need in each iteration of Algorithm \ref{alg:highorder_inexact}, used as a building block in Algorithm \ref{alg:restarts_inexact_notconvex}, to find $\left(\tilde{\varepsilon}^{(2)}_f\left(\varepsilon_2\right),\tilde{\sigma}^{(2)}\left(\varepsilon_2,\sigma_2\right)\right)$-solution to the auxiliary problem \eqref{prox_step_inexact}, where $\tilde{\sigma}^{(2)}\left(\varepsilon_2,\sigma_2\right), \tilde{\varepsilon}^{(2)}_f\left(\varepsilon_2\right)$ satisfy 
 inequalities \eqref{varepsilon_poly_CATD_with_prob}, \eqref{sigma_poly_CATD_with_prob}.
 For the particular definitions of $\vp$, $\psi$  \eqref{eq:framework_aux_step2_inverse} in this Loop, this problem has the following form:  
\begin{align}
x_{l+1}^t &= \arg\min _{x \in \R^{d_x}}\left\{ \la\nabla f (x_l^{md}), x-x_l^{md}\ra \right.\nonumber\\
&\left. +\max _{y \in  \R^{d_y}}\left\{G(x, y)+h(y) - \frac{H_1}{2} \|y - y_k^{md}\|^2 \right\} + \frac{H_2}{2}\|x - x_l^{md}\|^2\right\}\label{eq:sub2_inverse},
\end{align}
Below, in the next \hyperref[subsec_third_inverse]{paragraph "Loop 3"},  we explain how to solve this auxiliary problem to obtain its \\ $\left(\tilde{\varepsilon}^{(2)}_f\left(\varepsilon_2\right),\tilde{\sigma}^{(2)}\left(\varepsilon_2,\sigma_2\right)\right)$-solution.

To summarize Loop 2, both main assumptions of Theorem~\ref{AM:comfortable_view_with_prob} hold and we can use it to guarantee that we obtain an $(\varepsilon'_2,\sigma'_2)$-solution of the auxiliary problem \eqref{eq:sub1_inverse}. This requires one time to solve the problem~\eqref{eq:loop2_probl_for_changing_delta_L_inverse}, which, by Assumption \ref{assumpt:framework_oracle} has the same cost as evaluating inexact oracle for the function $\psi$.
Further, we need $O\left(\left( 1+\left( \frac{H_2 }{\mu_{\vp} + \mu_{\psi}} \right)^{\frac{1}{2}}\right) \log \varepsilon_2^{-1}\right)=O\left(\left(1+\left( \frac{H_2 }{\mu_{x}} \right)^{\frac{1}{2}} \right)\log \varepsilon_2^{-1}\right)$ calls to the inexact oracles for $\vp$ and for $\psi$, and the same number of times solving the auxiliary problem \eqref{eq:sub2_inverse}.
Combining this oracle complexity with the cost of calculating inexact oracles for $\vp$ and for $\psi$, we obtain that solving problem \eqref{eq:problem_step2_inverse} requires $O\left(\left(1+\left( \frac{H_2 }{\mu_{x}} \right)^{\frac{1}{2}} \right)\log \varepsilon_2^{-1}\right)\tau_f$ calls of the basic oracle for $f$, $O\left(\left(1+\left( \frac{H_2 }{\mu_{x}} \right)^{\frac{1}{2}} \right)\log \varepsilon_2^{-1}\right)\mathcal{N}_G^y\left( \tau_G, H_1\right) \mathcal{K}_G^y\left(\varepsilon_2, \sigma_2\right)$  calls of the basic oracle for $G(x,\cdot)$, $O\left(\left(1+\left( \frac{H_2 }{\mu_{x}} \right)^{\frac{1}{2}} \right)\log \varepsilon_2^{-1}\right) \tau_G$ calls of the basic oracle for $G(\cdot, y)$,\\
$O\left(\left(1+\left( \frac{H_2 }{\mu_{x}} \right)^{\frac{1}{2}} \right)\log \varepsilon_2^{-1}\right)\mathcal{N}_{h}\left( \tau_h, H_1\right)\mathcal{K}_h\left(\varepsilon_2, \sigma_2\right)$ calls of the basic oracle for $h$. The only remaining thing is to provide an inexact solution to problem \eqref{eq:sub2_inverse} and, next, we move to Loop 3 to explain how to guarantee this. Note that we need to solve problem \eqref{eq:sub2_inverse} $O\left(\left(1+\left( \frac{H_2 }{\mu_{x}} \right)^{\frac{1}{2}} \right)\log \varepsilon_2^{-1}\right)$ times.

\paragraph{\textbf{Loop 3}}\label{subsec_third_inverse}$\;$\\
As mentioned in the previous Loop 2,
in each iteration of Algorithm \ref{alg:restarts_inexact_notconvex} in 
Loop 2 we need to find many times an $(\varepsilon_3,\sigma_3)$-solution of the auxiliary problem \eqref{eq:sub2_inverse}, where we denoted for simplicity $\sigma_3=\tilde{\sigma}^{(2)}\left(\varepsilon_2,\sigma_2\right)$ and $\varepsilon_3=\tilde{\varepsilon}^{(2)}_f\left(\varepsilon_2\right)$.
To solve problem \eqref{eq:sub2_inverse}, we would like to apply Algorithm  \ref{alg:restarts_inexact_notconvex} with
\begin{align}\label{eq:framework_aux_step3_inverse} 
\varphi = \max _{y \in  \R^{d_y}}\left\{G(x, y)-h(y) - \frac{H_1}{2} \|y - y_k^{md}\|^2 \right\}, \;\;\;
\psi = \la\nabla f (x_l^{md}), x-x_l^{md}\ra + \frac{H_2}{2}\|x - x_l^{md}\|^2.
\end{align}
The function $\psi$ is, clearly, $H_2$-strongly convex, $H_2$-smooth and its exact gradient is available. What makes solving problem \eqref{eq:sub2_inverse} not straightforward is that the exact value of $\varphi$ is not available. At the same time, we can construct an inexact oracle for this function.
Thanks to Assumption \ref{assumpt:framework_oracle}, it is possible to construct a $\left(\delta^{(3)}\left(\varepsilon_3\right), \sigma^{(3)}_0\left(\varepsilon_3,\sigma_3\right), 2L_G + 4\frac{L_G^2}{H_1+\mu_y}\right)$-oracle for the function 
$\vp$ for any $\delta^{(3)}\left(\varepsilon_3\right)=\textbf{poly}\left(\varepsilon_3 \right)$ and $\sigma^{(3)}_0\left(\varepsilon_3,\sigma_3\right)=\textbf{poly}\left(\varepsilon_3,\sigma_3 \right)$. 
Using Lemma \ref{lem:sum_oracle}, we obtain that we can construct \\ a  
$\left(\delta^{(3)}\left(\varepsilon_3\right),\sigma^{(3)}_0\left(\varepsilon_3,\sigma_3\right), H_2+ 2L_G + 4\frac{L_G^2}{H_1+\mu_x}, H_2\right)$-oracle for the function $\vp+\psi$.
Thus, we can apply Algorithm  \ref{alg:restarts_inexact_notconvex} with parameter $H=H_3 \geq 2L_G + 4\frac{L_G^2}{H_1+\mu_y}$, which will be chosen later, to solve problem \eqref{eq:sub2_inverse}.
Moreover, since Assumption \ref{assumpt:framework_oracle} requires  $\delta^{(3)}\left(\varepsilon_3\right)=\bf{poly}\left(\varepsilon_3 \right)$ and  $\sigma^{(3)}_0\left(\varepsilon_3,\sigma_3\right)=\bf{poly}\left(\varepsilon_3,\sigma_3 \right)$, which holds for the dependencies in \eqref{delta_poly_CATD_with_prob} and \eqref{sigma_0_poly_CATD_with_prob}, we can choose $\delta^{(3)}\left(\varepsilon_3\right)$ and $\sigma^{(3)}_0\left(\varepsilon_3,\sigma_3\right)$ such that \eqref{delta_poly_CATD_with_prob} and \eqref{sigma_0_poly_CATD_with_prob} hold. So, the first main assumption of Theorem~\ref{AM:comfortable_view_with_prob} holds. 
At the same time, according to Assumptions \ref{assumpt:framework} and \ref{assumpt:framework_oracle}, constructing inexact oracle for $\varphi$ requires $\mathcal{N}_G^y\left( \tau_G, H_1\right) \mathcal{K}_G^y\left(\varepsilon_3, \sigma_3\right)$ calls of the basic oracle for $G(x,\cdot)$, $ \tau_G$ calls of the basic oracle for $G(\cdot,y)$,
$\mathcal{N}_{h}\left( \tau_h, H_1\right)\mathcal{K}_h\left(\varepsilon_3, \sigma_3\right)$ calls of the basic oracle for $h$. At the same time, no calls to the oracle for $f$ are needed.

Let us discuss the second main assumption of Theorem~\ref{AM:comfortable_view_with_prob}. 
To ensure that this assumption holds, we need in each iteration of Algorithm \ref{alg:highorder_inexact}, used as a building block in Algorithm \ref{alg:restarts_inexact_notconvex}, to find $\left(\tilde{\varepsilon}^{(3)}_f\left(\varepsilon_3\right),\tilde{\sigma}^{(3)}\left(\varepsilon_3,\sigma_3\right)\right)$-solution to the auxiliary problem \eqref{prox_step_inexact}, where $\tilde{\sigma}^{(3)}\left(\varepsilon_3,\sigma_3\right), \tilde{\varepsilon}^{(3)}_f\left(\varepsilon_3\right)$ satisfy 
 inequalities \eqref{varepsilon_poly_CATD_with_prob}, \eqref{sigma_poly_CATD_with_prob}.
 For the particular definitions of $\vp$, $\psi$ in \eqref{eq:framework_aux_step3_inverse} in this Loop, this problem has the following form:  
\begin{multline}
    \label{eq:SeparateX01_inverse}
    u^t_{m+1} = \arg\min _{u \in  \R^{d_x}} \{ \la \nabla \varphi_{\delta^{(3)},2L_{\vp}}(u_m^{md}),u-u_m^{md}\ra+\psi(u) + \frac{H_3}{2}\|u-u_m^{md}\|_2^2 \}
    \\
    = \arg\min _{u \in  \R^{d_x}} \{ \la \nabla \varphi_{\delta^{(3)},2L_{\vp}}(u_m^{md}),u-u_m^{md}\ra
    +\la \nabla  f(x_l^{md}),u-x_l^{md}\ra+\frac{H_2}{2}\|u-x_l^{md}\|_2^2+\frac{H_3}{2}\|u-u_m^{md}\|_2^2\},
\end{multline}
where $L_{\vp}=L_G + \frac{L_G^2}{H_1+\mu_x}$.
This quadratic auxiliary problem \eqref{eq:SeparateX01_inverse} can be solved explicitly and exactly since at the point it needs to be solved, $\nabla \varphi_{\delta^{(3)},2L_{\vp}}(u_m^{md})$ is already calculated. Thus, the second main assumption of Theorem~\ref{AM:comfortable_view_with_prob} is satisfied with  $\tilde{\sigma}^{(3)}\left(\varepsilon_3,\sigma_3\right)=0$ and $ \tilde{\varepsilon}^{(3)}_f\left(\varepsilon_3\right)=0$, which clearly satisfy \eqref{delta_poly_CATD_with_prob} and \eqref{sigma_0_poly_CATD_with_prob}.

To summarize Loop 3, both main assumptions of Theorem~\ref{AM:comfortable_view_with_prob} hold and we can use it to guarantee that we obtain an $(\varepsilon_3,\sigma_3)$-solution of the auxiliary problem \eqref{eq:sub2_inverse}. This requires 
$O\left(\left(1+\left( \frac{H_3 }{\mu_{\vp} + \mu_{\psi}} \right)^{\frac{1}{2}}\right)\log \varepsilon_3^{-1} \right)=O\left(\left(1+\left( \frac{H_3 }{H_2} \right)^{\frac{1}{2}}\right)\log \varepsilon_3^{-1}\right)$ calls to the inexact oracles for $\vp$ and for $\psi$, and the same number of times solving the auxiliary problem \eqref{eq:SeparateX01_inverse}.
Combining this oracle complexity with the cost of calculating inexact oracles for $\vp$ and for $\psi$, 
we obtain that solving problem \eqref{eq:sub2_inverse} requires  $O\left(\left(1+\left( \frac{H_3 }{H_2} \right)^{\frac{1}{2}}\right)\log \varepsilon_3^{-1}\right)\mathcal{N}_G^y\left( \tau_G, H_1\right) \mathcal{K}_G^y\left(\varepsilon_3, \sigma_3\right)$  calls of the basic oracle for $G(x,\cdot)$, $O\left(\left(1+\left( \frac{H_3 }{H_2} \right)^{\frac{1}{2}}\right)\log \varepsilon_3^{-1}\right) \tau_G$ calls of the basic oracle for $G(\cdot,y)$ and 
$O\left(\left(1+\left( \frac{H_3 }{H_2} \right)^{\frac{1}{2}}\right)\log \varepsilon_3^{-1}\right)\mathcal{N}_{h}\left( \tau_h, H_1\right)\mathcal{K}_h\left(\varepsilon_3, \sigma_3\right)$ calls of the basic oracle for $h$.

\begingroup
\setlength{\tabcolsep}{6pt} 
\renewcommand{\arraystretch}{1.5} 
\begin{table}\centering
\begin{tabular}{|c|c|c|c|c|c|}
\hline
            & \begin{tabular}[c]{@{}c@{}}\textbf{Goal} \end{tabular} & \begin{tabular}[c]{@{}c@{}} \textbf{$\varphi ,\psi$}\end{tabular} & \begin{tabular}[c]{@{}c@{}}$\mu$ in Th.\ref{AM:comfortable_view_with_prob} \end{tabular} & \begin{tabular}[c]{@{}c@{}}\textbf{Iteration number}\\
            \textbf{of Algorithm \ref{alg:highorder_inexact}}\\
            (Th. \ref{AM:comfortable_view_with_prob})
            \end{tabular} & \begin{tabular}[c]{@{}c@{}}\textbf{Each iteration}  \\\textbf{requires}\end{tabular}                                                                                                          \\ \hline
Loop 1  & \begin{tabular}[c]{@{}c@{}} $(\varepsilon,\sigma)$-solution \\of problem $\eqref{eq:main30}$ \end{tabular}                                        & $\eqref{eq:framework_aux_step1_inverse}$                                                 & $\mu_y$                                                                                  & $\widetilde{O} \left(1+ \sqrt{H_1/\mu_y} \right)$                 & \begin{tabular}[c]{@{}c@{}}Find  $(\varepsilon_1,\sigma_1)$-solution of $\eqref{eq:sub1_inverse}$\\ and calculate \\$\left(\delta^{(1)},  L_{\psi}\right)$-oracle of $\psi(y)$\end{tabular}       \\ \hline
Loop 2 & \begin{tabular}[c]{@{}c@{}} $(\varepsilon_1,\sigma_1)$-solution \\of problem $\eqref{eq:problem_step2_inverse}$ \end{tabular}                                  & $\eqref{eq:framework_aux_step2_inverse}$                                                 & $\mu_x$                                                                                  & $\widetilde{O}(1+\sqrt{H_2/\mu_x})$                               & \begin{tabular}[c]{@{}c@{}}Find  $(\varepsilon_2,\sigma_2)$-solution of $\eqref{eq:sub2_inverse}$\\ and calculate \\ $\left(\delta^{(2)}, L_{\psi}\right)$-oracle of $\psi(x)$\end{tabular}        \\ \hline
Loop 3  & \begin{tabular}[c]{@{}c@{}} $(\varepsilon_2,\sigma_2)$-solution \\of problem $\eqref{eq:sub2_inverse}$  \end{tabular}                                           & $\eqref{eq:framework_aux_step3_inverse}$                                                 & $H_2$                                                                                    & $\widetilde{O}(1+\sqrt{H_3/H_2})$                                 & \begin{tabular}[c]{@{}c@{}}Find  $(\varepsilon_3,\sigma_3)$-solution of $\eqref{eq:SeparateX01_inverse}$\\ and calculate \\ $\left(\delta^{(3)}, L_{\vp}\right)$-oracle of $\vp(x)$\end{tabular} \\ \hline

\end{tabular}

\caption{Summary of the three loops of the framework described in this Appendix. 
}
\label{tabl:saddleproblem_steps_inverse}
\end{table}

\subsection{Complexity of the framework}

Below we formally finalize in Theorem \ref{theorem:general_framework_inverse} the analysis of the framework by carefully combining the bounds obtained in \hyperref[subsec_first_inverse]{Loop 1}-\hyperref[subsec_third_inverse]{Loop 3} to obtain the final bounds for the total number of oracle calls for each part $f$, $G$, $h$ of the objective in problem \eqref{eq:framework_funct}. 
In the next Appendix~\ref{Appendix_h_not_sum}, we apply Theorem \ref{theorem:general_framework_inverse} to obtain complexity bounds for our framework applied to problem \eqref{eq:problem_st_h_sum} in the case $m_h=1$.

\begin{theorem} \label{theorem:general_framework_inverse}
Let Assumptions~\ref{assumpt:framework}, \ref{assumpt:framework_oracle}, \ref{assumpt:framework_oracle_x} hold. Then, execution of the optimization framework described in \hyperref[subsec_first_inverse]{Loop 1}-\hyperref[subsec_third_inverse]{Loop 3}
with 
$$
H_1 = 2L_G, H_2 = 2L_f, H_3 = 2\left(L_G + \frac{2L_G^2}{\mu_y + H_1}\right)
$$
generates an $(\varepsilon,\sigma)$-solution to the problem \eqref{eq:framework_funct} in the sense of Definition \ref{def:saddle_solution}.
Moreover, for the number of basic oracle calls it holds that

\begin{align}
    &\text{Number of calls of basic oracle $O_{f}$ for }  f \text{ is}:\nonumber\\ 
    & \widetilde{O} \left(\left( 1 + \sqrt{\frac{L_G}{\mu_y}}\right) \left( \mathcal{N}_f\left( \tau_f\right) + \left( 1 + \sqrt{\frac{L_f}{\mu_x}}\right)\cdot \tau_f\right)
    \right),\label{eq:framework_f_inverse}\\
    &\text{Number of calls of basic oracle  $O_{h}$ for } h \text{ is}:\nonumber\\ 
    &\widetilde{O} \left( \left( 1 + \sqrt{\frac{L_G}{\mu_y}}\right) \left( \tau_h +   \left( 1 + \sqrt{\frac{L_f}{\mu_x}}\right)\left(1 + \sqrt{\frac{L_G}{L_f}}\right)\mathcal{N}_h\left( \tau_h, 2L_G\right)\right)
    \right),\label{eq:framework_h_inverse}\\
    &\text{Number of calls of basic oracle $O^{x}_{G}$ for } G(\cdot, y)\text{ is}:\nonumber\\ 
    & \widetilde{O} \left( \left( 1 + \sqrt{\frac{L_G}{\mu_y}}\right) \left( \mathcal{N}_G^x\left( \tau_G\right) + \left( 1 + \sqrt{\frac{L_f}{\mu_x}}\right) \left( 1 + \sqrt{\frac{L_G}{L_f}}  \right)\tau_G\right)
    \right),\label{eq:framework_G_x_inverse}\\ 
    &\text{Number of calls of basic oracle $O^{y}_{G}$ for } G(x,\cdot)\text{ is}:\nonumber\\  
    & \widetilde{O} \left( \left( 1 + \sqrt{\frac{L_G}{\mu_y}}\right) \left( \tau_G + \left( 1 + \sqrt{\frac{L_f}{\mu_x}}\right) \left( 1 + \sqrt{\frac{L_G}{L_f}}  \right)\mathcal{N}_G^y\left( \tau_G, 2L_G\right)\right)
    \right).
    \label{eq:framework_G_y_inverse}
\end{align}
\end{theorem}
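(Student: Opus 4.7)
The proof will follow the same two-step strategy as Theorem~\ref{theorem:general_framework}, only adapted to the reordering of Loops~2 and~3. By construction, Loop~1 returns an $(\varepsilon,\sigma)$-solution to \eqref{eq:main30}, which by Definition~\ref{def:saddle_solution} is an $(\varepsilon,\sigma)$-solution to \eqref{eq:framework_funct}, so the correctness part is automatic and only the complexity bounds need work.

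\emph{Step 1 (polynomial dependence).} I would first verify, exactly as in the proof of Theorem~\ref{theorem:general_framework}, that every intermediate accuracy/confidence parameter is polynomial in $(\varepsilon,\sigma)$. Propagating \eqref{delta_poly_CATD_with_prob}--\eqref{sigma_poly_CATD_with_prob} through Loop~1 gives $\varepsilon_1,\sigma_1,\tilde\varepsilon_f^{(1)},\tilde\sigma^{(1)},\delta^{(1)},\sigma_0^{(1)}=\textbf{poly}(\varepsilon,\sigma)$; then \eqref{eq:step_2_choos_eps_fin_inverse}--\eqref{eq:step_2_choos_delta_fin_inverse} transfer this polynomial dependence to $\varepsilon_2,\sigma_2,\bar\delta,\bar\sigma_0$; applying Theorem~\ref{AM:comfortable_view_with_prob} inside Loop~2 yields the analogous property for $\delta^{(2)},\sigma_0^{(2)},\tilde\varepsilon_f^{(2)},\tilde\sigma^{(2)}$; and finally Loop~3 inherits the same property for $\delta^{(3)},\sigma_0^{(3)}$ (with $\tilde\varepsilon_f^{(3)}=\tilde\sigma^{(3)}=0$ thanks to the closed-form solvability of \eqref{eq:SeparateX01_inverse}). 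This justifies writing $\mathcal K_G^y,\mathcal K_h,\mathcal K_G^x,\mathcal K_f=\widetilde O(1)$ as well as $\log\varepsilon_i^{-1}=\widetilde O(1)$ in the subsequent multiplications.

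\emph{Step 2 (multiplication across loops).} With $H_1=2L_G$, $H_2=2L_f$, $H_3=2(L_G+2L_G^2/(\mu_y+H_1))\le 4L_G$, Theorem~\ref{AM:comfortable_view_with_prob} yields $\widetilde O(1+\sqrt{L_G/\mu_y})$ outer iterations in Loop~1, $\widetilde O(1+\sqrt{L_f/\mu_x})$ in Loop~2, and $\widetilde O(1+\sqrt{H_3/H_2})=\widetilde O(1+\sqrt{L_G/L_f})$ in Loop~3. I then need to multiply each per-iteration oracle cost, collected in the last paragraph of each of the three Loop descriptions, by the corresponding number of times that loop is executed, exactly as in the proof of Theorem~\ref{theorem:general_framework}. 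The key bookkeeping difference relative to that proof is that here $f$-gradients enter as the \emph{exact} $\varphi$ of Loop~2 (and only appear in Loop~3 as an already-computed constant vector in the quadratic \eqref{eq:SeparateX01_inverse}), whereas the inexact oracle for $\max_y\{G(x,y)-h(y)-\frac{H_1}{2}\|y-y_k^{md}\|^2\}$ now serves as $\psi$ in Loop~2 and as $\varphi$ in Loop~3; this is why the factor $(1+\sqrt{L_f/\mu_x})$ multiplies $\tau_f$ but $(1+\sqrt{L_f/\mu_x})(1+\sqrt{L_G/L_f})$ multiplies the $h$, $G(\cdot,y)$, $G(x,\cdot)$ costs arising from Assumption~\ref{assumpt:framework_oracle}.

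\emph{Concretely}, combining these factors gives
\begin{align*}
N_f &= \widetilde O\!\left(1+\sqrt{L_G/\mu_y}\right)\mathcal N_f(\tau_f) + \widetilde O\!\left(1+\sqrt{L_G/\mu_y}\right)\!\widetilde O\!\left(1+\sqrt{L_f/\mu_x}\right)\tau_f,\\
N_h &= \widetilde O\!\left(1+\sqrt{L_G/\mu_y}\right)\tau_h + \widetilde O\!\left(1+\sqrt{L_G/\mu_y}\right)\!\widetilde O\!\left(1+\sqrt{L_f/\mu_x}\right)\!\widetilde O\!\left(1+\sqrt{L_G/L_f}\right)\mathcal N_h(\tau_h,2L_G),
\end{align*}
and analogous expressions for the $G$-oracles, which match \eqref{eq:framework_f_inverse}--\eqref{eq:framework_G_y_inverse}. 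The main obstacle is purely bookkeeping: being careful that $\nabla f$ is not called a third time inside Loop~3 (since $\nabla f(x_l^{md})$ in \eqref{eq:framework_aux_step3_inverse} is computed once per Loop~2 iteration and reused), and that each of $h$, $G(\cdot,y)$, $G(x,\cdot)$ is charged with the correct role (direct cost vs.\ cost of realizing Assumption~\ref{assumpt:framework_oracle}) in each of the three loops. Once this accounting is done correctly, the stated complexity bounds follow without further estimation.
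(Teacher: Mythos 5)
Your proposal is correct and follows essentially the same two-step argument as the paper's proof: establish polynomial dependence of all intermediate accuracies exactly as in Theorem~\ref{theorem:general_framework}, then multiply per-iteration oracle costs across the loops with $H_1=2L_G$, $H_2=2L_f$, $H_3\le 4L_G$. Your bookkeeping of which oracle is charged in which loop — in particular that $\nabla f$ is not called in Loop~3 while $h$ and both $G$-oracles are charged through all three loops via Assumption~\ref{assumpt:framework_oracle} — matches the paper's accounting and yields the stated bounds.
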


\begin{proof}
By construction, as an output of Loop 1 we obtain an $(\varepsilon,\sigma)$-solution to the problem \eqref{eq:framework_funct} according to Definition \ref{def:saddle_solution}.

We prove the estimates of for the numbers of oracle calls in two steps.
The first step is to formally prove that  in each loop the dependence of the number of oracle calls on the target accuracy $\varepsilon$ and a confidence level $\sigma$ is logarithmic. The second step is to multiply the estimates for the number of oracle calls between loops and choose the parameters $H_1$, $H_2$, $H_3$.

\textbf{Step 1. Polynomial dependence.}
Proof of this part is equivalent to the proof of the Theorem~\ref{theorem:general_framework}.

\textbf{Step 2. Final estimates.}

We have already counted the number of oracles calls for each oracle in each loop \hyperref[subsec_first_inverse]{Loop 1}-\hyperref[subsec_third_inverse]{Loop 3}, see the last paragraph of the description of each loop. 
We start with the number of basic oracle calls of $f$, which is called in each step of \hyperref[subsec_first_inverse]{Loop 1} and \hyperref[subsec_second_inverse]{Loop 2}.
Thus, the total number is
\begin{align*}
     \text{\# of calls in Loop1 + (\# of steps in Loop 1)$\cdot$(\# of calls in Loop 2)}\\
     =\widetilde{O}\left(1+\left( \frac{H_1 }{\mu_{y}} \right)^{\frac{1}{2}}\right)\mathcal{N}_{f}\left( \tau_f\right)\mathcal{K}_f\left(\varepsilon, \sigma\right) + 
     \widetilde{O}\left(1+\left( \frac{H_1 }{\mu_{y}} \right)^{\frac{1}{2}}\right)\cdot \left(\widetilde{O}\left(1+\left( \frac{H_2 }{\mu_{x}} \right)^{\frac{1}{2}}\right)\tau_f \right) \\
     =\widetilde{O} \left(\left( 1+ \sqrt{\frac{H_1}{\mu_y}}\right) \left( \mathcal{N}_f\left( \tau_f\right) + \left( 1+\sqrt{\frac{H_2}{\mu_x}}\right)\cdot \tau_f\right)
    \right),
\end{align*}
where we used that $\mathcal{K}_f\left(\varepsilon, \sigma\right)=\widetilde{O}(1)$.

The basic oracle of $h$ is called in each step of all the three loops.
Thus, the total number is
\begin{align*}
     \text{\# of calls in Loop1 + (\# of steps in Loop 1)$\cdot$(\# of calls in Loop 2)} \\ 
     \text{+ (\# of steps in Loop 1)$\cdot$(\# of steps in Loop 2)$\cdot$(\# of calls in Loop 3)}\\
     =\widetilde{O}\left(1+\left( \frac{H_1 }{\mu_{y}} \right)^{\frac{1}{2}}\right)\tau_h 
     +  \widetilde{O}\left(1+\left( \frac{H_1 }{\mu_{y}} \right)^{\frac{1}{2}}\right)\cdot \left(\widetilde{O}\left(1+\left( \frac{H_2 }{\mu_{x}} \right)^{\frac{1}{2}}\right)\mathcal{N}_{h}\left( \tau_h, H_1\right)\mathcal{K}_h\left(\varepsilon_2, \sigma_2\right) \right)\\
     +  \widetilde{O}\left(1+\left( \frac{H_1 }{\mu_{y}} \right)^{\frac{1}{2}}\right)\cdot\left(\widetilde{O}\left(1+\left( \frac{H_2 }{\mu_{x}} \right)^{\frac{1}{2}}\right)\right) \cdot \left(\widetilde{O}\left(1+\left( \frac{H_3 }{H_2} \right)^{\frac{1}{2}}\right)\mathcal{N}_{h}\left( \tau_h, H_1\right)\mathcal{K}_h\left(\varepsilon_3, \sigma_3\right) \right)
     \\
     =\widetilde{O} \left( \left( 1+ \sqrt{\frac{H_1}{\mu_y}}\right) \left( \tau_h +   \left( 1+\sqrt{\frac{H_2}{\mu_x}}\right)\left(\mathcal{N}_h\left( \tau_h, H_1\right) + \left( 1+ \sqrt{\frac{H_3}{H_2}}\right) \cdot \mathcal{N}_h\left( \tau_h, H_1\right)\right)\right)
    \right)\\
    =\widetilde{O} \left( \left( 1+ \sqrt{\frac{H_1}{\mu_y}}\right) \left( \tau_h +   \left( 1+\sqrt{\frac{H_2}{\mu_x}}\right) \left( 1+ \sqrt{\frac{H_3}{H_2}}\right)\mathcal{N}_h\left( \tau_h, H_1\right)
    \right)\right),
\end{align*}
where we used that $\mathcal{K}_h\left(\varepsilon, \sigma\right)=\widetilde{O}(1)$.

The basic oracle of $G(\cdot, y)$ is called in each step of all the three loops.
Thus, the total number is
\begin{align*}
     \text{\# of calls in Loop1 + (\# of steps in Loop 1)$\cdot$(\# of calls in Loop 2)} \\ 
     \text{+ (\# of steps in Loop 1)$\cdot$(\# of steps in Loop 2)$\cdot$(\# of calls in Loop 3)}\\
     =\widetilde{O}\left(1+\left( \frac{H_1 }{\mu_{y}} \right)^{\frac{1}{2}}\right)\mathcal{N}_G^x\left( \tau_G\right)\mathcal{K}_G^x\left(\varepsilon, \sigma\right) 
     +  \widetilde{O}\left(1+\left( \frac{H_1 }{\mu_{y}} \right)^{\frac{1}{2}}\right)\cdot \left(\widetilde{O}\left(1+\left( \frac{H_2 }{\mu_{x}} \right)^{\frac{1}{2}}\right)\tau_G\right)\\
     +  \widetilde{O}\left(1+\left( \frac{H_1 }{\mu_{y}} \right)^{\frac{1}{2}}\right)\cdot\left(\widetilde{O}\left(1+\left( \frac{H_2 }{\mu_{x}} \right)^{\frac{1}{2}}\right)\right) \cdot \left(\widetilde{O}\left(1+\left( \frac{H_3 }{H_2} \right)^{\frac{1}{2}}\right)\tau_G \right)
     \\
     =\widetilde{O} \left( \left( 1+ \sqrt{\frac{H_1}{\mu_y}}\right) \left( \mathcal{N}_G^x\left( \tau_G\right) +   \left( 1+\sqrt{\frac{H_2}{\mu_x}}\right)\left(\tau_G + \left( 1+ \sqrt{\frac{H_3}{H_2}}\right) \tau_G\right)
    \right)\right)\\
    =\widetilde{O} \left( \left( 1+ \sqrt{\frac{H_1}{\mu_y}}\right) \left( \mathcal{N}_G^x\left( \tau_G\right)+   \left( 1+\sqrt{\frac{H_2}{\mu_x}}\right) \left( 1+ \sqrt{\frac{H_3}{H_2}}\right)\tau_G
    \right)\right),
\end{align*}
where we used that $\mathcal{K}_G^x\left(\varepsilon, \sigma\right)=\widetilde{O}(1)$.

Finally, the basic oracle of $G(x,\cdot)$ is called in each step of all the three loops.
Thus, the total number is
\begin{align*}
     \text{\# of calls in Loop1 + (\# of steps in Loop 1)$\cdot$(\# of calls in Loop 2)} \\ 
     \text{+ (\# of steps in Loop 1)$\cdot$(\# of steps in Loop 2)$\cdot$(\# of calls in Loop 3)}\\
     =\widetilde{O}\left(1+\left( \frac{H_1 }{\mu_{y}} \right)^{\frac{1}{2}}\right)\tau_G
     +  \widetilde{O}\left(1+\left( \frac{H_1 }{\mu_{y}} \right)^{\frac{1}{2}}\right)\cdot \left(\widetilde{O}\left(1+\left( \frac{H_2 }{\mu_{x}} \right)^{\frac{1}{2}}\right)\mathcal{N}_G^y\left( \tau_G, H_1\right) \mathcal{K}_G^y\left(\varepsilon_2, \sigma_2\right)\right)\\
     +  \widetilde{O}\left(1+\left( \frac{H_1 }{\mu_{y}} \right)^{\frac{1}{2}}\right)\cdot\left(\widetilde{O}\left(1+\left( \frac{H_2 }{\mu_{x}} \right)^{\frac{1}{2}}\right)\right) \cdot \left(\widetilde{O}\left(1+\left( \frac{H_3 }{H_2} \right)^{\frac{1}{2}}\right)\mathcal{N}_G^y\left( \tau_G, H_1\right) \mathcal{K}_G^y\left(\varepsilon_2, \sigma_2\right) \right)
     \\
     =\widetilde{O} \left( \left( 1+ \sqrt{\frac{H_1}{\mu_y}}\right) \left( \tau_G +   \left( 1+\sqrt{\frac{H_2}{\mu_x}}\right)\left(\mathcal{N}_G^y\left( \tau_G, H_1\right)  + \left( 1+ \sqrt{\frac{H_3}{H_2}}\right) \mathcal{N}_G^y\left( \tau_G, H_1\right) \right)
    \right)\right)\\
    =\widetilde{O} \left( \left( 1+ \sqrt{\frac{H_1}{\mu_y}}\right) \left( \tau_G+   \left( 1+\sqrt{\frac{H_2}{\mu_x}}\right) \left( 1+ \sqrt{\frac{H_3}{H_2}}\right)\mathcal{N}_G^y\left( \tau_G, H_1\right)
    \right)\right),
\end{align*}
where we used that $\mathcal{K}_G^y\left(\varepsilon_2, \sigma_2\right)=\widetilde{O}(1)$.

 The final estimates are obtained by substituting the constants $H_1, H_2, H_3$ given by
$$H_1 = 2L_G, H_2 = 2L_f, H_3 = 2\left(L_G + \frac{2L_G^2}{\mu_y + H_1}\right)\leq 2\left(L_G + \frac{2L_G^2}{H_1}\right)=4L_G.
$$
\end{proof}
\section{Proof of Theorem~\ref{theorem:G-sum_noprox} and Theorem~\ref{theorem:G-sum_h_prox}}\label{Appendix_h_not_sum}

In this appendix we prove Theorems~\ref{theorem:G-sum_noprox},  \ref{theorem:G-sum_h_prox} and Corollary \ref{theorem:h-not-sum} and construct algorithms for problem \eqref{eq:problem_Gsum_minmax}
using the results of Section \ref{section:saddleFramework}, in particular, Theorem \ref{theorem:general_framework}, for the case $L_f \geq L_G$, and the results of the previous Appendix, in particular, Theorem \ref{theorem:general_framework_inverse}. To use these theorems we need to satisfy Assumptions \ref{assumpt:framework_oracle}, \ref{assumpt:framework_oracle_x}, which is done in the first subsection. Then, in the next subsections, we combine the building blocks  to obitan the final results.


\subsection{Algorithms to guarantee Assumptions \ref{assumpt:framework_oracle}, \ref{assumpt:framework_oracle_x}}
We start with two auxiliary results, that show how to satisfy Assumptions \ref{assumpt:framework_oracle}, \ref{assumpt:framework_oracle_x} algorithmically.
The first lemma provides complexity for inexact solution of the maximization problem \eqref{eq:framework_g_obt_oracle} and the complexity of finding an inexact oracle for function $g$ defined in the same equation, thereby proving that Assumption \ref{assumpt:framework_oracle} holds. We underline that the algorithm which guarantees Assumption \ref{assumpt:framework_oracle} depends on whether $L_h \geq L_G$ or $L_h \leq L_G$.
After that we provide a simple corollary to show that Assumption \ref{assumpt:framework_oracle_x} also holds.

 \begin{lemma} \label{lem:obtain_oracle_mh=1}
Let the function $g$ be defined via maximization problem in \eqref{eq:framework_g_obt_oracle}, i.e.
\begin{align}\label{eq:g_obt_oracle_mh=1}
g(x)= \max _{y \in  \R^{d_y}}\left\{G(x, y)-h(y) -\frac{H}{2}\|y-y_0\|^2\right\},
\end{align}
 where $G(x,y)$, $h(y)$ are according to \eqref{eq:problem_Gsum_minmax} 
 and satisfy Assumption \ref{assumpt:G_sum}.1,2,3(a), $y_0 \in \R^{d_y}$.  Then, for each of two cases $L_h \geq L_G$ and $L_h \leq L_G$ we  organize computations in two loops and apply Algorithm \ref{alg:restarts_inexact_notconvex}, so that  Assumption~\ref{assumpt:framework_oracle} holds with $\tau_G$ basic oracle calls for $G(\cdot,y)$ and the following estimates for the number of basic oracle calls for $G(x,\cdot)$ and $h$ respectively
 \begin{align}\label{eq:complofgG_mh=1}
&\mathcal{N}_{G}^y\left( \tau_G, H\right) = O\left(\tau_G + \tau_G\sqrt{L_G/(H+\mu_y)}\right),\\
&\mathcal{N}_{h}\left( \tau_h, H\right) = O\left(\tau_h+\tau_h \sqrt{L_h/(H+\mu_y)}\right). \label{eq:complofgh_mh=1}
\end{align}
We name these algorithms "Sliding $L_h \geq L_G$" and "Sliding $L_h \leq L_G$".
\end{lemma}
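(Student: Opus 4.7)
First, I would apply Lemma~\ref{lemma:obt_delta_oracle} with $F(x,y)=G(x,y)$ and $w(y)=h(y)+\frac{H}{2}\|y-y_0\|^{2}$, reducing the construction of the required $(\delta,\sigma_0,2L_g)$-oracle for $g$ (where $L_g=L_G+2L_G^{2}/(\mu_y+H)$) to producing a $(\delta/2,\sigma_0)$-solution $\tilde y_{\delta/2}(x)$ of the inner maximization. Once $\tilde y_{\delta/2}(x)$ is available, a single evaluation $\nabla_x G(x,\tilde y_{\delta/2}(x))$, costing $\tau_G$ basic oracle calls of $O_G^x$, delivers the inexact gradient of $g$. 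It therefore remains to solve the convex composite minimization $\min_y\{-G(x,y)+h(y)+\frac{H}{2}\|y-y_0\|^{2}\}$ with oracle complexities separated between $\nabla_y G$ and $\nabla h$ according to~\eqref{eq:complofgG_mh=1}--\eqref{eq:complofgh_mh=1}.

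The plan is to use Algorithm~\ref{alg:restarts_inexact_notconvex} (R-AM) as the outer method and standard accelerated gradient descent (AGD) as the inner solver of its proximal subproblem~\eqref{prox_step_inexact}, choosing the composite splitting $(\varphi,\psi)$ differently in the two regimes $L_h\geq L_G$ and $L_h\leq L_G$ in order to obtain Lan-type gradient-sliding separation. This parallels the two-loop argument in the proof of Lemma~\ref{lem:obtain_oracle}, but with accelerated gradient descent in place of L-SVRG, since $h$ is no longer a finite sum here.

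In the regime $L_h\geq L_G$, I would take $\varphi=-G(x,\cdot)$ (smoothness $L_G$) and $\psi=h+\frac{H}{2}\|\cdot-y_0\|^{2}$ (smoothness $L_h+H$, strong convexity $\mu_y+H$), with $H_1=2L_G$. Theorem~\ref{AM:comfortable_view_with_prob} then yields $\widetilde O(1+\sqrt{L_G/(H+\mu_y)})$ outer iterations, each evaluating $\nabla\varphi$ at cost $\tau_G$ calls of $O_G^y$ and invoking an AGD solve of an $(L_h+H+H_1)$-smooth, $(\mu_y+H+H_1)$-strongly convex inner subproblem. That inner AGD runs for $\widetilde O(1+\sqrt{L_h/L_G})$ iterations, each using one $\nabla h$. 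Multiplying outer and inner counts and simplifying via $L_h\geq L_G$ yields $\widetilde O(1+\sqrt{L_h/(H+\mu_y)})$ total calls of $O_h$, matching~\eqref{eq:complofgh_mh=1}, and $\tau_G\cdot\widetilde O(1+\sqrt{L_G/(H+\mu_y)})$ calls of $O_G^y$, matching~\eqref{eq:complofgG_mh=1}. In the regime $L_h\leq L_G$, I swap: $\varphi=h+\frac{H}{2}\|\cdot-y_0\|^{2}$, $\psi=-G(x,\cdot)$, and $H_1=2(L_h+H)$. The outer iteration count becomes $\widetilde O(1+\sqrt{L_h/(H+\mu_y)})$ (each step using one $\nabla h$), while the inner AGD on the $(L_G+H_1)$-smooth, $H_1$-strongly convex subproblem takes $\widetilde O(1+\sqrt{L_G/(L_h+H)})$ iterations (each costing $\tau_G$ calls of $O_G^y$); multiplying and using $L_h\leq L_G$ again recovers~\eqref{eq:complofgh_mh=1}--\eqref{eq:complofgG_mh=1}. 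These two procedures are the ``Sliding $L_h\geq L_G$'' and ``Sliding $L_h\leq L_G$'' algorithms named in the statement.

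The main obstacle is the bookkeeping that ensures the polynomial-in-$\varepsilon$ dependencies demanded by Assumption~\ref{assumpt:framework_oracle} survive both nested levels: one has to chain~\eqref{varepsilon_poly_CATD_with_prob}--\eqref{sigma_poly_CATD_with_prob} so that the tolerances required by the outer R-AM propagate to polynomially-in-$\varepsilon$ tolerances on the inner AGD and on the quality of $\tilde y_{\delta/2}(x)$, and then verify that the resulting logarithmic factors in the inner-solve complexity are genuinely $\widetilde O(1)$. This is structurally the same chain of implications as in Step~1 of the proof of Theorem~\ref{theorem:general_framework} and contributes only logarithmic factors absorbed into $\widetilde O(\cdot)$, so once it is set up the final bounds follow immediately from combining Lemma~\ref{lemma:obt_delta_oracle} with the per-level counts computed above.
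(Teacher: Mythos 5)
Your proposal is correct and follows essentially the same route as the paper: reduce to an inexact solution of the inner maximization via Lemma~\ref{lemma:obt_delta_oracle}, then run a two-loop sliding scheme with the roles of $-G(x,\cdot)$ and $h+\frac{H}{2}\|\cdot-y_0\|^2$ swapped between the two regimes, and the same complexity arithmetic. The only cosmetic difference is that the paper implements the inner solver by reusing Algorithm~\ref{alg:restarts_inexact_notconvex} with exact oracles (whose quadratic proximal subproblem is solved in closed form) rather than invoking a separate AGD routine, which yields identical counts.
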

\begin{proof}
To satisfy Assumption~\ref{assumpt:framework_oracle} we need to provide an $\left(\delta\left(\varepsilon\right)/2 ,\sigma_0\left(\varepsilon,\sigma\right)\right)$-solution to the problem \eqref{eq:g_obt_oracle_mh=1} and $\left(\delta\left(\varepsilon\right) ,\sigma_0\left(\varepsilon,\sigma\right), 2L_g\right)$-oracle of $g$ in \eqref{eq:g_obt_oracle_mh=1}, where $L_g = L_G+2L_G^2/(\mu_y+H)$.\\
By Lemma~\ref{lemma:obt_delta_oracle} with $F(x,y)=G(x,y)$, $w(y)=h(y)+\frac{H}{2}\|y-y_0\|^2$, $\delta=\delta\left(\varepsilon\right)$ and $\sigma_0 = \sigma_0\left(\varepsilon,\sigma\right)$ applied to the problem \eqref{eq:g_obt_oracle_mh=1},
if we find a $(\delta/2,\sigma_0)$-solution $\tilde{y}_{\delta/2}(x)$ of the problem  \eqref{eq:g_obt_oracle_mh=1},
then $ \nabla_{x} G\left(x, \tilde{y}_{\delta/2}(x)\right)$ is $(\delta,\sigma_0,2 L_g)$-oracle of $g$ and its calculation requires $\tau_G$ calls of the oracle $\nabla_x G(\cdot,y)$.
To finish the proof, we now focus on obtaining a $(\delta/2,\sigma_0)$-solution $\tilde{y}_{\delta/2}(x)$ of the problem  \eqref{eq:g_obt_oracle_mh=1}. For this we consider two cases $L_h \geq L_G$ and $L_h \leq L_G$ and for each one we construct a two-loop procedure described below. We begin with the case $L_h \geq L_G$.

\paragraph{Sliding for $L_h \geq L_G$, Loop 1}\label{lemma_loop1_mh=1}$\;$\\
The goal of Loop 1 is to find an $\left(\delta\left(\varepsilon\right)/2,\sigma_0\left(\varepsilon,\sigma\right)\right)$-solution of  problem \eqref{eq:g_obt_oracle_mh=1} as a maximization problem in $y$.
To obtain such an approximate solution, we change the sign of this optimization problem and apply Algorithm~\ref{alg:restarts_inexact_notconvex} with  
\begin{align}\label{aux_step4_mh=1}
    \varphi = -G(x,y) ,\;\;\;\; \psi = h(y) + \frac{H}{2}\|y-y_0\|^2.
\end{align}
Function $\varphi$ is convex and has $L_G$-Lipschitz continuous gradient, function $\psi$ is $ H+\mu_y$-strongly convex and has $L_h+H$-Lipschitz continuous gradient. Thus, we can apply Algorithm  \ref{alg:restarts_inexact_notconvex} with exact oracles and parameter $H_1\geq 2L_G$, which will be chosen later, to solve problem \eqref{eq:g_obt_oracle_mh=1}.
To satisfy the conditions of Theorem~\ref{AM:comfortable_view_with_prob}, which gives the complexity of Algorithm  \ref{alg:restarts_inexact_notconvex}, we, first, observe that the oracles of $\varphi$ and $\psi$ are exact and, second, observe that we need in each iteration of Algorithm \ref{alg:highorder_inexact}, used as a building block in Algorithm \ref{alg:restarts_inexact_notconvex}, to find an $\left(\tilde{\varepsilon}^{(1)}_f\left(\delta/2\right),\tilde{\sigma}^{(1)}\left(\delta/2,\sigma_0\right)\right)$-solution to the auxiliary problem \eqref{prox_step_inexact}, which in this case has the following form:  
\begin{align} 
&z_{k+1}^t = \arg\min _{z \in  \R^{d_y}} \{ \la \nabla \varphi(z_k^{md}),z-z_k^{md}\ra+\psi(z) + \frac{H_1}{2}\|z-z_k^{md}\|_2^2 \} \nonumber
\\
&= \arg\min _{z \in  \R^{d_y}} \{ -\la \nabla_{z}  G(x, z_k^{md}),z-z_k^{md}\ra+h(z) + \frac{H}{2}\|z-y_0\|^2+\frac{H_1}{2}\|z-z_k^{md}\|_2^2\} \label{eq:190_mh=1},
\end{align}
 where $\tilde{\sigma}^{(1)}\left(\delta/2,\sigma_0\right), \tilde{\varepsilon}^{(1)}_f\left(\delta/2\right)$ need to satisfy  inequalities \eqref{varepsilon_poly_CATD_with_prob}, \eqref{sigma_poly_CATD_with_prob}. 
 Below, in the Loop 2, we explain how to solve this auxiliary problem in such a way that these inequalities hold.

To summarize Loop 1, both main assumptions of Theorem~\ref{AM:comfortable_view_with_prob} hold and we can use it to guarantee that we obtain an $(\delta/2,\sigma_0)$-solution of problem \eqref{eq:g_obt_oracle_mh=1}. Due to polynomial dependencies  $\delta\left(\varepsilon\right)=\bf{poly}\left(\varepsilon\right)$, $\sigma_0\left(\varepsilon, \sigma \right)=\bf{poly}\left(\varepsilon, \sigma \right)$ this requires $\widetilde{O}\left(1+\left( \frac{H_1 }{\mu_{\vp} + \mu_{\psi}} \right)^{\frac{1}{2}}\right)=\widetilde{O}\left(1+\left( \frac{H_1 }{\mu_{y}+H} \right)^{\frac{1}{2}}\right)$ 
calls to the (exact) oracles for $\vp$ and for $\psi$, and the same number of times solving the auxiliary problem \eqref{eq:190_mh=1}.
Combining this oracle complexity with the cost of calculating (exact) oracles for $\vp$ and for $\psi$, we obtain that solving problem \eqref{eq:g_obt_oracle} requires 
$\widetilde{O}\left(1+\left( \frac{H_1 }{\mu_{y}+H} \right)^{\frac{1}{2}}\right)\tau_G$ calls of the basic oracle for $G(x,\cdot)$ and $\widetilde{O}\left(\tau_h\left(1+\left( \frac{H_1 }{\mu_{y}+H} \right)^{\frac{1}{2}}\right)\right)$ of the basic oracles for $h$. 
The only remaining thing is to provide an inexact solution to problem \eqref{eq:190_mh=1} and, next, we move to Loop 2 to explain how to guarantee this. Note that we need to solve problem \eqref{eq:190_mh=1} $\widetilde{O}\left(1+\left( \frac{H_1 }{\mu_{y}+H} \right)^{\frac{1}{2}}\right)$ times.\\
\paragraph{Sliding for $L_h \geq L_G$, Loop 2 }\label{lemma_loop2_mh=1}$\;$\\
As mentioned in the previous Loop 1,
in each iteration of Algorithm \ref{alg:restarts_inexact_notconvex} in 
Loop 1 we need to find many times an $(\varepsilon_2,\sigma_2)$-solution of the auxiliary problem \eqref{eq:190_mh=1}, where we denoted for simplicity $\sigma_2=\tilde{\sigma}^{(1)}\left(\delta/2,\sigma_0\right)$ and $\varepsilon_2=\tilde{\varepsilon}^{(1)}_f\left(\delta/2\right)$.
To solve problem \eqref{eq:190_mh=1}, we would like to apply Algorithm  \ref{alg:restarts_inexact_notconvex} with
\begin{align}\label{aux_step5_mh=1}
    \varphi = h(z) ,\;\;\;\; \psi = -\la \nabla_{z}  G(x, z_k^{md}),z-z_k^{md}\ra + \frac{H}{2}\|z-y_0\|^2+\frac{H_1}{2}\|z-z_k^{md}\|_2^2.
\end{align}
Function $\varphi$ is $\mu_y$-strongly convex and has $L_h$-Lipschitz continuous gradient, function $\psi$ is $ H+H_1$-strongly convex and has $H+H_1$-Lipschitz continuous gradient. Thus, we can apply Algorithm  \ref{alg:restarts_inexact_notconvex} with exact oracles and parameter $H_2\geq 2L_h$, which will be chosen later, to solve problem \eqref{eq:190_mh=1}.
To satisfy the conditions of Theorem~\ref{AM:comfortable_view_with_prob}, which gives the complexity of Algorithm  \ref{alg:restarts_inexact_notconvex}, we, first, observe that the oracles of $\varphi$ and $\psi$ are exact and, second, observe that we need in each iteration of Algorithm \ref{alg:highorder_inexact}, used as a building block in Algorithm \ref{alg:restarts_inexact_notconvex}, to find an $\left(\tilde{\varepsilon}^{(2)}_f\left(\varepsilon_2\right),\tilde{\sigma}^{(2)}\left(\varepsilon_2,\sigma_2\right)\right)$-solution to the auxiliary problem \eqref{prox_step_inexact}, which in this case has the following form:  
\begin{align}
    &u^t_{m+1} = \arg\min _{u \in  \R^{d_x}} \{ \la \nabla \varphi(u_m^{md}),u-u_m^{md}\ra+\psi(u) + \frac{H_2}{2}\|u-u_m^{md}\|_2^2 \}\nonumber \\
    &= \arg\min _{u \in  \R^{d_x}} \{ \la \nabla  h(u_m^{md}),u-u_m^{md}\ra-\la \nabla_{z}  G(x, z_k^{md}),u-z_k^{md}\ra + \frac{H}{2}\|u-y_0\|^2+\frac{H_1}{2}\|u-z_k^{md}\|_2^2 +\frac{H_2}{2}\|u-u_m^{md}\|_2^2\}.\label{eq:SeparateX01_mh=1}
\end{align}
 This quadratic auxiliary problem \eqref{eq:SeparateX01_mh=1} can be solved explicitly and exactly. Thus, the second main assumption of Theorem~\ref{AM:comfortable_view_with_prob} is satisfied with  $\tilde{\sigma}^{(2)}\left(\varepsilon_2,\sigma_2\right) = 0, \tilde{\varepsilon}^{(2)}_f\left(\varepsilon_2\right) = 0$, which clearly satisfy \eqref{delta_poly_CATD_with_prob} and \eqref{sigma_0_poly_CATD_with_prob}.

To summarize Loop 2, both main assumptions of Theorem~\ref{AM:comfortable_view_with_prob} hold and we can use it to guarantee that we obtain an $(\varepsilon_2,\sigma_2)$-solution of the auxiliary problem \eqref{eq:190_mh=1}. This requires 
$O\left(\left(1+\left( \frac{H_2 }{\mu_{\vp} + \mu_{\psi}} \right)^{\frac{1}{2}}\right)\log \varepsilon_2^{-1} \right)=O\left(\left(1+\left( \frac{H_2}{\mu_y+H+H_1} \right)^{\frac{1}{2}}\right)\log \varepsilon_2^{-1}\right)$ calls to the (exact) oracles for $\vp$ and for $\psi$, and the same number of times solving the auxiliary problem \eqref{eq:SeparateX01_mh=1}.
Combining this oracle complexity with the cost of calculating (exact) oracles for $\vp$ and for $\psi$, 
we obtain that solving problem \eqref{eq:190_mh=1} requires   
$O\left(\tau_h\left(1+\left( \frac{H_2}{\mu_y+H+H_1} \right)^{\frac{1}{2}}\right)\log \varepsilon_2^{-1}\right)$ calls of the basic oracle for $h$. Also according to the polynomial dependencies \eqref{varepsilon_poly_CATD_with_prob}, \eqref{sigma_poly_CATD_with_prob} we obtain that 
$$
\sigma_2 = \tilde{\sigma}^{(1)}\left(\delta/2,\sigma_0\right) = \textbf{poly}(\delta/2,\sigma_0), \;\; \varepsilon_2 = \tilde{\varepsilon}^{(1)}_f\left(\delta/2,\sigma_0\right) = \textbf{poly}(\delta/2,\sigma_0).
$$
Using conditions $\delta\left(\varepsilon\right)=\bf{poly}\left(\varepsilon\right)$, $\sigma_0\left(\varepsilon, \sigma \right)=\bf{poly}\left(\varepsilon, \sigma \right)$ in the formulation of Assumption~\ref{assumpt:framework_oracle} we obtain that the dependencies 
$$
\sigma_2\left(\varepsilon,\sigma\right), \tilde{\sigma}^{(1)}\left(\varepsilon,\sigma\right), \varepsilon_2 \left(\varepsilon,\sigma\right), \tilde{\varepsilon}^{(1)}_f\left(\varepsilon,\sigma\right)
$$
are polynomial. Then, we can use notation $\widetilde{O}(\cdot)$   without specifying what precision we mean and implying that the logarithmic part depends on the initial  $\varepsilon, \sigma$. \\
\paragraph{Sliding $L_h \geq L_G$, combining the estimates of both loops}$\;$\\ 
Combining the estimates of the above Loop 1 and Loop 2 we see that, finding a point $\tilde{y}_{\delta/2}(x)$ that is a $\left(\delta\left(\varepsilon\right)/2 ,\sigma_0\left(\varepsilon,\sigma\right)\right)$-solution to the problem \eqref{eq:g_obt_oracle_mh=1} 
requires the following number of calls of the basic oracles of $G(x,\cdot)$ and $h$ respectively
\begin{align}\label{eq:obt_complofgG_mh=1}
& \widetilde{O}\left(\tau_G + \tau_G\sqrt{H_1/(H+\mu_y)}\right),\\
&  \widetilde{O}\left( \tau_h\left(1 + \sqrt{H_1/(H+\mu_y)}\right) + \left(1 + \sqrt{H_1/(H+\mu_y)} \right)\tau_h\left(1+\sqrt{\frac{H_2}{\mu_y+H+H_1}}\right)\right). \label{eq:obt_complofgh_mh=1}
\end{align}
 Finding $\left(\delta\left(\varepsilon\right) ,\sigma_0\left(\varepsilon,\sigma\right), 2L_g\right)$-oracle of $g$ by calculating $ \nabla_{x} G\left(x, \tilde{y}_{\delta/2}(x)\right)$ requires additionally $\tau_G=m_G$ 
calls of the basic oracle for $G(\cdot,y)$. 
Since in Assumption~\ref{assumpt:framework_oracle} we denote the dependence on the target accuracy $\varepsilon$ and confidence level $\sigma$ by a separate quantities denoted by $\mathcal{K}(\varepsilon,\sigma)$ and in this case it is logarithmic, choosing $H_1 = 2L_G$ and $H_2 = 2L_h$ we get the final estimates for  $\mathcal{N}_{G}^y$ and $\mathcal{N}_{h}$ to guarantee that  Assumption~\ref{assumpt:framework_oracle} holds:
\begin{align*}
&\mathcal{N}_{G}^y = O\left(\tau_G + \tau_G\sqrt{\frac{L_G}{\mu_y+H}}\right),\\ 
&\mathcal{N}_{h} = O\left( \tau_h \left( 1 + \sqrt{2L_G/(H+\mu_y)}\right)\left(1+\sqrt{\frac{2L_h}{\mu_y+H+2L_G}}\right)\right)\\ 
& =  O\left( 1 + \sqrt{\frac{2L_G}{\mu_y+H}} + \sqrt{\frac{2L_h}{\mu_y+H}} + \sqrt{\frac{2L_G}{H+\mu_y}} \sqrt{\frac{2L_h}{\mu_y+H+2L_G}} \right)\tau_h = O\left( \tau_h \left(1 +  \sqrt{\frac{L_h}{\mu_y+H}}\right)\right),
\end{align*}
where we used that $L_h\geq L_G$

Our aim now is to obtain the same estimates on $\mathcal{N}_{G}^y$ and $\mathcal{N}_{h}$ for the case when $L_h\leq L_G$. We do this by changing the order of Loop 1 and Loop 2 in the construction of previous Algorithm.

\paragraph{Sliding for $L_h \leq L_G$, Loop 1 }\label{lemma_loop1_mh=1_inverse}$\;$\\
The goal of Loop 1 is to find an $\left(\delta\left(\varepsilon\right)/2,\sigma_0\left(\varepsilon,\sigma\right)\right)$-solution of  problem \eqref{eq:g_obt_oracle_mh=1} as a maximization problem in $y$.
To obtain such an approximate solution, we change the sign of this optimization problem and apply Algorithm~\ref{alg:restarts_inexact_notconvex} with  
\begin{align}\label{aux_step4_mh=1_inverse}
    \varphi = h(y) ,\;\;\;\; \psi = -G(x,y) + \frac{H}{2}\|y-y_0\|^2.
\end{align}
Function $\varphi$ is $\mu_y$-strongly convex and has $L_h$-Lipschitz continuous gradient, function $\psi$ is $H$-strongly convex and has $L_h+H$-Lipschitz continuous gradient. Thus, we can apply Algorithm  \ref{alg:restarts_inexact_notconvex} with exact oracles and parameter $H_1\geq 2L_h$, which will be chosen later, to solve problem \eqref{eq:g_obt_oracle_mh=1}.
To satisfy the conditions of Theorem~\ref{AM:comfortable_view_with_prob}, which gives the complexity of Algorithm  \ref{alg:restarts_inexact_notconvex}, we, first, observe that the oracles of $\varphi$ and $\psi$ are exact and, second, observe that we need in each iteration of Algorithm \ref{alg:highorder_inexact}, used as a building block in Algorithm \ref{alg:restarts_inexact_notconvex}, to find an $\left(\tilde{\varepsilon}^{(1)}_f\left(\delta/2\right),\tilde{\sigma}^{(1)}\left(\delta/2,\sigma_0\right)\right)$-solution to the auxiliary problem \eqref{prox_step_inexact}, which in this case has the following form:  
\begin{align} 
&z_{k+1}^t = \arg\min _{z \in  \R^{d_y}} \{ \la \nabla \varphi(z_k^{md}),z-z_k^{md}\ra+\psi(z) + \frac{H_1}{2}\|z-z_k^{md}\|_2^2 \} \nonumber
\\
&= \arg\min _{z \in  \R^{d_y}} \{ \la \nabla_{z} h(z_k^{md}),z-z_k^{md}\ra-G(x, z) + \frac{H}{2}\|z-y_0\|^2+\frac{H_1}{2}\|z-z_k^{md}\|_2^2\} \label{eq:190_mh=1_inverse},
\end{align}
 where $\tilde{\sigma}^{(1)}\left(\delta/2,\sigma_0\right), \tilde{\varepsilon}^{(1)}_f\left(\delta/2\right)$ need to satisfy  inequalities \eqref{varepsilon_poly_CATD_with_prob}, \eqref{sigma_poly_CATD_with_prob}. 
 Below, in the Loop 2, we explain how to solve this auxiliary problem in such a way that these inequalities hold.

To summarize Loop 1, both main assumptions of Theorem~\ref{AM:comfortable_view_with_prob} hold and we can use it to guarantee that we obtain an $(\delta/2,\sigma_0)$-solution of problem \eqref{eq:g_obt_oracle_mh=1}. Due to polynomial dependencies  $\delta\left(\varepsilon\right)=\bf{poly}\left(\varepsilon\right)$, $\sigma_0\left(\varepsilon, \sigma \right)=\bf{poly}\left(\varepsilon, \sigma \right)$ this requires $\widetilde{O}\left(1+\left( \frac{H_1 }{\mu_{\vp} + \mu_{\psi}} \right)^{\frac{1}{2}}\right)=\widetilde{O}\left(1+\left( \frac{H_1 }{\mu_{y}+H} \right)^{\frac{1}{2}}\right)$ 
calls to the (exact) oracles for $\vp$ and for $\psi$, and the same number of times solving the auxiliary problem \eqref{eq:190_mh=1_inverse}.
Combining this oracle complexity with the cost of calculating (exact) oracles for $\vp$ and for $\psi$, we obtain that solving problem \eqref{eq:g_obt_oracle} requires 
$\widetilde{O}\left(1+\left( \frac{H_1 }{\mu_{y}+H} \right)^{\frac{1}{2}}\right)\tau_G$ calls of the basic oracle for $G(x,\cdot)$ and $\widetilde{O}\left(1+\left( \frac{H_1 }{\mu_{y}+H} \right)^{\frac{1}{2}}\right)\tau_h$ of the basic oracles for $h$. 
The only remaining thing is to provide an inexact solution to problem \eqref{eq:190_mh=1_inverse} and, next, we move to Loop 2 to explain how to guarantee this. Note that we need to solve problem \eqref{eq:190_mh=1_inverse} $\widetilde{O}\left(1+\left( \frac{H_1 }{\mu_{y}+H} \right)^{\frac{1}{2}}\right)$ times.\\

\paragraph{Sliding for $L_h \leq L_G$, Loop 2 }\label{lemma_loop2_mh=1_inverse}$\;$\\
As mentioned in the previous Loop 1,
in each iteration of Algorithm \ref{alg:restarts_inexact_notconvex} in 
Loop 2 we need to find many times an $(\varepsilon_2,\sigma_2)$-solution of the auxiliary problem \eqref{eq:190_mh=1_inverse}, where we denoted for simplicity $\sigma_2=\tilde{\sigma}^{(1)}\left(\delta/2,\sigma_0\right)$ and $\varepsilon_2=\tilde{\varepsilon}^{(1)}_f\left(\delta/2\right)$.
To solve problem \eqref{eq:190_mh=1_inverse}, we would like to apply Algorithm  \ref{alg:restarts_inexact_notconvex} with
\begin{align}\label{aux_step5_mh=1_inverse}
    \varphi = -G(x, z) ,\;\;\;\; \psi = \la \nabla  h( z_k^{md}),z-z_k^{md}\ra + \frac{H}{2}\|z-y_0\|^2+\frac{H_1}{2}\|z-z_k^{md}\|_2^2.
\end{align}
Function $\varphi$ is convex and has $L_G$-Lipschitz continuous gradient, function $\psi$ is $ H+H_1+\mu_y$-strongly convex and has $H+H_1$-Lipschitz continuous gradient. Thus, we can apply Algorithm  \ref{alg:restarts_inexact_notconvex} with exact oracles and parameter $H_2\geq 2L_G$, which will be chosen later, to solve problem \eqref{eq:190_mh=1_inverse}.
To satisfy the conditions of Theorem~\ref{AM:comfortable_view_with_prob}, which gives the complexity of Algorithm  \ref{alg:restarts_inexact_notconvex}, we, first, observe that the oracles of $\varphi$ and $\psi$ are exact and, second, observe that we need in each iteration of Algorithm \ref{alg:highorder_inexact}, used as a building block in Algorithm \ref{alg:restarts_inexact_notconvex}, to find an $\left(\tilde{\varepsilon}^{(2)}_f\left(\varepsilon_2\right),\tilde{\sigma}^{(2)}\left(\varepsilon_2,\sigma_2\right)\right)$-solution to the auxiliary problem \eqref{prox_step_inexact}, which in this case has the following form:  
\begin{align}
    &u^t_{m+1} = \arg\min _{u \in  \R^{d_x}} \{ \la \nabla \varphi(u_m^{md}),u-u_m^{md}\ra+\psi(u) + \frac{H_2}{2}\|u-u_m^{md}\|_2^2 \}\nonumber \\
    &= \arg\min _{u \in  \R^{d_x}} \{ -\la \nabla_u  G(x,u_m^{md}),u-u_m^{md}\ra+\la \nabla  h(z_k^{md}),u-z_k^{md}\ra + \frac{H}{2}\|u-y_0\|^2+\frac{H_1}{2}\|u-z_k^{md}\|_2^2 +\frac{H_2}{2}\|u-u_m^{md}\|_2^2\}.\label{eq:SeparateX01_mh=1_inverse}
\end{align}
 This quadratic auxiliary problem \eqref{eq:SeparateX01_mh=1_inverse} can be solved explicitly and exactly. Thus, the second main assumption of Theorem~\ref{AM:comfortable_view_with_prob} is satisfied with  $\tilde{\sigma}^{(2)}\left(\varepsilon_2,\sigma_2\right) = 0, \tilde{\varepsilon}^{(2)}_f\left(\varepsilon_2\right) = 0$, which clearly satisfy \eqref{delta_poly_CATD_with_prob} and \eqref{sigma_0_poly_CATD_with_prob}.

To summarize Loop 2, both main assumptions of Theorem~\ref{AM:comfortable_view_with_prob} hold and we can use it to guarantee that we obtain an $(\varepsilon_2,\sigma_2)$-solution of the auxiliary problem \eqref{eq:190_mh=1_inverse}. This requires 
$O\left(\left(1+\left( \frac{H_2 }{\mu_{\vp} + \mu_{\psi}} \right)^{\frac{1}{2}}\right)\log \varepsilon_2^{-1} \right)=O\left(\left(1+\left( \frac{H_2}{\mu_y+H+H_1} \right)^{\frac{1}{2}}\right)\log \varepsilon_2^{-1}\right)$ calls to the (exact) oracles for $\vp$ and for $\psi$, and the same number of times solving the auxiliary problem \eqref{eq:SeparateX01_mh=1_inverse}.
Combining this oracle complexity with the cost of calculating (exact) oracles for $\vp$ and for $\psi$, 
we obtain that solving problem \eqref{eq:190_mh=1_inverse} requires   
$O\left(\left(1+\left( \frac{H_2}{\mu_y+H+H_1} \right)^{\frac{1}{2}}\right)\log \varepsilon_2^{-1}\right)\tau_G$ calls of the basic oracle for $G(x, \cdot)$. Also according to the polynomial dependences \eqref{varepsilon_poly_CATD_with_prob}, \eqref{sigma_poly_CATD_with_prob} we obtain that 
$$
\sigma_2 = \tilde{\sigma}^{(1)}\left(\delta/2,\sigma_0\right) = \textbf{poly}(\delta/2,\sigma_0), \;\; \varepsilon_2 = \tilde{\varepsilon}^{(1)}_f\left(\delta/2,\sigma_0\right) = \textbf{poly}(\delta/2,\sigma_0).
$$
Using conditions $\delta\left(\varepsilon\right)=\bf{poly}\left(\varepsilon\right)$, $\sigma_0\left(\varepsilon, \sigma \right)=\bf{poly}\left(\varepsilon, \sigma \right)$ in the formulation of Asumption~\ref{assumpt:framework_oracle} we obtain that the dependencies 
$$
\sigma_2\left(\varepsilon,\sigma\right), \tilde{\sigma}^{(1)}\left(\varepsilon,\sigma\right), \varepsilon_2 \left(\varepsilon,\sigma\right), \tilde{\varepsilon}^{(1)}_f\left(\varepsilon,\sigma\right)
$$
are polynomial. Then, we can use notation $\widetilde{O}(\cdot)$   without specifying what precision we mean and implying that the logarithmic part depends on the initial  $\varepsilon, \sigma$. \\
\paragraph{Sliding for $L_h \leq L_G$, combining the estimates of both loops}$\;$\\ 
Combining the estimates of the above Loop 1 and Loop 2 we see that, finding a point $\tilde{y}_{\delta/2}(x)$ which is an $\left(\delta\left(\varepsilon\right)/2 ,\sigma_0\left(\varepsilon,\sigma\right)\right)$-solution to the problem \eqref{eq:g_obt_oracle_mh=1} 
requires the following number of calls of the basic oracles of $h$ and $G(x,\cdot)$ respectively
\begin{align}\label{eq:obt_complofgG_mh=1_inverse}
& \widetilde{O}\left(1 + \sqrt{H_1/(H+\mu_y)}\right)\tau_h,\\
&  \widetilde{O}\left( \tau_G + \tau_G\sqrt{H_1/(H+\mu_y)}+ \left( 1 + \sqrt{H_1/(H+\mu_y)}\right)\left(\tau_G+\tau_G\sqrt{\frac{H_2}{\mu_y+H+H_1}}\right)\right). \label{eq:obt_complofgh_mh=1_inverse}
\end{align}
 Finding $\left(\delta\left(\varepsilon\right) ,\sigma_0\left(\varepsilon,\sigma\right), 2L_g\right)$-oracle of $g$ by calculating $ \nabla_{x} G\left(x, \tilde{y}_{\delta/2}(x)\right)$ requires additionally $\tau_G$ 
calls of the basic oracle for $G(\cdot,y)$. 
Since in Assumption~\ref{assumpt:framework_oracle} we denote the dependence on the target accuracy $\varepsilon$ and confidence level $\sigma$ by a separate quantities denoted by $\mathcal{K}(\varepsilon,\sigma)$ and in this case it is logarithmic, choosing $H_1 = 2L_h$ and $H_2 = 2L_G$ we get the final estimates for  $\mathcal{N}_{G}^y$ and $\mathcal{N}_{h}$ to guarantee that  Assumption~\ref{assumpt:framework_oracle} holds:
\begin{align*}
&\mathcal{N}_{G}^y =O\left( \left(1 + \sqrt{2L_h/(H+\mu_y)}\right)\left(1+\sqrt{\frac{2L_G}{\mu_y+H+2L_h}}\right)\right)\tau_G\\ 
&=O\left( 1 + \sqrt{\frac{2L_h}{\mu_y+H}} + \sqrt{\frac{2L_G}{\mu_y+H}} + \sqrt{\frac{2L_h}{\mu_y+H}}\sqrt{\frac{2L_G}{\mu_y+H+2L_h}}\right)\tau_G = O\left( \tau_G +  \tau_G\sqrt{\frac{L_G}{\mu_y+H}} \right),\\ 
&\mathcal{N}_{h} = O\left(1 + \sqrt{\frac{L_h}{\mu_y+H}}\right)\tau_h,
\end{align*}
where for the first bound we used that $L_h \leq L_G$.

It is important to note that the estimates on $\mathcal{N}_{G}^y$ and $\mathcal{N}_{h}$ obtained in both cases $L_h \geq L_G$ and $L_h \leq L_G$ are exactly the same. Thus, regardless of the relation between $L_h$ and $L_G$, we obtain the estimates in the statement of the Lemma. Yet, we underline that the algorithm actually depends on whether $L_h \geq L_G$ or $L_h \leq L_G$.
\qed
\end{proof}

We now obtain a simple counterpart of the previous Lemma for the case when  Assumption \ref{assumpt:G_sum}.3(b) holds instead of Assumption \ref{assumpt:G_sum}.3(a). In this case $h$ is prox-friendly and there is no need to consider different cases and just one Loop is enough since the auxiliary problem \eqref{eq:190_mh=1} in Loop 1 can be solved explicitly. 


\begin{lemma} \label{lem:obtain_oracle_mh=1_h-prox}
Let the function $g$ be defined via maximization problem in \eqref{eq:framework_g_obt_oracle}, i.e.
\begin{align}\label{eq:g_obt_oracle_mh=1_h-prox}
g(x)= \max _{y \in  \R^{d_y}}\left\{G(x, y)-h(y) -\frac{H}{2}\|y-y_0\|^2\right\},
\end{align}
 where $G(x,y)$, $h(y)$ are according to \eqref{eq:problem_Gsum_minmax} 
 and satisfy Assumption \ref{assumpt:G_sum}.1,2,3(b),  $y_0 \in \R^{d_y}$.  Then, applying Algorithm \ref{alg:restarts_inexact_notconvex} to this problem, we guarantee that Assumption~\ref{assumpt:framework_oracle} holds with $\tau_G$ basic oracle calls for $G(\cdot,y)$ and the following estimates for the number of basic oracle calls for $G(x,\cdot)$ and $h$ respectively
  \begin{align}\label{eq:complofgG_mh=1_h-prox}
&\mathcal{N}_{G}^y\left( \tau_G, H\right) = O\left(\tau_G + \tau_G\sqrt{L_G/(H+\mu_y)}\right),\\
&\mathcal{N}_{h}\left( \tau_h, H\right) = 0. \label{eq:complofgh_mh=1_h-prox}
\end{align}
\end{lemma}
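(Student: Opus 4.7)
The plan is to mirror the structure of Lemma~\ref{lem:obtain_oracle_mh=1}, but collapse the two-loop construction to a single loop, exploiting prox-friendliness of $h$. As in Lemma~\ref{lem:obtain_oracle_mh=1}, the first step is to invoke Lemma~\ref{lemma:obt_delta_oracle} with $F(x,y)=G(x,y)$ and $w(y)=h(y)+\frac{H}{2}\|y-y_0\|^2$: if we can produce a $(\delta/2,\sigma_0)$-solution $\tilde{y}_{\delta/2}(x)$ of the maximization problem \eqref{eq:g_obt_oracle_mh=1_h-prox}, then $\nabla_x G(x,\tilde{y}_{\delta/2}(x))$ furnishes a $(\delta,\sigma_0,2L_g)$-oracle for $g$, at the cost of exactly $\tau_G$ calls of the basic oracle $O_G^x$ for $G(\cdot,y)$.

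To obtain the $(\delta/2,\sigma_0)$-solution, I would negate the maximization problem and apply Algorithm~\ref{alg:restarts_inexact_notconvex} with the splitting $\varphi(y)=-G(x,y)$ and $\psi(y)=h(y)+\frac{H}{2}\|y-y_0\|^2$. Here $\varphi$ is convex and $L_G$-smooth and its gradient requires $\tau_G$ calls of $O_G^y$, while $\psi$ is $(H+\mu_y)$-strongly convex. Choosing $H_1=2L_G$, Theorem~\ref{AM:comfortable_view_with_prob} gives $\widetilde{O}(1+\sqrt{L_G/(H+\mu_y)})$ outer iterations, provided that (i) the required polynomial bounds on the oracle inaccuracies hold (trivially, since we have exact first-order oracles for $\varphi$ and $\psi$), and (ii) the auxiliary subproblem \eqref{prox_step_inexact} is solved accurately enough. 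The auxiliary subproblem in iteration $k$ has the explicit form
\begin{equation*}
z_{k+1}^t = \arg\min_{z\in\mathbb{R}^{d_y}}\left\{-\langle \nabla_y G(x,z_k^{md}),z-z_k^{md}\rangle + h(z) + \frac{H}{2}\|z-y_0\|^2+\frac{H_1}{2}\|z-z_k^{md}\|^2\right\},
\end{equation*}
which is exactly of the shape \eqref{f_x:prox-friendly} with $c_1 = -\nabla_y G(x,z_k^{md}) - H y_0 - H_1 z_k^{md}$ (absorbing linear terms), $\bar{x}=0$ and $c_2 = H+H_1$, together with the prox-friendly function $h$. Therefore this subproblem is solved either in closed form or numerically to machine precision, without any basic oracle call of $h$. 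This means the second loop used in Lemma~\ref{lem:obtain_oracle_mh=1} is not needed here, and in particular $\tilde{\varepsilon}^{(1)}_f=0$, $\tilde{\sigma}^{(1)}=0$, which trivially satisfies \eqref{varepsilon_poly_CATD_with_prob}, \eqref{sigma_poly_CATD_with_prob}.

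Combining the pieces, each of the $\widetilde{O}(1+\sqrt{L_G/(H+\mu_y)})$ outer iterations of Algorithm~\ref{alg:restarts_inexact_notconvex} spends one evaluation of $\nabla_y G(x,\cdot)$, i.e.\ $\tau_G$ calls of $O_G^y$, and no calls of $O_h$. Adding the final $\tau_G$ calls of $O_G^x$ to assemble $\nabla_x G(x,\tilde{y}_{\delta/2}(x))$, we obtain the claimed bounds
\begin{equation*}
\mathcal{N}_G^y(\tau_G,H)=O\bigl(\tau_G+\tau_G\sqrt{L_G/(H+\mu_y)}\bigr),\qquad \mathcal{N}_h(\tau_h,H)=0.
\end{equation*}
The polynomial dependencies on $\varepsilon,\sigma$ required by Assumption~\ref{assumpt:framework_oracle} follow, as before, from the fact that the dependencies in \eqref{delta_poly_CATD_with_prob}--\eqref{sigma_poly_CATD_with_prob} are themselves polynomial, so all logarithms collapse into the $\widetilde{O}(\cdot)$. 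The only subtlety to double-check carefully is the interpretation of ``prox-friendly $h$'' inside the subproblem when $h$ itself is (as stated in the lemma) merely smooth and prox-friendly (not necessarily admitting both), but since $\varepsilon_2$-accurate solution of a prox problem is tantamount to solving \eqref{f_x:prox-friendly}, the reduction goes through and no gradient access to $h$ is needed. This is the only place where a careful reading of the prox-friendly assumption is required; otherwise the proof is a direct simplification of Lemma~\ref{lem:obtain_oracle_mh=1}.
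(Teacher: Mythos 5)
Your proposal is correct and follows essentially the same route as the paper: the paper's own proof simply states that the argument is the "Sliding $L_h \geq L_G$" construction of Lemma~\ref{lem:obtain_oracle_mh=1} with the sole modification that the auxiliary subproblem \eqref{eq:190_mh=1} is solved explicitly via the prox-friendliness of $h$, which is exactly the single-loop reduction you spell out. Your write-up is in fact more detailed than the paper's one-line proof, and the identification of the subproblem with the form \eqref{f_x:prox-friendly} matches the intended argument.
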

\begin{proof}
The proof is similar to the proof for the case "Sliding $L_h \geq L_G$" in the proof of Lemma~\ref{lem:obtain_oracle_mh=1} with the only change that the auxiliary problem \eqref{eq:190_mh=1} is solved explicitly thanks to $h$ being prox-friendly. 
\qed
\end{proof}

By changing the variables $x$ and $y$ in Lemma \ref{lem:obtain_oracle_mh=1} and choosing $H = 0$ we obtain the following simple corollary that ensures Assumption ~\ref{assumpt:framework_oracle_x}.
\begin{corollary} \label{lem:obtain_oracle_x_mh=1}
Let the function $r$ be defined via maximization problem in \eqref{eq:framework_r_obt_oracle}, i.e.
\begin{align}
r(y)= \min _{x \in  \R^{d_x}}\left\{G(x, y)+f(x)\right\},
\end{align}
 where  $G(x,y),f(y)$ are according to \eqref{eq:problem_Gsum_minmax} 
 and satisfy Assumption \ref{assumpt:G_sum}.1,2,3(a).
 Then, for each of two cases $L_f \geq L_G$ and $L_f \leq L_G$ we organize computations in two loops and apply Algorithm \ref{alg:restarts_inexact_notconvex}, so that Assumption~\ref{assumpt:framework_oracle_x} holds with $\tau_G$ basic oracle calls for $G(x,\cdot)$ and the following estimates for the number of basic oracle calls for $G(\cdot,y)$ and $f$ respectively
 \begin{align}\label{eq:complofgG_x_mh=1}
&\mathcal{N}_{G}^x\left( \tau_G\right) = O\left(\tau_G + \tau_G\sqrt{L_G/\mu_x}\right),\\
&\mathcal{N}_{f}\left( \tau_f\right) = O\left(\tau_f+ \tau_f\sqrt{L_f/\mu_x}\right). \label{eq:complofgf_x_mh=1}
\end{align}
We name these algorithms "Sliding $L_f \geq L_G$" and "Sliding $L_f \leq L_G$".
\end{corollary}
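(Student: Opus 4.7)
The plan is to reduce this corollary directly to Lemma~\ref{lem:obtain_oracle_mh=1} by a symmetry/reformulation argument rather than redoing the two-loop construction from scratch. First I would rewrite
\[
r(y) = \min_{x \in \R^{d_x}} \{G(x,y) + f(x)\} = - \max_{x \in \R^{d_x}} \{ -G(x,y) - f(x) \},
\]
so that providing a $(\delta/2,\sigma_0)$-solution to the inner minimization and an inexact oracle for $r$ is equivalent (up to a sign) to providing the same for
\[
\tilde g(y) = \max_{x \in \R^{d_x}} \{ \tilde G(x,y) - \tilde h(x) - \tfrac{\tilde H}{2}\|x-x_0\|^2\},
\]
with the substitutions $\tilde G(x,y) := -G(x,y)$, $\tilde h(x) := f(x)$, $\tilde H := 0$, and the roles of the variables $x$ and $y$ swapped. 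Crucially, $\tilde G$ is still $L_G$-smooth and convex-concave (the minus sign just swaps "convex in first argument/concave in second" with "concave in first/convex in second", and the swap of variables restores the form required in \eqref{eq:g_obt_oracle_mh=1}), while $\tilde h = f$ is $\mu_x$-strongly convex and $L_f$-smooth, playing exactly the role of $h$ in Lemma~\ref{lem:obtain_oracle_mh=1} (with $\mu_y$ replaced by $\mu_x$ and $L_h$ replaced by $L_f$).

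Next, I would invoke Lemma~\ref{lem:obtain_oracle_mh=1} for this reformulated problem. This gives an inexact solution and an inexact oracle at the corresponding cost; applying the output bounds \eqref{eq:complofgG_mh=1} and \eqref{eq:complofgh_mh=1} under the substitutions $\mu_y \to \mu_x$, $L_h \to L_f$, $H \to 0$, $\tau_h \to \tau_f$, and swapping $O_G^x \leftrightarrow O_G^y$, one obtains precisely
\[
\mathcal{N}_{G}^x(\tau_G) = O\bigl(\tau_G + \tau_G\sqrt{L_G/\mu_x}\bigr), \qquad \mathcal{N}_{f}(\tau_f) = O\bigl(\tau_f + \tau_f\sqrt{L_f/\mu_x}\bigr),
\]
which are \eqref{eq:complofgG_x_mh=1} and \eqref{eq:complofgf_x_mh=1}, and the number of calls to the basic oracle for $G(x,\cdot)$ (which now plays the role that $G(\cdot,y)$ played in the lemma) is $\tau_G$. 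The two cases "Sliding $L_f \ge L_G$" and "Sliding $L_f \le L_G$" correspond respectively to "Sliding $L_h \ge L_G$" and "Sliding $L_h \le L_G$" from the lemma, with the order of the two inner loops chosen accordingly.

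The only step I expect to require careful checking is the bookkeeping under the sign flip $\tilde G = -G$: I need to verify that the smoothness constant $L_G$ of $G$ is preserved, that Lemma~\ref{lemma:obt_delta_oracle} (used inside the proof of Lemma~\ref{lem:obtain_oracle_mh=1} to convert an approximate inner maximizer into an inexact oracle with Lipschitz constant $2L_r = 2L_G + 4L_G^2/\mu_x$) applies after the swap, and that the polynomial dependencies $\delta(\varepsilon)=\mathbf{poly}(\varepsilon)$ and $\sigma_0(\varepsilon,\sigma) = \mathbf{poly}(\varepsilon,\sigma)$ demanded by Assumption~\ref{assumpt:framework_oracle_x} are the same as those demanded by Assumption~\ref{assumpt:framework_oracle} after the substitution. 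Once these are checked, the corollary follows immediately from Lemma~\ref{lem:obtain_oracle_mh=1}.
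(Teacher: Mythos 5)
Your proposal is correct and follows essentially the same route as the paper: the paper itself derives this corollary in one line by "changing the variables $x$ and $y$ in Lemma~\ref{lem:obtain_oracle_mh=1} and choosing $H=0$", which is exactly your reduction with $\tilde G=-G$, $\tilde h=f$, $\mu_y\to\mu_x$, $L_h\to L_f$. Your additional bookkeeping on the sign flip and the preservation of the smoothness constants is sound and simply makes explicit what the paper leaves implicit.
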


\subsection{Proof of Theorem~\ref{theorem:G-sum_noprox}}
Finally, we prove Theorem~\ref{theorem:G-sum_noprox} for problem \eqref{eq:problem_Gsum_minmax} by combining the building blocks depending on the relation between $L_f$ and $L_G$ and relation between $L_h$ and $L_G$. If $L_f \geq L_G$ we use the general framework from the main text (see Section \ref{section:saddleFramework} and Theorem \ref{theorem:general_framework}). In the opposite case we apply the variation of this framework described in Appendix \ref{Appendix_Framework_inverse} (see Theorem \ref{theorem:general_framework_inverse}). 
In both cases we use Lemma~\ref{lem:obtain_oracle_mh=1} and Corollary~\ref{lem:obtain_oracle_x_mh=1} to ensure Assumptions \ref{assumpt:framework_oracle}, \ref{assumpt:framework_oracle_x}, but with different order of the loops described inside these Lemma and Corollary depending on the relation between $L_h$ and $L_G$, i.e. we use either sliding $L_h \geq L_G$ or sliding $L_h \leq L_G$ in Lemma~\ref{lem:obtain_oracle_mh=1} and either sliding $L_f \geq L_G$ or sliding $L_f \leq L_G$ in Corollary~\ref{lem:obtain_oracle_x_mh=1}.
For convenience, we summarize which results are used in which case in Table \ref{tabl:saddleproblem_mh=1}.

\begin{proof}[of Theorem~\ref{theorem:G-sum_noprox}]
Assumption \ref{assumpt:G_sum}.1,2,3(a) with \eqref{eq:assumpt1_constants_G_sum}  guarantee that Assumption~\ref{assumpt:framework} holds. 
Further, the choice $H = 2L_G$ in Lemma~\ref{lem:obtain_oracle_mh=1} guarantee that Assumption \ref{assumpt:framework_oracle} holds with the number of oracle calls given by \eqref{eq:complofgG_mh=1} and \eqref{eq:complofgh_mh=1}. Corollary~\ref{lem:obtain_oracle_x_mh=1} guarantee that Assumption \ref{assumpt:framework_oracle_x} holds with the number of oracle calls given by \eqref{eq:complofgG_x_mh=1} and \eqref{eq:complofgf_x_mh=1}. We consider two cases $L_f \geq L_G$ and $L_f \leq L_G$ and, for each case, apply either the general framework from the main text or from the previous appendix. We show that in both cases the estimates are the same and are equal to the ones in the statement of the theorem. In each case we make the derivations with $\sigma =0$ since all the algorithms are deterministic in this case.
\begin{table}\centering
\begin{tabular}{|c|c|c|}
\hline
\textbf{Different regimes} & \textbf{$L_h \geq L_G$} & \textbf{$L_h \leq L_G$}\\ \hline
\textbf{$L_f \leq L_G$} & \begin{tabular}[c]{@{}c@{}} Framework from Appendix~\ref{Appendix_Framework_inverse} (Theorem~\ref{theorem:general_framework_inverse})\\+
Sliding for $L_h \geq L_G$ (Lemma~\ref{lem:obtain_oracle_mh=1})\\+ Sliding for $L_f \leq L_G$ (Corollary~\ref{lem:obtain_oracle_x_mh=1})\end{tabular}
& \begin{tabular}[c]{@{}c@{}} Framework from Appendix~\ref{Appendix_Framework_inverse} (Theorem~\ref{theorem:general_framework_inverse})\\+
Sliding for $L_h \leq L_G$ (Lemma~\ref{lem:obtain_oracle_mh=1})\\+ Sliding for $L_f \leq L_G$ (Corollary~\ref{lem:obtain_oracle_x_mh=1})\end{tabular} \\\hline
\textbf{$L_f \geq L_G$} & \begin{tabular}[c]{@{}c@{}} General Framework (Theorem~\ref{theorem:general_framework})\\+
Sliding for $L_h \geq L_G$ (Lemma~\ref{lem:obtain_oracle_mh=1})\\+ Sliding for $L_f \geq L_G$ (Corollary~\ref{lem:obtain_oracle_x_mh=1})\end{tabular}
& \begin{tabular}[c]{@{}c@{}} General Framework (Theorem~\ref{theorem:general_framework})\\+
Sliding for $L_h \leq L_G$ (Lemma~\ref{lem:obtain_oracle_mh=1})\\+ Sliding for $L_f \geq L_G$ (Corollary~\ref{lem:obtain_oracle_x_mh=1})\end{tabular} \\\hline
\end{tabular}

\caption{Summary of the proof of Theorem~\ref{theorem:h-not-sum}. For each regime we apply the algorithms described in the proofs of the corresponding results listed in the table to obtain the complexity 
estimates \eqref{theorem:h-not-sum_f}-\eqref{theorem:h-not-sum_G_y} for the number of basic oracle calls for each part of the objective $f$, $h$, and $G$.
}
\label{tabl:saddleproblem_mh=1}
\end{table}

 We begin with the case $L_f \geq L_G$. 
\paragraph{Case $L_f \geq L_G$}$\;$\\
Applying Theorem~\ref{theorem:general_framework} with $\tau_f=\tau_h=1$ and $\tau_G = m_G$, Lemma~\ref{lem:obtain_oracle_mh=1} with $H=2L_G$, Corollary~\ref{lem:obtain_oracle_x_mh=1} and combining the complexity estimates in these results,
 we obtain the following final complexity bounds.

Number of basic oracle calls of $f$:
\begin{align*}
\widetilde{O} \left(\left( 1 + \sqrt{\frac{L_G}{\mu_y}}\right) \left(1+ \sqrt{ \frac{L_f}{\mu_x}} + \left( 1 + \sqrt{\frac{L_G}{\mu_x}}\right)\left( 1 + \sqrt{\frac{L_f}{L_G}}\right)\right)
    \right) 
    = \widetilde{O} \left(\left( \sqrt{\frac{L_G}{\mu_y}}\right) \left( \sqrt{ \frac{L_f}{\mu_x}} + \left( \sqrt{\frac{L_G}{\mu_x}}\right)\left( \sqrt{\frac{L_f}{L_G}}\right)\right)
    \right)\\
    =\widetilde{O} \left(\left( \sqrt{\frac{L_f L_G}{\mu_x \mu_y}}\right)\right),
\end{align*}
where we used that, $L_G \leq L_f$ and, by the assumptions of this Theorem, $1 \leq L_G/\mu_y $, $1 \leq L_G/\mu_x$, $1 \leq L_f/\mu_x $.

Number of basic oracle calls of $h$:
\begin{align*}
\widetilde{O} \left( \left( 1 + \sqrt{\frac{L_G}{\mu_y}}\right) \left( 1 +   \left( 1 + \sqrt{\frac{L_G}{\mu_x}}\right) \left( 1+\sqrt{\frac{ L_h}{2L_G+\mu_y}}\right)\right)
    \right) 
=\widetilde{O} \left( \left( \sqrt{\frac{L_G}{\mu_y}}\right) \left( 1 +   \left( \sqrt{\frac{L_G}{\mu_x}}\right)\left(1+ \sqrt{\frac{ L_h}{L_G}}\right)\right)
    \right)=\\
    =\widetilde{O} \left( \max \left\{ \sqrt{\frac{ L_G L_h}{\mu_x \mu_y}}, \sqrt{\frac{ L_G^2}{\mu_x \mu_y}} \right\}\right),
\end{align*}
where we used that $H=2L_G$ in Lemma~\ref{lem:obtain_oracle_mh=1} and, by the assumptions of this Theorem, $1 \leq L_G/\mu_y $, $1 \leq L_G/\mu_x$.

Number of basic oracle calls of  $G(\cdot, y)$:
\begin{align*}
 \widetilde{O} \left( \left( 1 + \sqrt{\frac{L_G}{\mu_y}}\right) \left( m_G + m_G\sqrt{\frac{L_G}{\mu_x}} + m_G\left( 1 + \sqrt{\frac{L_G}{\mu_x}}\right) \right)
    \right) = \widetilde{O} \left(   m_G\sqrt{\frac{L_G^2}{\mu_x \mu_y}}  \right),
\end{align*}
where we used that, by the assumptions of this Theorem, $1 \leq L_G/\mu_y $ and $1 \leq L_G/\mu_x$.

Number of basic oracle calls of  $G(x,\cdot)$:
\begin{align*}
 \widetilde{O} \left( \left( 1 + \sqrt{\frac{L_G}{\mu_y}}\right) \left( m_G + m_G\left( 1 + \sqrt{\frac{L_G}{\mu_x}}\right)  \left(1 + \sqrt{\frac{L_G}{2L_G+\mu_y}} \right)\right)
    \right) \\
 =\widetilde{O} \left(m_G \left( 1 + \sqrt{\frac{L_G}{\mu_y}}\right) \left( 1 + \sqrt{\frac{L_G}{\mu_x}}  \right)
    \right)=\widetilde{O} \left(  m_G \sqrt{\frac{L_G^2}{\mu_x \mu_y}}  \right),
\end{align*}
where we used that $H=2L_G$ in Lemma~\ref{lem:obtain_oracle_mh=1} and, by the assumptions of this Theorem, $1 \leq L_G/\mu_y $ and $1 \leq L_G/\mu_x$.

\paragraph{Case $L_f \leq L_G$}$\;$\\
Applying Theorem~\ref{theorem:general_framework_inverse} with $\tau_f=\tau_h=1$ and $\tau_G = m_G$, Lemma~\ref{lem:obtain_oracle_mh=1} with $H=2L_G$, Corollary~\ref{lem:obtain_oracle_x_mh=1} and combining the, complexity estimates
we obtain the final complexity bounds as follows.

Number of basic oracle calls of $f$:
\begin{align*}
& \widetilde{O} \left(\left( 1 + \sqrt{\frac{L_G}{\mu_y}}\right) \left(1+ \sqrt{ \frac{L_f}{\mu_x}} + \left( 1 + \sqrt{\frac{L_f}{\mu_x}}\right)\right)
    \right) = \widetilde{O} \left(\left( \sqrt{\frac{L_G}{\mu_y}}\right) \left( \sqrt{ \frac{L_f}{\mu_x}} + \left( \sqrt{\frac{L_f}{\mu_x}}\right)\right)
    \right)=\widetilde{O} \left(\left( \sqrt{\frac{L_f L_G}{\mu_x \mu_y}}\right)\right),
\end{align*}
where we used that, by the assumptions of this Theorem, $1 \leq L_G/\mu_y $ and $1 \leq L_f/\mu_x$.

Number of basic oracle calls of $h$:
\begin{align*}
& \widetilde{O} \left( \left( 1 + \sqrt{\frac{L_G}{\mu_y}}\right) \left( 1+   \left( 1 + \sqrt{\frac{L_f}{\mu_x}}\right)\left( 1+\sqrt{\frac{L_G}{L_f}}\right)\left(1 +\sqrt{\frac{ L_h}{2L_G+\mu_y}}\right)\right)
    \right) \nonumber \\
&=\widetilde{O} \left( \left( \sqrt{\frac{L_G}{\mu_y}}\right) \left( 1 +   \left( \sqrt{\frac{L_f}{\mu_x}}\right)\cdot \left( \sqrt{\frac{L_G}{L_f}}\right) \cdot \left(1+ \sqrt{\frac{ L_h}{L_G}}\right)\right)
    \right)=\widetilde{O} \left(\max \left\{ \sqrt{\frac{ L_G^2}{\mu_x \mu_y}},\sqrt{\frac{ L_G L_h}{\mu_x \mu_y}} \right\} \right),
\end{align*}
where we used that $H=2L_G$ in Lemma~\ref{lem:obtain_oracle_mh=1}, $L_G \geq L_f$ and, by the assumptions of this Theorem, $1 \leq L_G/\mu_y $, $1 \leq L_f/\mu_x$.

Number of basic oracle calls of  $G(\cdot, y)$:
\begin{align*}
& \widetilde{O} \left( \left( 1 + \sqrt{\frac{L_G}{\mu_y}}\right) \left( m_G + m_G\sqrt{\frac{L_G}{\mu_x}} + m_G\left( 1 + \sqrt{\frac{L_f}{\mu_x}}\right) \left( 1 + \sqrt{\frac{L_G}{L_f}} \right)\right)
    \right) \nonumber \\
& =\widetilde{O} \left( m_G\left(  \sqrt{\frac{L_G}{\mu_y}}\right) \left(  \sqrt{\frac{L_G}{\mu_x}} + \left(  \sqrt{\frac{L_f}{\mu_x}}\right) \cdot \left(  \sqrt{\frac{L_G}{L_f}}\right) \right)
    \right)=\widetilde{O} \left(m_G \left(  \sqrt{\frac{L_G}{\mu_y}}\right)  \left(  \sqrt{\frac{L_G}{\mu_x}} \right) \right)= \widetilde{O} \left( m_G  \sqrt{\frac{L_G^2}{\mu_x \mu_y}}  \right),
\end{align*}
where we used that $L_G \geq L_f$ and, by the assumptions of this Theorem, $1 \leq L_G/\mu_y $, $1 \leq L_G/\mu_x $, $1 \leq L_f/\mu_x$.

Number of basic oracle calls of  $G(x,\cdot)$:
\begin{align*}
& \widetilde{O} \left( \left( 1 + \sqrt{\frac{L_G}{\mu_y}}\right) \left( m_G + \left( 1 + \sqrt{\frac{L_f}{\mu_x}}\right)  \left(m_G + m_G\sqrt{\frac{L_G}{2L_G+\mu_y}} + \sqrt{\frac{L_G}{L_f}} \cdot m_G\left(1 + \sqrt{\frac{L_G}{2L_G+\mu_y}}\right) \right)\right)
    \right) \nonumber \\
& =\widetilde{O} \left( m_G\left(  \sqrt{\frac{L_G}{\mu_y}}\right) \left( 1 + \left(  \sqrt{\frac{L_f}{\mu_x}}\right) \cdot \left( \sqrt{\frac{L_G}{L_f}}\right)  \right) \right)= \widetilde{O} \left( \max \left\{ m_G\sqrt{\frac{L_G}{\mu_y}}, m_G\sqrt{\frac{ L_G^2}{\mu_x \mu_y}}\right\} \right)=\widetilde{O} \left( m_G  \sqrt{\frac{L_G^2}{\mu_x \mu_y}}  \right),
\end{align*}
where we used that $H=2L_G$ in Lemma~\ref{lem:obtain_oracle_mh=1}, $L_G \geq L_f$ and, by the assumptions of this Theorem, $1 \leq L_G/\mu_y $, $1 \leq L_G/\mu_x $, $1 \leq L_f/\mu_x$.

\qed \end{proof}

\begin{proof}[of Theorem~\ref{theorem:G-sum_h_prox}]
The only difference in the proof of Theorem~\ref{theorem:G-sum_h_prox} from the proof of Theorem~\ref{theorem:G-sum_noprox} is the use of Lemma~\ref{lem:obtain_oracle_mh=1_h-prox} instead of Lemma~\ref{lem:obtain_oracle_mh=1} to satisfy Assumption~\ref{assumpt:framework_oracle}. Thus, applying expressions \eqref{eq:complofgG_mh=1_h-prox}, \eqref{eq:complofgh_mh=1_h-prox} for $\mathcal{N}_{G}^y$ and $\mathcal{N}_{h}$ and following the proof of Theorem~\ref{theorem:G-sum_noprox} without any changes we obtain the same estimates for the number of basic oracle calls of $f, G(\cdot, y), G(x,\cdot)$. Considering $\mathcal{N}_{h} = 0$ and using that, by the assumptions of this Theorem, $1 \leq L_G/\mu_y $, we obtain that the number of basic oracle calls of $h$ is
\begin{align*}
\widetilde{O} \left(\sqrt{\frac{ L_G }{ \mu_y}} \right).
\end{align*}
\qed
\end{proof}

\end{document}